\newtheorem{theorem}{Theorem}[section]
\newtheorem{proposition}[theorem]{Proposition}
\newtheorem{lemma}[theorem]{Lemma}
\newtheorem{corollary}[theorem]{Corollary}
\newtheorem*{notation}{Notation}
\theoremstyle{definition}
\newtheorem{definition}[theorem]{Definition}
\newtheorem{example}[theorem]{Example}
\theoremstyle{remark}
\newtheorem{remark}[theorem]{Remark}
\numberwithin{equation}{section}
\newcommand{\hook}{\mathbin{\lrcorner}}
\newcommand{\cL}{\mathcal{L}}
\newcommand{\cG}{\mathcal{G}}
\newcommand{\cR}{\mathcal{R}}
\newcommand{\cRc}{\mathcal{RC}}
\newcommand{\bC}{\mathbb{C}}
\newcommand{\bR}{\mathbb{R}}
\newcommand{\lie}[1]{\mathfrak{#1}}
\newcommand{\mfa}{\lie{a}}
\newcommand{\mfg}{\lie{g}}
\newcommand{\g}{\lie{g}}
\newcommand{\mfh}{\lie{h}}
\newcommand{\h}{\lie{h}}
\newcommand{\mfn}{\lie{n}}
\newcommand{\mfr}{\lie{r}}
\newcommand{\aff}{\lie{aff}}
\newcommand{\yes}{\checkmark}
\newcommand{\spa}[1]{\mathrm{span}(#1)}
\DeclareMathOperator{\im}{Im}
\DeclareMathOperator{\End}{End}
\DeclareMathOperator{\ad}{ad}
\DeclareMathOperator{\diag}{diag}
\DeclareMathOperator{\id}{id}
\DeclareMathOperator{\tr}{tr}
\DeclareMathOperator{\op}{\oplus}
\DeclareMathOperator{\dd}{d}
\setlist{nosep}
\begin{document}

\title{Generalised Einstein metrics on Lie groups}

\author{Vicente Cort\'{e}s}

\author{Marco Freibert}

\author{Mateo Galdeano}
\thanks{}

\begin{abstract}
We continue the systematic study of left-invariant generalised Einstein metrics on Lie groups initiated in \cite{CK}. 
Our approach is based on a new reformulation of the corresponding algebraic system. For a fixed 
Lie algebra $\mathfrak g$, the unknowns of the system consist of a scalar product $g$ and a $3$-form $H$ on $\mathfrak g$ as well as  
a linear form $\delta$  on  
$\mathfrak g\oplus \mathfrak g^*$.  As in \cite{CK}, the Lie bracket of $\mathfrak g$ is considered part of the unknowns.
In the Riemannian case, we show that the generalised Einstein condition always reduces to the commutator ideal and we provide a full classification of solvable generalised Einstein Lie groups. In the Lorentzian case, under the additional assumption $\delta =0$, we classify---up to one case---all almost Abelian generalised Einstein Lie groups. We then particularize to four dimensions and provide a full classification of generalised Einstein Riemannian Lie groups as well as generalised Einstein Lorentzian Lie groups with $\delta =0$ and non-degenerate commutator ideal.
\end{abstract}
\maketitle

\section{Introduction}

Generalised geometry was introduced by Hitchin \cite{Hi} and further developed by Gualtieri \cite{Gu1,Gu2} as a tool to unify complex and symplectic geometry. Over time, generalised geometry has developed into a full branch of geometry and its range of applications has grown far beyond its original purpose. Particularly interesting is its relation to physics, as generalised geometry provides a way to geometrize certain supergravity theories \cite{CSW,GSh}.

Perhaps the most fundamental version of generalised geometry is the one  formulated on a (say exact) Courant algebroid over a manifold $M$. There is a convenient way to describe such algebroids: every exact Courant algebroid over $M$ is isomorphic to the bundle $TM\oplus T^*M$ equipped with the natural pairing, the natural projection to $TM$, and an $H$-twisted Dorfman bracket. This bracket extends the usual Lie bracket and crucially depends on a closed 3-form $H$.

Many familiar concepts from classical geometry have an analogue in generalised geometry, a main example being the generalised metric $\mathcal{G}$. An exact Courant algebroid over $M$ equipped with a generalised metric $\mathcal{G}$ is (isometrically) isomorphic to a bundle as described above together with a metric $g$ on $M$ which in turn determines the generalised metric $\mathcal{G}_g$ on the bundle $TM\oplus T^*M$.

The main object of interest for us will be the generalised Ricci tensor. A key observation is that, unlike in classical geometry, this tensor depends not only on the generalised metric $\mathcal{G}_g$ but also on the three-form $H$ and a divergence operator $\delta$. Therefore, we say that $(M,H,\mathcal{G}_g,\delta)$ is \emph{generalised Einstein} if the associated generalised Ricci tensor vanishes.

The motivation to study generalised Einstein manifolds comes both from geometry and physics. First of all, these manifolds can be understood as the generalised analogue of Ricci flat manifolds, which constitute one of the most important classes of manifolds in classical geometry. A particular subclass is that of Bismut Ricci-flat manifolds, compare Remark \ref{Bismut:rem}.

On the other hand, the generalised Einstein equations naturally emerge as some of the equations of motion of certain supergravity theories \cite{CSW,GSh}, in the same spirit as the classical Einstein equations appear as the equations of motion of the Einstein--Hilbert action. This further motivates the analysis of not just Riemannian but also Lorentzian metrics. In this context, the four-dimensional case is clearly of particular interest.

Moreover, it has been shown \cite{GLP} that, for the NS-NS sector of ten-dimensional supergravity, the most general supersymmetric backgrounds are given (locally) by a principal bundle with fibre a Lorentzian Lie group and base a suitable Riemannian manifold. These backgrounds satisfy the corresponding supergravity equations of motion, which in some particular cases reduce to the generalised Einstein condition on the fibre. Hence, the generalised Einstein equations on Lie groups are especially interesting since they are related to compactifications relevant for string theory.

In this work we thus focus on left-invariant Riemannian and Lorentzian generalised Einstein structures on exact Courant algebroids over a Lie group $G$. In particular, we will concentrate on classifying these Lie groups in four dimensions as well as obtaining some classification results in arbitrary dimensions.

This topic was first studied on \cite{CK, K} (see also \cite{ADG} for an alternative approach using the language of quadratic Lie algebras), where the general theory of left-invariant generalised Einstein structures on Lie groups was developed and a full classification for the three-dimensional case was obtained. The classification was extended to the simplest class of heterotic Courant algebroids in 
\cite{CD}. Examples and partial classifications of Bismut Ricci-flat homogeneous spaces have been obtained in \cite{PR1,PR2}. The key aspect is that in this setting the non-linear partial differential equations corresponding to the generalised Einstein condition can be recast as algebraic equations on the underlying Lie algebra $\mfg$, making the problem amenable to Lie theory, algebra and even classifications.

Following this philosophy, we present in Proposition \ref{pro:gEexplicitly} a new rewriting of the generalised Einstein equations of \cite{CK} explicitly in terms of tensors encoding the geometric data and the Lie bracket on the underlying Lie algebra 
$\mfg$.  Hence, we call solutions to these equations \emph{generalised Einstein Lie algebras}. Our new formulas prove to be extremely useful for classifying possible solutions to the generalised Einstein equations and they considerably reduce some of the required computations.

In particular, we are able to show in Theorem \ref{th:structRiemGE} that in the Riemannian setting the generalised Einstein condition automatically reduces to the generalised Einstein equations on the commutator ideal supplemented by an extra equation on the restriction of the divergence operator. We can then use this result to provide a full description of solvable generalised Einstein Riemannian Lie algebras, which reduce to the classical flat Abelian Lie algebras classified by Milnor \cite{Mi}, see Corollary~\ref{co:Riemsolv}. We also present in Theorem \ref{th:4dRiem} a full classification in four dimensions of all generalised Einstein Riemannian Lie algebras.

We then switch our attention to the Lorentzian setting, which turns out to be much less rigid than its Riemannian counterpart. In order to achieve some partial classification, we will focus on the case where the divergence operator vanishes, $\delta=0$. We are then able to obtain in Theorem \ref{th:almostAbelian} a classification---up to one particular subcase\footnote{The subcase corresponds to the situation where the codimension one Abelian ideal $\mathfrak n$ is non-degenerate and the symmetric part of the operator $f$ of $\mathfrak n$ defining the almost Abelian Lie algebra $\mathfrak g$ has a non-real eigenvalue.}, which is excluded in four dimensions---of all generalised Einstein Lorentzian almost Abelian Lie algebras with $\delta=0$ in arbitrary dimensions. 

Furthermore, we show in Theorem \ref{th:4dLorentzian} a full classification of generalised Einstein Lorentzian Lie algebras in four dimensions with vanishing divergence operator, under the additional assumption that the commutator ideal is non-degenerate. All our results in the four dimensional case are summarised in Table \ref{table:4d}.

It is worth pointing out that our classification results can also be used to obtain generalised Einstein Lorentzian Lie algebras with non-zero divergence operator following Corollary \ref{co:gEdeltanonzero}. We present in Corollary \ref{co:almostAbeliandeltaneq0} the results for Lorentzian almost Abelian Lie algebras, and some additional results for the four-dimensional case in Corollary \ref{co:reductivecasesdeltaneq0}.

Similarly, we obtain some partial results for metrics of signature $(n-2,2)$, where $n$ is the dimension of the Lie group. We discuss the almost Abelian case in Corollary \ref{co:signaturen-22}. The case of four dimensions, which corresponds to split signature, is covered in Corollary \ref{co:signature22}. This signature is even richer than the Lorentzian one, as illustrated by Example \ref{ex:abeliansignature32} where we construct an Abelian generalised Einstein Lie algebra with  $n=5$ for which $H\neq0$.

Our work leads to several natural follow-up questions. Since the main focus of this paper is to obtain general classification results rather than constructing particular explicit solutions, certain cases remain largely unexplored. It would be interesting to further study them, and in fact we expect many other generalised Einstein Lie algebras to exist. For example, the subcase that is not covered by our Lorentzian almost Abelian classification provides new solutions already in dimension five, cf.\ Example \ref{ex:5dalmostAbelian}, although a general classification result seems to be out of reach at the moment. In the same vein, four-dimensional Lorentzian solutions with non-zero divergence are expected to be abundant---this was indeed the case in three dimensions \cite{CK}---and many more examples should exist beyond those of Corollary \ref{co:reductivecasesdeltaneq0}. 

In the Riemannian case, we found that the generalised Einstein condition automatically reduces to the commutator ideal. Similarly, the non-degeneracy of the commutator ideal was one of the key assumptions in our classification of four-dimensional Lorentzian Lie algebras. One can thus wonder what the role of the commutator ideal is in more generality, and whether some variant of the Riemannian reduction result could also hold in other situations.

Finally, supergravity theories have additional (bosonic) equations of motion beyond the generalised Einstein equations. In the language of generalised geometry, these can be expressed as the vanishing of the \emph{generalised scalar curvature}. It would be interesting to study which of our solutions satisfy this property and are honest supergravity backgrounds as a result. We are planning to study this in the future. 

The paper is organised as follows: in Section \ref{sec:generalisedEinstein} we set notation and present our convenient rewriting of the generalised Einstein condition on Lie algebras. Section \ref{sec:Riemannian} is devoted to the Riemannian case and includes our results both in four and in higher dimensions. We then move on to the Lorentzian setting with zero divergence operator: in Section \ref{sec:almostAbelian} we explore the almost Abelian case in arbitrary dimensions, whereas in Section \ref{sec:4dLorentzian} we restrict ourselves to the case of four dimensions and non-degenerate commutator ideal.

\subsection*{Acknowledgements}

We would like to thank David Krusche for his contributions during the early stages of this project.

Research of MG and VC is funded by the Deutsche Forschungsgemeinschaft (DFG, German Research Foundation) under Germany's Excellence Strategy, EXC 2121 ``Quantum Universe,''  390833306. 
Research of VC is also funded by the DFG under -- SFB-Gesch\"aftszeichen 1624 -- Projektnummer 506632645.

\section{The generalised Einstein condition on Lie groups and Lie algebras}
\label{sec:generalisedEinstein}
\subsection{Exact Courant algebroids and generalised pseudo-Riemannian metrics}
In this subsection, we briefly recall some basic notions from generalised pseudo-Riemannian geometry. For a more detailed introduction to this subject as well as for proofs of the mentioned facts, we refer to \cite{GSt}, \cite{K} and \cite{GSh}, where the latter two treat also the genuine \emph{pseudo}-Riemannian case.

We begin with the basic notion of an \emph{(exact)} Courant algebroid:
\begin{definition}
Let $M$ be a manifold:
\begin{itemize}[wide]
\item
 A \emph{Courant algebroid (on $M$)} is a quadruple $(E,\langle \cdot,\cdot\rangle, [\cdot,\cdot],\pi)$ consisting of a vector bundle $E$ over $M$, a non-degenerate symmetric bilinear form $\langle \cdot,\cdot \rangle$ on $E$, a bilinear map $[\cdot,\cdot]:E\times E\rightarrow E$ on $E$ (called the \emph{bracket}) and a bundle map $\pi:E\rightarrow TM$ such that for all sections $a,b,c$ of $E$ and all $f\in C^{\infty}(M)$ the following conditions hold:
\begin{itemize}
\item[(i)]
$[a,[b,c]]=[[a,b],c]+[b,[a,c]]\,$.
\item[(ii)]
$\pi([a,b])=[\pi(a),\pi(b)]\,$.
\item[(iii)]
$[a,fb]=f[a,b]+\pi(a)(f)\, b\,$.
\item[(iv)]
$\pi(a)\langle b,c\rangle=\langle [a,b],c\rangle+\langle b,[a,c]\rangle\,$.
\item[(v)]
$[a,b]+[b,a]=\pi^*(\dd\langle a,b\rangle)\,$.
\end{itemize}
Here, in condition (v), we use $\langle \cdot,\cdot\rangle$ to identify $E^*$ with $E$, and so the dual map $\pi^*$ of $\pi:E\rightarrow TM$ is a bundle map $\pi^*:T^*M\rightarrow E$ from $T^*M$ to $E$. The conditions (ii) and (iii) easily follow from the others and are only 
included for convenience.

In the following, we often denote the Courant algebroid $(E,\langle \cdot,\cdot\rangle, [\cdot,\cdot],\pi)$ simply by $E$.
\item
Let $(E,\langle \cdot,\cdot\rangle, [\cdot,\cdot],\pi)$ be a Courant algebroid. Then we always have the following short sequence of vector bundles:
\begin{equation*}
0\rightarrow T^*M \stackrel{\pi^*}{\rightarrow} E\stackrel{\pi}{\rightarrow} TM\rightarrow 0 \, .
\end{equation*}
We call $E$ \emph{exact} if this short sequence is exact.
\item
A Courant algebroid $(E_1,\langle\cdot,\cdot\rangle_{E_1},[\cdot,\cdot]_{E_1},\pi_{E_1})$ over $M$ and a Courant algebroid $(E_2,\langle\cdot,\cdot\rangle_{E_2},[\cdot,\cdot]_{E_2},\pi_{E_2})$ over $N$ are called \emph{isomorphic} if there is a pair $(f,F)$ of maps, where $f:M\rightarrow N$ is a diffeomorphism and $F:E_1\rightarrow E_2$ is a vector bundle isomorphism covering $f$ such that the pullback via $F$ of the Courant algebroid data $(\langle\cdot,\cdot\rangle_{E_2},[\cdot,\cdot]_{E_2},\pi_{E_2})$ on $E_2$ equals the Courant algebroid data $(\langle\cdot,\cdot\rangle_{E_1},[\cdot,\cdot]_{E_1},\pi_{E_1})$ on $E_1$
\end{itemize}
\end{definition}
\begin{remark}
\begin{itemize}[wide]
	\item 
We note that condition (i) shows that the bracket $[\cdot,\cdot]$ satisfies the Jacobi identity. However, it is (if $\dim(M)>0$) not a Lie bracket as it fails to be anti-symmetric, cf. condition (v).
\item The subbundle $\pi^*(T^*M)$ is an isotropic subbundle of $E$.
\end{itemize}	
\end{remark}
We emphasize that we are only interested in (left-invariant) exact Courant algebroids (on Lie groups) in this article and so will concentrate only on this class of Courant algebroids in the following. In fact, all exact Courant algebroids are isomorphic to the following examples of exact Courant algebroids on $M$:
\begin{definition}
Let $M$ be a manifold and $\mathbb{T}M:=TM\oplus T^*M$ be the \emph{generalised tangent bundle} over $M$. The generalised tangent bundle comes equipped with a natural non-degenerate symmetric bilinear form $\langle \cdot,\cdot\rangle$ given by
\begin{equation*}
\langle X+\xi,Y+\eta\rangle =\frac{1}{2} \left(\eta(X)+\xi(Y)\right)\, ,
\end{equation*}
for $X,Y\in \Gamma(TM)$, $\xi,\eta\in \Gamma(T^*M)$.

Moreover, given any closed three-form $H\in \Omega^3 M$, we have a natural bilinear map $[\cdot,\cdot]_H:\mathbb{T}M\times \mathbb{T}M\rightarrow \mathbb{T}M$ given by
\begin{equation*}
[X+\xi,Y+\eta]_H:=[X,Y]+\cL_X \eta-\iota_Y \dd\xi+H(X,Y,\cdot)\,.
\end{equation*}
$[\cdot,\cdot]_H$ is called the \emph{$H$-twisted Dorfman bracket}.
\end{definition}
The following proposition is the content of \cite[Proposition 2.10, Proposition 2.17]{GSt}:
\begin{proposition}
Let $M$ be a manifold.
\begin{enumerate}
	\item[(a)]
	Let $H\in \Omega^3 M$ be closed. Then the generalised tangent bundle $\mathbb{T}M$ on $M$ endowed with the natural symmetric bilinear form $\langle \cdot,\cdot \rangle$, the $H$-twisted Dorfman bracket $[\cdot,\cdot]_H$ and the natural bundle map $\pi:\mathbb{TM}=TM\oplus T^*M\rightarrow TM$ given by the projection to $TM$ along $T^*M$ is an exact Courant algebroid.
	 \item[(b)] Any exact Courant algebroid $E$ on $M$ is isomorphic to $(\mathbb{T}M,\langle\cdot,\cdot \rangle,[\cdot,\cdot]_H,\pi)$ for some closed $H\in \Omega^3 M$.
\end{enumerate}
\end{proposition}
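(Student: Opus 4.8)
The plan is to prove (a) by a direct verification of the Courant algebroid axioms on $\mathbb{T}M=TM\oplus T^*M$, treating (i), (iv), (v) as the substantive conditions (since (ii) and (iii) are noted to follow from the others, and in any case are immediate from the explicit formula for $[\cdot,\cdot]_H$). First I would compute $\pi^*$ explicitly: identifying $\mathbb{T}M$ with its dual via $\langle\cdot,\cdot\rangle$, the defining condition $\langle \pi^*\alpha,\,Y+\eta\rangle=\alpha(\pi(Y+\eta))=\alpha(Y)$ for all $Y+\eta$ forces $\pi^*\alpha=2\alpha\in T^*M\subset\mathbb{T}M$, the factor $2$ coming from the $\tfrac12$ in the pairing. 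Exactness of $0\to T^*M\xrightarrow{\pi^*}\mathbb{T}M\xrightarrow{\pi}TM\to 0$ is then immediate, as $\pi^*$ is a nonzero scalar multiple of the inclusion (hence injective with image $T^*M=\ker\pi$) and $\pi$ is the surjective projection. For axiom (v) I would add $[a,b]_H+[b,a]_H$ for $a=X+\xi$, $b=Y+\eta$: the $TM$-parts cancel by antisymmetry of the Lie bracket, the $H$-terms cancel by antisymmetry of $H$, and the cotangent part collapses via Cartan's formula $\cL_X=\dd\iota_X+\iota_X\dd$ to $\dd(\eta(X)+\xi(Y))=2\,\dd\langle a,b\rangle=\pi^*(\dd\langle a,b\rangle)$, consistent with the computation of $\pi^*$ above. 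Axiom (iv) is a similar but slightly longer Cartan-calculus identity relating $\pi(a)\langle b,c\rangle$ to the two bracket pairings.

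The remaining and decisive condition in (a) is axiom (i), the Leibniz/Jacobi identity. Here I would expand both sides on three generic sections $X+\xi$, $Y+\eta$, $Z+\zeta$, reduce the vector-field part using the Jacobi identity on $TM$, and reduce the $T^*M$-part using Cartan calculus together with the identities $\cL_{[X,Y]}=[\cL_X,\cL_Y]$ and $\iota_{[X,Y]}=[\cL_X,\iota_Y]$. The $H$-dependent contributions do not cancel on their own; instead they collect precisely into the term $(\dd H)(X,Y,Z,\any)$, so that the identity holds if and only if $H$ is closed. I expect this Jacobi computation to be the main obstacle in part (a), both in organising the many terms and in isolating the $\dd H$ contribution.

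For (b) the plan is the standard isotropic splitting argument. Given an exact Courant algebroid $E$, I would first choose any bundle splitting $s\colon TM\to E$ of $\pi$, which exists since the defining sequence is a short exact sequence of vector bundles, and then correct $s$ to be isotropic: because $\pi^*(T^*M)$ is isotropic and equals $\ker\pi$, the symmetric form $(X,Y)\mapsto\langle sX,sY\rangle$ can be cancelled by replacing $s$ with $s-\pi^*\!\circ\beta$ for a suitable $\beta\in\Gamma(T^*M\otimes T^*M)$. An isotropic $s$ yields, after normalisation, a bundle isometry $\mathbb{T}M\to E$, $X+\xi\mapsto sX+\tfrac12\pi^*\xi$, carrying $\langle\cdot,\cdot\rangle$ to the pairing on $E$. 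Transporting the bracket of $E$ through this isometry, axioms (iii) and (iv) force the $TM$- and $T^*M$-actions to be exactly the Lie derivative and the $-\iota_Y\dd\xi$ terms, while (v) fixes the symmetric part; the only residual freedom is the tensor $H(X,Y,Z):=\langle[sX,sY],sZ\rangle$, which by isotropy and (v) is totally antisymmetric and hence a genuine $3$-form. This shows the transported bracket is $[\cdot,\cdot]_H$, and the Jacobi identity (i) for $E$ then translates, by the same mechanism as in part (a), into $\dd H=0$. The main obstacle in (b) is verifying that the transported bracket carries no terms beyond the Dorfman form and that closedness of $H$ is exactly equivalent to (i); a final remark is that changing the isotropic splitting by a $2$-form $B$ replaces $H$ by $H+\dd B$, so the class $[H]\in H^3(M,\bR)$ is the genuine invariant, although for the present existence statement any closed representative suffices.
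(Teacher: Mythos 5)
The paper does not prove this proposition at all: it is quoted verbatim from \cite{GSt} (Propositions 2.10 and 2.17 there), so there is no in-paper argument to compare against. Your proposal is the standard proof from that literature and is correct in outline: the computation $\pi^*\alpha=2\alpha$, the Cartan-calculus verification of (iv) and (v), the reduction of the Jacobi identity to $\dd H=0$, and for (b) the isotropic-splitting construction with the residual curvature $3$-form and the $B$-field ambiguity $H\mapsto H+\dd B$. The only point to watch is a normalisation consistent with your own computation of $\pi^*$: with the pairing $\langle X+\xi,Y+\eta\rangle=\tfrac12(\eta(X)+\xi(Y))$ and the convention $[sX,sY]_H=[X,Y]+H(X,Y,\cdot)$, the recovered form is $H(X,Y,Z)=2\langle[sX,sY],sZ\rangle$, not $\langle[sX,sY],sZ\rangle$; and in part (b) the tensoriality of this expression in $X$ and $Y$ (needed to conclude it is a genuine $3$-form) uses axioms (iii) and (iv) in addition to isotropy and (v). Neither issue affects the validity of the strategy.
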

Next, we add more structure to a given exact Courant algebroid $E$:
\begin{definition}
Let $E$ be an exact Courant algebroid over an $n$-dimensional manifold $M$.
\begin{itemize}[wide]
	\item
A \emph{generalised pseudo-Riemannian metric} is an endomorphism $\cG$ of $E$ which is an involution, i.e. $\cG^2=\id_E\,$, and for which
\begin{equation*}
	\tilde{\cG}:=\langle \cG\cdot,\cdot\rangle
\end{equation*}
is a symmetric bilinear form on $E^*$ such that $\tilde{\cG}|_{S^2 \pi^*(T^*M)}$ is non-degenerate.

We say that $\cG$ has \emph{signature} $(p,n-p)$ if $\tilde{\cG}|_{S^2 \pi^*(T^*M)}$ has signature $(p,n-p)$ and we call $\cG$ \emph{Riemannian} if $\tilde{\cG}|_{S^2 \pi^*(T^*M)}$ is positive definite (i.e. has signature $(n,0)$) and \emph{Lorentzian} if $\tilde{\cG}|_{S^2 \pi^*(T^*M)}$ is \emph{Lorentzian}, i.e. has signature $(n-1,1)$.
\item
If $\cG$ is a generalised pseudo-Riemannian metric on $M$, then we denote by $E_{\pm}$ the $\pm 1$-eigenbundles and observe that these have dimension $n$ as well. Moreover, we denote by
\begin{equation*}
\pi_{\pm}:E\rightarrow E_{\pm}
\end{equation*}
the projections onto $E_{\pm}$ along $E_{\mp}\,$. We observe that these are explictly given by
\begin{equation*}
\pi_{\pm} (e)=\frac{1}{2}(e\pm \cG e)\,.
\end{equation*}
\item 
Two exact Courant algebroids $E_1$ and $E_2$ endowed with generalised pseudo-Riemannian metrics $\cG_{E_1}$ and $\cG_{E_2}$ are said to be \emph{isometrically isomorphic} if there is an isomorphism $(f,F)$ of Courant algebroids from $E_1$ to $E_2$ such that $\cG_{E_2}\circ F=F\circ \cG_{E_1}$.
\end{itemize}
\end{definition}
\begin{remark}
\begin{itemize}[wide]
	\item
	Let $\cG$ be a generalised pseudo-Riemannian metric of signature $(p,n-p)$ on an exact Courant algebroid $E$. Then $\pi_{\pm}|_{\pi^*(T^*M)}:\pi^*(T^*M)\rightarrow E_{\pm}$ is a vector bundle isomorphism satisfying
	\begin{equation*}
	\tilde{\cG}(\xi,\eta)=\pm 2\,\langle \pi_{\pm}(\xi),\pi_{\pm}(\eta)\rangle \, .
	\end{equation*}
In particular, the restriction of $\langle \cdot,\cdot \rangle$ to $E_+$ is non-degenerate and also has signature $(p,n-p)$. It is then fairly easy to see that the definition of an \emph{admissible} generalised pseudo-Riemannian metric of signature $(p,n-p)$ given in \cite{GSh} coincides with our definition of a generalised pseudo-Riemannian metric of signature $(p,n-p)$.
\item 
As $\langle E_+,E_-\rangle=0$, the last item shows that $\tilde{\cG}$ is also non-degenerate on the entire bundle $E$ and has signature $(2p,2(n-p))$.
\end{itemize}
\end{remark}
There is a special class of generalised pseudo-Riemannian metrics on the exact Courant algebroid $\mathbb{T}M$ to which any exact Courant algebroid over $M$ endowed with a generalised pseudo-Riemannian metric is isometrically isomorphic to:
\begin{proposition}\label{pro:eCAisometricisom}
Let $M$ be an $n$-dimensional manifold.
\begin{enumerate}[(a)]
	\item Let $g$ be a pseudo-Riemannian metric of signature $(p,n-p)$ on $M$ and set
	\begin{equation*}
	\cG_g:=\begin{pmatrix} 0 & g^{-1} \\ g & 0\end{pmatrix}
	\end{equation*}
on $\mathbb{T}M$ with respect to the splitting $\mathbb{T} M=TM\oplus T^*M$, where $g$ is considered as a bundle isomorphism from $TM$ to $T^*M$. Moreover, let $H\in \Omega^3 M$ be closed. Then
$\cG_g$ is a generalised pseudo-Riemannian metric of signature $(p,n-p)$ on $(\mathbb{T}M,\langle \cdot,\cdot\rangle,[\cdot,\cdot]_H,\pi)$.
\item Let $E$ be an exact Courant algebroid over $M$ endowed with a generalised pseudo-Riemannian metric $\cG$ of signature $(p,n-p)$. There exists a pseudo-Riemannian metric $g$ of signature $(p,n-p)$ and a closed three-form $H$ on $M$ such that $(E,\cG)$ is isometrically isomorphic to $(\mathbb{T}M,\langle \cdot,\cdot\rangle,[\cdot,\cdot]_H,\pi,\cG_g)$.
\end{enumerate}
\end{proposition}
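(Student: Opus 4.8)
The plan is to treat the two parts separately: part (a) is a direct verification that $\cG_g$ satisfies the defining properties of a generalised pseudo-Riemannian metric, while part (b) rests on analysing the $\pm1$-eigenbundles of $\cG$ and killing the resulting $2$-form by a $B$-field transformation.

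For part (a), I would first check that $\cG_g$ is an involution by the block computation $\cG_g^2=\begin{pmatrix} 0 & g^{-1}\\ g & 0\end{pmatrix}^2=\begin{pmatrix} g^{-1}g & 0\\ 0 & gg^{-1}\end{pmatrix}=\id_{\mathbb T M}$, where $g$ is read as the isomorphism $TM\to T^*M$ and $g^{-1}$ as its inverse. Next I would compute $\tilde\cG=\langle\cG_g\cdot,\cdot\rangle$ explicitly: since $\cG_g(X+\xi)=g^{-1}\xi+gX$, plugging into the natural pairing gives $\tilde\cG(X+\xi,Y+\eta)=\tfrac12\big(g(X,Y)+g^{-1}(\xi,\eta)\big)$, which is manifestly symmetric. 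Restricting to $\pi^*(T^*M)$, i.e.\ to pure cotangent elements $0+\xi$, yields $\tilde\cG(\xi,\eta)=\tfrac12\, g^{-1}(\xi,\eta)$; as $g^{-1}$ has the same signature $(p,n-p)$ as $g$, this restriction is non-degenerate of signature $(p,n-p)$, so $\cG_g$ is a generalised pseudo-Riemannian metric of the asserted signature.

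For part (b), I would first invoke part (b) of the preceding proposition to reduce to the model $E=(\mathbb T M,\langle\cdot,\cdot\rangle,[\cdot,\cdot]_{H_0},\pi)$ for some closed $H_0\in\Omega^3 M$, transporting $\cG$ along the isomorphism so that we may assume $E=\mathbb T M$. The key structural step is to show that $E_+$ is a graph over $TM$: by the Remark preceding the proposition, non-degeneracy of $\tilde\cG|_{S^2\pi^*(T^*M)}$ forces $E_+\cap\pi^*(T^*M)=0$, and since $\ker\pi=\pi^*(T^*M)$ by exactness and $\dim E_+=n$, the map $\pi|_{E_+}\colon E_+\to TM$ is a bundle isomorphism. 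Hence $E_+=\{X+\beta X : X\in TM\}$ for a bundle map $\beta\colon TM\to T^*M$, which I split as $\beta=g+B$ into its symmetric part $g$ and antisymmetric part $B\in\Omega^2 M$. Evaluating the pairing on the graph, $\langle X+\beta X,Y+\beta Y\rangle=\tfrac12\big(\beta Y(X)+\beta X(Y)\big)=g(X,Y)$, and the Remark identifies this with $\langle\cdot,\cdot\rangle|_{E_+}$ of signature $(p,n-p)$, so $g$ is a pseudo-Riemannian metric of signature $(p,n-p)$.

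To finish I would apply the $B$-field transformation $F\colon X+\xi\mapsto X+\xi-\iota_X B$, a vector bundle automorphism of $\mathbb T M$ covering $\id_M$ that preserves both $\langle\cdot,\cdot\rangle$ and $\pi$ and sends $[\cdot,\cdot]_{H_0}$ to $[\cdot,\cdot]_{H}$ with $H:=H_0-\dd B$, which is closed since $\dd B$ is exact. By construction $F$ maps $E_+=\mathrm{graph}(g+B)$ onto $\mathrm{graph}(g)$, which is exactly the $(+1)$-eigenbundle of $\cG_g$ (the solutions of $\cG_g(X+\xi)=X+\xi$ are precisely those with $\xi=gX$); therefore $F$ intertwines $\cG$ with $\cG_g$, so $(\id_M,F)$ is an isometric isomorphism onto $(\mathbb T M,\langle\cdot,\cdot\rangle,[\cdot,\cdot]_H,\pi,\cG_g)$, and composing with the isomorphism from the first step gives the claim. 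I expect the main obstacle to be precisely this last step: verifying carefully that the $B$-transform is a genuine Courant-algebroid isomorphism which shifts the twisting form only by the exact $2$-form $\dd B$, and that it really conjugates $\cG$ into the standard block form. The graph reformulation of $E_+$ is what makes the conjugation transparent and reduces the computation to linear algebra along each fibre.
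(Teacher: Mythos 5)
The paper does not actually prove this proposition: it is recalled as a known fact, with the reader referred to \cite{GSt}, \cite{K} and \cite{GSh}, so there is no in-paper argument to compare yours against. That said, your proof is the standard one and is essentially correct: the block computation in (a), and in (b) the reduction to $(\mathbb{T}M,[\cdot,\cdot]_{H_0})$ via the preceding proposition, the identification of $E_+$ as the graph of $\beta=g+B$ using $E_+\cap\pi^*(T^*M)=0$ together with $\ker\pi=\pi^*(T^*M)$ and $\dim E_+=n$, and the removal of $B$ by a $B$-field transform, which changes the twist only by the exact form $\dd B$.

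Two small points you should make explicit. First, knowing $F(E_+)=\mathrm{graph}(g)$ is not by itself the statement that $F$ intertwines $\cG$ with $\cG_g$; you also need $F(E_-)$ to be the $(-1)$-eigenbundle $\mathrm{graph}(-g)$. This does follow from what you already have: $E_-=E_+^{\perp}$ (since $\langle E_+,E_-\rangle=0$ and both have rank $n$ with $\langle\cdot,\cdot\rangle$ non-degenerate), $F$ preserves $\langle\cdot,\cdot\rangle$, and $\mathrm{graph}(g)^{\perp}=\mathrm{graph}(-g)$; an involution is determined by its two eigenbundles. Second, the sign in $H=H_0\mp\dd B$ depends on the convention for the Dorfman bracket and for $F$; since either sign yields a closed three-form and the statement only asserts existence of some closed $H$, this is harmless, but the verification that the $B$-transform is a Courant algebroid automorphism shifting the twist by $\pm\dd B$ is the one computation you have deferred and should actually carry out (it uses $\cL_X\iota_YB-\iota_Y\dd\iota_XB-\iota_Y\iota_X\dd B=\iota_{[X,Y]}B+\iota_Y\iota_X \dd B - \iota_Y\iota_X\dd B$, i.e.\ Cartan calculus).
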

\subsection{Generalised Ricci curvatures and the generalised Einstein condition}
We start by giving the definition of a generalised connection, which will be essential to define the generalised Ricci curvatures below and then finally the generalised Einstein condition:
\begin{definition}
Let $E$ be an exact Courant algebroid over $M$.
\begin{itemize}[wide]
\item A \emph{generalised} connection $D$ on $E$ is an $\bR$-linear map $D:\Gamma(E)\rightarrow \Gamma(\End(E))$, $e\mapsto De=(\tilde{e}\mapsto D_{\tilde{e}} e)$ such that
\begin{itemize}
	\item[(1)] $D_{\tilde{e}}(fe)=\pi(\tilde{e})(f)\, e+ f\, D_{\tilde{e}} e$ for all $e,\tilde{e}\in \Gamma(E)$, $f\in C^{\infty}(M)$
	\item[(2)] and $D_{\tilde{e}} \langle e_1,e_2\rangle=\langle D_{\tilde{e}} e_1,e_2\rangle+\langle e_1,D_{\tilde{e}} e_2\rangle$ for all $\tilde{e},e_1,e_2\in \Gamma(E)$.
\end{itemize}
\item The \emph{torsion} of $D$ is the $(1,2)$-tensor field $T_D$ defined by
\begin{equation*}
T_D(e_1,e_2):=D_{e_1} e_2-D_{e_2} e_1-[e_1,e_2]+(D e_1)^* e_2 \, ,
\end{equation*}
for $e_1,e_2\in \Gamma(E)$, where $(D e_1)^*\in \End(E)$ is the adjoint of $De_1\in \End(E)$ with respect to $\langle\cdot,\cdot\rangle$.
\item Let $\cG$ be a generalised pseudo-Riemanian metric on $E$. Then a generalised connection $D$ is called a \emph{generalised Levi-Civita connection for $\cG$} if $D$ is \emph{torsion-free}, i.e. $T_D=0$, and \emph{metric}, i.e. $D\cG=0$.
\item 
For a generalised connection $D$ on $E$ compatible with a generalised pseudo-Riemannian metric $\cG$ on $E$, we may define two \emph{generalised curvature} endomorphisms
\begin{equation*}
\cR^+_D\in \Gamma(E_+^*\otimes E_-^*\otimes \End(E_+)) \, ,\quad
\cR^+_D(a,b) c=D_a D_b c-D_b D_a c-D_{[a,b]} c \, ,
\end{equation*}
for $a,c\in \Gamma(E_+)$, $b\in \Gamma (E_-)$ and
\begin{equation*}
	\cR^-_D\in \Gamma(E_-^*\otimes E_+^*\otimes \End(E_-)) \, ,\quad
	\cR^-_D(a,b) c=D_a D_b c-D_b D_a c-D_{[a,b]} c \, ,
\end{equation*}
for $a,c\in \Gamma(E_-)$, $b\in \Gamma (E_+)$. Consequently, we define two \emph{generalised Ricci curvatures} by
\begin{equation*}
	\cRc^{+}_D\in \Gamma(E_-\otimes E_+) \, ,\quad \cRc^+_D(a,c)=\tr(E_+\ni a\mapsto \cR^D(a,b)c\in E_+) \, ,
\end{equation*}
for $b\in E_-\,$, $c\in E_+$ and
\begin{equation*}
	\cRc^{-}_D\in \Gamma(E_+\otimes E_-) \, ,\quad \cRc^-_D(a,c)=\tr(E_-\ni a\mapsto \cR^D(a,b)c\in E_-) \, ,
\end{equation*}
for $b\in E_+\,$, $c\in E_-\,$. Finally, we set
\begin{equation*}
	\cRc_D:=\cRc^+_D-\cRc^-_D\in  \Gamma(E_-\otimes E_+ \oplus  E_+\otimes E_-) \, ,
\end{equation*}
and call $\cRc_D$ also the \emph{generalised Ricci curvature of $D$}.
\end{itemize}
\end{definition}
\begin{remark}
We note that trying to define analogous generalised curvature operators for other combinations of sections of $E_+$ and $E_-$ would not lead to a $C^{\infty}(M)$-trilinear map.
\end{remark}
Now, by \cite[Proposition 3.15]{GSt}, a generalised Levi-Civita connection exists for any generalised pseudo-Riemannian metric but is never unique. So, given a generalised pseudo-Riemannian metric $\cG$, there is also no natural generalised Ricci curvature associated to $\cG$. However, the generalised Ricci curvature $\cRc_D$ of a generalised Levi-Civita connection $D$ only depends on the \emph{divergence operator} of $D$:
\begin{definition}
Let $E$ be an exact Courant algebroid over a manifold $M$. Then a \emph{divergence operator} is a first-order differential operator $\delta:\Gamma(E)\rightarrow C^{\infty}(M)$ satisfying $\delta(f e)=\pi(e)(f)+f\,\delta(e)$ for all $e\in \Gamma(E)$, $f\in C^ {\infty}(M)$.
\end{definition}
\begin{lemma}\label{le:divandRic}
Let $E$ be an exact Courant algebroid over a manifold $M$ endowed with a pseudo-Riemannian metric $\cG$. Moreover, let $D$ be a generalised Levi-Civita connection of $\cG$. Then
\begin{equation*}
\delta_D:\Gamma(E)\rightarrow C^{\infty}(M) \, ,\quad \delta_D(e):=\tr(De) \, ,
\end{equation*}
is a divergence operator on $E$ and the map
\begin{equation*}
D\mapsto \delta_D
\end{equation*}
from all generalised Levi-Civita connections of $\cG$ to the space of all divergence operators on $E$ is surjective. Furthermore, if $\delta_{D_1}=\delta_{D_2}$ for two generalised Levi-Civita connections then $\cRc_{D_1}=\cRc_{D_2}$ for the associated generalised Ricci curvatures.
\end{lemma}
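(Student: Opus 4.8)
The plan is to handle the three assertions separately: the divergence property by a direct trace computation, and the surjectivity together with the independence of $\cRc_D$ by analysing the affine space of generalised Levi-Civita connections through the difference tensor of two such connections. For the first claim I would unwind $\delta_D(e)=\tr(De)$, where $De\in\End(E)$ denotes $\tilde e\mapsto D_{\tilde e}e$. Property (1) of a generalised connection gives that $D(fe)$ is the endomorphism $\tilde e\mapsto \pi(\tilde e)(f)\,e+f\,D_{\tilde e}e$. The first summand is the rank-one endomorphism $e\otimes(\dd f\circ\pi)$, whose trace is $\dd f(\pi(e))=\pi(e)(f)$, and the second has trace $f\,\tr(De)$; hence $\delta_D(fe)=\pi(e)(f)+f\,\delta_D(e)$, which is exactly the defining identity of a divergence operator.

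For the other two claims I would fix one generalised Levi-Civita connection $D_0$ (which exists by \cite[Proposition 3.15]{GSt}) and write an arbitrary one as $D=D_0+A$. As property (1) holds for both $D$ and $D_0$, the difference $A$ is tensorial, $A\in\Gamma(E^*\otimes\End(E))$; metricity and $D\cG=0$ force each $A_{\tilde e}$ to be skew for $\langle\cdot,\cdot\rangle$ and to preserve the splitting $E=E_+\oplus E_-$, so $A_{\tilde e}\in\lie{o}(E_+)\oplus\lie{o}(E_-)$. Writing $\mathbf A(x,y,z):=\langle A_x y,z\rangle$, torsion-freeness of both connections is equivalent to the algebraic identity $\mathbf A(x,y,z)-\mathbf A(y,x,z)+\mathbf A(z,x,y)=0$. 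The key step is to decompose this identity according to the $E_\pm$-types of $x,y,z$: since $A_x$ preserves $E_\pm$, the tensor $\mathbf A$ has only four type components; evaluation on triples with exactly two equal types kills the two mixed components (those in $E_-^*\otimes\lie{o}(E_+)$ and $E_+^*\otimes\lie{o}(E_-)$), and on triples of a single type the identity reduces to a cyclic relation on the pure components $\mathbf A(\pm;\pm)\in E_\pm^*\otimes\wedge^2 E_\pm^*$.

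Given this structure, for surjectivity I would note that divergence operators form an affine space over the $C^\infty(M)$-linear functionals $\Gamma(E^*)\cong\Gamma(E)$ and that $\delta_D-\delta_{D_0}=\tr(A\,\cdot\,)$ is captured by the metric traces of the pure components. For any $\phi\in\Gamma(E_\pm^*)$ the tensor $\mathbf A(x,y,z)=\langle x,y\rangle\phi(z)-\langle x,z\rangle\phi(y)$ (supported on the corresponding pure type) is skew in $y,z$, solves the cyclic identity, and a short dual-frame computation gives $\tr(A\,e)=(1-n)\,\phi(e)$, so letting $\phi$ range over $\Gamma(E_+^*)\oplus\Gamma(E_-^*)$ realises every divergence operator when $\dim M\geq2$. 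For the independence statement I assume $\delta_{D_1}=\delta_{D_2}$, so that $A=D_2-D_1$ has the type structure above together with $\tr_{13}\mathbf A(+;+)=\tr_{13}\mathbf A(-;-)=0$, and I compute $\cR^+_{D_2}-\cR^+_{D_1}$. The vanishing of the mixed components makes the quadratic terms $A_aA_bc-A_bA_ac$ drop, and after rewriting the bracket contribution via torsion-freeness it becomes $-A_{(D_1a)^* b}c=0$ because $(D_1a)^*b=0$ (as $D_1 a$ is $E_+$-valued and $b\in E_-$); this leaves $(\cR^+_{D_2}-\cR^+_{D_1})(a,b)c=((D_1)_aA)_bc-((D_1)_bA)_ac$ for $a,c\in E_+$, $b\in E_-$. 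Tracing over $a\in E_+$, the first summand is the $(-;+)$ component of $(D_1)A$ and vanishes identically, while the second equals $-(D_1)_b\big(\tr_{13}\mathbf A(+;+)\big)=0$ since $D_1$ is metric (so contraction commutes with $(D_1)_b$) and the trace is zero; hence $\cRc^+_{D_2}=\cRc^+_{D_1}$, and the mirror argument yields $\cRc^-_{D_2}=\cRc^-_{D_1}$, so $\cRc_{D_2}=\cRc_{D_1}$.

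The main obstacle I anticipate is the type decomposition of the torsion-free identity and its propagation through the curvature difference: once it is established that the mixed components of $A$ vanish and the pure components contribute only their metric traces, both surjectivity and the well-definedness of $\cRc_D$ reduce to careful bookkeeping with $\langle\cdot,\cdot\rangle$. The delicate points are keeping track of signs and, especially, controlling the adjoint term $(D_1a)^*b$ arising from the non-antisymmetry of the Courant bracket.
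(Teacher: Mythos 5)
The paper does not actually prove this lemma: it is stated as recalled background, with proofs delegated to \cite{GSt} and \cite{K} (see the opening of Section 2.1), so there is no in-paper argument to compare against. Your proposal is correct and is essentially the standard argument from those references. The three pillars all check out: (1) the Leibniz computation $\tr\bigl(\tilde e\mapsto \pi(\tilde e)(f)\,e\bigr)=\pi(e)(f)$ for the rank-one summand is exactly what makes $\delta_D$ a divergence operator; (2) writing $D=D_0+A$, metricity and $D\cG=0$ do force $A_x$ to be skew and to preserve $E_\pm$, and plugging a triple with $x\in E_\mp$, $y,z\in E_\pm$ into the torsion identity $\mathbf A(x,y,z)-\mathbf A(y,x,z)+\mathbf A(z,x,y)=0$ kills the two mixed components, leaving only the cyclic relation on the pure ones; your Weyl-type tensors $\langle x,y\rangle\phi(z)-\langle x,z\rangle\phi(y)$ do satisfy that relation and have trace $(1-n)\phi$, which gives surjectivity; (3) in the curvature difference the terms $A_bc$, $A_b(D_{1,a}c)$, $A_aA_bc$, $A_bA_ac$ and $A_{\pi_-[a,b]}c$ all vanish for type reasons, torsion-freeness of $D_1$ converts $\pi_+\bigl(D_{1,b}a+[a,b]\bigr)$ into $\pi_+\bigl((D_1a)^*b\bigr)=0$, and the surviving term $-((D_1)_bA)_ac$ has vanishing $E_+$-trace because contraction commutes with $D_1$ and $\tr_{13}\mathbf A=\delta_{D_2}-\delta_{D_1}=0$. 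Two small remarks: your restriction $\dim M\geq 2$ for surjectivity is genuinely needed (for $n=1$ the generalised Levi-Civita connection is unique, so the map cannot be surjective); the paper implicitly assumes this, cf.\ the assertion that such connections are ``never unique.'' Also, strictly one should note that $\tilde e\mapsto A_{\tilde e}e$ is $C^\infty(M)$-linear in $\tilde e$ by the very definition of a generalised connection as a map into $\Gamma(\End(E))$, so tensoriality of $A$ is immediate; you use this but do not say it.
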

Lemma \ref{le:divandRic} allows us now to define:
\begin{definition}
Let $E$ be an exact Courant algebroid over a manifold $M$ endowed with a generalised pseudo-Riemannian metric $\cG$ and let $\delta$ be a divergence operator on $E$.
\begin{itemize}
	\item 
	 The \emph{generalised Ricci curvature $\cRc_{(\cG,\delta)}$ of $(\cG,\delta)$} is the generalised Ricci curvature of any generalised Levi-Civita connection $D$ of $\cG$ with $\delta_D=\delta$.
	 \item The pair $(\cG,\delta)$ is called \emph{generalised Einstein} if $\cRc_{(\cG,\delta)}=0$.
\end{itemize}
\end{definition}
\subsection{Generalised pseudo-Riemannian geometry on Lie algebras}
From now on, using Proposition \ref{pro:eCAisometricisom}, we assume that our exact Courant algebroid $E$ endowed with a generalised pseudo-Riemannian metric $\cG$ is given by $(\mathbb{T}M,\langle \cdot,\cdot\rangle,[\cdot,\cdot]_H,\pi,\cG_g)$ for a closed three-form $H$ on $M$ and a pseudo-Riemannian metric $g$ on $M$.

Moreover, we now look at the special case where $M$ is a Lie group and assume that all involved data is left-invariant:
\begin{definition}
Let $G$ be a Lie group with associated Lie algebra $\mfg$.
\begin{itemize}[wide]
	\item 
	 An exact Courant algebroid on $G$ of the form  $(\mathbb{T}G,\langle \cdot,\cdot\rangle,[\cdot,\cdot]_H,\pi)$ with $H\in\Omega^3 G$ endowed with a generalised pseudo-Riemannian metric of the form $\cG_g$ for $g$ a pseudo-Riemannian metric on $G$ is called \emph{left-invariant} if both $H$ and $g$ are left-invariant. We usually denote the pair $\Bigl((\mathbb{T}G,\langle \cdot,\cdot\rangle,[\cdot,\cdot]_H,\pi),\cG_g\Bigr)$ of the exact Courant algebroid and the generalised pseudo-Riemannian metric as above simply by $(G,H,\cG_g)$ and say, by a slight abuse of notation, that $(H,\cG_g)$ is a \emph{left-invariant generalised pseudo-Riemannian metric} on $G$.
     \item We also say that a pair $(H,\cG_g)\in \Lambda^3 \mfg^*\times \End(E)$ with $H$ closed and $g$ a pseudo-Riemannian metric on $\mfg$ is a \emph{generalised pseudo-Riemannian metric on (the Lie algebra) $\mfg$}, where we set
     \begin{equation*}
     	E:=E(\mfg):=\mfg\oplus \mfg^* \, .
     \end{equation*}
 We then also call $(\mfg,H,\cG_g)$ a \emph{generalised pseudo-Riemannian Lie algebra}.
 
      Note that generalised pseudo-Riemannian metrics on $\mfg$ are $1:1$ to left-invariant generalised pseudo-Riemannian metrics on $G$.
     \item A divergence operator $\delta$ on $\mathbb{T}G$ is called \emph{left-invariant} if $\delta(e)$ is left-invariant for all left-invariant sections $e$ of $\mathbb{T}G$. Such a divergence operator is equivalent to an element in $E^*=(\mfg\oplus \mfg^*)^*$ and so any element $\delta\in E^*$ is called a \emph{divergence operator on $\mfg$}.
     \item It thus makes sense to talk about the \emph{generalised Ricci curvature} of a pair of a generalised pseudo-Riemannian metric $(H,\cG_g)$ on $\mfg$ and a divergence operator $\delta$ on $\mfg$ and also to say that $(H,\cG_g,\delta)$ is \emph{generalised Einstein}. We then also call $(\mfg,H,\cG_g,\delta)$ a \emph{generalised Einstein pseudo-Riemannian Lie algebra}.
      \end{itemize}
\end{definition}
\begin{remark}
	Note that on a generalised pseudo-Riemannian Lie algebra $(\mfg,H,\cG_g)$, the $H$-twisted Dorfman bracket $[\cdot,\cdot]_H$ on $E=\mfg\oplus \mfg^*$ is given by
	\begin{equation}\label{eq:DfBr}
		\begin{split}
			[X+\xi,Y+\eta]_H&=[X,Y]_{\mfg}+\cL_X \eta-Y\hook \dd\xi+ H(X,Y,\cdot)\\
			&=[X,Y]+X\hook d\eta-Y\hook \dd\xi+H(X,Y,\cdot) \, ,
		\end{split}
	\end{equation}
	where $[\cdot,\cdot]_{\mfg}$ is the Lie bracket of $\mfg$ and we usually omit the index $\mfg$ in the following. So here $[\cdot,\cdot]_H$ is actually anti-symmetric and, thus, defines a Lie algebra structure on $E$. Note that condition (iv) of a Courant algebroid shows that $(E, {[} \cdot , \cdot {]}_H, \langle \cdot , \cdot \rangle)$ is actually a quadratic Lie algebra.
	
	Moreover, we observe that $\mfg^*$ is an Abelian ideal of 
	$E$ such that $E/\mfg^*\cong \mfg$
	as Lie algebras. So $E$ is an Abelian extension of $\mfg$ by the Abelian Lie algebra $\mfg^*$, i.e. we have the following short exact sequence of Lie algebras:
	\begin{equation*}
		0\rightarrow \mfg^*\rightarrow E\rightarrow \mfg\rightarrow 0 \, .
	\end{equation*}
	In the case $H=0$, this sequence splits and $E$ is a semidirect product $E=\mfg\ltimes \mfg^*$ with $\mfg$ acting on the Abelian subalgebra $\mfg^*$ by the coadjoint action. Consequently, for $H=0$, $E$ is the cotangent Lie algebra of $\mfg$.
\end{remark}
\subsection{New useful formulas for the generalised Einstein condition}
Next, we recall some explicit formulas from \cite{CK} for the generalised Ricci curvatures---and so for the generalised Einstein condition---for a generalised pseudo-Riemannian Lie algebra $(\mfg,H,\cG_g)$ endowed with a divergence operator $\delta$. We will then rewrite these formulas so that the generalised Einstein condition is explicitly expressed in terms of the adjoint operators of the underlying Lie algebra (and associated endomorphisms) as well as of the three-form $H$ (and associated two-forms and endomorphisms), cf. Proposition \ref{pro:gEexplicitly} below. This proposition will allow us to derive in an easy way a reduction result in the Riemannian case, cf. Theorem \ref{th:structRiemGE}, and will also be very helpful in the Lorentzian case which will be considered in Sections \ref{sec:almostAbelian} and \ref{sec:4dLorentzian}.

We begin presenting the already known formulas from \cite{CK}. Let $u\in E_+$ and $v\in E_-$ be given and define two linear maps by
\begin{equation*}
\begin{split}
\Gamma_{u}:& E_-\rightarrow E_+ \, ,\qquad \Gamma_u:=\pi_+\circ [u,\cdot]_H|_{E_-} \, ,\\
\Gamma_{v}:& E_+\rightarrow E_- \, ,\qquad \Gamma_v:=\pi_-\circ [v,\cdot]_H|_{E_+} \, .
\end{split}
\end{equation*}
Using these linear maps, one has:
\begin{proposition}[\cite{CK}]
Let $(\mfg,H,\cG_g)$ be a generalised pseudo-Riemannian Lie algebra and let $\delta\in E^*$. Then the associated generalised Ricci curvatures $\cRc^+=\cRc^+_{(\cG_g,\delta)}$ and  $\cRc^-=\cRc^-_{(\cG_g,\delta)}$ are given by 
\begin{equation}\label{eq:gRcs}
\begin{split}
\cRc^+(v,u)&=-\tr(\Gamma_v\circ \Gamma_u)+\delta(\pi_+([v,u]_H)) \, ,\\
\cRc^-(u,v)&=-\tr(\Gamma_v\circ \Gamma_u)+\delta(\pi_-([u,v]_H))=-\tr(\Gamma_v\circ \Gamma_u)-\delta(\pi_-([v,u]_H)) \, ,\\
\end{split}
\end{equation}
for $u\in E_+$ and $v\in E_-\,$. Moreover, $(\mfg,H,\cG_g,\delta)$ is generalised Einstein if and only if 
\begin{equation}\label{eq:gE}
	2 \tr(\Gamma_v\circ \Gamma_u)=\delta(\cG_g\, [v,u]_H) \, ,\qquad \delta([u,v]_H)=0 \, ,
\end{equation}
for all $u\in E_+$ and $v\in E_-\,$.
\end{proposition}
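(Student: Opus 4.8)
The plan is to reduce everything to the quadratic Lie algebra $(E,[\cdot,\cdot]_H,\langle\cdot,\cdot\rangle)$ and to evaluate the defining trace of the generalised Ricci curvature on a conveniently chosen generalised Levi-Civita connection. By Lemma~\ref{le:divandRic} such a connection $D$ with $\delta_D=\delta$ exists and $\cRc_{(\cG_g,\delta)}=\cRc_D$ is independent of the choice, so it suffices to produce the claimed value for one such $D$. Because all data are left-invariant, $D$ is an algebraic object: on left-invariant sections the Leibniz term $\pi(\tilde e)(f)$ drops out and the metricity condition becomes $\langle D_{\tilde e}e_1,e_2\rangle+\langle e_1,D_{\tilde e}e_2\rangle=0$, so each $D_{\tilde e}$ is skew and, in particular, preserves the splitting $E=E_+\oplus E_-$.

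The key preliminary step is to pin down the part of $D$ that is forced. Pairing the torsion-free condition $T_D=0$ against a third element, $\langle D_{e_1}e_2-D_{e_2}e_1-[e_1,e_2]_H,e_3\rangle+\langle D_{e_3}e_1,e_2\rangle=0$, and feeding in arguments with prescribed $E_{\pm}$-types, the two \emph{mixed} blocks are uniquely determined: for $c\in E_-$, $a\in E_+$ one gets $D_c a=\pi_+([c,a]_H)$, and for $a\in E_+$, $c\in E_-$ one gets $D_a c=\pi_-([a,c]_H)$. Using the anti-symmetry of $[\cdot,\cdot]_H$ (valid here since $H$ is closed, cf.\ \eqref{eq:DfBr}), these are precisely $-\Gamma$: explicitly $D_c u=-\Gamma_u(c)$ and $D_a v=-\Gamma_v(a)$ for $u\in E_+$, $v\in E_-$. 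The two \emph{diagonal} blocks $D|_{E_+}$ and $D|_{E_-}$ remain free, but their only invariant content is tied to $\delta$ through $\delta_D(w)=\tr(Dw)=\tr_{E_+}(a\mapsto D_a w)$ for $w\in E_+$ (and symmetrically on $E_-$), the cross terms vanishing because $\langle E_+,E_-\rangle=0$.

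With this in hand I would expand $\cRc^+(v,u)=\tr_{a\in E_+}\bigl(a\mapsto D_aD_v u-D_vD_a u-D_{[a,v]_H}u\bigr)$ into three traces. In the first, $w:=D_v u=\pi_+([v,u]_H)\in E_+$ is fixed, so its $E_+$-trace is exactly $\delta_D(w)=\delta(\pi_+([v,u]_H))$, producing the divergence term. In the remaining two traces the undetermined diagonal block $\Phi:=\bigl(a\mapsto D_a u\bigr)\in\End(E_+)$ appears; splitting $[a,v]_H$ into its $E_{\pm}$-parts and substituting the forced mixed components turns these contributions into $\tr(\Psi\Phi)$ and $\tr(\Phi\Psi)$ with $\Psi:=D_v|_{E_+}\in\End(E_+)$, so by cyclicity of the trace they cancel, and the leftover canonical piece collapses to $\tr(\Gamma_u\circ\Gamma_v)=\tr(\Gamma_v\circ\Gamma_u)$. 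This yields $\cRc^+(v,u)=-\tr(\Gamma_v\circ\Gamma_u)+\delta(\pi_+([v,u]_H))$, and the analogous computation with the roles of $E_+$ and $E_-$ interchanged gives the formula for $\cRc^-(u,v)$, the second expression following from $[u,v]_H=-[v,u]_H$.

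Finally, the generalised Einstein characterisation is bookkeeping on top of \eqref{eq:gRcs}. Since $\cRc^+\in\Gamma(E_-\otimes E_+)$ and $\cRc^-\in\Gamma(E_+\otimes E_-)$ sit in independent summands of $\cRc_D=\cRc^+_D-\cRc^-_D$, the condition $\cRc_D=0$ is equivalent to $\cRc^+=0$ and $\cRc^-=0$ for all $u\in E_+$, $v\in E_-$. Adding the two vanishing equations and using $\pi_+-\pi_-=\cG_g$ gives $2\tr(\Gamma_v\circ\Gamma_u)=\delta(\cG_g\,[v,u]_H)$, while subtracting them and using $\pi_++\pi_-=\id$ gives $\delta([v,u]_H)=0$, equivalently $\delta([u,v]_H)=0$. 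The main obstacle is the third paragraph: correctly isolating the forced mixed components from the torsion identity and verifying that the free diagonal block drops out of the Ricci trace via the cyclicity argument; once that cancellation is secured, the rest is formal.
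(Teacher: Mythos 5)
Your proposal is correct, but it does substantially more work than the paper: for the curvature formulas \eqref{eq:gRcs} the paper simply cites \cite[Theorem~1.1]{CK}, and its own proof consists only of the final bookkeeping step (adding and subtracting the two equations and using $\pi_+-\pi_-=\cG_g$, $\pi_++\pi_-=\id_E$), which you reproduce verbatim in your last paragraph. Your third paragraph is an independent rederivation of the cited formulas: you extract the forced mixed blocks $D_c a=\pi_+([c,a]_H)$, $D_a c=\pi_-([a,c]_H)$ from the torsion identity, observe that the free diagonal block $\Phi=(a\mapsto D_au)$ enters the Ricci trace only through $-\tr(\Psi\Phi)+\tr(\Phi\Psi)=0$ and through $\delta_D$, and are left with $-\tr(\Gamma_u\circ\Gamma_v)+\delta(\pi_+([v,u]_H))$. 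I checked this computation and it is sound; it buys a self-contained proof at the cost of length. Two small points of rigour you should tighten if you keep this route: compatibility with $\langle\cdot,\cdot\rangle$ makes each $D_{\tilde e}$ skew but does \emph{not} by itself give preservation of $E_\pm$ --- that comes from the separate condition $D\cG=0$; and the identity $\tr(Dw)=\tr_{E_+}(a\mapsto D_aw)$ for $w\in E_+$ holds because $D\cG=0$ forces the image of $Dw$ to lie in $E_+$ (so the $E_-$ diagonal block vanishes), not merely because $\langle E_+,E_-\rangle=0$.
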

\begin{proof}
The first part of the proposition, i.e. the formulas for $\cRc^+$ and $\cRc^-$ are given in Theorem 1.1. in \cite{CK}. For the second part, we simply add and substract the two equations in \eqref{eq:gRcs} and set, using that $\pi_+-\pi_-=\cG_g$ and $\pi_++\pi_-=\id_E\,$, the two resulting terms equal to zero, 
\end{proof}
Let us now compute $\tr(\Gamma_v\circ \Gamma_u)$ in more detail. Now since for $u\in E_+$ and $v\in E_-\,$, the map $\Gamma_u$ is from $E_-$ to $E_+$ and $\Gamma_v$ is from $E_+\to E_-\,$, we first conjugate both homomorphisms to endomorphisms of $\mfg$ noting that $\pi_{\pm}|_{\mfg}:\mfg\rightarrow E_{\pm}$ is an isomorphism. Observing that the inverse map of $\pi_{\pm}|_{\mfg}$ is given by $2\,\pi|_{E_{\pm}}:E_{\pm}\rightarrow \mfg$, we thus define
\begin{equation*}
\begin{split}
\tilde{\Gamma}_u\in \End(\mfg) \, ,\qquad \tilde{\Gamma}_u:=2\,\pi\circ \Gamma_u\circ \pi_-=f_+\circ [u,\cdot]_H\circ \pi_-|_{\mfg} \, ,\\
\tilde{\Gamma}_v\in \End(\mfg) \, ,\qquad \tilde{\Gamma}_v:=2\,\pi\circ \Gamma_v\circ \pi_+=f_-\circ [v,\cdot]_H\circ \pi_+|_{\mfg} \, ,
\end{split}
\end{equation*}
for $u\in E_+$ and $v\in E_-\,$, where we have set $f_{\pm}:=2\,\pi\circ \pi_{\pm}:E\rightarrow \mfg$. Explicitly, the maps $f_{\pm}$ are given by
\begin{equation*}
f_{\pm}(X+\xi)=2\,\pi\left(\tfrac{1}{2}\left(X\pm \xi^{\sharp}+\xi\pm X^\flat\right)\right)=X\pm \xi^{\sharp} \, ,
\end{equation*}
for $X\in \mfg$, $\xi\in \mfg^*$. Finally, we set
\begin{equation*}
	\begin{split}
		\Gamma_X^+ &:\mfg\rightarrow \mfg \, ,\qquad \Gamma_X^+:=\tilde{\Gamma}_{\pi_+(X)} \, ,\\
		\Gamma_X^- &:\mfg\rightarrow \mfg \, ,\qquad \Gamma_X^-:=\tilde{\Gamma}_{\pi_-(X)} \, ,
	\end{split}
\end{equation*}
for $X\in \mfg$ and define the following bilinear form on $\mfg$:
\begin{equation}\label{eq:beta}
	\beta:\mfg\times \mfg\rightarrow \bR \, ,\qquad \beta(X,Y):=2\tr(\Gamma_X^-\circ \Gamma_{Y}^+) \, .
\end{equation}
Using this notation, we may now re-express the generalised Einstein condition as follows:
\begin{lemma}\label{le:gE}
	Let $(\mfg,H,\cG_g)$ be a generalised pseudo-Riemannian Lie algebra. Then $(\mfg,H,\cG_g,\delta)$ is generalised Einstein if and only if
	\begin{equation}\label{eq:GEbeta}
		\beta(X,Y)=-\delta(\cG_g\, [\pi_+(Y),\pi_-(X)]_H) \, ,\qquad \delta([\pi_+(Y),\pi_-(X)]_H)=0 \, ,
	\end{equation}
	for all $X,Y\in \mfg$.
\end{lemma}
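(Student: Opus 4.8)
The plan is to derive \eqref{eq:GEbeta} from the characterisation \eqref{eq:gE} of the preceding proposition by a change of variables, so that no new curvature computation is needed. The first observation is that, since $\pi_{\pm}|_{\mfg}:\mfg\rightarrow E_{\pm}$ is an isomorphism with inverse $2\,\pi|_{E_{\pm}}$, writing $u=\pi_+(Y)$ and $v=\pi_-(X)$ with $X,Y\in\mfg$ sets up a bijective correspondence between pairs $(u,v)\in E_+\times E_-$ and pairs $(X,Y)\in\mfg\times\mfg$. Hence the quantifier ``for all $u\in E_+$ and $v\in E_-$'' appearing in \eqref{eq:gE} is equivalent to the quantifier ``for all $X,Y\in\mfg$'' appearing in \eqref{eq:GEbeta}, and it suffices to check that, under this substitution, each of the two equations of \eqref{eq:gE} turns into the corresponding equation of \eqref{eq:GEbeta}.

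The main computation concerns the trace term. From the definitions $\tilde{\Gamma}_u=2\,\pi\circ\Gamma_u\circ\pi_-|_{\mfg}$ and $\tilde{\Gamma}_v=2\,\pi\circ\Gamma_v\circ\pi_+|_{\mfg}$, together with $(\pi_{\pm}|_{\mfg})^{-1}=2\,\pi|_{E_{\pm}}$, one reads off that $\tilde{\Gamma}_u=(\pi_+|_{\mfg})^{-1}\circ\Gamma_u\circ\pi_-|_{\mfg}$ and $\tilde{\Gamma}_v=(\pi_-|_{\mfg})^{-1}\circ\Gamma_v\circ\pi_+|_{\mfg}$. Composing these and cancelling the middle factor $\pi_+|_{\mfg}\circ(\pi_+|_{\mfg})^{-1}=\id_{E_+}$ gives $\tilde{\Gamma}_v\circ\tilde{\Gamma}_u=(\pi_-|_{\mfg})^{-1}\circ(\Gamma_v\circ\Gamma_u)\circ\pi_-|_{\mfg}$, so that $\Gamma_v\circ\Gamma_u\in\End(E_-)$ and $\tilde{\Gamma}_v\circ\tilde{\Gamma}_u\in\End(\mfg)$ are conjugate and therefore have equal trace. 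Taking $u=\pi_+(Y)$, $v=\pi_-(X)$ and recalling $\Gamma_Y^+=\tilde{\Gamma}_{\pi_+(Y)}$, $\Gamma_X^-=\tilde{\Gamma}_{\pi_-(X)}$, this identifies the defining expression \eqref{eq:beta} of $\beta(X,Y)$ with $2\,\tr(\Gamma_v\circ\Gamma_u)$, i.e.\ with the left-hand side of the first equation in \eqref{eq:gE}.

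It then remains to match the right-hand sides. For the second equation this is immediate: $\delta([u,v]_H)=0$ becomes $\delta([\pi_+(Y),\pi_-(X)]_H)=0$. For the first equation the only subtlety is a sign: since $H=0$ is not assumed, we use that the $H$-twisted Dorfman bracket on $E=\mfg\oplus\mfg^*$ is anti-symmetric, cf.\ \eqref{eq:DfBr}, so that $[v,u]_H=[\pi_-(X),\pi_+(Y)]_H=-[\pi_+(Y),\pi_-(X)]_H$. Thus $\delta(\cG_g\,[v,u]_H)=-\delta(\cG_g\,[\pi_+(Y),\pi_-(X)]_H)$, and the first equation of \eqref{eq:gE} becomes exactly the first equation of \eqref{eq:GEbeta}. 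Combining these observations yields the asserted equivalence.

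The argument is essentially bookkeeping, and I do not expect a genuine obstacle; the two places where care is required are the conjugation identity for the trace—in particular keeping straight that the inverse of $\pi_{\pm}|_{\mfg}$ is $2\,\pi|_{E_{\pm}}$ and that it is $\pi_-|_{\mfg}$, not $\pi_+|_{\mfg}$, that conjugates $\Gamma_v\circ\Gamma_u$ into $\tilde{\Gamma}_v\circ\tilde{\Gamma}_u$—and the sign produced by the anti-symmetry of the bracket, which is exactly what converts $[\pi_-(X),\pi_+(Y)]_H$ into $-[\pi_+(Y),\pi_-(X)]_H$ in the first equation.
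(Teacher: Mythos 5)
Your argument is correct and follows essentially the same route as the paper's own proof: the bijection $(u,v)=(\pi_+(Y),\pi_-(X))$, the conjugation identity $\tilde{\Gamma}_v\circ\tilde{\Gamma}_u=(\pi_-|_{\mfg})^{-1}\circ\Gamma_v\circ\Gamma_u\circ\pi_-|_{\mfg}$ giving $\beta(X,Y)=2\tr(\Gamma_v\circ\Gamma_u)$, and the anti-symmetry of $[\cdot,\cdot]_H$ to account for the sign. Your write-up is in fact slightly more explicit than the paper about the sign coming from $[v,u]_H=-[\pi_+(Y),\pi_-(X)]_H$, which the paper leaves implicit.
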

\begin{proof}
Let $u\in E_+$ and $v\in E_-\,$. Since $\pi_{\pm}|_{\mfg}:\mfg\rightarrow E_{\pm}$ are isomorphisms, $u=\pi_+(Y)$ and $v=\pi_-(X)$ for uniquely defined $X,Y\in \mfg$. This shows that the second equation in \eqref{eq:gE} is equivalent to the second equation in \eqref{eq:GEbeta}. Moreover,
\begin{equation*}
\begin{split}
\beta(X,Y)&=2\tr(\Gamma_X^-\circ \Gamma_{Y}^+)=2\tr(\tilde{\Gamma}_{\pi_-(X)}\circ \tilde{\Gamma}_{\pi_+(Y)})\\
&=2\tr((\pi_-|_{\mfg})^{-1}\circ\Gamma_{\pi_-(X)}\circ \Gamma_{\pi_+(Y)}\circ \pi_-|_{\mfg})=2\tr(\Gamma_{\pi_-(X)}\circ \Gamma_{\pi_+(Y)}) \, ,
\end{split}
\end{equation*}
and so also the first equation in \eqref{eq:gE} is equivalent to the first equation in \eqref{eq:GEbeta}.
\end{proof}

Our next goal is to derive an explicit formula for the bilinear form $\beta$. For this, we first compute $\Gamma_X^+,\Gamma_X^-\in \End(\mfg)$ more explicitly. Let us start with $\Gamma_X^+$:
\begin{equation*}
	\begin{split}
		4\, \Gamma^+_X(Y)&=4\, \tilde{\Gamma}_{\frac{1}{2}(X+X^\flat)}(Y)=f_+([X+X^\flat,Y-Y^\flat]_H)\\
		&=f_+([X,Y]-dY^\flat(X)-dX^\flat(Y)+H(X,Y,\cdot))\\
		&=f_+([X,Y]+g(Y,\ad_X)+g(X,\ad_Y)+H(X,Y,\cdot))\\
		&=f_+(\ad_X(Y)+g(\ad_X^*(Y),\cdot)+g(\ad_Y^*(X),\cdot)+H(X,Y,\cdot))\\
		&=\ad_X(Y)+\ad_X^*(Y)+\ad_Y^*(X)+H(X,Y,\cdot)^{\sharp}\\
		&=2\ad_X^S(Y)+\ad_Y^*(X)+H(X,Y,\cdot)^{\sharp} \, ,
\end{split}
\end{equation*}
where $f^S$ denotes the symmetric part of an endomorphism $f$ with respect to $g$. Using that $[X-X^\flat,Y+Y^\flat]_H=-[Y+Y^\flat,X-X^\flat]_H\,$, we compute, similarly, that
\begin{equation*}
	\begin{split}
		4\, \Gamma^-_X(Y)&=f_-([X-X^\flat,Y+Y^\flat]_H)=-f_-([Y+Y^\flat,X-X^\flat]_H)\\
		&=-f_-(\ad_Y(X)+g(\ad_Y^*(X),\cdot)+g(\ad_X^*(Y),\cdot)+H(Y,X,\cdot))\\
		&=\ad_X(Y)+\ad_Y^*(X)+\ad_X^*(Y)-H(X,Y,\cdot)^{\sharp}\\
		&=2\, \ad_X^S(Y)+\ad_Y^*(X)-H(X,Y,\cdot)^{\sharp} \, .
	\end{split}
\end{equation*}
\begin{remark}\label{Bismut:rem}
	Comparing with \cite[Proposition 3.14]{GSt}, the above formulas show that $\nabla^{\pm}_XY=-2\, \Gamma^{\pm}_YX$ with $\nabla^\pm =\nabla^g\pm \frac12 H^\sharp$ being the Bismut connections for $(g,H)$, i.e.\ the unique metric connection with totally skew-symmetric torsion $H$ or $-H$, respectively. Note also that comparing the formulas for the various Ricci tensors in \cite{GSt} one can easily verify that the vanishing of the generalised Ricci curvature $\cRc_{(\cG,\delta)}^+$ in the case of constant dilaton, i.e.\ when 
	$\delta = \delta^g$, is equivalent to the vanishing of the Ricci curvature of $\nabla^+$ (and similarly for $\cRc_{(\cG,\delta)}^-$ and $\nabla^-$). Here $\delta^g\in \mathfrak g^*\subset E^*$ stands for the (pseudo-)Riemannian divergence $\delta^g(X) = \tr \nabla^gX$. 
	However in this paper we do not restrict to the case of constant dilaton. In \cite{CK} it is shown that $\delta^g=-\tau$, where $X \mapsto \tau (X) 
	= \tr (\mathrm{ad}_X)$ is the trace-form.

\end{remark}
Now we note that the endomorphism $\ad^*(X)\in \End(\mfg)$ of $\mfg$ given by
\begin{equation}\label{eq:defofadstarX}
\mfg \ni Y\mapsto \ad^*(X)(Y):=\ad_Y^*(X)\in \mfg
\end{equation}
is skew-symmetric as
\begin{equation*}
g(\ad_Y^*(X),Z)=g(X,\ad_Y(Z))=-g(\ad_Z(Y),X)=-g(Y,\ad^*_Z(X)) \, .
\end{equation*}
Moreover, also the endomorphism $H_X\in \End(\mfg)$ of $\mfg$ defined by
\begin{equation}\label{eq:defofHX}
\mfg\ni Y\mapsto H_X(Y):= H(X,Y,\cdot)^{\sharp}\in \mfg
\end{equation}
is skew-symmetric since
\begin{equation*}
	g(H(X,Y,\cdot)^{\sharp},Z)=H(X,Y,Z)=-H(X,Z,Y)=-g(Y,H(X,Z,\cdot)^{\sharp}) \, .
\end{equation*}
Thus, since $\ad_X^S$ is by definition symmetric, we have
\begin{equation*}
4 (\Gamma^+_X)^*=2\ad_X^S-\ad^*(X)-H_X \, ,\qquad 4 (\Gamma^-_X)^*=2\ad_X^S-\ad^*(X)+H_X \, .
\end{equation*}
Next, choose an orthonormal basis $(e_1,\ldots,e_n)$ of $(\mfg,g)$ with $g(e_i,e_i)=\epsilon_i$ for $i=1,\ldots,n$. Recalling that for any endomorphism $f\in \End(\mfg)$ of $\mfg$ we have
\begin{equation*}
	g(f,f)=\sum_{i=1}^n \epsilon_i\, g(f(e_i),f(e_i)) \, ,
\end{equation*}
and using that $g$-symmetric endomorphisms are orthogonal to $g$-antisymmetric endomorphisms, we compute now
\begin{equation*}
	\begin{split}
8\, \beta(X,Y)&=16\, \tr(\Gamma^-_X\circ \Gamma_Y^+)=16\,\sum_{i=1}^n \epsilon_i\, g(\Gamma^-_X(\Gamma^+_Y(e_i)),e_i)
\\
&=16\,\sum_{i=1}^n \epsilon_i\, g(\Gamma^+_Y(e_i),(\Gamma^-_X)^*(e_i))=16\,g(\Gamma^+_Y,(\Gamma^-_X)^*)\\
&=g(2\ad_Y^S+\ad^*(Y)+H_Y,2\ad_X^S-\ad^*(X)+H_X)\\
&=4\, g(\ad_X^S,\ad_Y^S)+g(-\ad^*(X)+H_X,\ad^*(Y)+H_Y)\\
&=4\, g(\ad_X^S,\ad_Y^S)-g(\ad^*(X),\ad^*(Y))+2\,g(H(X,\cdot,\cdot),H(Y,\cdot,\cdot))\\
&+g(H_X,\ad^*(Y))-g(H_Y,\ad^*(X)) \, ,
	\end{split}
\end{equation*}
for all $X,Y\in \mfg$, where we used that $g(H_X,H_Y)=2\, g(H(X,\cdot,\cdot),H(Y,\cdot,\cdot))$ in the usual convention for the scalar product on two-forms that $g(e^i\wedge e^j,e^i\wedge e^j)=\epsilon_i\,\epsilon_j\,$. Thus,
the symmetric part $\beta^S$ of $\beta$ is given by
\begin{equation}\label{eq:betaS}
\beta^S(X,Y)=\frac{1}{2}\, g(\ad_X^S,\ad_Y^S)-\frac{1}{8}\, g(\ad^*(X),\ad^*(Y))+\frac{1}{4}\,g(H(X,\cdot,\cdot),H(Y,\cdot,\cdot)) \, ,
\end{equation}
whereas the anti-symmetric part $\beta^A$ of $\beta$ is given by
\begin{equation}\label{eq:betaA}
	\beta^A(X,Y)=\frac{1}{8}\, \left(g(H_X,\ad^*(Y))-g(H_Y,\ad^*(X))\right) \, .
\end{equation}
Computing that
\begin{equation*}
\begin{split}
\cG_g\, [\pi_+(Y),\pi_-(X)]_H&=\frac{1}{4} \cG_g\, [Y+Y^\flat,X-X^\flat]_H\\
&=\frac{1}{4} \cG_g\left([Y,X]+g(\ad_Y^*(X)+\ad_X^*(Y),\cdot)+H(Y,X,\cdot)\right)\\
&=-\frac{1}{4} \cG_g\left([X,Y]-g(\ad_X^*(Y)+\ad_Y^*(X),\cdot)+H(X,Y,\cdot)\right)\\
&=-\frac{1}{4} \left(-\ad_X^*(Y)-\ad_Y^*(X)+H(X,Y,\cdot)^{\sharp}+ [X,Y]^\flat\right) \, ,
\end{split}
\end{equation*}
we see that the symmetric part of $-\delta(\cG_g\, [\pi_+(Y),\pi_-(X)]_H)$ is given by
\begin{equation*}
-\frac{1}{4} \delta\left(\ad_X^*(Y)+\ad_Y^*(X)\right) \, ,
\end{equation*}
whereas the anti-symmetric part of $-\delta(\cG_g\, [\pi_+(X),\pi_-(Y)]_H)$ is given by
\begin{equation*}
\frac{1}{4} \delta\left(H(X,Y,\cdot)^{\sharp}+ [X,Y]^\flat\right) \, .
\end{equation*}
The above calculation also yields that the symmetric part of $\delta([\pi_+(Y),\pi_-(X)]_H)$ is given by
\begin{equation*}
\frac{1}{4}\delta((\ad_X^*(Y))^\flat+(\ad_Y^*(X))^\flat) \, ,
\end{equation*}
whereas the anti-symmetric part of $\delta([\pi_+(Y),\pi_-(X)]_H)$ equals
\begin{equation*}
	-\frac{1}{4}\delta([X,Y]+H(X,Y,\cdot)) \, .
\end{equation*}
Recalling that a symmetric bilinear form is zero when the associated quadratic form is zero, our computations imply the following more explicit reformulation of Lemma \ref{le:gE}:
\begin{proposition}\label{pro:gEexplicitly}
Let $(\mfg,H,\cG_g)$ be a generalised pseudo-Riemannian Lie algebra and $\delta\in E^*$. Then $(\mfg,H,\cG_g,\delta)$ is generalised Einstein if and only if
\begin{equation}\label{eq:GEbetaexplicitly}
\begin{split}
-2\,\delta(\ad_X^*(X))=4\, g(\ad_X^S,\ad_X^S)&-g(\ad^*(X),\ad^*(X))+2\, g(H(X,\cdot,\cdot),H(X,\cdot,\cdot)) \, ,\\
2\, \delta\left(H(X,Y,\cdot)^{\sharp}+ [X,Y]^\flat\right)&=g(H_X,\ad^*(Y))-g(H_Y,\ad^*(X)) \, ,\\
\delta((\ad_X^*(X))^\flat)&=0 \, ,\\
\delta([X,Y]+H(X,Y,\cdot))&=0 \, ,
\end{split}
\end{equation}
for all $X,Y\in \mfg$.	
\end{proposition}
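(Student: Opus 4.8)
The plan is to invoke Lemma~\ref{le:gE} and then convert its two defining identities \eqref{eq:GEbeta} into the four scalar conditions \eqref{eq:GEbetaexplicitly} by splitting each identity into its symmetric and antisymmetric parts as a bilinear form in $(X,Y)$. The organising principle is elementary: an equality of (not necessarily symmetric) bilinear forms on $\mfg$ holds for all $X,Y$ if and only if their symmetric parts coincide and their antisymmetric parts coincide; furthermore, by polarisation a symmetric bilinear form is determined by its restriction to the diagonal $Y=X$, so each symmetric-part identity may be recorded as a single quadratic identity in $X$, whereas each antisymmetric-part identity must be retained as a genuine bilinear identity in $(X,Y)$.

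First I would treat the identity $\beta(X,Y)=-\delta(\cG_g\,[\pi_+(Y),\pi_-(X)]_H)$. Its symmetric part combines formula \eqref{eq:betaS} for $\beta^S$ with the already-computed symmetric part $-\tfrac14\,\delta(\ad_X^*(Y)+\ad_Y^*(X))$ of the right-hand side; restricting to $Y=X$ and clearing denominators yields the first equation of \eqref{eq:GEbetaexplicitly}. (Here it is convenient to note that in the full expression for $8\beta$ the two cross terms $g(H_X,\ad^*(Y))-g(H_Y,\ad^*(X))$ cancel on the diagonal, so the diagonal restriction indeed involves only the symmetric piece \eqref{eq:betaS}.) The antisymmetric part matches \eqref{eq:betaA} against the antisymmetric part $\tfrac14\,\delta(H(X,Y,\cdot)^{\sharp}+[X,Y]^{\flat})$ of the right-hand side, and this produces---as a full bilinear identity, not a diagonal one---the second equation of \eqref{eq:GEbetaexplicitly}.

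Next I would treat the identity $\delta([\pi_+(Y),\pi_-(X)]_H)=0$ in the same manner. Its symmetric part on the diagonal is a multiple of $\delta((\ad_X^*(X))^{\flat})$, whose vanishing is the third equation of \eqref{eq:GEbetaexplicitly}, while its antisymmetric part is a multiple of $\delta([X,Y]+H(X,Y,\cdot))$, whose vanishing is the fourth. Since every reduction step is an equivalence, assembling the four conditions and absorbing numerical factors gives precisely \eqref{eq:GEbetaexplicitly}, and conversely \eqref{eq:GEbetaexplicitly} recovers \eqref{eq:GEbeta}.

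All the analytically substantial work---the explicit endomorphisms $\Gamma_X^{\pm}$, the formulas \eqref{eq:betaS} and \eqref{eq:betaA} for the two parts of $\beta$, and the expansion of $\cG_g\,[\pi_+(Y),\pi_-(X)]_H$---has already been carried out before the statement, so no new estimate or structural input is needed. I therefore expect the only real obstacle to be careful bookkeeping of the symmetric/antisymmetric split: keeping track of which scalar equation arises as a diagonal (quadratic) identity and which must remain bilinear, and tracking the various factors of $2$ and $4$ introduced by the projections $\pi_\pm$ and by the scalar-product conventions for two-forms.
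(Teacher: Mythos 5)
Your proposal is correct and follows essentially the same route as the paper: the paper's proof consists precisely of the computations of $\Gamma_X^{\pm}$, $\beta^S$, $\beta^A$ and the symmetric/antisymmetric parts of $\delta(\cG_g\,[\pi_+(Y),\pi_-(X)]_H)$ and $\delta([\pi_+(Y),\pi_-(X)]_H)$ carried out before the statement, followed by exactly the observation you make---that a symmetric bilinear form vanishes iff its associated quadratic form does, while the antisymmetric identities must be kept bilinear. Your remark that the cross terms cancel on the diagonal and your bookkeeping of the factors of $2$ and $4$ are consistent with the paper's computation.
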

From Section \ref{sec:almostAbelian} on, we will concentrate only on generalised pseudo-Riemannian Lie algebras $(\mfg,H,\cG_g)$ which are generalised Einstein for zero divergence $\delta=0$. This is surely a restriction, however, we observe from Proposition \ref{pro:gEexplicitly} that then, under suitable assumptions, the generalised Einstein condition also holds for some non-zero divergences. On the other hand, the condition to be generalised Einstein for some $\delta\in E^*$ puts restrictions on the possible divergences $\delta$. We summarise our observations in the following corollary, where we have used that for a quadratic Lie algebra $(\mfg,g)$, all endomorphisms $\ad_X$ are skew-symmetric and so $\ad_X^S=0$ and $\ad^*(X)=\ad_X$ for all $X\in \mfg$:
\begin{corollary}\label{co:gEdeltanonzero}
Let $(\mfg,H,\cG_g)$ be a generalised pseudo-Riemannian Lie algebra.
\begin{enumerate}[(a)]
\item
Let $(\mfg,H,\cG_g,0)$ be generalised Einstein for zero divergence. We then have that $(\mfg,H,\cG_g,\delta)$ is generalised Einstein for some $\delta\in E^*$ if and only if $\delta$ satisfies
\begin{equation*}
\begin{split}
\delta(\ad_X^*(X))=\delta((\ad_X^*(X))^\flat)&=0 \, ,\\
\delta\left( [X,Y]+H(X,Y,\cdot)\right)&=0 \, ,\\
\delta\left(H(X,Y,\cdot)^{\sharp}+ [X,Y]^\flat\right)&=0 \, ,
\end{split}
\end{equation*}
for all $X,Y\in \mfg$.
\item
Let $H=0$ and let $(\mfg,H=0,\cG_g,\delta)$ be generalised Einstein for some $\delta\in E^*$. Then $\delta(\mfg'\oplus (\mfg')^\flat)=0$. In particular, if $\mfg$ is perfect, e.g. when $\mfg$ is semisimple,  we must have $\delta=0$.
\item 
Let $H=0$ and $\ad_X^*(X)=0$ for all $X\in \mfg$. Then $(\mfg,H=0,\cG_g,\delta)$ is generalised Einstein for some $\delta\in E^*$ with $\delta(\mfg'\oplus (\mfg')^\flat)=0$ (e.g. $\delta=0$) if and only if it is generalised Einstein for all $\delta\in E^*$ with $\delta(\mfg'\oplus (\mfg')^\flat)=0$.
\item
Let $(\mfg,g)$ be a quadratic Lie algebra. Then $(\mfg, H=0,\cG_g,\delta)$ is generalised Einstein if and only if $\ad_X$ is null for all $X\in \mfg$ and $\delta(\mfg'\oplus (\mfg')^\flat)=0$.
\end{enumerate}

\end{corollary}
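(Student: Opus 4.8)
The plan is to read off all four statements directly from the explicit generalised Einstein system \eqref{eq:GEbetaexplicitly} in Proposition \ref{pro:gEexplicitly}, exploiting the key structural feature that the right-hand sides of the first two equations there are \emph{independent} of $\delta$, whereas the third and fourth equations are purely (homogeneous) linear constraints on $\delta$. Each part then amounts to substituting the relevant hypothesis into \eqref{eq:GEbetaexplicitly} and simplifying; no analytic input beyond the proposition is needed.

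For part (a), I would start from the assumption that $\delta=0$ solves \eqref{eq:GEbetaexplicitly}. Since the $\delta$-independent right-hand sides of the first two equations then equal zero, those right-hand sides vanish for \emph{every} choice of $\delta$. Consequently the system for an arbitrary $\delta\in E^*$ collapses to the four homogeneous conditions $\delta(\ad_X^*(X))=0$, $\delta((\ad_X^*(X))^\flat)=0$, $\delta(H(X,Y,\cdot)^{\sharp}+[X,Y]^\flat)=0$ and $\delta([X,Y]+H(X,Y,\cdot))=0$ for all $X,Y\in\mfg$, which is exactly the claimed list, with both implications of the ``if and only if'' reading off at once.

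For parts (b)--(d) I would set $H=0$ throughout, so that the second and fourth equations of \eqref{eq:GEbetaexplicitly} read $\delta([X,Y]^\flat)=0$ and $\delta([X,Y])=0$. As $X,Y$ range over $\mfg$ these say precisely $\delta((\mfg')^\flat)=0$ and $\delta(\mfg')=0$, hence $\delta(\mfg'\oplus(\mfg')^\flat)=0$, which already proves the first assertion of (b); the ``in particular'' clause follows since for perfect (e.g.\ semisimple) $\mfg$ one has $\mfg'=\mfg$, so that non-degeneracy of $g$ gives $\mfg'\oplus(\mfg')^\flat=\mfg\oplus\mfg^*=E$ and therefore $\delta=0$. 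For part (c), adding the hypothesis $\ad_X^*(X)=0$ makes the third equation trivial and, crucially, makes the left-hand side of the first equation vanish, so that the first equation becomes the single $\delta$-independent identity $4\,g(\ad_X^S,\ad_X^S)=g(\ad^*(X),\ad^*(X))$. Since any $\delta$ with $\delta(\mfg'\oplus(\mfg')^\flat)=0$ automatically satisfies the remaining (second, third, fourth) equations, being generalised Einstein for such a $\delta$ is equivalent to this single metric identity; as the identity does not involve $\delta$, it holds for one such $\delta$ if and only if it holds for all of them. Part (d) is then the specialisation of (c) to a quadratic Lie algebra: using $\ad_X^S=0$, $\ad^*(X)=\ad_X$ and $\ad_X^*(X)=-\ad_X(X)=0$ as noted before the corollary, the surviving first equation reduces to $g(\ad_X,\ad_X)=0$, i.e.\ $\ad_X$ is null, while the divergence constraint remains $\delta(\mfg'\oplus(\mfg')^\flat)=0$.

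The computations are entirely routine once \eqref{eq:GEbetaexplicitly} is available, so there is no serious obstacle; the only point demanding care is the bookkeeping in (c), namely isolating which of the four equations are $\delta$-dependent and checking that the hypothesis $\delta(\mfg'\oplus(\mfg')^\flat)=0$ annihilates exactly the $\delta$-dependent terms while leaving the single metric identity untouched. I would also verify the identification of $(\mfg')^\flat$ with the span of the elements $[X,Y]^\flat$, and re-derive $\ad_X^*(X)=0$ from the skew-symmetry of $\ad_X$ in the quadratic case so that (d) genuinely falls within the scope of (c).
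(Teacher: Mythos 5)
Your proposal is correct and follows exactly the route the paper intends: the corollary is stated without a separate proof precisely because, as you observe, all four parts are read off from the system \eqref{eq:GEbetaexplicitly} of Proposition \ref{pro:gEexplicitly} by noting which terms are $\delta$-independent and using $\ad_X^S=0$, $\ad^*(X)=\ad_X$, $\ad_X^*(X)=0$ in the quadratic case. The bookkeeping in each part (including the identification of $(\mfg')^\flat$ and the perfectness argument in (b)) is accurate.
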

\section{Generalised Einstein Riemannian Lie algebras}
\label{sec:Riemannian}
In this section we show that, in the Riemannian case, the generalised Einstein condition always reduces to the generalised Einstein condition on the commutator ideal. This reduction allows us to give a full classification of solvable generalised Einstein Riemannian Lie algebras. Using results from \cite{CK}, we are able to obtain also a full classification of all four-dimensional generalised Einstein Riemannian Lie algebras.

We start with the mentioned reduction to the commutator ideal:
\begin{theorem}\label{th:structRiemGE}
Let $(\mfg,H,\mathcal{G}_g)$ be a generalised Riemannian Lie algebra and $\delta\in E^*$ be a divergence operator. Then $(\mfg,H,\mathcal{G}_g,\delta)$ is generalised Einstein if and only if $\mfh:=(\mfg')^{\perp}$ is an Abelian Lie subalgebra of $\mfg$ which acts skew-symmetrically on $\mfg'$, $\delta([\mfh,\mfg']\oplus[\mfh,\mfg']^\flat)=0$, $X\hook H=0$ for all $X\in \mfh$ and $(\mfg',H|_{\mfg'},\mathcal{G}_{g|_{\mfg'}},\delta|_{\mfg'\oplus (\mfg')^*})$ is generalised Einstein.

In this case, $\mfg\cong \mfg'\rtimes \bR^m$ ($m=\dim \mathfrak h$) as Lie algebras and $H$ may be considered as a three-form on $\mfg'$.
\end{theorem}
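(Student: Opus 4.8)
The plan is to argue entirely from the explicit generalised Einstein equations \eqref{eq:GEbetaexplicitly} of Proposition \ref{pro:gEexplicitly}, using in an essential way that $g$ is positive definite. The decisive observation is that for $X\in\mfh=(\mfg')^{\perp}$ one has $\ad^*(X)=0$: indeed $g(\ad^*(X)(Y),Z)=g(X,[Y,Z])=0$ for all $Y,Z$, since $[Y,Z]\in\mfg'$ is orthogonal to $X$. In particular $\ad_X^*(X)=\ad^*(X)(X)=0$, so the first equation in \eqref{eq:GEbetaexplicitly} at such an $X$ collapses to $0=4\,g(\ad_X^S,\ad_X^S)+2\,g(H(X,\cdot,\cdot),H(X,\cdot,\cdot))$. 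In the Riemannian case both summands are non-negative, so they vanish separately: $\ad_X^S=0$ and $X\hook H=0$ for every $X\in\mfh$. From $\ad_X^S=0$ the operator $\ad_X$ is skew-symmetric, and together with $\ad_X(\mfg)\subseteq\mfg'$ this forces $\ad_X Y\in\mfg'\cap\mfh=0$ for $X,Y\in\mfh$; hence $\mfh$ is an abelian subalgebra, $\ad_X|_{\mfg'}$ is skew-symmetric, and $H$ has no leg in $\mfh$. This already yields conditions (a), (b), (d), the splitting $\mfg\cong\mfg'\rtimes\bR^m$, and the descent of $H$ to $\mfg'$.

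For the remaining conditions and the converse I would use that the equations \eqref{eq:GEbetaexplicitly} are the vanishing of symmetric and antisymmetric bilinear forms (governed by $\beta^S$, $\beta^A$ and the (anti)symmetric parts of the $\delta$-terms), so it suffices to test arguments $X,Y$ in $\mfg'$ or in $\mfh$. Write $\phi_j:=\ad_{h_j}|_{\mfg'}$ for an orthonormal basis $h_1,\dots,h_m$ of $\mfh$; these are skew-symmetric on $\mfg'$. A block computation with respect to $\mfg=\mfg'\oplus\mfh$ shows that, for $X\in\mfg'$, the off-diagonal blocks of both $\ad_X^S$ and $\ad^*(X)$ are (up to sign and a factor) the single map $B_X\colon\mfh\to\mfg'$, $B_X(h_j)=-\phi_j(X)$, together with its transpose, with $\|B_X\|^2=\sum_j\|\phi_j(X)\|^2$. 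These off-diagonal blocks contribute $+2\|B_X\|^2$ to $4\,g(\ad_X^S,\ad_X^S)$ and $-2\|B_X\|^2$ to $-g(\ad^*(X),\ad^*(X))$, so they cancel; meanwhile $H(X,\cdot,\cdot)$ and $\ad_X^*(X)$ see only the $\mfg'$-block because $X\hook H$ vanishes on $\mfh$ and the $\phi_j$ are skew-symmetric. Hence on $\mfg'\times\mfg'$ all four equations restrict verbatim to the four equations for the restricted data $(\mfg',H|_{\mfg'},\cG_{g|_{\mfg'}},\delta|_{\mfg'\oplus(\mfg')^*})$, which is condition (e).

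For the mixed pairs $X\in\mfg'$, $Y=h_j\in\mfh$ I would use $\ad^*(h_j)=0$, $h_j\hook H=0$ and the skew-symmetry of $\ad_{h_j}$: these kill every quadratic term and reduce $\ad_X^*(h_j)+\ad_{h_j}^*(X)$ to $[X,h_j]\in[\mfh,\mfg']$, so the four equations read $\delta([X,h_j])=0$ and $\delta([X,h_j]^\flat)=0$, i.e.\ exactly condition (c), $\delta([\mfh,\mfg']\oplus[\mfh,\mfg']^\flat)=0$. On $\mfh\times\mfh$ every term vanishes identically (the relevant operators are zero and $[\mfh,\mfh]=0$), so no new condition arises. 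Reading these three cases backwards proves the converse: (a), (b), (d) force $\ad^*(h_j)=0$, $h_j\hook H=0$ and $\ad_{h_j}$ skew, after which (e) settles the $\mfg'\times\mfg'$ block, (c) settles the mixed block, and the $\mfh\times\mfh$ block is automatic.

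I expect the main obstacle to be bookkeeping rather than conceptual: one must check carefully that the genuinely non-trivial off-diagonal blocks produced by the $\mfh$-action cancel \emph{exactly} between the $4\,g(\ad_X^S,\ad_X^S)$ and $-g(\ad^*(X),\ad^*(X))$ terms, and that the combination $g(\phi_j(Y),X)+g(\phi_j(X),Y)$ vanishes by skew-symmetry so that $\ad_X^*(Y)+\ad_Y^*(X)$ stays inside $\mfg'$; a stray sign or factor here would spuriously manufacture extra terms and break the clean reduction. The one place where positive-definiteness of $g$ is indispensable is the implication $4\,g(\ad_X^S,\ad_X^S)+2\,g(H(X,\cdot,\cdot),H(X,\cdot,\cdot))=0\Rightarrow\ad_X^S=0,\ X\hook H=0$, which is precisely what fails in Lorentzian signature and explains why this reduction is special to the Riemannian case.
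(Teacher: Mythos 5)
Your proposal is correct and follows essentially the same route as the paper's proof: the observation $\ad^*(X)=0$ for $X\in\mfh$, the positive-definiteness argument forcing $\ad_X^S=0$ and $X\hook H=0$, and the block decomposition of $\ad_Y^S$ and $\ad^*(Y)$ for $Y\in\mfg'$ whose off-diagonal contributions cancel exactly (your map $B_X$ is the paper's $f_Y=h_Y$). The treatment of the mixed and $\mfh\times\mfh$ pairs via polarisation also matches the paper.
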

\begin{proof}
Denote by $H'$ the restriction of $H$ to $\mfg'$ and observe first that $\ad^*(X)=0$ for any $X\in \mfh$ since for any $Y\in \mfg$ we have 
\begin{equation*}
X\in \mfh=(\mfg')^{\perp}\subseteq \mathrm{im}(\ad_Y)^{\perp}=\ker(\ad_Y^*) \, .
\end{equation*}
Hence, the first equation in \ref{eq:GEbetaexplicitly} reduces for $X\in \mfh$ to
\begin{equation*}
0=4\, g(\ad_X^S,\ad_X^S)+2\, g(H(X,\cdot,\cdot),H(X,\cdot,\cdot)) \, .
\end{equation*}
As $g$ is Riemannian, this implies $\ad_X^S=0$ and $X\hook H=0$, i.e. $H=H'$. Now if $\tilde{X}\in \mfh$, then
\begin{equation*}
0=2\ad_X^S(\tilde{X})=\ad_X(\tilde{X})+\ad^*_X(\tilde{X})=[X,\tilde{X}] \, ,
\end{equation*}
i.e. $\mfh$ is Abelian. Hence, $\ad_X^S=0$ for all $X\in \mfh$ is equivalent to $\mfh$ being Abelian and acting skew-symmetrically on $\mfg'$.

We observe that under these conditions, the validity of the second equation in \eqref{eq:GEbetaexplicitly} for pairs $(X,Y)\in \mfh\times \mfg'$ is equivalent to $\delta([X,Y]^\flat)=0$ for all such pairs, i.e. to $\delta|_{[\mfh,\mfg']^\flat}=0$. Now if $X,Y$ are both in $\mfg'$, the second equation in \eqref{eq:GEbetaexplicitly} reduces to the corresponding generalised Einstein equation for $(\mfg',H',\cG_{g|_{\mfg'}},\delta|_{\mfg'\oplus (\mfg')^*})$ due to $H=H'$ and so also
\begin{equation*}
g(H_X,\ad^*(Y))-g(H_Y,\ad^*(X))=g(H'_X,(\ad^{\mfg'})^*(Y))-g(H'_Y,(\ad^{\mfg'})^*(X)) \, .
\end{equation*}
Let us look at the other equations. We observe that the validity of the fourth equation in \eqref{eq:GEbetaexplicitly} for pairs $(X,Y)\in \mfh\times \mfg'$ is equivalent to $\delta|_{[\mfh,\mfg']}=0$ and that if $(X,Y)$ are both in $\mfh$, the fourth equation reduces to the corresponding generalised Einstein equation for $(\mfg',H',\cG_{g|_{\mfg'}},\delta|_{\mfg'\oplus (\mfg')^*})$. Moreover, the third equation in \eqref{eq:GEbetaexplicitly} is automatically fulfilled for $X\in \mfh$ and reduces to the corresponding generalised Einstein equation for $(\mfg',H',\cG_{g|_{\mfg'}},\delta|_{\mfg'\oplus (\mfg')^*})$ if $X\in \mfg^*$. However, additionally, we have to polarise the third equation in \eqref{eq:GEbetaexplicitly} and then insert $X\in \mfh$ and $Y\in \mfg'$ into that equation, which yields that we must have
\begin{equation*}
0=\delta((\ad_X^*(Y)+\ad_Y^*(X))^\flat)=\delta((\ad_X(Y))^\flat)=\delta([X,Y]^\flat) \, ,
\end{equation*}
which, however, holds due to $\delta|_{[\mfh,\mfg']^\flat}=0$. 

We still need to investigate the first equation in \eqref{eq:GEbetaexplicitly} for $X$ not being in $\mfh$. Note that the polarisation of this equation for $X\in \mfh$ and $Y\in \mfg'$ yields zero on the right hand side due to $\ad_X^S=0$, $\ad^*(X)=0$ and $X\hook H=0$. The left hand side is given by $-2\delta(\ad_X^*(Y)+\ad_Y^*(X))=2\delta([X,Y])=0$ due to $\delta|_{[\mfh,\mfg']}=0$. 

So finally, we need to check that the first equation reduces for elements in $\mfg'$ to the corresponding generalised Einstein equation for $(\mfg',H',\cG_{g|_{\mfg'}},\delta|_{\mfg'\oplus (\mfg')^*})$. We denote the element that we use by $Y\in \mfg'$ and so, due to $H=H'$, need to show that
\begin{equation*}
\begin{split}
4\,g((\ad^{\mfg}_Y)^S,(\ad^{\mfg}_Y)^S) -g((\ad^{\mfg})^*(Y),(\ad^{\mfg})^*(Y))\\
=4\,g((\ad^{\mfg'}_Y)^S,(\ad^{\mfg'}_Y)^S) -g((\ad^{\mfg'})^*(Y),(\ad^{\mfg'})^*(Y)) \, ,
\end{split}
\end{equation*}
for all $Y\in \mfg'$, where the different upper indices denote if we consider the corresponding endomorphisms on $\mfg$ or $\mfg'$. To prove this assertion, we note first that
\begin{equation*}
\ad^{\mfg}_Y=\begin{pmatrix} \ad^{\mfg'}_Y & f_Y \\ 0 & 0 \end{pmatrix}
\end{equation*}
with respect to the splitting $\mfg=\mfg'\oplus \mfh$ for the linear map $f_Y:\mfh\rightarrow \mfg'$, $f_Y(X)=\ad^{\mfg}_Y(X)$, and so
\begin{equation*}
2(\ad^{\mfg}_Y)^S=\begin{pmatrix} 2(\ad^{\mfg'}_Y)^S & f_Y\\ f_Y^* & 0 \end{pmatrix} \, .
\end{equation*}
Moreover, we have
\begin{equation*}
(\ad^{\mfg})^*(Y)=\begin{pmatrix} (\ad^{\mfg'})^*(Y) & h_Y \\ -h_Y^* & 0 \end{pmatrix} \, ,
\end{equation*}
for a linear map $h_Y:\mfh\rightarrow \mfg'$ defined by $h_Y(X)=(\ad^{\mfg}_X)^*(Y)$ for $X\in \mfh$, using that $(\ad^{\mfg})^*(Y)$ is skew-symmetric and noting that the lower right corner of the matrix is zero since $\mfh$ is Abelian. Now since $\ad^{\mfg}_X$ is skew-symmetric, we get
\begin{equation*}
h_Y(X)=(\ad^{\mfg}_X)^*(Y)=-\ad^{\mfg}_X(Y)=\ad^{\mfg}_Y(X)=f_Y(X) \, ,
\end{equation*}
for all $X\in \mfh$, i.e. $f_Y=h_Y\,$. Therefore,
\begin{equation*}
\begin{split}
	\left\|2(\ad^{\mfg}_Y)^S\right\|^2-\left\|(\ad^{\mfg})^*(Y)\right\|^2&=	\left\|2(\ad^{\mfg'}_Y)^S\right\|^2+\left\|f_Y\right\|^2+\left\|f_Y^*\right\|^2\\
	&-\left\|(\ad^{\mfg'})^*(Y)\right\|^2-\left\|h_Y\right\|^2-\left\|-h_Y^*\right\|^2 \\
 &=\left\|2(\ad^{\mfg'}_Y)^S\right\|^2-\left\|(\ad^{\mfg'})^*(Y)\right\|^2 \, ,
\end{split}
\end{equation*}
which finishes the proof.
\end{proof}
As an immediate corollary, we obtain:
\begin{corollary}\label{co:Riemnil}
Let $(\mfg,H,\mathcal{G}_g)$ be a \emph{nilpotent} generalised Riemannian Lie algebra and $\delta\in E^*$. Then $(\mfg,H,\mathcal{G}_g,\delta)$ is generalised Einstein if and only if $\mfg$ is Abelian and $H=0$.
\end{corollary}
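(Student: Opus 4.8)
The plan is to deduce this directly from the reduction Theorem \ref{th:structRiemGE}, with no induction on dimension needed. The "if" direction is immediate: if $\mfg$ is Abelian and $H=0$, then $\mfg'=0$, all adjoint operators vanish, $H_X=0$, and every term appearing in \eqref{eq:GEbetaexplicitly} is zero, so $(\mfg,H,\mathcal G_g,\delta)$ is generalised Einstein for any $\delta\in E^*$ (alternatively, apply Theorem \ref{th:structRiemGE} with $\mfg'=0$, in which case all stated conditions hold trivially).

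For the "only if" direction I would argue as follows. Assume $(\mfg,H,\mathcal G_g,\delta)$ is generalised Einstein with $\mfg$ nilpotent and $g$ Riemannian, and apply Theorem \ref{th:structRiemGE}. It provides the splitting $\mfg=\mfg'\oplus\mfh$ with $\mfh=(\mfg')^{\perp}$ Abelian and acting \emph{skew-symmetrically} on $\mfg'$, together with $X\hook H=0$ for all $X\in\mfh$. The crucial point is then to combine this skew-symmetry with nilpotency: since $\mfg$ is nilpotent, Engel's theorem shows that $\ad_X$ is a nilpotent endomorphism for every $X\in\mfg$, so in particular $\ad_X|_{\mfg'}$ is nilpotent for $X\in\mfh$ (note $\mfg'$ is an ideal, so this restriction is indeed an endomorphism of $\mfg'$). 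But a $g|_{\mfg'}$-skew-symmetric endomorphism of the Euclidean space $(\mfg',g|_{\mfg'})$ is normal, hence diagonalisable over $\bC$ with purely imaginary eigenvalues; if it is also nilpotent its only eigenvalue is $0$ and it must vanish. Thus $\ad_X|_{\mfg'}=0$ for all $X\in\mfh$, i.e.\ $[\mfh,\mfg']=0$.

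Together with $\mfh$ being Abelian, this shows $\mfh$ is central in $\mfg$, whence $\mfg'=[\mfg,\mfg]=[\mfg',\mfg']$. Since $\mfg'$, being an ideal of the nilpotent Lie algebra $\mfg$, is itself nilpotent, its derived algebra is a proper subalgebra unless $\mfg'=0$; the equality $\mfg'=[\mfg',\mfg']$ therefore forces $\mfg'=0$. Consequently $\mfg$ is Abelian, $\mfh=(\mfg')^{\perp}=\mfg$, and the condition $X\hook H=0$ for all $X\in\mfh=\mfg$ gives $H=0$, as claimed.

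I expect the only genuinely substantive step to be the nilpotency-meets-skew-symmetry observation (skew-symmetric plus nilpotent with respect to a positive-definite metric implies zero); everything else is bookkeeping on top of Theorem \ref{th:structRiemGE}. It is worth emphasising that the Riemannian hypothesis enters precisely here through the definiteness of $g|_{\mfg'}$, and that the loop closing $\mfg'=[\mfg',\mfg']\Rightarrow\mfg'=0$ relies only on the non-perfectness of nonzero nilpotent Lie algebras.
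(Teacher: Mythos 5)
Your proposal is correct and follows essentially the same route as the paper: apply Theorem \ref{th:structRiemGE}, use Engel's theorem to see that the skew-symmetric action of $\mfh$ on the Euclidean space $(\mfg',g|_{\mfg'})$ is also nilpotent and hence zero, conclude $\mfg\cong\mfg'\oplus\bR^m$ with $\mfg'$ perfect nilpotent and therefore trivial, and read off $H=0$ from $X\hook H=0$. The only difference is that you spell out the ``skew-symmetric plus nilpotent implies zero'' step, which the paper leaves implicit.
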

\begin{proof}
By Theorem \ref{th:structRiemGE}, the vector space $\mfh:=(\mfg')^{\perp}$ is an Abelian Lie algebra which acts skew-symmetrically on $\mfg'$ and $X\hook H=0$ for all $X\in \mfh$. By Engel's theorem, the endomorphism $\ad_X$ is also nilpotent for any $X\in \mfh$, so we must have $\ad_X=0$ for all $X\in \mfh$. Consequently, $\mfg\cong \mfg'\oplus \bR^m$ as Lie algebras. Therefore, $\mfg'=[\mfg,\mfg]=[\mfg',\mfg']$, i.e. $\mfg'$ is a perfect nilpotent Lie algebra, which implies $\mfg'=\{0\}$ and $\mfg=\mfh$. Thus, $\mfg$ is Abelian and $H=0$. Conversely, it is clear from \eqref{eq:GEbetaexplicitly} that $(\bR^n,0,\mathcal{G}_g,\delta)$ is generalised Einstein for all $\delta\in E^*$.
\end{proof}
Corollary \ref{co:Riemnil} implies the following structural result on \emph{solvable} generalised Einstein Riemannian Lie algebras:
\begin{corollary}\label{co:Riemsolv}
Let $(\mfg,H,\mathcal{G}_g)$ be a solvable generalised Riemannian Lie algebra and $\delta\in E^*$. Then $(\mfg,H,\mathcal{G}_g,\delta)$ is generalised Einstein if and only if $H=0$, $\mfg'$ is an Abelian ideal, $\mfh:=(\mfg')^{\perp}$ is an Abelian subalgebra, $\mfh$ acts skew-symmetrically on $\mfg'$ and $\delta(\mfg'\oplus (\mfg')^\flat)=0$.

 Equivalently, $(\mfg,H,\mathcal{G}_g,\delta )$ is generalised Einstein if and only if $H=0$, $g$ is flat and $\delta(\mfg'\oplus (\mfg')^\flat)=0$.
\end{corollary}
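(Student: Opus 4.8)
The plan is to reduce to the nilpotent case already settled in Corollary~\ref{co:Riemnil} and then to match the resulting structure against Milnor's classification of flat left-invariant metrics. First I would observe that since $\mfg$ is solvable its commutator ideal $\mfg'$ is nilpotent (a standard consequence of Lie's theorem in characteristic zero). Assuming $(\mfg,H,\cG_g,\delta)$ is generalised Einstein, Theorem~\ref{th:structRiemGE} tells me that $\mfh:=(\mfg')^{\perp}$ is an Abelian subalgebra acting skew-symmetrically on $\mfg'$, that $X\hook H=0$ for all $X\in\mfh$, that $\delta([\mfh,\mfg']\oplus[\mfh,\mfg']^\flat)=0$, and crucially that $(\mfg',H|_{\mfg'},\cG_{g|_{\mfg'}},\delta|_{\mfg'\oplus(\mfg')^*})$ is itself generalised Einstein. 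Since $\mfg'$ is a \emph{nilpotent} generalised Riemannian Lie algebra, Corollary~\ref{co:Riemnil} forces $\mfg'$ to be Abelian and $H|_{\mfg'}=0$; combined with $X\hook H=0$ for $X\in\mfh$ this yields $H=0$.

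Next I would extract the divergence condition. Positive-definiteness of $g$ guarantees that $\mfg=\mfg'\oplus\mfh$ is a genuine vector-space direct sum, and with both summands Abelian the bracket relations collapse to $\mfg'=[\mfg,\mfg]=[\mfh,\mfg']$. Hence the condition $\delta([\mfh,\mfg']\oplus[\mfh,\mfg']^\flat)=0$ from Theorem~\ref{th:structRiemGE} is exactly $\delta(\mfg'\oplus(\mfg')^\flat)=0$. Moreover, once $\mfg'$ is Abelian and $H|_{\mfg'}=0$, the remaining requirement that $(\mfg',0,\cG_{g|_{\mfg'}},\delta|_{\mfg'\oplus(\mfg')^*})$ be generalised Einstein holds automatically, as is clear from \eqref{eq:GEbetaexplicitly} (cf.\ the last line of the proof of Corollary~\ref{co:Riemnil}). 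This settles the ``only if'' part of the first characterisation; the ``if'' part is then immediate by reading Theorem~\ref{th:structRiemGE} backwards, since $[\mfh,\mfg']\subseteq\mfg'$ makes the divergence hypotheses a consequence of $\delta(\mfg'\oplus(\mfg')^\flat)=0$, and $H=0$ trivially satisfies $X\hook H=0$.

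Finally, to obtain the equivalent flatness formulation I would appeal to Milnor's criterion \cite{Mi}: a left-invariant Riemannian metric is flat precisely when $\mfg$ splits as an orthogonal sum $\mfg=\mfb\oplus\mfu$ of an Abelian ideal $\mfu$ and an Abelian subalgebra $\mfb$ with $\ad_b$ skew-symmetric for every $b\in\mfb$. One direction is direct: taking $\mfu=\mfg'$ and $\mfb=\mfh$ reproduces exactly the structural conditions above, where skew-symmetry of $\ad_X$ on all of $\mfg$ for $X\in\mfh$ follows from skew-symmetry on $\mfg'$ together with $[\mfh,\mfh]=0$ and the orthogonality $\mfg'\perp\mfh$. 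For the converse I would pass from Milnor's (non-canonical) splitting $\mfg=\mfb\oplus\mfu$ to the canonical one $\mfg=\mfg'\oplus(\mfg')^{\perp}$: one checks $\mfg'\subseteq\mfu$ and $\mfb\subseteq\mfh$, and then, for $b\in\mfb$ and $w\in\mfh\cap\mfu$ and $Y\in\mfg'$, uses $g([b,w],Y)=-g(w,\ad_bY)=0$ (skew-symmetry of $\ad_b$, $\ad_bY\in\mfg'$, and $w\perp\mfg'$) together with positive-definiteness to conclude $[b,w]=0$, so that $\mfh$ is Abelian; a similar computation shows $\mfh$ acts skew-symmetrically on $\mfg'$.

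I expect this last matching step---replacing Milnor's arbitrary splitting by the canonical commutator/orthogonal-complement splitting---to be the only genuinely delicate point, everything else being a routine assembly of Theorem~\ref{th:structRiemGE}, Corollary~\ref{co:Riemnil}, and the bracket identity $\mfg'=[\mfh,\mfg']$. Positive-definiteness of $g$ is used repeatedly, both to realise $\mfg=\mfg'\oplus\mfh$ as a direct sum and to deduce vanishing of bracket terms from orthogonality.
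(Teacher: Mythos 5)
Your proof is correct and follows essentially the same route as the paper: reduce via Theorem \ref{th:structRiemGE}, apply Corollary \ref{co:Riemnil} to the nilpotent commutator ideal $\mfg'$, use $\mfg'=[\mfh,\mfg']$ to identify the divergence condition, and invoke Milnor's theorem for the flatness reformulation. The only difference is that you spell out the passage between Milnor's (non-canonical) splitting and the canonical one $\mfg=\mfg'\oplus(\mfg')^{\perp}$, which the paper leaves entirely to the citation; your verification of that step is sound.
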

\begin{proof}
As $\mfg$ is solvable, the commutator ideal $\mfg'$ is nilpotent. Moreover, by Theorem \ref{th:structRiemGE}, $(\mfg,H,\mathcal{G}_g,\delta)$ is generalised Einstein if and only if $\mfh$ is an Abelian subalgebra of $\mfg$ acting skew-symmetrically on $\mfh$, $\delta|_{[\mfh,\mfg']\oplus [\mfh,\mfg']^\flat}=0$, $X\hook H=0$ for all $X\in \mfh$ and $(\mfg',H|_{\mfg'},\mathcal{G}_{g|_{\mfg'}},\delta|_{\mfg'})$ is generalised Einstein. The latter condition is by Corollary \ref{co:Riemnil} equivalent to $\mfg'$ being Abelian and $H|_{\mfg'}=0$, i.e. to $H=0$. Moreover, as $\mfg'$ and $\mfh=(\mfg')^\perp$ are both Abelian, we have $\mfg'=[\mfh,\mfg']$. This shows the first claimed equivalence.

The second equivalence follows from the classical result of Milnor on the structure of flat Riemannian Lie algebras \cite{Mi}.
\end{proof}
In the four-dimensional case, we obtain the following explicit classification of all generalised Einstein Riemannian Lie algebras:
\begin{theorem}\label{th:4dRiem}
	Any four-dimensional generalised Einstein Riemannian Lie algebra $(\mfg,H,\cG_g,\delta)$ is isomorphic to one of the following four-dimensional generalised Einstein Riemannian Lie algebras:
	\begin{itemize}
		\item[(i)]
		$(\bR^4,H=0,\cG_g,\delta)$ for an arbitrary Riemannian metric $g$ and an arbitary divergence operator $\delta\in E^*$,
		\item[(ii)]
		or $(\mathfrak{e}(2)\oplus \bR, H=0,\cG_g,\delta)$ such that $(\mathfrak{e}(2)\oplus \bR)$ admits an orthonormal basis $(e_1,\ldots,e_4)$ with the only non-zero Lie brackets (up to anti-symmetry) being
		\begin{equation*}
			[e_3,e_1]=a\, e_2 \, ,\quad [e_3,e_2]=-a\, e_1 \, ,
		\end{equation*}
		for some $a>0$ and $\delta\in \spa{e_3,e_4,e^3,e^4}$,
		\item[(iii)]
		or $(\mathfrak{so}(3)\oplus \bR, H,\cG_g,\delta)$ such that there exist $a\in \bR^*$, $b\in \bR$ and $\epsilon\in \{-1,1\}$ and an orthonormal basis $(e_1,\ldots,e_4)$ with $H=a\, e^{123}$,
	       the only non-zero Lie brackets (up to anti-symmetry) being
		\begin{equation*}
			[e_1,e_2]=\epsilon a\, e_3 \, ,\;\;\; [e_2,e_3]=\epsilon a\, e_1 \, ,\;\;\; [e_3,e_1]=\epsilon a\, e_2 \, ,\;\;\; [e_3,e_4]=b\, e_2 \, ,\;\;\; [e_4,e_2]=b\, e_3 \, .
		\end{equation*}
		 In addition, if $b\neq 0$ we have $\delta(e_1)=-\epsilon\,\delta(e^1)$, $\delta(\spa{e_2,e_3,e^2,e^3})=0$ and if $b=0$
		we have instead $\delta(e_i)=-\epsilon\,\delta(e^i)$ for all $i=1,2,3$.
	\end{itemize}
\end{theorem}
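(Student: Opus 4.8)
The plan is to classify according to the dimension and isomorphism type of the commutator ideal $\mfg'$, using Theorem \ref{th:structRiemGE} as the main engine. By that theorem, $(\mfg,H,\cG_g,\delta)$ is generalised Einstein exactly when $\mfh:=(\mfg')^\perp$ is Abelian and acts skew-symmetrically on $\mfg'$, the three-form $H$ is supported on $\mfg'$ (so $X\hook H=0$ for all $X\in\mfh$), the restriction $(\mfg',H|_{\mfg'},\cG_{g|_{\mfg'}},\delta|_{\mfg'\oplus(\mfg')^*})$ is itself generalised Einstein, and $\delta([\mfh,\mfg']\oplus[\mfh,\mfg']^\flat)=0$. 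Since $\dim\mfg=4$, the first step is to split into the five cases $\dim\mfg'\in\{0,1,2,3,4\}$ and eliminate the impossible ones. Note also that, since $H$ lives on $\mfg'$, a nonzero $H$ forces $\dim\mfg'\ge 3$; this already explains why $H=0$ in the lower-dimensional cases.

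Three cases are dispatched quickly. For $\dim\mfg'=4$ the algebra $\mfg$ would be perfect, but the Levi decomposition of a four-dimensional Lie algebra has either a trivial Levi factor (so $\mfg$ is solvable and not perfect) or a three-dimensional one with a one-dimensional radical that is forced to be central (so $\mfg\cong\mfs\oplus\bR$, again not perfect); hence $\dim\mfg'=4$ cannot occur. For $\dim\mfg'=1$, a skew-symmetric endomorphism of a one-dimensional Euclidean space vanishes, so $[\mfh,\mfg']=0$; combined with $\mfh$ and $\mfg'$ being Abelian this forces $\mfg$ Abelian, contradicting $\mfg'\neq0$. The case $\dim\mfg'=0$ gives $\mfg=\bR^4$ with $H=0$ and arbitrary $\delta$, which is case (i).

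For $\dim\mfg'=2$ I would first observe that $H|_{\mfg'}=0$ automatically, and that $\mfg'$ must be a two-dimensional generalised Einstein Riemannian Lie algebra; the non-Abelian $\aff(\bR)$ is not flat and so is excluded by Corollary \ref{co:Riemsolv}, leaving $\mfg'\cong\bR^2$. Then $\mfh\cong\bR^2$ acts on $\bR^2$ through $\so(2)$, and since both summands are Abelian we have $[\mfg,\mfg]=[\mfh,\mfg']=\mfg'$, which is two-dimensional; this forces one direction of $\mfh$ to act as a nonzero rotation (image all of $\mfg'$) and the complementary direction to be central, yielding $\mfe(2)\oplus\bR$. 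Choosing an orthonormal basis adapted to $\mfg'\oplus\mfh$ and normalising the rotation rate to $a>0$ reproduces the brackets of case (ii), while the divergence conditions collapse to $\delta(\mfg'\oplus(\mfg')^\flat)=0$, i.e.\ $\delta\in\spa{e_3,e_4,e^3,e^4}$, the restricted condition on the Abelian $\mfg'$ being vacuous.

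The main work, and the expected obstacle, is $\dim\mfg'=3$, where case (iii) and the only nonzero $H$ appear. Here the restriction $(\mfg',H|_{\mfg'},\cG_{g|_{\mfg'}},\ldots)$ must be a three-dimensional generalised Einstein Riemannian Lie algebra, so I would invoke the three-dimensional classification of \cite{CK}: the candidates are $\bR^3$ and $\mfe(2)$ (both with $H|_{\mfg'}=0$) and $\so(3)$ with $H=a\,e^{123}$. The extra constraint is that $\mfg'$ must equal $[\mfg,\mfg]=(\mfg')'+\mathrm{im}(\ad_{e_4}|_{\mfg'})$, with $\ad_{e_4}|_{\mfg'}$ a skew-symmetric derivation and hence of even rank at most two; for $\mfg'=\bR^3$ this is impossible, and a short analysis of the skew-symmetric derivations of $\mfe(2)$ shows their image lies in $(\mfe(2))'$, so neither yields the full three-dimensional commutator ideal—only the perfect algebra $\so(3)$ survives. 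Since every derivation of $\so(3)$ is inner, $\ad_{e_4}|_{\mfg'}=\ad_v$ for some $v\in\so(3)$, which lets one untwist the extension, identify $\mfg\cong\so(3)\oplus\bR$, and—after rotating the basis so that $v$ points along $e_1$—read off the brackets of case (iii) with $b$ the rotation rate. Finally I would assemble the divergence conditions by combining the $\so(3)$-constraints from \cite{CK} (namely $\delta(e_i)=-\epsilon\,\delta(e^i)$) with $\delta([\mfh,\mfg']\oplus[\mfh,\mfg']^\flat)=0$; the dichotomy between $b\neq0$ (where $[\mfh,\mfg']=\spa{e_2,e_3}$) and $b=0$ (where $[\mfh,\mfg']=0$) produces precisely the two listed sets of $\delta$-conditions. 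The delicate points are confirming that the reduction genuinely excludes every three-dimensional $\mfg'$ other than $\so(3)$, fixing the coefficient $a$ and the sign $\epsilon$ from the three-dimensional generalised Einstein equations relating $H$ to the bracket, and tracking how the $\so(3)$ divergence relations interact with the $[\mfh,\mfg']$ relations.
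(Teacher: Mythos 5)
Your proposal is correct and rests on exactly the same machinery as the paper's proof: Theorem \ref{th:structRiemGE} together with Corollaries \ref{co:Riemnil} and \ref{co:Riemsolv} and the three-dimensional classification from \cite{CK}. The only difference is organisational (you split by $\dim\mfg'$ and identify $\mfg\cong\mathfrak{so}(3)\oplus\bR$ via innerness of derivations of $\mathfrak{so}(3)$, whereas the paper splits into Abelian/solvable/non-solvable and cites the list of four-dimensional Lie algebras), and the substance, including the final $\delta$-conditions, agrees.
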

\begin{proof}
If $\mfg$ is Abelian, Corollary \ref{co:Riemnil} shows that $(\mfg,H,\cG_g,\delta)$ has to be as in (i). 

If $\mfg$ is solvable but not Abelian, then Corollary \ref{co:Riemsolv} yields $H=0$, $\mfg=\mfg'\rtimes \mfh$ with $\mfg'$ being Abelian and $\mfh=(\mfg')^{\perp}$ acting skew-symmetrically on $\mfg'$. As $\mfg$ is not Abelian, this action cannot be trivial and so $\dim(\mfg')=\dim(\mfh)=2$. We may then choose an orthonormal basis $(e_1,e_2)$ of $\mfg'$ and an orthonormal basis $(e_3,e_4)$ of $\mfh$ such that $\ad_{e_4}=0$, i.e. $e_4$ is central, and 	$[e_3,e_1]=a\, e_2\,$, $[e_3,e_2]=-a\, e_1$ for some $a>0$. Then $\mfg\cong \mathfrak{e}(2)\oplus \bR$ and $\delta([\mfh,\mfg']\oplus [\mfh,\mfg']^\flat)=\delta(\spa{e_1,e_2,e^1,e^2})=0$, i.e. $\delta\in \spa{e_3,e_4,e^3,e^4}$

Finally, let us assume that $\mfg$ is non-solvable. By the classification of four-dimensional Lie algebras, $\mfg\cong \mathfrak{so}(3)\oplus \bR$ or $\mfg\cong \mathfrak{so}(2,1)\oplus \bR$ and so the commutator ideal $\mfg'$ is either isomorphic to $\mathfrak{so}(3)$ or to $\mathfrak{so}(2,1)$. Theorem \ref{th:structRiemGE} imposes that the restrictions $(H',\cG_{g'},\delta')$ of $(H,\cG_g,\delta)$ to $\mfg'$ must give a generalised Riemannian Einstein metric on $\mfg'$, and by \cite{CK} only $\mathfrak{so}(3)$ admits such a metric. Therefore, we must have $\mfg'\cong \mathfrak{so}(3)$, i.e. $\mfg\cong  \mathfrak{so}(3)\oplus \bR$, and by \cite[Theorem 3.12]{CK}, there exists an orthonormal basis $(e_1,e_2,e_3)$ of $\mfg'$ and $a\in \bR^*$, $\epsilon\in \{-1,1\}$ such that
\begin{equation*}
[e_1,e_2]=\epsilon a\, e_3 \, ,\quad [e_2,e_3]=\epsilon a\, e_1 \, ,\quad [e_3,e_1]=\epsilon a\, e_2 \, ,\quad H'=a\, e^{123} \, ,
\end{equation*}
and $\delta|_{E_{\epsilon}}=0$, which is equivalent to $\delta(e_i)=-\epsilon\delta(e^i)$ for $i=1,2,3$. Now note first that by Theorem \ref{th:structRiemGE}, $H=H'$, and so $H=a\, e^{123}$. Moreover, choose $e_4$ such that $(e_1,e_2,e_3,e_4)$ is an orthonormal basis of $\mfg$. Then
\begin{equation*}
e_4=f+\sum_{i=1}^3 b_i\, e_i \, ,
\end{equation*}
for some $f\in \bR\subseteq \mathfrak{so}(3)\oplus \bR$, $f\neq 0$ and certain $b_1,b_2,b_3\in \bR$. By applying a rotation to the orthonormal basis $(e_1,e_2,e_3)$, we may assume that $b_2=b_3=0$. Setting $b:=b_1 a \epsilon$, we thus have
\begin{equation*}
[e_4,e_2]=b\, e_3 \, ,\quad [e_4,e_3]=-b\, e_2 \, .	
\end{equation*}
Finally, if $b\neq 0$ then $[\mfh,\mfg']=\spa{e_1,e_2}$, and so $\delta(\spa{e_2,e_3,e^2,e^3})=0$ by Theorem \ref{th:structRiemGE}.
\end{proof}
\section{Almost Abelian generalised Einstein Lie algebras with zero divergence}\label{sec:almostAbelian}
In this section, we will give, up to one case, a full classifications of so-called \emph{almost Abelian} generalised Riemannian or Lorentzian Lie algebras $(\mfg,H,\cG_g)$ with zero divergence.

For this, we first recall the definition of \emph{almost Abelian}, and, more generally, of \emph{almost nilpotent} Lie algebras:
\begin{definition}
	A Lie algebra $\mfg$ is called \emph{almost nilpotent} if it admits a codimension one nilpotent ideal $\mfn$. If $\mfn$ is actually Abelian, we call $\mfg$ \emph{almost Abelian}.
\end{definition}
\begin{remark}
Note that in low dimensions, the vast majority of Lie algebras are almost nilpotent or almost Abelian:
	
	In dimensions $1$ and $2$, all Lie algebras are almost Abelian. In dimension $3$, all but the simple Lie algebras $\mathfrak{so}(3)$ and $\mathfrak{so}(2,1)$ are almost Abelian. In dimension $4$---which is the most relevant case for us---all but the reductive Lie algebras $\mathfrak{so}(3)\oplus \bR$ and $\mathfrak{so}(2,1)\oplus \bR$ as well as two other Lie algebras, namely $\mathfrak{aff}_\bC$ and $\mathfrak{aff}_\bR\oplus \mathfrak{aff}_\bR$, are almost nilpotent.
\end{remark}
Note that if $\mfg$ is almost Abelian with codimension one Abelian ideal $\mfn$ and if $X\in \mfg\setminus \mfn$, then the endomorphism $f:=\ad_X|_{\mfn}\in \End(\mfn)$ of $\mfn$ completely determines the Lie bracket structure of $\mfg$. Moreover, different choices of $X\in \mfg\setminus \mfn$ may only result in a non-zero scaling of $f$ and so $f$ is, essentially independent of this choice.

Referring to the results that we obtain later in this section, our classification reads as follows:
\begin{theorem}\label{th:almostAbelian}
Let $(\mfg,H,\cG_g)$ be an almost Abelian generalised Lorentzian Lie algebra of arbitrary dimension with codimension one Abelian ideal $\mfn$. Moreover, let $f\in \End(\mfn)$ be as above. In the case that $\mfn$ is non-degenerate and $H\neq 0$, we assume additionally that the symmetric part $f^S$ of $f$ has non non-real eigenvalue. Then $(\mfg,H,\cG_g)$ is generalised Einstein for zero divergence operator if and only if one of the following conditions hold:
\begin{itemize}
\item[(i)]
$\mfn$ is non-degenerate, $H=0$ and $f$ is as in Theorem \ref{th:almostAbeliannondegH=0},
\item[(ii)]
or $\mfn$ is non-degenerate, $H\neq 0$ and $f$ is as in Theorem \ref{th:almostAbeliannondegHneq0},
\item[(iii)]
or $\mfn$ is degenerate, $H=0$ and $f$ is as in Theorem \ref{th:almostAbeliandeg}.
\end{itemize}
\end{theorem}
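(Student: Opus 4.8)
The plan is to reduce Theorem \ref{th:almostAbelian} to a case distinction governed by the causal character of the Abelian ideal $\mfn$ and, within the non-degenerate branch, by whether the three-form $H$ vanishes. First I would fix a vector $X_0\in\mfg\setminus\mfn$ and set $f=\ad_{X_0}|_{\mfn}$, so that the whole Lie bracket is encoded in $f$ via $\ad_Y|_{\mfn}=0$ and $\ad_Y(X_0)=-f(Y)$ for $Y\in\mfn$. I would then specialise the four equations of Proposition \ref{pro:gEexplicitly} to $\delta=0$: the third and fourth equations become trivial, and only
\begin{align*}
4\, g(\ad_X^S,\ad_X^S)-g(\ad^*(X),\ad^*(X))+2\, g(H(X,\any,\any),H(X,\any,\any)) &= 0,\\
g(H_X,\ad^*(Y))-g(H_Y,\ad^*(X)) &= 0,
\end{align*}
remain, for all $X,Y\in\mfg$. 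Evaluating these on a basis adapted to the splitting $\mfg=\mfn\oplus\bR X_0$ converts them into algebraic conditions relating the symmetric and skew-symmetric parts of $f$ (taken with respect to $g|_{\mfn}$) to the components of $H$.

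Next I would invoke the elementary fact that a Lorentzian scalar product restricted to a hyperplane $\mfn$ is either non-degenerate or degenerate; this yields the dichotomy ``$\mfn$ non-degenerate'' versus ``$\mfn$ degenerate,'' and inside the non-degenerate branch I split further according to whether $H=0$ or $H\neq 0$. The three resulting cases are manifestly mutually exclusive, so the only point needed for exhaustiveness is that no generalised Einstein structure with zero divergence can have $\mfn$ degenerate and $H\neq 0$; this is part of the degenerate analysis, whose conclusion is recorded in Theorem \ref{th:almostAbeliandeg}. In each of the three surviving cases, the precise normal form of $f$ (and of $H$) is exactly the content of Theorems \ref{th:almostAbeliannondegH=0}, \ref{th:almostAbeliannondegHneq0} and \ref{th:almostAbeliandeg}, respectively, so the statement follows by assembling these three results.

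The hard part is not the bookkeeping of the case distinction but the detailed solution of the reduced equations inside each branch---above all the non-degenerate branch with $H\neq 0$, where one must analyse the spectrum of the symmetric part $f^S$ of $f$. The possibility that $f^S$ has a non-real eigenvalue, which can arise only once $\mfn$ carries an indefinite induced metric, is precisely the obstruction that forces the extra hypothesis in the statement and constitutes the single subcase left open; ruling out this spectral behaviour and deriving the resulting normal forms for $f$ is where the bulk of the work lies, and it is carried out in the course of proving the three referenced theorems.
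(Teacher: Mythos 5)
Your proposal is correct and follows essentially the same route as the paper: Theorem \ref{th:almostAbelian} is an assembly of the three case-by-case classifications, with the reduction to Corollary \ref{co:gEdelta=0}, the dichotomy on the causal character of $\mfn$, the sub-dichotomy on $H$, and the fact that degenerate $\mfn$ forces $H=0$ (absorbed into Theorem \ref{th:almostAbeliandeg}) all matching the paper's organisation. You also correctly locate the excluded subcase in the spectral analysis of $f^S$ within the non-degenerate, $H\neq 0$ branch.
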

We also obtain some result in the case that the signature of $g$ is $(n-2,2)$:
\begin{corollary}\label{co:signaturen-22}
Let $(\mfg,H,\cG_g)$ be an almost Abelian generalised pseudo-Riemannian Lie algebra of dimension $n$ and signature $(n-2,2)$ such that $H=0$ and such that $\mfn$ is Lorentzian. Let also $f\in \End(\mfn)$ be as above. Then $(\mfg,H,\cG_g)$ is generalised Einstein for zero divergence operator if and only if $f$ is as in Theorem \ref{th:almostAbeliannondegH=0} (b).
\end{corollary}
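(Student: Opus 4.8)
The plan is to feed the explicit generalised Einstein system of Proposition \ref{pro:gEexplicitly} with $H=0$ and $\delta=0$. In that situation the second, third and fourth equations of \eqref{eq:GEbetaexplicitly} hold trivially and the whole system collapses to the single scalar condition
\[
4\, g(\ad_X^S,\ad_X^S)=g(\ad^*(X),\ad^*(X))\qquad\text{for all }X\in\mfg.
\]
To exploit this I would first encode the almost Abelian structure in an orthogonal splitting. Fix a generator $X_0$ spanning the one-dimensional, non-degenerate orthogonal complement $\mfn^{\perp}$, normalised so that $g(X_0,X_0)=\epsilon_0\in\{-1,+1\}$, and set $f:=\ad_{X_0}|_{\mfn}$. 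Writing a general element as $X=a X_0+W$ with $a\in\bR$ and $W\in\mfn$, and using $[X_0,W]=f(W)$ together with $[\mfn,\mfn]=0$, I would compute the matrices of $\ad_X$, of its metric adjoint $\ad_X^*$, and of the skew-symmetric operator $\ad^*(X)$ in the block decomposition $\mfg=\mfn^{\perp}\oplus\mfn$.

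The decisive step is the resulting trace computation. Denoting by $f^*$ and $f^S$ the adjoint and the symmetric part of $f$ with respect to $g|_{\mfn}$, a direct calculation of $\tr\bigl((\ad_X^S)^2\bigr)$ and $\tr\bigl((\ad^*(X))^2\bigr)$ yields
\[
\begin{gathered}
g(\ad_X^S,\ad_X^S)=\tfrac{\epsilon_0}{2}\,g(f(W),f(W))+a^2\,\tr\bigl((f^S)^2\bigr),\\
g(\ad^*(X),\ad^*(X))=2\,\epsilon_0\,g(f^*(W),f^*(W)).
\end{gathered}
\]
Substituting into the scalar condition and noting that no cross term between $a$ and $W$ survives in either trace, the equation separates into its $a^2$-part and its $W$-part and becomes equivalent to the pair
\[
\tr\bigl((f^S)^2\bigr)=0\qquad\text{and}\qquad g(f(W),f(W))=g(f^*(W),f^*(W))\ \ \forall W\in\mfn,
\]
the second of which polarises to normality $f f^*=f^* f$ with respect to $g|_{\mfn}$.

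The crucial observation—and, in effect, the entire content of the corollary—is that the factor $\epsilon_0$ cancels from the $W$-equation and never enters the $a^2$-equation, so both surviving conditions depend only on $f$ and on the metric $g|_{\mfn}$, and \emph{not} on the sign $\epsilon_0$ of $\mfn^{\perp}$. Since $g|_{\mfn}$ is Lorentzian both in the present signature $(n-2,2)$ setting (where the total signature forces $\epsilon_0=-1$ on the timelike complement) and in the Lorentzian setting of Theorem \ref{th:almostAbeliannondegH=0}(b) (where $\epsilon_0=+1$ on the spacelike complement), the two problems reduce to the identical algebraic classification of $f$ up to isometries of $g|_{\mfn}$ and rescaling of $X_0$. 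I would therefore conclude by invoking Theorem \ref{th:almostAbeliannondegH=0}(b), whose normal forms are precisely the solutions of the two displayed conditions, to obtain both directions of the equivalence at once. I do not anticipate a genuine obstacle here; the one point that must be checked carefully is exactly the $\epsilon_0$-insensitivity, namely that the off-diagonal blocks of $(\ad_X^S)^2$ and $(\ad^*(X))^2$ do not contribute to the trace (so that no $aW$-cross term appears) and that the common prefactor $\epsilon_0\neq 0$ in the $W$-part may be removed.
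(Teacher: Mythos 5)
Your proposal is correct and follows essentially the same route as the paper: in both cases the point is that for $H=0$, $\delta=0$ the generalised Einstein condition collapses to $f$ being normal and $\tr((f^S)^2)=0$, which involves only $f$ and $g|_{\mfn}$ and is insensitive to the sign $\epsilon_0=g(X_0,X_0)$, so the classification coincides with Theorem \ref{th:almostAbeliannondegH=0}(b). You merely inline the trace computations that the paper delegates to Proposition \ref{pro:codim1idealnondeg} and Corollaries \ref{co:almostAbelian} and \ref{co:almostAbelianH=0}(b).
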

\begin{proof}
This follows directly from the fact that by Corollary \ref{co:almostAbelianH=0} (b), a generalised pseudo-Riemannian metric $(H=0,\cG_{g})$ on an almost Abelian Lie algebra $\mfg$ such that $\mfn$ is non-degenerate may only be generalised Einstein for $\delta=0$ if also any other generalised pseudo-Riemannian metric $(H=0,\cG_{g'})$ on $\mfg$ with $g|_{\mfn}=g'|_{\mfn}$ is generalised Einstein for $\delta=0$.
\end{proof}
Next, we specialise our results to dimension four. In this case, Theorem \ref{th:4dalmostAbeliannondegHneq0} below gives a full classification of \emph{four-dimensional} almost Abelian generalised Einstein Lorentzian Lie algebras $(\mfg,H,\cG_g,\delta=0)$ with non-degenerate codimension one Abelian ideal $\mfn$ and $H\neq 0$ by showing that in this case $f^S$ cannot have a complex non-real eigenvalue. Thus, applying Theorem \ref{th:almostAbeliannondegH=0} and Theorem \ref{th:almostAbeliandeg} to dimension four and combining them with Theorem \ref{th:4dalmostAbeliannondegHneq0} we obtain the following full classification of four-dimensional almost Abelian generalised Einstein Lorentzian Lie algebras $(\mfg,H,\cG_g,\delta=0)$:
\begin{theorem}\label{th:4dalmostAbelian}
Let $(\mfg,H,\cG_g)$ be a four-dimensional almost Abelian generalised Lorentzian Lie algebra with codimension one Abelian ideal $\mfn$ and let $f\in \End(\mfn)$ be above. Then $(\mfg,H,\cG_g)$ is generalised Einstein for zero divergence operator if and only if one of the following conditions hold:
\begin{itemize}
\item[(i)] $\mfn$ is Riemannian, 
\begin{equation*}
H=0 \, ,\quad f=\diag(L_1(0,a),0) \, ,
\end{equation*}
 for some $a\in \bR$ with respect to an orthonormal basis $(e_1,e_2,e_3)$ of $\mfn$,
\item[(ii)] or $\mfn$ is Lorentzian,
\begin{equation*}
H=0 \, ,\quad f=M_3(\sigma) \, ,
\end{equation*}
for some $\sigma\in \bR$ with respect to an orthonormal basis $(e_1,e_2,e_3)$ of $\mfn$ with $g(e_1,e_1)=-1$,
\item[(iii)] or $\mfn$ is Lorentzian,
\begin{equation*}
H=0 \, ,\quad f=\diag(L_1\left(\alpha,\sqrt{\alpha^2+\frac{a^2}{2}}\right),a) \, ,
\end{equation*}
for certain $(\alpha,a)\in \bR^2\setminus \{(0,0)\}$ with respect to an orthonormal basis $(e_1,e_2,e_3)$ of $\mfn$ with $g(e_1,e_1)=-1$,
\item[(iv)] or $\mfn$ is Lorentzian,
\begin{equation*}
H=0 \, ,\quad f=\begin{pmatrix}  \frac{\epsilon}{2} &  \frac{\epsilon}{2}  & v  \\ -\frac{\epsilon}{2} &  -\frac{\epsilon}{2}  & - v \\ v & v & 0 \end{pmatrix}  \, ,
\end{equation*}
 for some $\epsilon\in\{-1,1\}$, $v\in \bR$ with respect to an orthonormal basis $(e_1,e_2,e_3)$ of $\mfn$ with $g(e_1,e_1)=-1$,
\item[(v)] or $\mfn$ is Lorentzian,
\begin{equation*}
H=0 \, ,\quad f=L_3(0) \, ,
\end{equation*}
with respect to an orthonormal basis $(e_1,e_2,e_3)$ of $\mfn$ with $g(e_1,e_1)=-1$,
\item[(vi)] or $\mfn$ is Lorentzian,
\begin{equation*}
H=a (e^{12}+e^{13})\wedge X^\flat \, , \quad f= \frac{1}{\sqrt{2}}\begin{pmatrix} 0 & -1-\frac{a^2}{2} & 0 \\ 1-\frac{a^2}{2} & 0 & 1-\frac{a^2}{2}  \\ 0 & 1+\frac{a^2}{2}  & 0 \end{pmatrix} \, ,
\end{equation*}
for some $a\in \bR^*$ with respect to an orthonormal basis $(e_1,e_2,e_3)$ of $\mfn$ with $g(e_1,e_1)=-1$,
\item[(vii)] or $\mfn$ is degenerate, 
\begin{equation*}
H=0 \, , \quad f= \begin{pmatrix} 0 & -a & 0 \\ a & 0 & 0\\  b_1 & b_2 & 0 \end{pmatrix} \, ,
\end{equation*}
for certain $a,b_1,b_2\in \bR$ with respect to a basis $(e_1,e_2,e_3)$ of $\mfn$ such that $g(e_1,e_1)=g(e_2,e_2)=g(e_3,X)=1$, $g(e_3,e_i)=0$ for $i=1,2,3$, $g(X,e_1)=g(X,e_2)=0$.
\end{itemize}
Moreover, in each of the different cases the underlying Lie algebra $\mfg$ is isomorphic to the following four-dimensional Lie algebras:
\begin{itemize}
	\item[(i)] $e(2)\oplus \bR$ (for $a\neq 0$) or $\bR^4$ (for $a=0$).
	\item[(ii)] $e(2)\oplus \bR$ (for $\sigma>0$), $A_{4,1}$ (for $\sigma=0$) or
	$e(1,1)\oplus \bR$ (for $\sigma<0$).
	\item[(iii)] $A_{4,6}^{\sqrt{2}\cos(\varphi),\sin(\varphi)}$ for some $\varphi\in \bR$ (for $a\neq 0$) or $ \mathfrak{r}'_{3,1}\oplus \bR$ (for $a=0$).
	\item[(iv)]
	$A_{4,1}$ (for $v\neq 0$) or $\mfh_3\oplus \bR$ (for $v=0$).
	\item[(v)]
	$A_{4,1}$.
	\item[(vi)]
	$A_{4,1}$ for $a\neq\pm\sqrt{2}$, $\mfh_3\oplus \bR$ for $a=\pm\sqrt{2}$.
	\item[(vii)]
	$e(2)\oplus \bR$ (for $a\neq 0$), $\mfh_3\oplus \bR$ (for $a=0$) and $(b_1,b_2)\neq (0,0)$) or $\bR^4$ (for $a=b_1=b_2=0$).
\end{itemize}
\end{theorem}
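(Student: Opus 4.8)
The statement is a specialisation to $\dim\mfg=4$ (equivalently $\dim\mfn=3$) of the three general results governing the Lorentzian almost Abelian case, together with a four-dimensional input that removes the single exceptional subcase. The plan is therefore to first recall, via Theorem \ref{th:almostAbelian}, that a Lorentzian almost Abelian $(\mfg,H,\cG_g)$ with $\delta=0$ falls into exactly one of three regimes: $\mfn$ non-degenerate with $H=0$, $\mfn$ non-degenerate with $H\neq0$, or $\mfn$ degenerate with $H=0$ (the degenerate regime with $H\neq0$ not occurring). Each regime is controlled, in arbitrary dimension, by Theorem \ref{th:almostAbeliannondegH=0}, Theorem \ref{th:almostAbeliannondegHneq0} and Theorem \ref{th:almostAbeliandeg} respectively, so the task is to feed $\dim\mfn=3$ into these and record the output.

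The only obstruction to applying the three theorems verbatim in dimension four is that Theorem \ref{th:almostAbeliannondegHneq0} carries the hypothesis that the symmetric part $f^S$ has no non-real eigenvalue. Accordingly, the first substantive step is to invoke Theorem \ref{th:4dalmostAbeliannondegHneq0}, which shows that for $\dim\mfn=3$, $\mfn$ non-degenerate and $H\neq0$ this hypothesis is automatic: $f^S$ can have no non-real eigenvalue. This closes the gap, makes the general $H\neq0$ classification available without restriction, and simultaneously produces the explicit normal form of case (vi).

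Granting this, the remaining work is specialisation and bookkeeping. I would take each building-block theorem and restrict its parametrised family of admissible $f\in\End(\mfn)$ to $\dim\mfn=3$. For $\mfn$ Riemannian this reduces $f$ under $O(3)$-conjugacy, collapsing the admissible family to the single form of case (i). For $\mfn$ Lorentzian of signature $(2,1)$ one reduces under $O(2,1)$, whose richer orbit structure accounts for the several normal forms (ii)--(vi); there the allowed real Jordan-type blocks $L_i$ and $M_i$, together with the constraints imposed by \eqref{eq:GEbetaexplicitly} at $\delta=0$ and $H=0$, pin down the eigenvalue parameters exactly as stated. For $\mfn$ degenerate one works in a basis adapted to the null line and obtains case (vii).

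Finally, for each resulting normal form of $f$ I would identify the isomorphism type of $\mfg$. Since $\mfg=\mfn\rtimes_f\bR$, its isomorphism class is determined by the pair $(\mfn,f)$ up to nonzero rescaling and conjugation of $f$, and the relevant discrete invariants are the rank and image of $f$ (equivalently $\mfg'=\mathrm{im}\,f$), the eigenvalue pattern of $f$, and whether $\mfg$ is Abelian, nilpotent or solvable-non-nilpotent; matching these against the standard four-dimensional classification yields the assignments listed after the cases (e.g.\ a single imaginary pair of eigenvalues together with a zero gives $e(2)\oplus\bR$, a nilpotent non-Abelian $f$ gives $\mfh_3\oplus\bR$, and so on). The main obstacle is concentrated entirely in Theorem \ref{th:4dalmostAbeliannondegHneq0}: ruling out non-real eigenvalues of $f^S$ in the non-degenerate $H\neq0$ regime is the one genuinely four-dimensional phenomenon, while everything else reduces to orbit analysis of $f$ under the relevant orthogonal group and comparison with known lists.
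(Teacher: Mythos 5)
Your proposal matches the paper's own proof: the theorem is obtained precisely by specialising Theorems \ref{th:almostAbeliannondegH=0} and \ref{th:almostAbeliandeg} to $\dim\mfn=3$ and combining them with Theorem \ref{th:4dalmostAbeliannondegHneq0}, whose role is exactly as you describe---ruling out the second-type (non-real eigenvalue) case for $f^S$ in four dimensions so that the $H\neq 0$ regime is fully covered. The remaining identification of the isomorphism types of $\mfg=\mfn\rtimes_f\bR$ from the normal forms of $f$ is likewise the bookkeeping step the paper performs.
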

\begin{remark}
We observe from Theorem \ref{th:4dalmostAbelian} that $A_{4,1}$ and $\mfh_3\oplus \bR$ are, up to isomorphism, the only four-dimensional almost Abelian Lie algebras $\mfg$ admitting a generalised Lorentzian metric $(H,\cG_g)$ with $H\neq 0$ which is generalised Einstein for $\delta=0$.
\end{remark}
In four dimensions, Corollary \ref{co:almostAbelian4dmetriconmfn} allows to give the following classification of almost Abelian generalised Einstein pseudo-Riemannian metrics of signature $(2,2)$ with non-degenerate codimension one Abelian ideal (noting that we may, by multiplying $g$ with $-1$ if necessary, assume that $\mfn$ is Lorentzian):
\begin{corollary}\label{co:signature22}
Let $(\mfg,H,\cG_g)$ be a four-dimensional almost Abelian generalised pseudo-Riemannian Lie algebra of signature $(2,2)$ such that $\mfn$ is non-degenerate and let $f\in \End(\mfn)$ be as in Theorem \ref{th:almostAbelian}. Then $(\mfg,H,\cG_g)$ is generalised Einstein for zero divergence operator if and only if $(H,f)$ are as in Theorem \ref{th:4dalmostAbelian} (ii) -- (vi), where the chosen orthonormal basis $(e_1,e_2,e_3)$ of $\mfn$ satisfies now $g(e_1,e_1)=-g(e_2,e_2)=-g(e_3,e_3)=-\epsilon$ for some $\epsilon \{-1,1\}$.
\end{corollary}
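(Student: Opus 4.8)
The plan is to reduce the split-signature problem to the already-settled Lorentzian classification of Theorem \ref{th:4dalmostAbelian}, using that for $\delta=0$ and non-degenerate $\mfn$ the generalised Einstein condition is insensitive to the transverse component of the metric. First I would invoke Corollary \ref{co:almostAbelian4dmetriconmfn}: for a four-dimensional almost Abelian Lie algebra with non-degenerate codimension-one Abelian ideal $\mfn$, whether $(\mfg,H,\cG_g)$ is generalised Einstein for $\delta=0$ depends only on the triple $(H,f,g|_{\mfn})$ and not on the number $g(X,X)$ for $X\perp\mfn$. Since $\dim\mfn=3$, $\mfn$ is non-degenerate and the total signature is $(2,2)$, the signature arithmetic $(2,2)=(\mathrm{sig}\, g|_{\mfn})+(\mathrm{sign}\, g(X,X))$ forces $g|_{\mfn}$ to have signature $(2,1)$ or $(1,2)$; in particular $\mfn$ can be neither Riemannian nor degenerate, so cases (i) and (vii) of Theorem \ref{th:4dalmostAbelian} are excluded a priori.

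Next I would record the symmetry $g\mapsto -g$, which preserves signature $(2,2)$ and, for $\delta=0$, preserves the generalised Einstein condition of Proposition \ref{pro:gEexplicitly}. Indeed, $\ad_X^*$, $\ad^*(X)$ and $\ad_X^S$ are unchanged, while $H_X\mapsto -H_X$, and the induced inner product on $\End(\mfg)$ and on $2$-forms is unchanged (each factor $\epsilon_i\epsilon_j$ is invariant); hence the first equation in \eqref{eq:GEbetaexplicitly} is preserved verbatim and the second merely flips sign, so both remain equivalent to zero. After replacing $g$ by $-g$ if necessary, I may therefore assume $g|_{\mfn}$ has signature $(2,1)$, i.e.\ $\mfn$ is Lorentzian; the two possible overall signs are precisely what is recorded by $\epsilon\in\{-1,1\}$ in the normalization $g(e_1,e_1)=-g(e_2,e_2)=-g(e_3,e_3)=-\epsilon$.

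With $\mfn$ Lorentzian and $g|_{\mfn}$ fixed, I would apply Corollary \ref{co:almostAbelian4dmetriconmfn} once more, now in the other direction: the admissible data $(H,f)$ for a signature-$(2,2)$ metric with Lorentzian $\mfn$ coincide with those for \emph{any} four-dimensional metric sharing the same $g|_{\mfn}$, in particular with the genuinely Lorentzian total metric obtained by taking $X$ spacelike. That Lorentzian case is exactly what Theorem \ref{th:4dalmostAbelian} classifies, and the subcases with $\mfn$ Lorentzian are precisely (ii)--(vi). This gives the forward implication, and reading the equivalence backwards gives the converse. I would also note explicitly that the standing hypothesis of Theorem \ref{th:almostAbelian}---that $f^S$ has no non-real eigenvalue when $\mfn$ is non-degenerate and $H\neq0$---costs nothing here, since Theorem \ref{th:4dalmostAbeliannondegHneq0} already shows it to be automatic in dimension four.

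The genuine content is bundled into Corollary \ref{co:almostAbelian4dmetriconmfn}: proving that the $\delta=0$ generalised Einstein equations for non-degenerate $\mfn$ truly decouple from $g(X,X)$ (including the $H\neq0$ subcases) is the hard part, after which the present corollary is a short formal consequence. The only new bookkeeping is tracking $\epsilon$ through the $g\mapsto -g$ symmetry and checking the signature count that eliminates the Riemannian and degenerate subcases; I expect no further obstacle beyond this.
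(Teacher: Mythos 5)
Your proposal is correct and follows essentially the same route as the paper: the paper derives this corollary directly from Corollary \ref{co:almostAbelian4dmetriconmfn} together with the observation that one may replace $g$ by $-g$ to assume $\mfn$ is Lorentzian, which is exactly your reduction. Your write-up merely makes explicit the sign-tracking under $g\mapsto -g$ and the signature count excluding cases (i) and (vii), details the paper leaves implicit.
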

Before we begin our investigation, we note for reference how Proposition \ref{pro:gEexplicitly} simplifies in the case of zero divergence:
\begin{corollary}\label{co:gEdelta=0}
	Let $(\mfg,H,\cG_g)$ be a generalised pseudo-Riemannian Lie algebra. Then $(\mfg,H,\cG_g,0)$ is generalised Einstein if and only if
	\begin{equation}\label{eq:gEdelta=0}
		\begin{split}
			4\, g(\ad_Y^S,\ad_Y^S)-g(\ad^*(Y),\ad^*(Y))+2\, g(H(Y,\cdot,\cdot),H(Y,\cdot,\cdot))&=0 \, ,\\
			g(H_Y,\ad^*(Z))-g(H_Z,\ad^*(Y))&=0 \, ,
		\end{split}
	\end{equation}
	for all $Y,Z\in \mfg$.	
\end{corollary}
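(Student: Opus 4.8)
The plan is to specialise Proposition \ref{pro:gEexplicitly} to the case $\delta=0$ and observe that exactly two of the four displayed equations survive, while the other two become vacuous. Looking at the system \eqref{eq:GEbetaexplicitly}, the third equation $\delta((\ad_X^*(X))^\flat)=0$ and the fourth equation $\delta([X,Y]+H(X,Y,\cdot))=0$ are linear in $\delta$ and hence are trivially satisfied once $\delta=0$. Likewise, on the left-hand sides of the first and second equations, every term involves $\delta$ as a prefactor: the first equation reads $-2\,\delta(\ad_X^*(X))=\cdots$ and the second reads $2\,\delta(H(X,Y,\cdot)^\sharp+[X,Y]^\flat)=\cdots$. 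Setting $\delta=0$ annihilates these left-hand sides, leaving precisely the two right-hand-side equations of \eqref{eq:gEdelta=0}.

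Concretely, I would first substitute $\delta=0$ into the first equation of \eqref{eq:GEbetaexplicitly}, which immediately yields
\begin{equation*}
0=4\, g(\ad_X^S,\ad_X^S)-g(\ad^*(X),\ad^*(X))+2\, g(H(X,\cdot,\cdot),H(X,\cdot,\cdot)) \, ,
\end{equation*}
for all $X\in\mfg$; this is the first equation of \eqref{eq:gEdelta=0} after renaming $X$ to $Y$. Second, I would substitute $\delta=0$ into the second equation of \eqref{eq:GEbetaexplicitly}, whose left-hand side vanishes, giving
\begin{equation*}
0=g(H_X,\ad^*(Y))-g(H_Y,\ad^*(X)) \, ,
\end{equation*}
for all $X,Y\in\mfg$; after renaming $(X,Y)$ to $(Y,Z)$ this is the second equation of \eqref{eq:gEdelta=0}. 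Finally, the third and fourth equations of \eqref{eq:GEbetaexplicitly} hold automatically since $\delta=0$ evaluates to zero on any argument.

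The only subtlety worth flagging is that the first equation of \eqref{eq:gEdelta=0} is stated in its \emph{quadratic} (diagonal) form, with a single vector $Y$, whereas one might worry that the full generalised Einstein condition requires the associated symmetric bilinear form to vanish on all pairs. This is handled exactly as in the passage preceding Proposition \ref{pro:gEexplicitly}: a symmetric bilinear form vanishes identically if and only if its associated quadratic form vanishes, so the diagonal version is equivalent to the polarised version. Hence no information is lost by writing only the diagonal equation, and the equivalence in both directions is immediate.

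I do not expect any genuine obstacle here, since the corollary is a direct specialisation of an already-proved proposition; the proof is a one-line substitution together with the observation that the $\delta$-dependent terms drop out. The main thing to be careful about is bookkeeping—matching the variable names between \eqref{eq:GEbetaexplicitly} and \eqref{eq:gEdelta=0} and confirming that the two surviving equations are precisely the two stated—rather than any substantive mathematical difficulty.
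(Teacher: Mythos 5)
Your proposal is correct and coincides with the paper's (implicit) argument: the corollary is stated there as an immediate specialisation of Proposition \ref{pro:gEexplicitly} to $\delta=0$, with the two $\delta$-linear equations becoming vacuous and the $\delta$-terms on the left-hand sides of the remaining two equations dropping out. Your remark on the equivalence of the diagonal and polarised forms of the first equation is exactly the point the paper already records just before Proposition \ref{pro:gEexplicitly}, so nothing is missing.
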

\subsection{Endomorphisms on Lorentzian vector spaces}\label{subsec:endosLorentz}
In the following subsections, we will work with well-known normal forms for symmetric or skew-symmetric endomorphism on a Lorentzian vector space. To recall these normal forms, we first need to define some matrices:
\begin{notation}
	We define the following matrices:
	\begin{equation*}
		\begin{split}
			L_1(\alpha,\beta)&:=\begin{pmatrix} \alpha & -\beta \\ \beta & \alpha \end{pmatrix} \, ,\qquad L_2(\alpha,\epsilon):=\epsilon\, \begin{pmatrix} \tfrac{1}{2}+\alpha & \tfrac{1}{2} \\ -\tfrac{1}{2} & -\tfrac{1}{2}+\alpha \end{pmatrix} \, ,\\
			L_3(\alpha)&:=\begin{pmatrix} \alpha & -\tfrac{1}{\sqrt{2}} & 0 \\ \tfrac{1}{\sqrt{2}} & \alpha & \tfrac{1}{\sqrt{2}}  \\ 0  &  \tfrac{1}{\sqrt{2}}  & \alpha \end{pmatrix} \, , \qquad M_3(\sigma):=\begin{pmatrix} 0 & 0 & -1+\sigma \\ 0 & 0 & -1-\sigma \\ -1+\sigma & 1+\sigma & 0 \end{pmatrix} \, ,\\
			M_4(\sigma,\tau)&:=\begin{pmatrix} 0 & 0 & -1+\sigma & \tau\\ 0 & 0 & -1-\sigma & \tau \\ -1+\sigma & 1+\sigma & 0 &0 \\
				\tau & -\tau & 0 & 0
			\end{pmatrix} \, .
		\end{split}
	\end{equation*}
\end{notation}
With this notation at hand, one has the following normal forms for symmetric or skew-symmetric endomorphisms on Lorentzian vector spaces:
\begin{lemma}[\cite{R}, \cite{MN}]\label{le:canonicalforms}
	Let $(V,g)$ be an $n$-dimensional Lorentzian vector space and $f\in \End(V)$ be an endomorphism of $V$. Then:
	\begin{itemize}
		\item[(a)]
		If $f$ is symmetric with respect to $g$, then $(V,g)$ admits an orthonormal basis $(e_1,\ldots,e_n)$ with $g(e_1,e_1)=-1$ such that $f$ equals, with respect to that basis, one of the following matrices
		\begin{equation*}
			\begin{split}
				&\diag(a_1,\ldots,a_n) \, ,\qquad \diag(L_1(\alpha,\beta),b_1,\ldots b_{n-2}) \, ,\\
				&\diag(L_2(\gamma,\epsilon),c_1,\ldots,c_{n-2}) \, ,\qquad \diag(L_3(\tau),d_1,\ldots,d_{n-3}) \, ,
			\end{split}
		\end{equation*}
		for certain $a_1,\ldots,a_n,b_1,\ldots,b_{n-2},c_1,\ldots,c_{n-2},d_1,\ldots,d_{n-3}\in \bR$,
		$\alpha,\gamma,\tau\in \bR$, $\beta>0$ and $\epsilon\in \{-1,1\}$.
		\item[(b)]
		If $f$ is anti-symmetric with respect to $g$, then $(V,g)$ admits an orthonormal basis $(e_1,\ldots,e_n)$ with $g(e_1,e_1)=-1$ such that either $n=2$ and $f=\left(\begin{smallmatrix} 0 & \rho \\ \rho & 0 \end{smallmatrix}\right)$ with respect to the basis $(e_1,e_2)$, or $n$ is odd and there exists some $k\in \{0,\ldots,\left\lfloor\tfrac{n-3}{2}\right\rfloor\}$, $a_1\geq \ldots\geq a_k>0$ and some $\sigma\in \bR$ such that
		\begin{equation*}
			f=\diag(M_3(\sigma),L_1(0,a_1),\ldots,L_1(0,a_k),0,\ldots,0) \, ,
		\end{equation*}
		with respect to the basis $(e_1,\ldots,e_n)$, or $n\geq 4$ is even and there exists some $l\in \{0,\ldots,\left\lfloor\tfrac{n-4}{2}\right\rfloor\}$, $b_1\geq \ldots \geq b_l>0$ and some $\tau\in \bR$, $\nu\geq 0$ such that
		\begin{equation*}
			f=\diag(M_4(\tau,\nu),L_1(0,b_1),\ldots,L_1(0,b_l),0,\ldots,0) \, ,
		\end{equation*}
		with respect to the basis $(e_1,\ldots,e_n)$.
	\end{itemize}
\end{lemma}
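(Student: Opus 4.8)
The plan is to reduce both statements to the ordinary (Euclidean) spectral theorem together with a careful analysis of a single ``bad'' invariant block, exploiting that a Lorentzian metric carries exactly one negative direction. In both parts I would begin by complexifying $f$ and decomposing $V$ into the real generalized eigenspaces of $f$ (grouping a non-real eigenvalue with its conjugate). A standard adjointness computation shows that the generalized eigenspace attached to $\lambda$ and the one attached to $\mu$ are $g$-orthogonal whenever $\mu \ne \lambda$ in the symmetric case (resp.\ $\mu \ne -\lambda$ in the skew case); hence these spaces are mutually $g$-orthogonal and each is $g$-non-degenerate, and $V$ is their $g$-orthogonal direct sum, with $f$ acting on each summand with a single spectral type.

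For part (a) the key point is to determine the signature forced on each summand. A single indecomposable real Jordan block $J_k(\lambda)$ can be made $g$-symmetric only for the antidiagonal pairing, whose signature is $(\lceil k/2\rceil,\lfloor k/2\rfloor)$; thus a size-two block carries signature $(1,1)$ and a size-three block signature $(2,1)$, while a conjugate pair for $\lambda = \alpha+i\beta$, $\beta>0$, gives a two-dimensional summand of signature $(1,1)$. Since $g$ has only one negative direction, at most one summand can fail to be positive definite, and that summand must be a single indecomposable block of one of the three types just listed (a second Jordan block of size $\ge 2$, or a Jordan block of size $\ge 4$, would require a second negative direction). On the positive-definite complement $f$ is symmetric for a genuine inner product, hence diagonalizable by the spectral theorem. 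Choosing a $g$-normalized Jordan (resp.\ real-canonical) basis on the single indefinite block identifies it with $L_1(\alpha,\beta)$, $L_2(\gamma,\epsilon)$, or $L_3(\tau)$, and assembling with the diagonal definite part yields precisely the four normal forms; any residual positive $J_1(\lambda)$-summands simply merge into the diagonal entries.

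For part (b) I would argue in the same spirit. On every positive-definite summand $f$ is skew for an inner product, hence an orthogonal sum of $2\times 2$ rotation blocks $L_1(0,a_i)$ and a kernel; this accounts for all the $L_1(0,a_i)$ and the trailing zeros. The single summand containing the negative direction is a genuinely Lorentzian skew block, and the classification of such indecomposable blocks (the standard Lorentzian canonical types: boost, (null) rotation, loxodromic, and their degenerations) shows that it is either the $2\times2$ boost $\left(\begin{smallmatrix} 0 & \rho \\ \rho & 0\end{smallmatrix}\right)$, the three-dimensional block $M_3(\sigma)$, or the four-dimensional block $M_4(\tau,\nu)$, the ranges $\beta>0$, $\epsilon\in\{-1,1\}$, $\nu\ge 0$ and the ordering of the $a_i$ being fixed by the remaining freedom in the adapted basis. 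The dimension and parity bookkeeping that selects $M_3$ in odd dimension and $M_4$ in even dimension (using e.g.\ $M_4(\sigma,0)=\diag(M_3(\sigma),0)$ to bridge the two) is exactly the content of the cited normal-form theory \cite{R,MN}.

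The main obstacle is the explicit normalization of the single indefinite block: constructing a pseudo-orthonormal basis that simultaneously realizes the Jordan/rotation structure and the prescribed matrices $L_2,L_3$ and $M_3,M_4$, and verifying that no further indecomposable indefinite block can occur. This is the genuinely Lorentzian input; away from that one block the problem collapses to the classical spectral theorem for definite inner products. I would therefore treat the symmetric and skew blocks by the standard direct-sum-of-indecomposables argument and simply record the resulting canonical matrices, as in \cite{R,MN}.
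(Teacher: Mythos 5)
The paper does not prove this lemma at all: it is imported verbatim from \cite{R} and \cite{MN}, so there is no in-paper argument to compare yours against. Your sketch is the standard route behind those references---complexify, split $V$ into mutually $g$-orthogonal, non-degenerate generalized eigenspaces, observe that the single negative direction of a Lorentzian metric permits at most one indefinite indecomposable summand, handle the positive-definite complement by the ordinary spectral theorem (resp.\ the Euclidean skew normal form), and then normalize the one indefinite block---and as an outline it is sound. Two points would need care in a full write-up. First, the signature of the invariant pairing on an indecomposable self-adjoint Jordan block $J_k(\lambda)$ is not canonically $(\lceil k/2\rceil,\lfloor k/2\rfloor)$: the pairing is only determined up to an overall sign, and for odd $k$ the two choices have different signatures. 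In the Lorentzian setting this is exactly what forces the size-$3$ block to carry the $(2,1)$ pairing while leaving a genuine sign ambiguity for the size-$2$ block---which is the origin of the parameter $\epsilon\in\{-1,1\}$ in $L_2(\gamma,\epsilon)$---so the claim has to be stated per parity rather than uniformly. Second, in part (b) the matrices $M_3(\sigma)$ and $M_4(\tau,\nu)$ are \emph{not} the indecomposable Lorentzian skew blocks: for generic parameters they are decomposable (e.g.\ $M_3(\sigma)$ with $\sigma\neq 0$ splits off a one-dimensional kernel, and only $M_3(0)$ is an indecomposable null rotation). They are composite normal forms that absorb the indecomposable Lorentzian piece together with one or two adjacent definite or null directions so as to give a uniform statement by the parity of $n$. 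So the final identification with the listed matrices is a genuine normalization exercise rather than a read-off from a list of indecomposables; deferring it to \cite{R} and \cite{MN}, as you do, is exactly what the paper itself does.
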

It will turn out to be useful to introduce some notation for the different possible normal forms of a symmetric endomorphism on a Lorentzian vector space:
\begin{notation}
	Let us call a symmetric endomorphism $f$ of a Lorentzian vector space $(V,g)$ of \emph{first}, \emph{second}, \emph{third} or \emph{fourth type} if $(V,g)$ has an orthonormal basis $(e_1,\ldots,e_n)$ with $g(e_1,e_1)=-1$ such that with respect to that basis $f$ equals the first, second, third or fourth matrix, respectively, mentioned in Lemma \ref{le:canonicalforms} (a).
\end{notation}
\begin{remark}
	We note a symmetric endomorphism $f$ of a Lorentzian vector space is of first, second, third or fourth type if and only if, respectively, $f$ is diagonalisable (over the reals), $f$ has a complex non-real eigenvalue, the Jordan normal form of $f$ has a real Jordan block of size $2$ or the Jordan normal form of $f$ has a real Jordan block of size $3$.
\end{remark}
By a direct computation, we obtain the following useful characterisations of the condition $\tr(f^2)=0$ of a symmetric endomorphism of a Lorentzian vector space, where the parameters are as in Lemma \ref{le:canonicalforms} (a):
\begin{lemma}\label{le:trfsquare=0}
	Let $f$ be a symmetric endomorphism of a Lorentzian vector space. Then:
	\begin{itemize}
		\item 
		If $f$ is of first type, then $\tr(f^2)\geq 0$ with equality if and only if $f=0$.
		\item
		If $f$ is of second tye, then $\tr(f^2)=0$ if and only if
		\begin{equation*}
			\beta=\sqrt{\alpha^2+\tfrac{1}{2}\sum_{i=1}^{n-2}b_i^2} \,.
		\end{equation*}
		\item
		If $f$ is of third type, then $\tr(f^2)\geq 0$ with equality if and only if $\gamma=c_1=\ldots=c_{n-2}=0$.
		\item
		If $f$ is of fourth type, then $\tr(f^2)\geq 0$ with equality if and only if $\tau=d_1=\ldots=d_{n-3}=0$.
	\end{itemize}
\end{lemma}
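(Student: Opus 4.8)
The plan is to compute $\tr(f^2)$ directly from the four normal forms in Lemma~\ref{le:canonicalforms}(a). The key point is that $\tr(f^2)$ is the trace of the operator $f^2$, hence a basis-independent invariant, so it may be evaluated in the orthonormal basis in which $f$ takes its normal form, where it becomes the ordinary trace of the square of the normal-form matrix. (One may equally note that for a $g$-symmetric endomorphism $\tr(f^2)$ agrees with the Hilbert--Schmidt-type quantity $g(f,f)=\sum_i \epsilon_i\, g(f(e_i),f(e_i))$, since $g$-symmetry gives $f_{ik}=\epsilon_i\epsilon_k f_{ki}$; but the operator-trace viewpoint is the most economical.)

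First I would record the reduction. Each of the four normal forms is block diagonal, consisting of scalar $1\times 1$ blocks together with at most one distinguished $2\times 2$ or $3\times 3$ block ($L_1$, $L_2$ or $L_3$). For any block-diagonal matrix one has $\tr(f^2)=\sum_j \tr(B_j^2)$, where a scalar block $a$ contributes $a^2$. Thus everything reduces to computing $\tr(B^2)$ for the three distinguished blocks, which I would do either by squaring the $2\times 2$ or $3\times 3$ matrix directly, or -- more slickly -- by summing the squares of the (complex) eigenvalues. The eigenvalue computation gives: $L_1(\alpha,\beta)$ has eigenvalues $\alpha\pm i\beta$, so $\tr(L_1(\alpha,\beta)^2)=2(\alpha^2-\beta^2)$; $L_2(\gamma,\epsilon)$ has characteristic polynomial $(\lambda-\epsilon\gamma)^2$ (using $\epsilon^2=1$, $\tr=2\epsilon\gamma$, $\det=\gamma^2$), hence the double eigenvalue $\epsilon\gamma$ and $\tr(L_2(\gamma,\epsilon)^2)=2\gamma^2$; and $L_3(\tau)=\tau\,\id+N$ with $N$ nilpotent ($N^3=0$), so $L_3(\tau)$ has the triple eigenvalue $\tau$ and $\tr(L_3(\tau)^2)=3\tau^2$.

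Assembling these yields, for the four types respectively, $\tr(f^2)=\sum_i a_i^2$, then $\tr(f^2)=2(\alpha^2-\beta^2)+\sum_{i=1}^{n-2} b_i^2$, then $\tr(f^2)=2\gamma^2+\sum_{i=1}^{n-2} c_i^2$, and finally $\tr(f^2)=3\tau^2+\sum_{i=1}^{n-3} d_i^2$. The four stated characterisations then follow immediately: in the first, third and fourth cases every summand is a square, so $\tr(f^2)\ge 0$ with equality exactly when all the listed parameters vanish; in the second case the negative contribution $-2\beta^2$ of the complex-eigenvalue block makes $\tr(f^2)=0$ equivalent to $\beta=\sqrt{\alpha^2+\tfrac12\sum_{i=1}^{n-2} b_i^2}$ (recall $\beta>0$).

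This sign pattern -- non-negativity except in the presence of a genuinely complex eigenvalue, where the Lorentzian indefiniteness surfaces -- is the one conceptual content of the statement. There is no real obstacle beyond the short block computations; the only thing to be careful about is matching the parameter conventions of Lemma~\ref{le:canonicalforms}(a) exactly, in particular keeping track of which scalars ($b_i$, $c_i$, $d_i$) accompany each distinguished block and verifying the nilpotency of $N$ used in the $L_3$ case.
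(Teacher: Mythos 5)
Your computation is correct and is exactly the "direct computation" the paper invokes (it gives no written proof of this lemma): block-diagonality reduces everything to $\tr(L_1(\alpha,\beta)^2)=2(\alpha^2-\beta^2)$, $\tr(L_2(\gamma,\epsilon)^2)=2\gamma^2$ and $\tr(L_3(\tau)^2)=3\tau^2$, and your eigenvalue/nilpotency evaluations of these traces check out, as does the final assembly of the four cases (using $\beta>0$ in the second). Nothing is missing.
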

\subsection{Generalised Einstein Lie algebras with non-degenerate codimension one ideals}
In this subsection, we consider arbitary generalised Einstein pseudo-Riemannian Lie algebras $(\mfg,H,\cG_g,\delta=0)$ which admit a non-degenerate codimension one 
ideal $\mfa$. We will apply the results of this subsection to the case where $\mfg$ has arbitrary dimension and $\mfa$ is Abelian, i.e. to the almost Abelian case, in the rest of Section \ref{sec:almostAbelian}. Furthermore, in Section \ref{sec:4dLorentzian} we will also apply some of the formulas to the case where $\mfg$ is four-dimensional and $\mfa$ is either simple or the Heisenberg Lie algebra $\mfh_3\,$.
\begin{remark}
We note that, unfortunately, we cannot transfer Theorem \ref{th:structRiemGE} to pseudo-Riemannian case even if one assumes that the commutator ideal $\mfg'$ is non-degenerate and of codimension one. This is due to the fact that the equation
\begin{equation*}
	2\,g(\ad_X^S,\ad_X^S)+ g(H(X,\cdot,\cdot),H(X,\cdot,\cdot))=0
\end{equation*}
that one still gets from \eqref{eq:GEbetaexplicitly} for $X\in \mfh=\mfg'^\perp$ does no longer imply that $\ad_X^S=0$ and $H(X,\cdot,\cdot)=0$, so the whole strategy of the proof breaks down.
\end{remark}
\begin{notation}
In what follows, we choose $X$ orthogonal to $\mfa$ with $\epsilon:=g(X,X)\in \{-1,1\}$ and set
\begin{equation*}
	f:=\ad(X)|_{\mfa}\in \End(\mfa) \, .
\end{equation*}
Moreover, we write
\begin{equation*}
	H=H'+X^\flat\wedge B \, ,
\end{equation*}
with $H'\in \Lambda^3\mfa^*$ and $B\in \Lambda^2 \mfa^*$.
\end{notation}
We first prove:
\begin{proposition}\label{pro:codim1idealnondeg}
Let $(\mfg,H,\cG_g)$ be a generalised pseudo-Riemannian Lie algebra admitting a non-degenerate codimension one ideal $\mfa$. Let $f$, $H'$ and $B$ be defined as above. Then $(\mfg,H,\cG_g,0)$ is generalised Einstein if and only if
\begin{equation*}
\begin{split}
0&=2\, g(f^S,f^S)+ g(B,B) \, ,\\
0&=2\,g(f^S,(\ad_Y^{\mfa})^S)+g(B,H'(Y,\cdot,\cdot)) \, ,\\
0&=4\, g((\ad_Y^\mfa)^S,(\ad_Y^{\mfa})^S)-g((\ad^\mfa)^*(Y),(\ad^{\mfa})^*(Y))+2\epsilon\, g([f^*,f](Y),Y)\\
&+ 2\, g(H'(Y,\cdot,\cdot),H'(Y,\cdot,\cdot))+2\epsilon\, g(B(Y,\cdot),B(Y,\cdot)) \, ,\\
0&=g((\ad^{\mfa})^*(Y),B),\\
g((\ad^{\mfa}&)^*(Y),H'_Z)-2 g(f^*(Y),B(Z,\cdot)^{\sharp})=g((\ad^{\mfa})^*(Z),H'_Y)-2 g(f^*(Z),B(Y,\cdot)^{\sharp}) \, ,
\end{split}
\end{equation*}
for all $Y,Z\in \mfa$, where we consider $B$ in the fourth equation as an element of $\End(\mfa)$.
\end{proposition}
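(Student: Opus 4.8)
The plan is to specialise the general generalised Einstein equations from Corollary \ref{co:gEdelta=0} to the decomposition $\mfg = \mfa \oplus \bR X$, where $X$ is the chosen unit-length vector orthogonal to the non-degenerate ideal $\mfa$. The strategy is purely computational: I would substitute the block structure of the relevant endomorphisms and forms into the two equations of \eqref{eq:gEdelta=0}, and then extract the five claimed equations by testing against the two families of arguments $Y,Z \in \mfa$ and the distinguished vector $X$. Since $\mfa$ is an ideal, we have $\ad_X(\mfa) \subseteq \mfa$, so $f = \ad_X|_{\mfa}$ is a genuine endomorphism of $\mfa$, and $\ad_X(X) = 0$ because $X \notin \mfa$ while $[X,X]=0$; thus the only off-diagonal coupling comes from $f$ and from the $X^\flat \wedge B$ part of $H$.

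The key steps, in order, are as follows. First I would write down the block-matrix form of $\ad_Y$ for $Y \in \mfa$ and of $\ad_X$ with respect to the orthogonal splitting $\mfg = \mfa \oplus \bR X$. Using $g(X,X) = \epsilon$, I would compute $\ad_X^S$, which couples $f$ with its $g|_\mfa$-adjoint, and observe that $g(\ad_X^S, \ad_X^S)$ produces both the $g(f^S,f^S)$ term and, via the lower-right entry, the commutator term $\epsilon\,g([f^*,f](Y),Y)$ appearing in the third equation. Second, I would split $H = H' + X^\flat \wedge B$ and compute the induced two-form maps $H_Y$ and $H_X$: for $Y \in \mfa$ the contraction $H(Y,\cdot,\cdot)$ decomposes into the purely internal piece $H'(Y,\cdot,\cdot)$ and a piece along $X^\flat$ governed by $B(Y,\cdot)$, which is the origin of the $\epsilon\,g(B(Y,\cdot),B(Y,\cdot))$ term, while $H(X,\cdot,\cdot) = B$ gives the pure $g(B,B)$ contribution in the first equation. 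Third, I would evaluate the first equation of \eqref{eq:gEdelta=0} separately at $Y = X$ (yielding the first listed equation, since $\ad_X^S$ contributes $2\,g(f^S,f^S)$ and $\ad^*(X)$ contributes nothing along $\mfa$-directions that survive) and at $Y \in \mfa$ (yielding the third listed equation). Polarising the first equation in the mixed argument $(X,Y)$ with $Y \in \mfa$ should produce the second listed equation. Fourth, I would expand the second equation of \eqref{eq:gEdelta=0}: inserting $(Y,X)$ with $Y \in \mfa$ gives the fourth listed equation $g((\ad^\mfa)^*(Y),B)=0$, and inserting $(Y,Z)$ with both in $\mfa$ gives the fifth listed equation after identifying the cross terms $g(f^*(Y),B(Z,\cdot)^\sharp)$ that arise from the $X^\flat \wedge B$ part interacting with $\ad^*$.

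The main obstacle will be bookkeeping the various contractions correctly — in particular tracking the factors of $\epsilon = g(X,X)$ that appear whenever an index is raised or a trace is taken in the $X$-direction, and correctly separating the skew-symmetric endomorphism $\ad^*(X)$ into its $\mfa$-component versus its $X$-component. The map $\ad^*(X)$ defined by \eqref{eq:defofadstarX} satisfies $\ad^*(X)(Y) = \ad_Y^*(X)$, and since $g(\ad_Y^*(X), X) = g(X, \ad_Y(X)) = -g(X, f(Y))$ need not vanish, one must carefully check which components of $\ad^*(X)$ feed into each equation; getting these $\epsilon$-weights and the $f^*$ versus $f$ placements right is where errors are most likely. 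A useful sanity check at the end is to set $X^\flat \wedge B = 0$ and verify that the equations collapse to the expected almost-Abelian $H'$-only system, and to set $f = 0$ to recover the equations for $\mfa$ as a generalised Einstein Lie algebra in its own right.
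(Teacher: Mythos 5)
Your plan follows the paper's proof essentially verbatim: decompose all operators in blocks with respect to the orthogonal splitting $\mfg=\mfa\oplus\bR X$, insert $X$, $Y\in\mfa$ and the mixed pair $(X,Y)$ into the two equations of Corollary \ref{co:gEdelta=0}, and polarise. Two corrections to the details, neither fatal: first, you assert that $g(\ad_Y^*(X),X)=-g(X,f(Y))$ need not vanish, but it does vanish --- $f(Y)=[X,Y]$ lies in the ideal $\mfa$ and $X\perp\mfa$, and more generally $\mfg'\subseteq\mfa$ forces $\ad^*(X)=0$ identically, which is the opening observation of the paper's proof and eliminates exactly the bookkeeping worry you raise. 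Second, the commutator term $2\epsilon\, g([f^*,f](Y),Y)$ in the third equation does not come from $g(\ad_X^S,\ad_X^S)$ (which equals $2\,g(f^S,f^S)$ contribution only, since $\ad_X=\diag(f,0)$ has no lower-right entry); it arises from the off-diagonal blocks of $\ad_Y^S$ and $\ad^*(Y)$ for $Y\in\mfa$, namely from the difference $2\epsilon\, g(f(Y),f(Y))-2\epsilon\, g(f^*(Y),f^*(Y))$.
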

\begin{proof}
Consider $X\in\mfa^\perp$ as in the Notation above and observe that $\ad^*(X)=0$. Inserting $X$ into the first equation of \eqref{eq:gEdelta=0}, we obtain the first equation in Proposition \ref{pro:codim1idealnondeg}. Next, writing no upper index for the corresponding operator on $\mfg$ and upper index $\mfa$ for the corresponding operator on $\mfa$, we have for $Y\in \mfa$
\begin{equation*}
\ad_Y=\begin{pmatrix} \ad_Y^{\mfa} & -f(Y)\\ 0 & 0 \end{pmatrix} \, ,
\end{equation*}
with respect to the splitting $\mfg=\mfa\oplus \spa{X}$. Hence,
\begin{equation*}
	\ad^S_Y=\begin{pmatrix} (\ad_Y^{\mfa})^S & -\frac{1}{2}\, f(Y)\\ -\frac{1}{2\epsilon}f(Y)^\flat & 0 \end{pmatrix} \, .
\end{equation*}
Thus, polarising the first equation in \eqref{eq:gEdelta=0}, inserting $X$ and $Y\in \mfa$ into that equation and using that $\ad^*(X)=0$, we arrive at the second equation in Proposition \ref{pro:codim1idealnondeg}.

Next, computing
\begin{equation*}
\begin{split}
g(\ad^*(Y)(Z),W)&=g(\ad_Z^*(Y),W)=g(Y,\ad_Z(W))=g(Y,\ad^{\mfa}_Z(W))\\
&=g((\ad^{\mfa}_Y)^*(Z),W)=g((\ad^{\mfa})^*(Y)(Z),W) \, ,\\	
g(\ad^*(Y)(Z),X)&=g(Y,\ad_Z(X))=-g(Y,f(Z))=-g(f^*(Y),Z) \, ,\\
g(\ad^*(Y)(X),Z)&=g(Y,\ad_X(Z))=g(Y,f(Z))=g(f^*(Y),Z) \, ,\\
g(\ad^*(Y)(X),X)&=0 \, ,
\end{split}
\end{equation*}
for all $Y,Z,W\in \mfa$, we obtain
\begin{equation*}
	\ad^*(Y)=\begin{pmatrix} (\ad_Y^{\mfa})^* & f^*(Y)\\ -\frac{1}{\epsilon}(f^*(Y))^\flat & 0 \end{pmatrix} \, .
\end{equation*}
Thus, inserting now $Y\in \mfa$ into the first equation in \eqref{eq:gEdelta=0}, we obtain
\begin{equation*}
\begin{split}
0&=4\, g((\ad^{\mfa}_Y)^S,(\ad^{\mfa}_Y)^S)+4\, g(\tfrac{1}{2}\,  X^\flat\otimes f(Y),\tfrac{1}{2}\, X^\flat\otimes f(Y))\\
&+4\, g(\tfrac{1}{2}\, f(Y)^\flat\otimes X,\tfrac{1}{2}\, f(Y)^\flat\otimes X)-g((\ad^\mfa)^*(Y),(\ad^{\mfa})^*(Y))\\
&-g( X^\flat\otimes f^*(Y),X^\flat\otimes f^*(Y))-g( (f^*(Y))^\flat\otimes X ,(f^*(Y))^\flat\otimes X)\\
&+2\, g(H'(Y,\cdot,\cdot),H'(Y,\cdot,\cdot))+2\epsilon\, g(B(Y,\cdot),B(Y,\cdot))\\
&=4\, g((\ad^{\mfa}_Y)^S,(\ad^{\mfa}_Y)^S)+2 \epsilon\, g(f(Y),f(Y))-g((\ad^\mfa)^*(Y),(\ad^{\mfa})^*(Y))\\
&-2\epsilon\, g(f^*(Y),f^*(Y))+2\, g(H'(Y,\cdot,\cdot),H'(Y,\cdot,\cdot))+2\epsilon\, g(B(Y,\cdot),B(Y,\cdot))\\
&=4\, g((\ad^{\mfa}_Y)^S,(\ad^{\mfa}_Y)^S)+2\epsilon\, g(f^*f(Y),Y)-2\epsilon\, g(ff^*(Y),Y)\\
&-g((\ad^\mfa)^*(Y),(\ad^{\mfa})^*(Y))+2\, g(H'(Y,\cdot,\cdot),H'(Y,\cdot,\cdot))+2\epsilon\, g(B(Y,\cdot),B(Y,\cdot))\\
&=4\,g((\ad_Y^\mfa)^S,(\ad_Y^{\mfa})^S)-g((\ad^\mfa)^*(Y),(\ad^{\mfa})^*(Y))+2\epsilon\, g([f^*,f](Y),Y)\\
&+ 2\, g(H'(Y,\cdot,\cdot),H'(Y,\cdot,\cdot))+2\epsilon\, g(B(Y,\cdot),B(Y,\cdot)) \, ,
\end{split}
\end{equation*}
i.e. the third equation in Proposition \ref{pro:codim1idealnondeg}. Next, inserting $X$ and $Y\in \mfa$ into the second equation in \eqref{eq:gEdelta=0} one arrives, due to $\ad^*(X)=0$, directly at the fourth equation in Proposition \ref{pro:codim1idealnondeg}.

Finally, let $Y,Z\in \mfa$. Noting that
\begin{equation*}
H_Y(W)=H(Y,W,\cdot)^{\sharp}=H'(Y,W,\cdot)^{\sharp}+B(Y,W)\, X=(H')_Y(W)+(B(Y,\cdot)\otimes X)(W) \, ,
\end{equation*}
for all $W\in \mfa$ and $H_Y(X)=-H(X,Y,\cdot)^{\sharp}=-\epsilon \, B(Y,\cdot)^{\sharp}$, we see that
\begin{equation*}
H_Y=\begin{pmatrix} H'_Y & -\epsilon \,B(Y,\cdot)^{\sharp} \\
	                  B(Y,\cdot)   & 0
	\end{pmatrix} \, ,
\end{equation*}
and so inserting $Y$ and $Z$ into the second equation in \eqref{eq:gEdelta=0}, we obtain
\begin{equation*}
\begin{split}	
0&=g(H'_Y,(\ad^\mfa)^*(Z))-\epsilon\,g(X^\flat\otimes B(Y,\cdot)^\sharp ,X^\flat\otimes f^*(Z))\\
&-\frac{1}{\epsilon}g(B(Y,\cdot)\otimes X, f^*(Z)^\flat\otimes X)-g(H'_Z,(\ad^\mfa)^*(Y))\\
&+\epsilon\,g(X^\flat\otimes B(Z,\cdot)^\sharp ,X^\flat\otimes f^*(Y))+\frac{1}{\epsilon}g(B(Z,\cdot)\otimes X, f^*(Y)^\flat\otimes X)\\
&=g(H'_Y,(\ad^\mfa)^*(Z))-2g(f^*(Y),B(Z,\cdot)^\sharp)\\
&-g(H'_Z,(\ad^\mfa)^*(Y))+2 g(f^*(Z),B(Y,\cdot)^\sharp)) \, ,
\end{split}
\end{equation*}
which is the final equation in Proposition \ref{pro:codim1idealnondeg} and so proves the statement.
\end{proof}
Proposition \ref{pro:codim1idealnondeg} has the following direct consequence:
\begin{corollary}\label{co:codim1fskewB=0}
Let $(\mfg,H,\cG_g)$ be a generalised pseudo-Riemannian Lie algebra admitting a non-degenerate codimension one ideal $\mfa$.
Then:
\begin{enumerate}[(a)]
	\item If $f$ is skew-symmetric and $B=0$, then $(\mfg,H,\cG_g,0)$ is generalised Einstein if and only if $(\mfa,H',\cG_{g|_{\mfa}},0)$ is generalised Einstein.
    \item If $\mfa$ is definite, then $(\mfg,H,\cG_g,0)$ is generalised Einstein if and only if $f^S=0$, $B=0$ and $(\mfa,H',\cG_{g|_{\mfa}},0)$ is generalised Einstein.
\end{enumerate}
\end{corollary}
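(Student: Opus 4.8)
The plan is to extract both claims directly from the five scalar conditions of Proposition~\ref{pro:codim1idealnondeg}, reading them against the two generalised Einstein equations for $(\mfa, H', \cG_{g|_{\mfa}}, 0)$ that Corollary~\ref{co:gEdelta=0} produces when applied to $\mfa$ in place of $\mfg$, namely
\begin{equation*}
4\, g((\ad_Y^\mfa)^S,(\ad_Y^\mfa)^S)-g((\ad^\mfa)^*(Y),(\ad^\mfa)^*(Y))+2\, g(H'(Y,\cdot,\cdot),H'(Y,\cdot,\cdot))=0
\end{equation*}
and $g(H'_Y,(\ad^\mfa)^*(Z))-g(H'_Z,(\ad^\mfa)^*(Y))=0$ for all $Y,Z\in\mfa$.

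For part~(a) I would substitute the hypotheses $f^S=0$ (which is exactly what skew-symmetry of $f$ means) and $B=0$ into the five equations. The first, second and fourth equations then hold trivially, since every term in them carries a factor of $f^S$ or of $B$. In the third equation, skew-symmetry gives $f^*=-f$, hence $[f^*,f]=0$, so the term $2\epsilon\, g([f^*,f](Y),Y)$ drops out; together with $B=0$ this turns the third equation into exactly the first of the two equations displayed above. Finally, with $B=0$ the fifth equation becomes $g((\ad^\mfa)^*(Y),H'_Z)=g((\ad^\mfa)^*(Z),H'_Y)$, which, by symmetry of $g$, is precisely the second displayed equation. Thus under the hypotheses of (a) the system of Proposition~\ref{pro:codim1idealnondeg} is equivalent to the generalised Einstein system for $(\mfa,H',\cG_{g|_{\mfa}},0)$, and the claimed equivalence follows.

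For part~(b), the ``if'' direction is immediate from (a), since $f^S=0$ says precisely that $f$ is skew-symmetric. For the ``only if'' direction I would use only the first equation of Proposition~\ref{pro:codim1idealnondeg}, $0=2\,g(f^S,f^S)+g(B,B)$. The key observation is that when $g|_{\mfa}$ is definite the induced inner products on $\End(\mfa)$ and on $\Lambda^2\mfa^*$ are positive definite regardless of the overall sign: choosing an orthonormal basis $(e_1,\dots,e_k)$ of $\mfa$ with $g(e_i,e_i)=\eta$ for a fixed $\eta\in\{-1,1\}$, one gets $g(A,A)=\sum_{i,j}\eta^2 A_{ji}^2\ge 0$ for $A\in\End(\mfa)$ and, using the convention $g(e^i\wedge e^j,e^i\wedge e^j)=\epsilon_i\epsilon_j=\eta^2=1$, also $g(B,B)=\sum_{i<j}B_{ij}^2\ge 0$, with equality in each case only for the zero tensor. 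Hence $0=2\,g(f^S,f^S)+g(B,B)$ forces $f^S=0$ and $B=0$, and part~(a) then yields that $(\mfa,H',\cG_{g|_{\mfa}},0)$ is generalised Einstein.

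The computations are all routine substitutions, so I expect no real difficulty in part~(a); the only genuinely non-formal point is the sign bookkeeping in part~(b), namely that passing from a \emph{negative} definite $g|_{\mfa}$ to the induced metrics on endomorphisms and on two-forms produces \emph{positive} definite pairings (the two sign factors multiply to $\eta^2=1$). This is exactly what makes the single quadratic constraint $2\,g(f^S,f^S)+g(B,B)=0$ strong enough to annihilate both $f^S$ and $B$ simultaneously, and it is the step I would write out most carefully.
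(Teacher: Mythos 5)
Your proof is correct and follows exactly the route the paper intends: the paper states this corollary as a ``direct consequence'' of Proposition~\ref{pro:codim1idealnondeg} with no written proof, and your argument simply fills in the routine substitutions (part (a)) and the positivity of the induced pairings on $\End(\mfa)$ and $\Lambda^2\mfa^*$ for definite $\mfa$ (part (b)) that the authors leave implicit.
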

The last corollary then implies:
\begin{corollary}\label{co:notAbeliannotdefinite}
Let $(\mfg,H,\cG_g,0)$ be an almost nilpotent generalised Einstein pseudo-Riemannian Lie algebra with definite codimension one nilpotent ideal $\mfn$. Then $\mfg$ is almost Abelian.
\end{corollary}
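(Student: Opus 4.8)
The plan is to obtain the statement as a short two-step reduction: first strip off the codimension one direction using Corollary \ref{co:codim1fskewB=0}(b), and then settle the nilpotent ideal itself with the Riemannian reduction of Corollary \ref{co:Riemnil}. Since $\mfg$ is almost nilpotent, it carries a codimension one nilpotent ideal $\mfn$, and the hypothesis that $g|_{\mfn}$ is definite ensures in particular that $\mfn$ is a \emph{non-degenerate} codimension one ideal. I would therefore set $\mfa := \mfn$ and adopt the Notation preceding Proposition \ref{pro:codim1idealnondeg}, writing $f = \ad(X)|_{\mfn}$ and $H = H' + X^\flat \wedge B$.

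The first step is to invoke Corollary \ref{co:codim1fskewB=0}(b). Because $\mfn$ is definite and $(\mfg,H,\cG_g,0)$ is generalised Einstein, this yields at once $f^S = 0$, $B = 0$, and---crucially---that the restricted datum $(\mfn, H', \cG_{g|_{\mfn}}, 0)$ is itself generalised Einstein. The remaining task is then internal to $\mfn$: I must show that a nilpotent Lie algebra carrying a definite generalised Einstein structure with zero divergence is necessarily Abelian.

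For this I would appeal to Corollary \ref{co:Riemnil}, which forces a nilpotent generalised Riemannian Lie algebra that is generalised Einstein to be Abelian (with $H=0$). The only subtlety---and the one step genuinely requiring care---is that Corollary \ref{co:Riemnil} is stated in the Riemannian (positive definite) case, whereas $g|_{\mfn}$ could a priori be negative definite. To bridge this I would observe that the system \eqref{eq:gEdelta=0} is invariant under $g \mapsto -g$: in the first equation of Corollary \ref{co:gEdelta=0} each induced pairing $g(\ad_Y^S,\ad_Y^S)$, $g(\ad^*(Y),\ad^*(Y))$ and $g(H(Y,\cdot,\cdot),H(Y,\cdot,\cdot))$ is unchanged, since the sign flip of the metric is cancelled by the sign flip of the signs $\epsilon_i$ of the orthonormal frame, while the second equation merely acquires an overall sign through the musical isomorphism in $H_Y$. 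Hence, after replacing $g|_{\mfn}$ by $-g|_{\mfn}$ if necessary, I may assume $g|_{\mfn}$ is positive definite, so that $(\mfn, H', \cG_{g|_{\mfn}})$ is a bona fide nilpotent generalised Riemannian Lie algebra. Corollary \ref{co:Riemnil} then makes $\mfn$ Abelian, which is exactly the assertion that $\mfg$ is almost Abelian.

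I do not anticipate a serious obstacle, as the heavy lifting is already carried out by Corollary \ref{co:codim1fskewB=0}(b) and Corollary \ref{co:Riemnil}. The one point demanding attention is the verification that passing to $-g$ preserves the generalised Einstein condition, which is a brief sign-bookkeeping check on the two expressions in \eqref{eq:gEdelta=0}.
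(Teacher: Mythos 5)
Your proposal is correct and follows exactly the paper's own two-step argument: Corollary \ref{co:codim1fskewB=0}(b) to pass to $(\mfn,H',\cG_{g|_{\mfn}},0)$, then Corollary \ref{co:Riemnil} to conclude that $\mfn$ is Abelian. Your additional sign-bookkeeping for the negative definite case (replacing $g|_{\mfn}$ by $-g|_{\mfn}$, under which \eqref{eq:gEdelta=0} is invariant) is a valid and slightly more careful treatment of a point the paper's proof passes over silently.
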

\begin{proof}
Since $\mfn$ is definite, Corollary \ref{co:codim1fskewB=0}(b) implies that $(\mfn,H',\cG_{g|_{\mfn}},0)$ is generalised Einstein. Hence, by Corollary \ref{co:Riemnil}, $\mfn$ is Abelian and so $\mfg$ almost Abelian.
\end{proof}
We note that in the almost Abelian case, Proposition \ref{pro:codim1idealnondeg} simplifies as follows:
\begin{corollary}\label{co:almostAbelian}
	Let $(\mfg,H,\cG_g)$ be an almost Abelian generalised pseudo-Riemannian Lie algebra with non-degenerate codimension one Abelian ideal $\mfn$. Let $f$, $H'$ and $B$ be defined as in the Notation above. Then $(\mfg,H,\cG_g,0)$ is generalised Einstein if and only if
\begin{equation}\label{eq:almostAbelian}
	\begin{split}
		0&=2\, g(f^S,f^S)+ g(B,B) \, ,\\
		0&= g(B,H'(Y,\cdot,\cdot)) \, ,\\
		g((f^*f-ff^*)(Y),Y)&=-\epsilon\, g(H'(Y,\cdot,\cdot),H'(Y,\cdot,\cdot))- g(B(Y,\cdot),B(Y,\cdot)) \, ,\\
	 g(f^*(Y),B(Z,\cdot)^{\sharp})&=g(f^*(Z),B(Y,\cdot)^{\sharp}) \, ,
	\end{split}
\end{equation}
	for all $Y,Z\in \mfn$.
\end{corollary}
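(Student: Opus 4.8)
The plan is to obtain this statement as a direct specialisation of Proposition \ref{pro:codim1idealnondeg} to the case in which the non-degenerate codimension one ideal $\mfa$ is Abelian, so that $\mfa=\mfn$. The first step is to record the two vanishing phenomena that Abelianness forces. Since $[\mfn,\mfn]=0$, we have $\ad_Y^{\mfn}=0$ for every $Y\in\mfn$, and in particular $(\ad_Y^{\mfn})^S=0$. Moreover, using the definition \eqref{eq:defofadstarX} of the adjoint operator on $\mfn$, namely $(\ad^{\mfn})^*(Y)(Z)=(\ad_Z^{\mfn})^*(Y)$, the vanishing of all $\ad_Z^{\mfn}$ also yields $(\ad^{\mfn})^*(Y)=0$ for all $Y\in\mfn$. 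These are the only two facts from Abelianness that I would need.

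The second step is to substitute these vanishing quantities into the five equations of Proposition \ref{pro:codim1idealnondeg}, term by term. The first equation $0=2\,g(f^S,f^S)+g(B,B)$ contains no adjoint contributions and is carried over verbatim. In the second equation the summand $2\,g(f^S,(\ad_Y^{\mfn})^S)$ drops out, leaving $0=g(B,H'(Y,\cdot,\cdot))$. In the third equation the terms involving $(\ad_Y^{\mfn})^S$ and $(\ad^{\mfn})^*(Y)$ vanish; dividing the surviving identity by $2\epsilon$ and using $\epsilon^{-1}=\epsilon$ (as $\epsilon\in\{-1,1\}$), I would rearrange it into $g((f^*f-ff^*)(Y),Y)=-\epsilon\,g(H'(Y,\cdot,\cdot),H'(Y,\cdot,\cdot))-g(B(Y,\cdot),B(Y,\cdot))$, which is precisely the third equation of the corollary. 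The fourth equation of the proposition, $0=g((\ad^{\mfn})^*(Y),B)$, collapses to the trivial identity $0=0$ and is therefore discarded. Finally, in the fifth equation both terms carrying $(\ad^{\mfn})^*$ disappear, leaving $-2\,g(f^*(Y),B(Z,\cdot)^{\sharp})=-2\,g(f^*(Z),B(Y,\cdot)^{\sharp})$, equivalently the last equation $g(f^*(Y),B(Z,\cdot)^{\sharp})=g(f^*(Z),B(Y,\cdot)^{\sharp})$.

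Since each of the four equations in \eqref{eq:almostAbelian} arises from Proposition \ref{pro:codim1idealnondeg} through these substitutions, and the one remaining proposition equation is vacuously satisfied, the claimed equivalence follows. I do not expect any genuine obstacle here: this is a routine specialisation. The only points requiring a moment's care are checking that Abelianness annihilates not merely $\ad_Y^{\mfn}$ but also $(\ad^{\mfn})^*(Y)$ — which is immediate from \eqref{eq:defofadstarX} — and correctly tracking the factors of $\epsilon$ when normalising the third equation.
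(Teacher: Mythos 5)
Your proposal is correct and is precisely the argument the paper intends: the corollary is stated as an immediate specialisation of Proposition \ref{pro:codim1idealnondeg} to $\mfa=\mfn$ Abelian (the paper gives no separate proof), and your term-by-term substitution of $(\ad_Y^{\mfn})^S=0$ and $(\ad^{\mfn})^*(Y)=0$, including the normalisation by $2\epsilon$ in the third equation and the discarding of the now-vacuous fourth equation, matches exactly what is needed.
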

We note the following consequence of Corollary \ref{co:almostAbelian} in the four-dimensional case:
\begin{corollary}\label{co:almostAbelian4d}
Let $(\mfg,H,\cG_g,\delta=0)$ be a four-dimensional almost Abelian generalised Einstein Lie algebra with non-degenerate codimension one Abelian ideal $\mfn$. Then $H'=H\vert_\mfn=0$.
\end{corollary}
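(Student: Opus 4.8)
The plan is to exploit that $\mfn$ is three-dimensional, so that $\Lambda^3\mfn^*$ is one-dimensional and $H'$ is forced to be a multiple of a volume form. Fix an orthonormal basis $(e_1,e_2,e_3)$ of $(\mfn,g|_\mfn)$ with $g(e_i,e_i)=\epsilon_i\in\{-1,1\}$ and set $\mathrm{vol}:=e^1\wedge e^2\wedge e^3$. Then $H'=c\,\mathrm{vol}$ for some $c\in\bR$, and the assignment $Y\mapsto H'(Y,\any,\any)=c\,(Y\hook\mathrm{vol})$ is $c$ times the contraction isomorphism $\mfn\to\Lambda^2\mfn^*$. I would prove $H'=0$ by showing $c=0$, arguing by contradiction and using the four generalised Einstein equations \eqref{eq:almostAbelian} of Corollary \ref{co:almostAbelian}.

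First I would assume $c\neq0$ and extract $B=0$ from the second equation in \eqref{eq:almostAbelian}, namely $g(B,H'(Y,\any,\any))=0$ for all $Y\in\mfn$. Since $c\neq0$, the two-forms $H'(Y,\any,\any)$ range over all of $\Lambda^2\mfn^*$ as $Y$ runs through $\mfn$ (the contraction map being an isomorphism in dimension three), and because $g$ restricts to a non-degenerate pairing on $\Lambda^2\mfn^*$ (as $\mfn$ is non-degenerate), orthogonality to the whole of $\Lambda^2\mfn^*$ forces $B=0$.

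With $B=0$ the third equation in \eqref{eq:almostAbelian} becomes $g([f^*,f](Y),Y)=-\epsilon\,g(H'(Y,\any,\any),H'(Y,\any,\any))$ for all $Y\in\mfn$. The decisive step is then to take the $g$-trace of this identity of quadratic forms, i.e.\ to sum over the orthonormal basis with the weights $\epsilon_i$. The left-hand side yields $\tr([f^*,f])=\tr(f^*f)-\tr(ff^*)=0$ by cyclicity of the trace, while a short computation using $H'(e_1,\any,\any)=c\,e^2\wedge e^3$, $H'(e_2,\any,\any)=-c\,e^1\wedge e^3$, $H'(e_3,\any,\any)=c\,e^1\wedge e^2$ and the convention $g(e^i\wedge e^j,e^i\wedge e^j)=\epsilon_i\epsilon_j$ gives $\sum_i\epsilon_i\,g(H'(e_i,\any,\any),H'(e_i,\any,\any))=3c^2\epsilon_1\epsilon_2\epsilon_3$. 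Hence the trace identity reads $0=-3\epsilon\,c^2\epsilon_1\epsilon_2\epsilon_3$, and since $\epsilon,\epsilon_1,\epsilon_2,\epsilon_3\in\{-1,1\}$ this forces $c=0$, contradicting $c\neq0$. Therefore $c=0$ and $H'=0$.

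The argument is essentially a dimension count combined with a single trace identity, so there is no genuine analytic obstacle; the only point requiring care is the bookkeeping of the metric signs $\epsilon_i$ in the computation of $\sum_i\epsilon_i\,g(H'(e_i,\any,\any),H'(e_i,\any,\any))$. What makes the whole reduction work is the three-dimensionality of $\mfn$: it is exactly what guarantees both that the forms $H'(Y,\any,\any)$ span $\Lambda^2\mfn^*$ (used to kill $B$) and that the trace of the $H'$-term is a nonzero multiple of $c^2$ rather than vanishing identically. I would also remark that the first and fourth equations of \eqref{eq:almostAbelian} are not needed for this particular conclusion.
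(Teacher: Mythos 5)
Your proposal is correct and follows essentially the same route as the paper: the second equation of \eqref{eq:almostAbelian} forces either $H'=0$ or $B=0$ (you phrase this via the contraction isomorphism $\mfn\cong\Lambda^2\mfn^*$, the paper writes out $hb_i=0$ componentwise, which is the same fact), and then the $\epsilon_i$-weighted trace of the third equation kills $H'$ via $\tr([f^*,f])=0$ against $3\epsilon\,\epsilon_1\epsilon_2\epsilon_3\,c^2$. The sign bookkeeping matches the paper's computation exactly.
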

\begin{proof}
We may choose an orthonormal basis $(e_1,e_2,e_3)$ of $\mfn$ and then write
\begin{equation*}
B=b_1 e^{23}+b_2 e^{31}+b_3 e^{12} \, ,\quad H'=h e^{123} \, ,
\end{equation*}
for certain $b_1,b_2,b_3,h\in \bR$. Then the second equation in \eqref{eq:almostAbelian} yields
\begin{equation*}
g(B,H'(e_i,\cdot,\cdot))=g(B, h e^{i+1\, i+2})= b_i h\, g(e^{i+1\, i+2},e^{i+1\, i+2}) \, ,
\end{equation*}
for $i=1,2,3$, where we compute the upper indices modulo three. Since $e^{i+1\, i+2}$ is not null, we have
\begin{equation*}
h b_1=0 \, ,\ h b_2=0 \, ,\ h b_3=0 \, ,
\end{equation*}
and so either $h=0$, i.e. $H'=0$, or $b_1=b_2=b_3=0$, i.e. $B=0$.

So let us assume that $B=0$ and show that then, necessarily, also $H'=0$. For this, we note first that
\begin{equation*}
g([f^*,f](Y),Y)=g(f(Y),f(Y))-g(f^*(Y),f^*(Y)) \, ,
\end{equation*}
for all $Y\in \mfn$. Hence, if $(e_1,e_2,e_3)$ is an orthonormal basis of $\mfn$ with $g(e_i,e_i)=:\epsilon_i\,$, then
\begin{equation*}
\epsilon_i\, g(H'(e_i,\cdot,\cdot),H'(e_i,\cdot,\cdot))=\epsilon_i \, h^2 g(e^{i+1\, i+2},e^{i+1\, i+2})=\epsilon_i\, \epsilon_{i+1}\,\epsilon_{i+2}\, h^2=\epsilon_1\,\epsilon_2\,\epsilon_3\, h^2 \, .
\end{equation*}
Thus, the third equation in \eqref{eq:almostAbelian} yields
\begin{equation*}
\begin{split}
0&=g(f,f)-g(f^*,f^*)=\sum_{i=1}^3 \epsilon_i\, \left(g(f(e_i),f(e_i))-g(f^*(e_i),f^*(e_i))\right)\\
&=
\sum_{i=1}^3 \epsilon_i\, g([f^*,f](e_i),e_i)=-\epsilon\, \sum_{i=1}^3 \epsilon_i\, g(H'(e_i,\cdot,\cdot),H'(e_i,\cdot,\cdot))=-\epsilon\,\sum_{i=1}^3 \epsilon_1\, \epsilon_2\,\epsilon_3\, h^2\\
&=-3\epsilon\,\epsilon_1\,\epsilon_2\, \epsilon_3\, h^2 \, .
\end{split}
\end{equation*}
Hence, $h=0$, i.e. $H'=0$.
\end{proof}
Corollary \ref{co:almostAbelian4d} implies that in four dimensions, the condition for an almost Abelian generalised pseudo-Riemannian Lie algebra $(\mfg,H,\cG_g)$ with non-degenerate codimension one Abelian ideal $\mfn$ to be generalised Einstein for zero divergence depends only on $H$ and the metric on $\mfn$:
\begin{corollary}\label{co:almostAbelian4dmetriconmfn}
Let $\mfg$ be a four-dimensional almost Abelian Lie algebra with codimension one Abelian ideal and let $g_1\,$, $g_2$ be two pseudo-Riemannian metrics on $\mfg$ with $g_1|_{\mfn}=g_2|_{\mfn}$. If $g_1$---and so also $g_2$---is non-degenerate on $\mfn$, then $(\cG_{g_1},H,\delta=0)$ is generalised Einstein if and only if $(\cG_{g_2},H,\delta=0)$ is generalised Einstein.
\end{corollary}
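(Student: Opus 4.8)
The plan is to reduce, by the symmetry of the statement in $g_1$ and $g_2$, to proving the single implication: if $(\cG_{g_1},H,\delta=0)$ is generalised Einstein, then so is $(\cG_{g_2},H,\delta=0)$. I would verify the latter by checking the four equations of Corollary \ref{co:almostAbelian}, i.e.\ the system \eqref{eq:almostAbelian}, for $g_2$, exploiting that the data entering them differ from the $g_1$-data only by nonzero scalar factors. First I would fix once and for all a reference vector $X_0\in\mfg\setminus\mfn$ and set $f_0:=\ad(X_0)|_\mfn\in\End(\mfn)$ and $\tilde B:=(X_0\hook H)|_\mfn\in\Lambda^2\mfn^*$. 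The key structural observation is that $H':=H|_\mfn$ is \emph{intrinsic}: in the decomposition $H=H'+X^\flat\wedge B$ the correction term $X^\flat\wedge B$ annihilates every triple of vectors from $\mfn$, so $H'$ equals the restriction $H|_{\Lambda^3\mfn}$ and depends neither on the metric nor on the auxiliary vector $X$.

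Next I would record how the metric-dependent data $f$, $\epsilon$, $B$ attached to an arbitrary metric $g_i$ (with $g_i|_\mfn=g_1|_\mfn=:h$ non-degenerate) are expressed through the reference data. The $g_i$-orthogonal complement of $\mfn$ is one-dimensional and, as $\mfn$ is $g_i$-non-degenerate, transverse to $\mfn$; scaling its generator so that $\epsilon_i:=g_i(X_i,X_i)\in\{-1,1\}$, we may write $X_i=c_iX_0+V_i$ with $c_i\neq 0$ and $V_i\in\mfn$. Since $\mfn$ is Abelian we have $\ad(V_i)|_\mfn=0$, whence $f_i=\ad(X_i)|_\mfn=c_if_0$. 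Evaluating $B_i=\epsilon_i\,(X_i\hook H)|_\mfn$ on a pair $Y,Z\in\mfn$ and using $H|_{\Lambda^3\mfn}=H'$ gives $B_i(Y,Z)=\epsilon_i\bigl(c_i\tilde B(Y,Z)+H'(V_i,Y,Z)\bigr)$. The decisive input is now Corollary \ref{co:almostAbelian4d}: applied to the generalised Einstein metric $g_1$ it forces $H'=0$, so the last term disappears and $B_i=\epsilon_i c_i\tilde B$ for every admissible metric.

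Finally I would substitute these expressions into \eqref{eq:almostAbelian}, noting that all metric operations occurring there — the adjoint $f^*$, the symmetric part $f^S$, the raising $B(Y,\cdot)^\sharp$, and the inner products of tensors on $\mfn$ — are computed with $g|_\mfn=h$ and are therefore identical for $g_1$ and $g_2$. With $H'=0$ the second equation is vacuous, while the first, third and fourth equations become, after inserting $f_i=c_if_0$ and $B_i=\epsilon_ic_i\tilde B$ and using $\epsilon_i^2=1$, respectively $c_i^2$, $c_i^2$ and $\epsilon_ic_i^2$ times one and the same $h$-expression in $f_0$ and $\tilde B$. Since $c_i\neq 0$ and $\epsilon_i\neq 0$, each equation holds for $g_i$ exactly when the corresponding reference expression vanishes, a condition independent of $i$; hence the system holds for $g_2$ as soon as it holds for $g_1$, which is the claim. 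The only real subtlety to watch is that the auxiliary vector $X$, and with it $f$, $B$ and $\epsilon$, genuinely change with the metric, so the whole argument hinges on establishing that $H'$ vanishes — this is precisely where four-dimensionality, through Corollary \ref{co:almostAbelian4d}, is indispensable, since otherwise the non-scaling contribution $H'(V_i,\cdot,\cdot)$ would survive in $B_i$ and destroy the clean homogeneity.
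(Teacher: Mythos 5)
Your proposal is correct and follows essentially the same route as the paper: relate the metric-dependent data $(f_i,B_i,\epsilon_i)$ by nonzero scalars (the paper writes $X_2=\lambda X_1+Y$ rather than using a common reference $X_0$, a cosmetic difference), invoke Corollary \ref{co:almostAbelian4d} to kill $H'$, and observe that the remaining equations of \eqref{eq:almostAbelian} are homogeneous under these rescalings. The scaling factors you compute agree with the paper's, so no further comment is needed.
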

\begin{proof}
First of all, note that if $X_i$ is orthogonal with respect to $g_i$ to $\mfn$ with $\epsilon_i:=g(X_i,X_i)\in \{-1,1\}$ for $i=1,2$, then $X_2=\lambda\, X_1+Y$ for $\lambda\in \bR^*$ and $Y\in \mfn$. Thus, $f_2=\ad_{X_2}|_{\mfn}=\lambda\,\ad_{X_1}|_{\mfn}=\lambda\, f_1\,$. Moreover, 
\begin{equation*}
B_2=\epsilon_2\, H(X_2,\cdot,\cdot)|_{\Lambda^2 \mfn}=\epsilon_2\, \lambda\, H(X_1,\cdot,\cdot)|_{\Lambda^2 \mfn}+\epsilon_2\, H(Y,\cdot,\cdot)|_{\Lambda^2 \mfn}=\epsilon_1\,\epsilon_2\, \lambda B_1 \, ,
\end{equation*} 
due to $H_1'=H_2'=H|_{\Lambda^3 \mfn}=0$ by Corollary \ref{co:almostAbelian4d}. Now, setting $g:=g_1|_{\mfn}\,$, \eqref{eq:almostAbelian} shows that $(\cG_{g_i},H,\delta=0)$ is generalised Einstein if and only if
\begin{equation*}
\begin{split}
0&=2\, g(f_i^S,f_i^S)+g(B_i,B_i) \, ,\\
g([f_i^*,f_i](Y),Y)&=-g(B_i(Y,\cdot),B_i(Y,\cdot)) \, ,\\
g(f_i^*(Y),B_i(Z,\cdot)^{\sharp})&=g(f_i^*(Z),B_i(ZY,\cdot)^{\sharp}) \, ,
\end{split}
\end{equation*}
for all $Y,Z\in \mfn$, which yields that $(\cG_{g_1},H,\delta=0)$ is generalised Einstein if and only if $(\cG_{g_2},H,\delta=0)$ is generalised Einstein.
\end{proof}
\subsection{Almost Abelian Lorentzian case with $H=0$ and non-degenerate $\mfn$}
Here, we look at the almost Abelian case with non-degenerate $\mfn$ and $H=0$. Moreover, in the entire subsection, we let $X\in \mfg$ be orthogonal to $\mfn$ with $g(X,X)\in \{-1,1\}$ and let $f:=\ad_X|_{\mfn}\in \End(\mfn)$.

First of all, we observe that Corollary \ref{co:almostAbelian} implies the following characterisation of the generalised Einstein condition for $\delta=0$ in our situation:
\begin{corollary}\label{co:almostAbelianH=0}
	Let $(\mfg,H=0,\cG_g)$ be an almost Abelian generalised pseudo-Riemannian Lie algebra with non-degenerate codimension one Abelian ideal $\mfn$. Then:
	\begin{enumerate}[(a)]
		\item 
		$(\mfg,H=0,\cG_g,\delta=0)$ is generalised Einstein if and only if $f$ is normal and $\tr((f^S)^2)=0$.
		\item
		If $g'$ is another pseudo-Riemannian metric on $\mfg$ with $g|_{\mfn}=g'|_{\mfn}$, then $(\mfg,H=0,\cG_g,\delta=0)$ is generalised Einstein if and only if $(\mfg,H=0,\cG_{g'},\delta=0)$ is generalised Einstein.  
	\end{enumerate}
\end{corollary}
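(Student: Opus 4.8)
The plan is to specialise Corollary~\ref{co:almostAbelian} to the case $H=0$ and then simply read off both statements. Since $H=0$ forces $H'=0$ and $B=0$, the system \eqref{eq:almostAbelian} collapses: its second and fourth equations become trivial, the first reduces to $g(f^S,f^S)=0$, and the third reduces to $g([f^*,f](Y),Y)=0$ for all $Y\in\mfn$. Hence everything follows once I interpret these two surviving scalar conditions.

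For the first, I would use the identity $g(A,A)=\tr(A^2)$ valid for any $g$-symmetric endomorphism $A$: expanding $g(A,A)=\sum_i\epsilon_i\,g(Ae_i,Ae_i)$ in an orthonormal basis of $(\mfn,g|_\mfn)$ and using $A^*=A$ gives $\sum_i\epsilon_i\,g(A^2e_i,e_i)=\tr(A^2)$. Applied to $A=f^S$, this turns the first equation into $\tr((f^S)^2)=0$. For the third condition, I would note that $[f^*,f]$ is $g$-symmetric, so the vanishing of the associated quadratic form $Y\mapsto g([f^*,f](Y),Y)$ forces $[f^*,f]=0$ by polarisation; only the non-degeneracy of $g|_\mfn$ is used, not its signature. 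This is exactly the statement that $f$ is normal, completing part~(a).

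For part~(b), the point is that the two conditions in (a) depend only on $f$ up to a non-zero scale and on $g|_\mfn$. Concretely, if $X$ and $X'$ are orthogonal to $\mfn$ with respect to $g$ and $g'$ respectively, then $X'=\lambda X+Y$ with $\lambda\in\bR^*$ and $Y\in\mfn$; since $\mfn$ is Abelian, $\ad_Y|_\mfn=0$ and hence $f'=\ad_{X'}|_\mfn=\lambda f$. Because $g|_\mfn=g'|_\mfn$, the adjoint and the symmetric-part operations on $\End(\mfn)$ agree for the two metrics, so $f'^*=\lambda f^*$ and $f'^S=\lambda f^S$. Normality is scale-invariant, and $\tr((f'^S)^2)=\lambda^2\,\tr((f^S)^2)$ vanishes if and only if $\tr((f^S)^2)$ does, as $\lambda\neq 0$. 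Thus the criterion in (a) holds for $g$ precisely when it holds for $g'$.

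The argument is essentially routine; the only points requiring a little care are the polarisation step in the third equation, where one must recall that a symmetric bilinear form is determined by its quadratic form irrespective of the (possibly indefinite) signature of $g|_\mfn$, and the bookkeeping in (b) showing that passing from $g$ to $g'$ rescales $f$ only by the nonzero scalar $\lambda$.
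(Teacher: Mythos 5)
Your proposal is correct and follows essentially the same route as the paper: specialise Corollary \ref{co:almostAbelian} to $H'=B=0$, identify $g(f^S,f^S)=\tr((f^S)^2)$ via an orthonormal basis, read off normality from the vanishing of the quadratic form of the symmetric operator $[f^*,f]$, and for (b) observe that $f'=\lambda f$ with $\lambda\neq 0$ so that both conditions are scale-invariant. The paper's proof is terser but contains exactly these steps, so no further comment is needed.
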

\begin{proof}
	First of all, note that $g(f^S,f^S)=\tr((f^S)^2)$. Indeed, taking $(e_1,\ldots,e_{n-1})$ an orthonormal basis of $\mfn$ with $g(e_i,e_i)=\epsilon_i\,$, we calculate
	\begin{equation*}
		\tr((f^S)^2)=\sum_{i=1}^{n-1} \epsilon_i\, g((f^S)^2(e_i),e_i)=\sum_{i=1}^{n-1} \epsilon_i\, g(f^S(e_i),f^S(e_i))=g(f^S,f^S) \, .
	\end{equation*}
	Then part (a) follows directly from Corollary \ref{co:almostAbelian}. Moreover, part (b) follows from (a) since if $X'$ is chosen orthogonally to $\mfn$ with respect to $g'$ with $g'(X',X')\in \{-1,1\}$ and $f':=\ad(X')|_{\mfn}\,$, then
	$f'=\lambda\, f$ for some $\lambda\in \bR^*$.
\end{proof}
\begin{remark}\label{re:fSdiagonalisable}
	\begin{itemize}[wide]
		\item[(i)]
		Note that if $g$ is definite on $\mfn$ or, more generally, if $f^S$ is diagonalisable, then $\tr((f^S)^2)=0$ if and only if $f^S=0$. Hence, in this case, $(\mfg,0,\mathcal{G}_g,0)$ is generalised Einstein if and only if $f$ is skew-symmetric, which implies that $g$ is flat by the computations in \cite{Mi}. 
		\item[(ii)] We note that the trace form $\tau\in \mfg^*$, $\tau(W)=\tr(\ad_W)$ for $W\in \mfg$ is given in our case by $\tau=\epsilon\tr(f)X^\flat$. Moreover, by
		 \cite[Corollary 2.30]{CK}, $(\mfg,0, \mathcal{G}_g,0)$ is generalised Einstein if and only if $g$ is a Ricci soliton satisfying
		\begin{equation*}
		\mathrm{Ric}^g+\nabla \tau=0 \, .
		\end{equation*}
         Now a short computation using the Koszul formula yields that the Levi-Civita connection $\nabla$ of $g$ is given by
         \begin{equation*}
         	\nabla_X Z=f^A(Z) \, ,\quad \nabla_X X=0 \, ,\quad \nabla_Z \tilde{Z}=\epsilon\, g(f^S(Z),\tilde{Z}) X \, ,\quad \nabla_Z X= -f^S(Z) \, ,
         \end{equation*}
         for all $Z,\tilde{Z}\in \mfn$. Hence, $\nabla \tau=-\epsilon\tr(f)\, g(f^S(\cdot),\cdot)$ so that the conditions $\tr((f^S)^2)=0$ and $f$ being normal are equivalent to
         \begin{equation*}
        \mathrm{Ric}^g=\epsilon\tr(f)\, g(f^S(\cdot),\cdot) \, .
         \end{equation*}
\end{itemize}
\end{remark}

Using the canonical forms in Lemma \ref{le:canonicalforms}, we are now able to describe all possible almost Abelian generalised Einstein Lorentzian Lie algebras with $H=0$, $\delta=0$ and with codimension one non-degenerate Abelian ideal. In order to distinguish the cases where the pseudo-Riemannian metric is flat, we need a classification of all flat pseudo-Riemannian metrics on almost Abelian Lie algebras with non-degenerate codimension one Abelian ideal:
\begin{proposition}\label{pro:almostAbelianflat}
Let $\mfg$ be an almost Abelian Lie algebra and $g$ be a pseudo-Riemannian metric on $\mfg$ such that a codimension one Abelian ideal $\mfn$ is non-degenerate. Then $g$ is flat if and only if there exists some $Y\in \mfn$ with $g(Y,Y)=0$ and some $\delta\in \{-1,1\}$ such that
\begin{equation*}
f=h+\delta\, Y^\flat\otimes Y \, ,
\end{equation*}
where $h$ is a $g$-anti-symmetric endomorphism of $\mfn$ satisfying $h(Y)=0$ and $f=\ad_X\vert_\mfn$ as explained above.
\end{proposition}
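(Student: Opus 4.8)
The plan is to compute the Riemann curvature tensor directly from the explicit Levi-Civita connection recorded in Remark~\ref{re:fSdiagonalisable}(ii) and to read off flatness as a system of algebraic identities on $f$. Writing $\mfg = \mfn \oplus \spa{X}$ with $X \perp \mfn$, $\epsilon = g(X,X) \in \{-1,1\}$, and splitting $f = f^S + f^A$ into its $g|_{\mfn}$-symmetric and $g|_{\mfn}$-antisymmetric parts, I would insert
$\nabla_X Z = f^A(Z)$, $\nabla_X X = 0$, $\nabla_Z \tilde Z = \epsilon\, g(f^S(Z),\tilde Z)\,X$, $\nabla_Z X = -f^S(Z)$
into $R(U,V)W = \nabla_U\nabla_V W - \nabla_V\nabla_U W - \nabla_{[U,V]}W$, using $[\mfn,\mfn]=0$ and $[X,Z]=f(Z)$. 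The component $R(Z,\tilde Z)X$ vanishes identically by symmetry of $g(f^S(\cdot),f^S(\cdot))$, and the three remaining independent components, after setting them to zero and cancelling the nonzero factor $\epsilon$, yield
\begin{align*}
g(f^S(Z), W)\, f^S(\tilde Z) &= g(f^S(\tilde Z), W)\, f^S(Z), \\
g(f^S(Z), f^A(W)) + g(f^S(f(Z)), W) &= 0, \\
(f^S)^2(Z) + [f^S, f^A](Z) &= 0,
\end{align*}
for all $Z,\tilde Z,W \in \mfn$. Thus $g$ is flat if and only if these three identities hold.

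I would then analyse the first identity. Read as an equality of operators in $W$, it says the rank-one maps $W \mapsto g(f^S(Z),W)\,f^S(\tilde Z)$ and $W \mapsto g(f^S(\tilde Z),W)\,f^S(Z)$ coincide for all $Z,\tilde Z$, which forces the vectors $f^S(Z)$ to be mutually proportional, i.e.\ $\rank f^S \le 1$; conversely, any symmetric endomorphism of rank at most one satisfies it. Such an $f^S$ can be written $f^S = \delta\, Y^\flat\otimes Y$ with $(Y^\flat\otimes Y)(Z) = g(Y,Z)\,Y$, for some $Y \in \mfn$ and sign $\delta \in \{-1,1\}$ (the case $f^S=0$ corresponding to $Y=0$, which trivially fits the claimed form with $h=f^A$ skew). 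Setting $h := f^A$ and $u := h(Y)$ and substituting $f^S = \delta\, Y^\flat\otimes Y$ into the third identity, I would obtain
\begin{equation*}
g(Y,Y)\, g(Y,Z)\, Y - \delta\, g(u,Z)\, Y - \delta\, g(Y,Z)\, u = 0 \qquad \text{for all } Z \in \mfn .
\end{equation*}
Since $h$ is skew one has $g(u,Y)=0$; comparing the $Y$-component and $u$-component (if $u,Y$ are independent the $Z$-linear coefficients must vanish separately, forcing $Y=0$; if $u=\lambda Y$ then $\lambda g(Y,Y)=0$ and substitution gives $-2\delta\lambda\, g(Y,Z)\,Y=0$, again a contradiction unless $\lambda=0$) rules out $u\neq 0$ by non-degeneracy of $g|_{\mfn}$. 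Hence $h(Y)=0$, and the surviving term $g(Y,Y)\,g(Y,Z)\,Y=0$ then forces $g(Y,Y)=0$. This proves the forward implication; note the second identity is not even needed here.

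For the converse I would substitute $f = h + \delta\, Y^\flat\otimes Y$ with $h$ skew, $h(Y)=0$ and $g(Y,Y)=0$ back into all three conditions: the first holds automatically because $f^S$ has rank at most one, and a short computation shows that the hypotheses $g(Y,Y)=0$ and $h(Y)=0$ annihilate every term of the second and third identities (each term carries a factor $g(Y,Y)$, $g(h(Y),\cdot)$ or $g(Y,h(\cdot))=-g(h(Y),\cdot)$). The bookkeeping in the curvature computation is routine, so the main obstacle is the case analysis of the third identity, where the interplay between the null direction $Y$ and the skewness relation $g(h(Y),Y)=0$ is exactly what pins down both $g(Y,Y)=0$ and $h(Y)=0$.
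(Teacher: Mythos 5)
Your proposal is correct and follows essentially the same route as the paper: both compute the curvature from the explicit Levi-Civita connection, extract the rank-one condition $f^S=\delta\,Y^\flat\otimes Y$ from the $R(Z,\tilde Z)W$ component, and then use the $R(X,Z)X$ component together with skewness of $f^A$ to pin down $h(Y)=0$ and $g(Y,Y)=0$ (your observation that the $R(X,Z)W$ component is equivalent to the $R(X,Z)X$ one is a minor bonus). The only packaging difference is that the paper phrases the final case analysis as ``$f^A(Y)=\lambda Y$, hence $\lambda=0$ or $Y$ null, and in fact both,'' which is the same dichotomy you run through with $u=\lambda Y$.
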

\begin{proof}
First of all, recall from Remark \ref{re:fSdiagonalisable} that the Levi-Civita connection $\nabla$ is given by 
    \begin{equation*}
	\nabla_X Z=f^A(Z) \, ,\quad \nabla_X X=0 \, ,\quad \nabla_Z \tilde{Z}=\epsilon\, g(f^S(Z),\tilde{Z}) X \, ,\quad \nabla_Z X= -f^S(Z) \, ,
\end{equation*}
for all $Z,\tilde{Z}\in \mfn$, where $\epsilon:=g(X,X)\in \{-1,1\}$. The condition for $g$ to be flat is equivalent to $[\nabla_{U},\nabla_V]=\nabla_{[U,V]}$ for all $U,V\in \mfg$. Now, we see that
\begin{equation*}
	[\nabla_{Z_1},\nabla_{Z_2}](X)=0=\nabla_{[Z_1,Z_2]}(X) \, ,
\end{equation*}
for all $Z_1,Z_2\in \mfn$. Hence, $[\nabla_{Z_1},\nabla_{Z_2}]=\nabla_{[Z_1,Z_2]}=0$ for all $Z_1,Z_2\in \mfn$ if and only if
\begin{equation*}
	[\nabla_{Z_1},\nabla_{Z_2}](W)=-\epsilon\, \left(g(f^S(Z_2),W) f^S(Z_1)- g(f^S(Z_1),W) f^S(Z_2)\right)=0 \, ,
\end{equation*}
for all $Z_1,Z_2,W\in \mfn$, i.e.  if and only if
\begin{equation*}
	f^S(Z_2)^\flat\otimes f^S(Z_1)=f^S(Z_1)^\flat\otimes f^S(Z_2) \, ,
\end{equation*}
for all $Z_1,Z_2\in \mfn$. We know from \cite{Mi} that if $f^S=0$, i.e. if $f$ is skew-symmetric, then $\mfg$ is flat. So let us assume that $f^S\neq 0$. The above equation then implies $\dim(\mathrm{im}(f^S))=1$. Hence, $f^S=\delta\, Y^\flat\otimes Y$ for some non-zero $Y\in \mfn$ and some $\delta\in \{-1,1\}$.

Then, the condition $[\nabla_X,\nabla_Z]=\nabla_{[X,Z]}=\nabla_{f(Z)}$ applied to $X$ yields
\begin{equation}\label{eq:flatcond}
		\delta\, g(Y,Z)f^A(Y)=-[\nabla_X,\nabla_Z]X=-\nabla_{f(Z)}X=\delta\, g(Y,f(Z))Y \, ,
\end{equation}
for all $Z\in \mfn$, showing that $f^A(Y)=\lambda\, Y$ for some $\lambda\in \bR$. Thus,
\begin{equation*}
\lambda g(Y,Y)=g(f^A(Y),Y)=-g(Y,f^A(Y))=-\lambda g(Y,Y) \, ,
\end{equation*}
which shows that $\lambda=0$ or $Y$ is null. We show that, in fact, both conditions have to hold. For this, assume first that $\lambda=0$, i.e. $f^A(Y)=0$. Then Equation \eqref{eq:flatcond} implies $0=g(Y,f(Z))=g(Y,f^S(Z))=\delta\, g(Y, g(Y,Z) Y)=\delta\, g(Y,Y)\, g(Y,Z)$ for all $Z\in\mfn$ and so $g(Y,Y)=0$, i.e. $Y$ is null. Next, assume that $Y$ is null. Then Equation \eqref{eq:flatcond} yields
\begin{equation*}
\begin{split}
		\delta \lambda\, g(Y,Z) Y&=\delta\, g(Y,Z)f^A(Y)=\delta\, g(Y,f(Z))Y=\delta\, g(Y,f^A(Z)+\delta g(Y,Z)Y)Y\\
		&=\delta\, g(Y,f^A(Z))Y=-\delta\, g(f^A(Y),Z) Y=-\delta \lambda\, g(Y,Z) Y \, ,
\end{split}
\end{equation*}
and so also here $\lambda=0$, i.e. $f^A(Y)=0$. Now, assuming that $f^A(Y)=0$ and that $Y$ is null, a short computation shows that also $[\nabla_X,\nabla_Z]\tilde{Z}=\nabla_{f(Z)}\tilde{Z}$ holds and so $g$ is flat in that case. This proves the statement.
\end{proof}
We can now prove the main result of this subsection:
\begin{theorem}\label{th:almostAbeliannondegH=0}
Let $(\mfg,H=0,\mathcal{G}_g,\delta=0)$ be an $n$-dimensional, $n\geq 3$, almost Abelian generalised Lorentzian Lie algebra with \emph{non-degenerate} codimension one Abelian ideal $\mfn$. Then:
\begin{itemize}
	\item[(a)] $(\mfg,H=0,\mathcal{G}_g,\delta=0)$ is generalised Einstein with positive definite $\mfn$ if and only if there exists some orthonormal basis $(e_1,\ldots,e_{n-1})$ and some $k\in \left\{0,\ldots,\left\lfloor \tfrac{n-1}{2}\right\rfloor\right\}$ and $a_1\geq \ldots\geq a_k>0$ such that
	\begin{equation*}
	f=\diag(L_1(0,a_1),\ldots,L_1(0,a_k),0,\ldots,0) \, ,
	\end{equation*}
	with respect to $(e_1,\ldots,e_{n-1})$. In this case, $g$ is flat.
	\item[(b)] $(\mfg,H=0,\mathcal{G}_g,\delta=0)$ is generalised Einstein with Lorentzian $\mfn$ if and only if there exists some orthonormal basis $(e_1,\ldots,e_{n-1})$ of $\mfn$ with $g(e_1,e_1)=-1$ such that with respect to that basis, $f$ equals one of the following matrices:
	\begin{itemize}
	\item[(i)]
	$n=3$ and
	\begin{equation*}
	f=\begin{pmatrix} 0 & \rho \\ \rho & 0 \end{pmatrix} \, ,
	\end{equation*}
	for some $\rho\in \bR$,
	\item[(ii)]
	or $n$ is even and
		\begin{equation*}
	f=\diag(M_3(\sigma),L_1(0,a_1),\ldots,L_1(0,a_k),0,\ldots,0) \, ,
	\end{equation*}
	for some $\sigma\in \bR$, some $k\in \{0,\ldots,\tfrac{n-4}{2}\}$ and certain $a_1\geq \ldots\geq a_k>0$,
	\item[(iii)]
	or $n\geq 5$ is odd and
    \begin{equation*}
    	f=\diag(M_4(\sigma,\tau),L_1(0,a_1),\ldots,L_1(0,a_k),0,\ldots,0) \, .
    \end{equation*}
    for some $\sigma\in \bR$, $\tau\geq 0$, some $k\in \{0,\ldots,\tfrac{n-5}{2}\}$ and certain $a_1\geq \ldots \geq a_k>0$,
    \item[(iv)]
    or
    \begin{equation*}
    f=\diag(L_1(\alpha,\beta),L_1(a_1,b_1),\ldots,L_1(a_k,b_k),c_1,\ldots,c_s) \, ,
    \end{equation*}
    for some $\alpha\in \bR$, some $k\in \left\{0,\ldots,\left\lfloor \tfrac{n-3}{2}\right\rfloor\right\}$
and $s\in \left\{0,\ldots,n-3\right\}$
with $2k+s=n-3$, and certain $a_1,\ldots,a_k\in \bR$, $b_1\geq \ldots\geq b_k>0$, $c_1\geq\ldots\geq c_s$
with $\beta=+\sqrt{\alpha^2+\sum_{i=1}^k a_i^2+\sum_{j=1}^s \tfrac{c_j^2}{2}}\,$,
\item[(v)]
or
\begin{equation*}
f=\begin{pmatrix} \tfrac{\epsilon}{2} & \tfrac{\epsilon}{2} & v^t \\ -\tfrac{\epsilon}{2} & -\tfrac{\epsilon}{2} & -v^t \\ v & v & \diag(L_1(0,a_1),\ldots,L_1(0,a_k),0,\ldots,0)
     \end{pmatrix} \, ,
 \end{equation*}
  for $\epsilon \in \{-1,1\}$, some $v\in \bR^{n-3}$, some $k\in \left\{0,\ldots,\left\lfloor \tfrac{n-3}{2}\right\rfloor\right\}$ and certain $a_1\geq \ldots\geq a_k>0$,
\item[(vi)]
or $n\geq 4$ and
\begin{equation*}
	f=\begin{pmatrix} 0 & -\tfrac{1}{\sqrt{2}} & 0 & v^t \\
		\tfrac{1}{\sqrt{2}} & 0 & \tfrac{1}{\sqrt{2}} &0\\
		0 & \tfrac{1}{\sqrt{2}} & 0 & -v^t \\
		v & 0 & v & \diag(L_1(0,a_1),\ldots,L_1(0,a_k),0,\ldots,0)
	\end{pmatrix}  \, ,            
\end{equation*}
  for some $v\in \bR^{n-4}$, some $k\in \left\{0,\ldots,\left\lfloor \tfrac{n-4}{2}\right\rfloor\right\}$ and certain $a_1\geq \ldots\geq a_k>0$.
	\end{itemize}
Moreover, $g$ is flat if and only if $f$ is as in the cases (i), (ii), (iii) or (v).
\end{itemize}
\end{theorem}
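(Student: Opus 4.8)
The plan is to reduce the entire classification to a problem of linear algebra via the characterisation of Corollary \ref{co:almostAbelianH=0}(a): since $H=0$ and $\mfn$ is non-degenerate, $(\mfg,0,\cG_g,0)$ is generalised Einstein if and only if $f$ is normal and $\tr((f^S)^2)=0$. Writing $f=f^S+f^A$ with $f^S$ the $g|_\mfn$-symmetric and $f^A$ the $g|_\mfn$-skew part, normality $[f,f^*]=0$ is equivalent to $[f^S,f^A]=0$. Thus I must classify, up to conjugation by $O(g|_\mfn)$, all pairs $(f^S,f^A)$ with $f^S$ symmetric, $f^A$ skew, $[f^S,f^A]=0$, and $\tr((f^S)^2)=0$; the stated normal forms then follow by reading off $f=f^S+f^A$.

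\textbf{Part (a) and the skew cases.} For $\mfn$ positive definite, $f^S$ is diagonalisable and $\tr((f^S)^2)=g(f^S,f^S)\ge 0$ with equality iff $f^S=0$; hence $f=f^A$ is skew, and the Euclidean normal form for skew endomorphisms yields $\diag(L_1(0,a_1),\dots,L_1(0,a_k),0,\dots,0)$. Normality is automatic and flatness follows from Remark \ref{re:fSdiagonalisable}(i). The same mechanism governs the Lorentzian subcases with $f^S=0$: there $f=f^A$ is skew, and Lemma \ref{le:canonicalforms}(b) produces exactly the forms (i), (ii), (iii) according to whether $n-1$ equals $2$, is odd, or is even.

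\textbf{Part (b), the non-skew cases.} Here I would put $f^S$ into the Lorentzian normal form of Lemma \ref{le:canonicalforms}(a), split into the four types, and impose $\tr((f^S)^2)=0$ via Lemma \ref{le:trfsquare=0}. This collapses each type drastically: the first type forces $f^S=0$ (handled above); the second forces a single Lorentzian block $L_1(\alpha,\beta)$ with $\beta$ fixed by the trace condition, together with a diagonal Euclidean part; the third forces $f^S=L_2(0,\epsilon)\oplus 0$ and the fourth $f^S=L_3(0)\oplus 0$. It remains to determine the skew $f^A$ in the commutant of each fixed $f^S$. In the second type the non-real eigenvalues $\alpha\pm i\beta$ of $L_1(\alpha,\beta)$ are disjoint from the real spectrum of the Euclidean part, so $f^A$ is block-diagonal; on the Lorentzian plane the only commuting skew operator is $0$, while on the complement $f^S+f^A$ is a Euclidean normal operator $\diag(L_1(a_i,b_i),c_j)$, giving (iv) with the displayed constraint on $\beta$. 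In the third and fourth types $f^S$ is nilpotent, so all of $\mfn$ is a single generalised eigenspace and $f^A$ may couple the Lorentzian sub-block to the Euclidean complement; computing this commutant in a null-adapted basis and normalising by the stabiliser of $f^S$ produces the coupling vector $v$ and the residual block $\diag(L_1(0,a_i),0)$, yielding (v) and (vi).

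\textbf{Flatness and the main obstacle.} Flatness is then decided by Proposition \ref{pro:almostAbelianflat}: $g$ is flat iff $f^S=\delta\,Y^\flat\otimes Y$ for a null $Y$ (equivalently $\rank f^S\le 1$ with null image) and $f^A(Y)=0$. Cases (i)--(iii) have $f^S=0$; case (v) has $f^S=L_2(0,\epsilon)\oplus 0$ of rank one with null image $Y=e_1-e_2$, and one checks $f^A(e_1-e_2)=0$ directly from the displayed matrix, so these are flat. By contrast $f^S$ has full rank on the Lorentzian plane in (iv) and $\rank f^S=2$ in (vi), so neither is of this form and they are not flat. The main obstacle will be the commutant computation for the third and fourth types: because $f^S$ is there a non-trivial nilpotent symmetric operator with null image, its skew commutant inside $\so(g|_\mfn)$ is not block-diagonal, and extracting precisely the coupled normal forms (v) and (vi)---including the ordering conventions on the $a_i$ and the exact shape of $v$---requires a careful null-adapted basis together with the residual gauge freedom fixing $f^S$. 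This is where the bulk of the work lies.
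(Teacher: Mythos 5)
Your proposal is correct and follows essentially the same route as the paper: reduction to ``$f$ normal and $\tr((f^S)^2)=0$'' via Corollary \ref{co:almostAbelianH=0}, a case split on the Lorentzian normal form of $f^S$ from Lemma \ref{le:canonicalforms}(a) combined with Lemma \ref{le:trfsquare=0}, computation of the skew commutant in each case (with the invariant-subspace argument for the second type and the null-adapted commutant computation for the third and fourth types), and the flatness criterion of Proposition \ref{pro:almostAbelianflat}. The obstacle you flag---the non-block-diagonal skew commutant of the nilpotent symmetric $f^S$ in the third and fourth types---is exactly where the paper spends its effort, and your sketch of how to resolve it matches the paper's argument.
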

\begin{proof}
\begin{itemize}[wide]
	\item[(a)]
	By Remark \ref{re:fSdiagonalisable}, $f$ is a skew-symmetric endomorphism on the positive definite subspace $\mfn$ and so of the claimed form.
	\item[(b)]
By Corollary \ref{co:almostAbelianH=0},  $(\mfg,H=0,\mathcal{G}_g,\delta=0)$ is generalised Einstein if and only if $\tr((f^S)^2)=0$ and $f$ is normal, the latter condition being equivalent to $[f^S,f^A]=0$. Now we distinguish different cases according to the type of $f^S$ and use the associated orthonormal basis $(e_1,\ldots,e_{n-1})$ of $\mfn$ with $g(e_1,e_1)=-1$ with respect to which $f^S$ has the corresponding normal form:
\begin{itemize}[$\bullet$,wide]
	\item If $f^S$ is of first type, then Lemma \ref{le:trfsquare=0} implies $f^S=0$, i.e. that $f$ is skew-symmetric. Hence, by Lemma \ref{le:canonicalforms} (b), there exists a (maybe different) orthonormal basis $(e_1,\ldots,e_{n-1})$ of $\mfn$ such that $f$ is as in case (i), (ii) or (iii).
   \item
   If $f^S$ is of the second type, i.e. $f^S=\diag(L_1(\alpha,\beta),b_1,\ldots,b_{n-3})$, then the condition $\beta\neq 0$ and $[f^S,f^A]=0$ imply that $f^A$ has to preserve both $U_1:=\spa{e_1,e_2}$ and 
$U_2:=\spa{e_3,\ldots,e_{n-1}}$. Since $f|_{U_2}$ is normal and $U_2$ is positive definite, $f|_{U_2}$ is diagonalisable over the complex numbers. Consequently, there is an 
orthonormal basis of $U_2\,$, which we name again $(e_3,\ldots,e_{n-1})$, some $k\in \left\{0,\ldots,\left\lfloor \tfrac{n-3}{2}\right\rfloor\right\}$, some $s\in \left\{0,\ldots,n-3\right\}$
with $2k+s=n-3$ and $a_1,\ldots,a_k\in \bR$, $b_1\geq \ldots\geq b_k>0$, $c_1\geq\ldots\geq c_s$ such that
\begin{equation*}
f|_{U_2}=\diag(L_1(a_1,b_1),\ldots, L_1(a_k,b_k),c_1,\ldots,c_s) \, .
\end{equation*}
Now we have $f^A|_{U_1}=\left(\begin{smallmatrix} 0 & \rho \\ \rho & 0\end{smallmatrix}\right)$ for some $\rho\in \bR$ and so the condition $[f^A,f^S]=0$ on $U_1$ and $f^S|_{U_1}=L_1(\alpha,\beta)$ with $\beta\neq 0$ forces $\rho=0$. Thus, $f|_{U_1}=f^S|_{U_1}$ and
\begin{equation*}
\begin{split}
f&=\diag(L_1(\alpha,\beta),L_1(a_1,b_1),\ldots,L_1(a_k,b_k),c_1,\ldots,c_s) \, ,\\
f^S&=\diag(L_1(\alpha,\beta),a_1,a_1,\ldots,a_k,a_k,c_1,\ldots,c_s) \, .
\end{split}
\end{equation*}
Now, by Lemma \ref{le:trfsquare=0}, the condition $\tr((f^S)^2)=0$ is equivalent to $\beta^2=\alpha^2+\sum_{i=1}^k a_i^2+\sum_{j=1}^s \tfrac{c_j^2}{2}$. 

Next, we show that $g$ is never flat. For this, assume by contradiction that $g$ was flat. Then, by Proposition \ref{pro:almostAbelianflat}, $f^S$ has to be of the form $f^S=\delta\, v^\flat\otimes v$ for some null vector $v\in \spa{e_1,\ldots,e_{n-1}}$ and some $\delta\in \{-1,1\}$. Consequently, $(f^S)^2=0$, and so $L_1(\alpha,\beta)^2=0$. However, the latter equality implies $\alpha=\beta=0$, which contradicts the assumption $\beta>0$. Hence, $g$ is never flat in this case.
\item
If $f^S$ is of the third type, Lemma \ref{le:trfsquare=0} implies $f^S=\diag(L_2(0,\epsilon),0,\ldots,0)$ with respect to the basis $(e_1,\ldots,e_{n-1})$. Since $f^A$ commutes with $f^S$, $f^A$ has to preserve both $\ker(f^S)=\spa{e_1-e_2,e_3,\ldots,e_{n-1}}$ and $\mathrm{im}(f^S)=\spa{e_1-e_2}$. 
Consequently,
$f^A(e_1)=\rho\, e_2 + v$ and $f^A(e_2)=\rho\, e_1+v$ for $\rho\in \bR$ and some $v\in U:=\spa{e_3,\ldots,e_{n-1}}$. As a result,
\begin{align*}
\epsilon\tfrac{\rho}{2}(e_1-e_2)&=f^S(\rho\, e_2+v)=f^S(f^A(e_1))\\
&=f^A(f^S(e_1))=\epsilon\tfrac{1}{2} f^A(e_1-e_2)=-\epsilon\tfrac{\rho}{2}(e_1-e_2) \, ,
\end{align*}
i.e. $\rho=0$. Thus,
\begin{equation*}
f^A=\begin{pmatrix} 0 & 0 & v^t \\ 0 & 0 & -v^t \\ v & v & \tilde{f}^A
     \end{pmatrix} \, ,
\end{equation*}
with respect to $\spa{e_1}\oplus \spa{e_2}\oplus U$ for some anti-symmetric $\tilde{f}^A \in \End(U)$. Since $U$ is positive definite, we may choose an appropriate orthonormal basis such that
\begin{equation*}
\tilde{f}^A=\diag(L_1(0,a_1),\ldots,L_1(0,a_k),0,\ldots,0) \, ,
\end{equation*}
for some $k\in \left\{0,\ldots,\left\lfloor \tfrac{n-3}{2}\right\rfloor\right\}$ and certain $a_1\geq \ldots\geq a_k>0$. Thus,
\begin{equation*}
f=\begin{pmatrix} \tfrac{\epsilon}{2} & \tfrac{\epsilon}{2} & v^t \\ -\tfrac{\epsilon}{2} & -\tfrac{\epsilon}{2} & -v^t \\ v & v & \diag(L_1(0,a_1),\ldots,L_1(0,a_k),0,\ldots,0)
     \end{pmatrix} \, ,
\end{equation*}
As $f^S=-\epsilon\, u^\flat\otimes u$ for the null vector $u:=\frac{1}{\sqrt{2}}\left(e_1-e_2\right)$, and $f^A(u)=0$, $g$ is flat by Proposition \ref{pro:almostAbelianflat}.
\item
If $f^S$ is of the fourth type, Lemma \ref{le:trfsquare=0} implies $f^S=\diag(L_3(0),0,\ldots,0)$ with respect to the basis $(e_1,\ldots,e_{n-1})$. Here, we set $U:=\spa{e_4,\ldots,e_{n-1}}$ and
write
\begin{equation*}
f^A=\begin{pmatrix} 0 & \rho_1 & \rho_2 & v^t \\
                    \rho_1 & 0 & -\rho_3 & -w^t \\
                    \rho_2 & \rho_3 & 0 & -z^t \\
                    v & w & z & \tilde{f}^A
    \end{pmatrix}       \, ,       
\end{equation*}
for $\rho_1,\rho_2,\rho_3\in \bR$, $v,w,z\in \bR^{n-4}$ and $\tilde{f}^A$ being skew-symmetric on the positive definite subspace $U$. Now, arguing as before, $f^A$ has to preserve both $\mathrm{im}(f^S)=\spa{e_1-e_3,e_2}$ and $\ker(f^S)=\spa{e_1-e_3,e_4,\ldots,e_{n-1}}$, and so also $\mathrm{im}(f^S)\cap \ker(f^S) =\spa{e_1-e_3}$. Nevertheless, $f^A$ preserves $\spa{e_1-e_3}$ if and only if $z=v$ and $\rho_3=-\rho_1$. Note that then $f^A$ preserves $\ker(f^S)=\spa{e_1-e_3,e_4,\ldots,e_{n-1}}$ if and only if  $w=0$. Under these assumptions, $\mathrm{im}(f^S)=\spa{e_1-e_3,e_2}$ is also preserved. Now
\begin{equation*}
\begin{split}
-\tfrac{\rho_1}{\sqrt{2}}e_1+\tfrac{\rho_2}{\sqrt{2}}e_2+\tfrac{\rho_1}{\sqrt{2}}e_3&=f^S(\rho_1 e_2+\rho_2 e_3+v)=f^S(f^A(e_1))=f^A(f^S(e_1))\\
&=f^A(\tfrac{1}{\sqrt{2}}e_2)=\tfrac{\rho_1}{\sqrt{2}} (e_1-e_3),
\end{split}
\end{equation*}
and so $\rho_1=0$, $\rho_2=0$, which also gives $\rho_3=-\rho_1=0$. Hence, by choosing an appropriate orthonormal basis of $U$, we have
\begin{equation*}
	f=\begin{pmatrix} 0 & -\tfrac{1}{\sqrt{2}} & 0 & v^t \\
		\tfrac{1}{\sqrt{2}} & 0 & \tfrac{1}{\sqrt{2}} &0\\
		0 & \tfrac{1}{\sqrt{2}} & 0 & -v^t \\
		v & 0 & v & \diag(L_1(0,a_1),\ldots,L_1(0,a_k),0,\ldots,0)
	\end{pmatrix},            
\end{equation*}
where $k\in \left\{0,\ldots,,\left\lfloor\tfrac{n-4}{2}\right\rfloor\right\}$ and $a_1\geq \ldots\geq a_k\,$.
\end{itemize}
By the same reasoning as in the second case, $g$ being flat implies $(f^S)^2=0$. However, a direct computation shows
$(f^S)^2\neq 0$ and we conclude that $g$ is never flat in this case.
\end{itemize}
\end{proof}
We are able to obtain some results in the non-zero divergence case as well:
\begin{corollary}\label{co:almostAbeliandeltaneq0}
Let $(\mfg,H=0,\cG_g)$ be one of the generalised Lorentzian Lie algebras in Theorem \ref{th:almostAbeliannondegH=0}	(a), (b) (i), (b) (ii) or (b) (iii) with the corresponding endomorphism $f\in \End(\mfn)$ of the codimension one Abelian ideal $\mfn$. Moreover, let $\delta\in E^*$ be arbitrary. Then $(\mfg,H=0,\cG_g,\delta)$ is generalised Einstein if and only if $\delta(\im(f)\oplus \im(f)^\flat)=0$.
\end{corollary}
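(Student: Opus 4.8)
The plan is to apply Corollary~\ref{co:gEdeltanonzero}(a), which is available here because each of the Lie algebras listed is, by Theorem~\ref{th:almostAbeliannondegH=0}, generalised Einstein for $\delta=0$. Since $H=0$, the criterion of that corollary reduces to the three families of conditions
\begin{equation*}
\delta(\ad_W^*(W))=\delta((\ad_W^*(W))^\flat)=0,\qquad \delta([W_1,W_2])=0,\qquad \delta([W_1,W_2]^\flat)=0,
\end{equation*}
to be checked for all $W,W_1,W_2\in\mfg$. The goal is to show that, for the $f$ at hand, these are jointly equivalent to the single condition $\delta(\im(f)\oplus\im(f)^\flat)=0$.

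The structural observation on which everything rests is that in precisely the cases (a), (b)(i), (b)(ii) and (b)(iii) the endomorphism $f$ is \emph{skew-symmetric} with respect to $g|_\mfn$: these are exactly the sub-cases with $f^S=0$, since $L_1(0,a_i)$, the block $\bigl(\begin{smallmatrix}0&\rho\\\rho&0\end{smallmatrix}\bigr)$, $M_3(\sigma)$ and $M_4(\sigma,\tau)$ are the skew-symmetric normal forms of Lemma~\ref{le:canonicalforms}(b). Writing $\mfg=\mfn\oplus\spa{X}$ with $\ad_X|_\mfn=f$ and $\mfn$ Abelian, for $W_i=Y_i+t_iX$ one gets $[W_1,W_2]=t_1 f(Y_2)-t_2 f(Y_1)\in\im(f)$, and the choice $W_1=X$, $W_2=Y$ shows $\mfg'=\im(f)$. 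Hence the two bracket conditions say exactly $\delta(\im(f))=0$ and $\delta(\im(f)^\flat)=0$, i.e.\ $\delta(\im(f)\oplus\im(f)^\flat)=0$.

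It then remains to see that the first, divergence-type condition is automatic once this holds. Using $\ad^*(X)=0$ together with the block form of $\ad^*(Y)$ from the proof of Proposition~\ref{pro:codim1idealnondeg}, a short computation for $W=Y+tX$ gives
\begin{equation*}
\ad_W^*(W)=\ad^*(Y)(Y+tX)=-\tfrac{1}{\epsilon}\,g(f^*(Y),Y)\,X+t\,f^*(Y).
\end{equation*}
Here the skew-symmetry of $f$ forces $g(f^*(Y),Y)=g(Y,f(Y))=0$, so $\ad_W^*(W)=-t\,f(Y)\in\im(f)$; thus $\delta(\ad_W^*(W))=\delta((\ad_W^*(W))^\flat)=0$ is a consequence of $\delta(\im(f)\oplus\im(f)^\flat)=0$. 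Both implications now follow at once.

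The only genuinely delicate point is the skew-symmetry of $f$, which is what makes the term $g(Y,f(Y))$ vanish and keeps $\ad_W^*(W)$ inside $\im(f)$. This is also why case (b)(v)---although flat---is excluded: there $f^S=-\epsilon\,u^\flat\otimes u\neq0$ for a null vector $u$, the quantity $g(Y,f(Y))$ no longer vanishes identically, and $\ad_W^*(W)$ picks up a component along $X\notin\im(f)$, so the equivalence with $\delta(\im(f)\oplus\im(f)^\flat)=0$ breaks down.
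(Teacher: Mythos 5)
Your proof is correct and follows essentially the same route as the paper: identify that $f$ is skew-symmetric exactly in cases (a), (b)(i)--(iii), show $\ad_W^*(W)\in\im(f)=\mfg'$ (the paper computes $\ad_Y^*(Y)=0$ directly where you read it off the block form of $\ad^*(Y)$ from Proposition \ref{pro:codim1idealnondeg}, but the computation is the same), and conclude via Corollary \ref{co:gEdeltanonzero}(a).
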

\begin{proof}
We note first that in the relevant cases of Theorem \ref{th:almostAbeliannondegH=0}, $f$ is a skew-symmetric endomorphism of the codimension one Abelian ideal. We now show that $\ad_Z^*(Z)\in \mfg'$ for all $Z\in \mfg=\mfn\oplus \bR\cdot X$, where the latter decomposition is as vector spaces. For this, write $Z=Y+\lambda X$ for some $Y\in \mfn$ and some $\lambda\in \bR$. Then, using that $\ad^*(X)=0$ and that $f$ is skew-symmetric, we get
\begin{equation*}
\ad_Z^*(Z)=\ad_Y^*(Y)+\lambda \ad_X^*(Y)=\ad_Y^*(Y)+\lambda f^*(Y)=\ad_Y^*(Y)-\lambda f(Y) \, .
\end{equation*}
However, $\ad_Y^*(Y)=0$ since $g(\ad_Y^*(Y),W)=g(Y,[Y,W])=0$ for all $W\in \mfn$ and
\begin{equation*}
g(\ad_Y^*(Y),X)=g(Y,\ad_Y(X))=-g(Y,f(Y))=0	 \, ,
\end{equation*}
since $f$ is skew-symmetric. Now $\mfg'=\mathrm{im}(f)$ and so $\ad_Z^*(Z)\in \mfg'$ for all $Z\in \mfg$. Hence, by Corollary \ref{co:gEdeltanonzero} (a), $(\mfg,H=0,\cG_g,\delta)$ is generalised Einstein if and only if
\begin{equation*}
 0=\delta(\mfg'\oplus (\mfg')^\flat)=\delta(\im(f)\oplus \im(f)^\flat) \, .
\end{equation*}
\end{proof}
\subsection{Almost Abelian Lorentzian case with $H\neq 0$ and non-degenerate $\mfn$}
We consider again almost Abelian generalised Einstein Lorentzian Lie algebras $(\mfg,H,\cG_g,0)$ with non-degenerate codimension one Abelian ideal $\mfn$. However, we now assume $H\neq 0$. We obtain a full classification in arbitrary dimensions under the assumption that $f^S$ is not of second type. Moreover, we show that in dimension four $f^S$ cannot be of second type and so we get a full classification without any extra assumptions in that case.

First of all, we show that in any dimension, any such example of a generalised Einstein Lie algebra must have Lorentzian $\mfn$ and $f^S$ cannot be diagonalisable:
\begin{lemma}\label{le:fSnotdiag}
Let $(\mfg,H,\cG_g,0)$ be an almost Abelian generalised Einstein Lorentzian Lie algebra such that the codimension one Abelian ideal $\mfn$ is non-degenerate and such that $H\neq 0$. Then $\mfn$ has Lorentzian signature and $f^S$ is not of first type.
\end{lemma}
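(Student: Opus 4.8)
The plan is to work entirely with the four scalar equations of Corollary~\ref{co:almostAbelian}, which characterise the generalised Einstein condition for $\delta=0$ in terms of $f$, $H'$ and $B$ (recall $H=H'+X^\flat\wedge B$). Both assertions will be proved in contrapositive form: I will show that if $\mfn$ is definite, or if $\mfn$ is Lorentzian but $f^S$ is of first type, then \eqref{eq:almostAbelian} forces $H=0$, contradicting $H\neq 0$. Throughout I use the identity $g(f^S,f^S)=\tr((f^S)^2)$ established in the proof of Corollary~\ref{co:almostAbelianH=0}, together with the elementary fact that the $g$-norm of a form supported on spacelike directions is non-negative, and vanishes only if the form does.

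First I rule out definite $\mfn$. Since $g$ is Lorentzian and $\mfn$ is a non-degenerate hyperplane, the only definite possibility is that $\mfn$ is positive definite, in which case $\epsilon=g(X,X)=-1$. On a positive definite space both $g(f^S,f^S)=\tr((f^S)^2)\ge 0$ and $g(B,B)\ge 0$, so the first equation of \eqref{eq:almostAbelian} forces $f^S=0$ and $B=0$. With $f$ skew-symmetric we have $f^*f-ff^*=2[f^S,f^A]=0$, so the third equation reduces to $0=-\epsilon\,g(H'(Y,\cdot,\cdot),H'(Y,\cdot,\cdot))=g(H'(Y,\cdot,\cdot),H'(Y,\cdot,\cdot))$ for all $Y$; definiteness then gives $H'(Y,\cdot,\cdot)=0$ for every $Y$, i.e.\ $H'=0$, and hence $H=0$. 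Thus the remaining possibility is that $\mfn$ is Lorentzian ($\epsilon=1$), as claimed.

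Next I assume $\mfn$ is Lorentzian and, for contradiction, that $f^S$ is of first type. Then $f^S$ is diagonalisable and, by Lemma~\ref{le:canonicalforms}(a), there is an orthonormal eigenbasis $(e_1,\ldots,e_{n-1})$ of $\mfn$ with $g(e_1,e_1)=-1$ and $f^S(e_i)=a_ie_i$. The key point is that the left-hand side of the third equation of \eqref{eq:almostAbelian} vanishes on each eigenvector: writing $g((f^*f-ff^*)(e_i),e_i)=g(fe_i,fe_i)-g(f^*e_i,f^*e_i)$ and using $fe_i=a_ie_i+f^Ae_i$, $f^*e_i=a_ie_i-f^Ae_i$ together with $g(e_i,f^Ae_i)=0$, both norms equal $a_i^2\,g(e_i,e_i)+g(f^Ae_i,f^Ae_i)$ and hence cancel. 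Since $\epsilon=1$, the third equation therefore gives, for every $i$,
\[
g(H'(e_i,\cdot,\cdot),H'(e_i,\cdot,\cdot))+g(B(e_i,\cdot),B(e_i,\cdot))=0.
\]
Applied to the timelike vector $e_1$: the $2$-form $H'(e_1,\cdot,\cdot)$ and the $1$-form $B(e_1,\cdot)$ have components $H'(e_1,e_j,e_k)$ and $B(e_1,e_j)$ that are non-zero only for $j,k\ge 2$, hence they are supported on the positive definite subspace $U:=\spa{e_2,\ldots,e_{n-1}}$; both norms are then non-negative and must vanish. Thus $H'$ and $B$ are forms on $U$. Feeding this back, for each spacelike eigenvector $e_i$ ($i\ge 2$) the contracted forms again live on $U$ with non-negative norm, so the displayed identity forces $H'(e_i,\cdot,\cdot)=0$ and $B(e_i,\cdot)=0$ for all $i\ge 2$. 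As $e_2,\ldots,e_{n-1}$ span $U$, this yields $H'=0$ and $B=0$, whence $H=0$ — a contradiction. Therefore $f^S$ is not of first type.

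The main obstacle, and the crux of the argument, is the second step of the Lorentzian case: recognising that the third equation becomes a sum of non-negative terms once specialised to eigenvectors of $f^S$, and that the \emph{timelike} eigenvector is the one that unlocks the argument, since contracting $H'$ and $B$ with it produces forms supported on the definite complement $U$. This is precisely where full diagonalisability is used — it supplies a complete orthonormal eigenbasis, one vector of which is timelike. For $f^S$ of second, third or fourth type no such eigenbasis exists and the left-hand side of the third equation no longer vanishes, so this positivity argument genuinely breaks down, consistent with the existence of generalised Einstein examples with $H\neq 0$ in those cases.
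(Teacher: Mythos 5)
Your proposal is correct and follows essentially the same route as the paper's proof: both argue by contradiction via the equations of Corollary~\ref{co:almostAbelian}, use the first equation to kill $f^S$ and $B$ in the definite case, and in the Lorentzian first-type case observe that $g((f^*f-ff^*)(e_i),e_i)=0$ on the eigenbasis so that the third equation becomes a sum of non-negative norms, first for the timelike eigenvector and then for the spacelike ones. The only differences are cosmetic (you compute the vanishing of the commutator term by a direct norm cancellation rather than via $[f^*,f]=-2[f^A,f^S]$).
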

\begin{proof}
We prove the Lemma by contradiction. Assume that either $\mfn$ has Riemannian signature or $\mfn$ has Lorentzian signature and $f^S$ is not of first type.

If $\mfn$ has Riemannian signature, then the first equation in \eqref{eq:almostAbelian} shows that $f^S=0$ and $B=0$. Hence, $f$ is skew-symmetric and so normal. Thus, the third equation in \eqref{eq:almostAbelian} implies
\begin{equation*}
g(H'(Y,\cdot,\cdot),H'(Y,\cdot,\cdot))=0 \, ,
\end{equation*}
for all $Y\in \mfn$, which gives $H'=0$ due to $g$ being Riemannian on $\mfn$. Hence, $H=0$ in contradiction to our assumptions.

Next, assume that $\mfn$ has Lorentzian signature but $f^S$ is of first type, i.e. $f^S=\diag(a_1,\ldots,a_{n-1})$ for certain $a_1,\ldots,a_{n-1}\in\bR$ with respect to an orthonormal basis $(e_1,\ldots,e_{n-1})$ of $\mfn$ with $g(e_1,e_1)=-1$. Then $\epsilon=g(X,X)=1$ and $e_1\hook H'$ and $e_1\hook B$ are both forms on the positive definite subspace $V:=\spa{e_2,\ldots,e_{n-1}}$. Since
$[f^*,f]=-2 [f^A,f^S]$, we have
\begin{equation*}
\begin{split}
g((f^*f-ff^*)(e_i),e_i)&=-2 g(f^A(f^S(e_i)),e_i)+2 g(f^S(f^A(e_i)),e_i)\\
&=4 g(f^S(e_i),f^A(e_i))=4 a_i\, g(e_i,f^A(e_i))=0 \, ,
\end{split}
\end{equation*}
for all $i=1,\ldots,n-1$. Inserting $e_1$ into the third equation in \eqref{eq:almostAbelian} yields
\begin{equation*}
g(H'(e_1,\cdot,\cdot),H'(e_1,\cdot,\cdot))+g(B(e_1,\cdot),B(e_1,\cdot))=0 \, ,
\end{equation*}
and so, since $V$ was positive definite, that $H'(e_1,\cdot,\cdot)=0$ and $B(e_1,\cdot)=0$. However, then $H'$ and $B$ themselves are forms on the positive definite subspace $V$. Hence, inserting now $e_i$ for $i=2,\ldots,n-1$ into the third equation in \eqref{eq:almostAbelian} gives
\begin{equation*}
	g(H'(e_i,\cdot,\cdot),H'(e_i,\cdot,\cdot))+g(B(e_i,\cdot),B(e_i,\cdot))=0 \, ,
\end{equation*}
and so $H'(e_i,\cdot,\cdot)=0$ and $B(e_i,\cdot)=0$ for all $i=2,\ldots,n-1$, which means that $H=H'+B=0$, a contradiction.
\end{proof}
Let us briefly comment on the case where $\mfg$ is Abelian. In this situation, $f^S$ is of first type and so Lemma \ref{le:fSnotdiag} implies that any generalised Einstein Lie algebra of the form $(\bR^n,H,\cG_g,0)$ with Lorentzian $g$ must satisfy $H=0$. This turns out to be still true when $n=4$ and $g$ has split signature:
\begin{corollary}\label{co:AbelianH=0}
	Let $(H,\mathcal{G}_g)$ be a generalised Einstein metric on the Abelian Lie algebra $\mfg=\bR^n$ with divergence operator $\delta=0$. If either $g$ has Lorentzian signature or the dimension $n$ is less or equal to four, we must have $H=0$.
\end{corollary}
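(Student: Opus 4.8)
The plan is to first reduce the generalised Einstein condition to a single algebraic condition on $H$, and then dispatch the two hypotheses separately. Since $\mfg=\bR^n$ is Abelian, every $\ad_Y$ vanishes, so $\ad_Y^S=0$ and $\ad^*(Y)=0$ for all $Y$. Feeding this into Corollary \ref{co:gEdelta=0}, the second equation of \eqref{eq:gEdelta=0} becomes vacuous and the first collapses to
\begin{equation*}
g\bigl(H(Y,\cdot,\cdot),H(Y,\cdot,\cdot)\bigr)=0 \qquad \text{for all } Y\in\mfg,
\end{equation*}
that is, every contraction $Y\hook H\in\Lambda^2\mfg^*$ is a null $2$-form. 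The entire task is therefore to show that this null condition forces $H=0$ under either stated hypothesis.

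The Lorentzian case (in arbitrary dimension) is essentially already contained in the discussion preceding the corollary: regarding $\mfg$ as almost Abelian with a non-degenerate codimension one ideal and $f=0$, Lemma \ref{le:fSnotdiag} rules out $H\neq0$. To keep the argument self-contained I would instead give the direct reasoning: choose an orthonormal basis $(e_1,\dots,e_n)$ with $e_1$ timelike, so that $V:=\spa{e_2,\dots,e_n}$ is positive definite. Then $e_1\hook H$ is a $2$-form supported on $V$, and a null $2$-form on a definite space must vanish; hence $e_1\hook H=0$ and $H\in\Lambda^3V^*$. Contracting now with each $e_i\in V$ again produces null $2$-forms on the definite space $V$, so they all vanish and $H=0$.

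For the second hypothesis $n\leq4$ I would reduce to the genuinely four-dimensional case. If $n\leq2$ there are no $3$-forms, and if $n=3$ the single component $H_{123}$ already satisfies $0=g(e_1\hook H,e_1\hook H)=\epsilon_2\epsilon_3\,H_{123}^2$ with $\epsilon_i:=g(e_i,e_i)$ and $\epsilon_2\epsilon_3\neq0$, forcing $H=0$ for every non-degenerate signature. The only remaining case is $n=4$, and there I would avoid any signature-by-signature bookkeeping by passing to the Hodge dual: write $\theta:=\star H\in\mfg^*$, so that $H=\pm\star\theta$. Using the identity $Y\hook\star\theta=\pm\,\star(\theta\wedge Y^\flat)$ together with the fact that $\star$ is, up to a fixed signature-dependent sign, an isometry on forms, and expanding the induced inner product on the decomposable $2$-form $\theta\wedge Y^\flat$ as a Gram determinant, the null condition becomes
\begin{equation*}
g(\theta,\theta)\,g(Y,Y)-\theta(Y)^2=g\bigl(\theta\wedge Y^\flat,\theta\wedge Y^\flat\bigr)=0 \qquad \text{for all } Y.
\end{equation*}

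Finally I would read off $H=0$ from this identity of quadratic forms in $Y$. The right-hand form $Y\mapsto\theta(Y)^2$ has rank at most one, while $Y\mapsto g(\theta,\theta)\,g(Y,Y)$ is a scalar multiple of the non-degenerate form $g$ and hence has rank $4$ whenever $g(\theta,\theta)\neq0$; equality then forces $g(\theta,\theta)=0$, after which the left-hand side vanishes identically, so $\theta(Y)^2\equiv0$, i.e.\ $\theta=0$ and $H=0$. The main obstacle is precisely the split signature $(2,2)$ subcase of $n=4$: there is no definite hyperplane and no distinguished timelike direction, so the contraction argument of the Lorentzian case breaks down and genuine null $2$-forms appear in each coordinate contraction. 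Recasting the problem through degree-three Hodge duality, which is available only because $\dim\mfg=4$, is what makes this case tractable and, as a bonus, handles all four-dimensional signatures at once.
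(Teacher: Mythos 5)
Your proof is correct, and for the genuinely four-dimensional case it takes a different route from the paper. The paper handles $n=4$ with split signature by invoking the almost Abelian machinery: it picks a non-degenerate codimension one subspace $\mfn$, decomposes $H=H'+X^\flat\wedge B$, kills $H'$ by Corollary \ref{co:almostAbelian4d}, and then kills $B$ from the nullity of the contractions $B(Y,\cdot)$ via the third equation in \eqref{eq:almostAbelian}. You instead pass to the Hodge dual $\theta=\star H\in\mfg^*$, convert the condition ``$Y\hook H$ is null for all $Y$'' into the Gram-determinant identity $g(\theta,\theta)\,g(Y,Y)-\theta(Y)^2=0$, and conclude by comparing ranks of quadratic forms. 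This is self-contained (it does not lean on the almost Abelian corollaries), treats all four-dimensional signatures uniformly rather than isolating the $(2,2)$ case, and makes transparent why the argument is confined to $n=4$ — duality to degree one is exactly what fails in the five-dimensional counterexample of Example \ref{ex:abeliansignature32}. Your direct treatment of the Lorentzian case (peeling off a timelike direction and using that a null $2$-form on a definite space vanishes) is also sound and is essentially the special case $f=0$ of the argument in Lemma \ref{le:fSnotdiag}, which is what the paper cites. The only point worth spelling out in a final write-up is the sign bookkeeping for the pseudo-Riemannian Hodge star, namely that $Y\hook\star\theta=\pm\star(\theta\wedge Y^\flat)$ and $g(\star\alpha,\star\alpha)=\pm g(\alpha,\alpha)$ with fixed nonzero signs, so that nullity is preserved; you flag this correctly.
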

\begin{proof}
The case where $g$ is Lorentzian is clear by our discussion above. Up to an overall sign, a metric in dimension 1, 2 or 3 is either Riemannian or Lorentzian, and the Riemannian case was studied in Corollary \ref{co:Riemnil}. Therefore, it suffices to consider the case $n=4$ with split signature. Choose some non-degenerate codimension one subspace $\mfn$ in $\mfg$. Then, $\mfn$ is an Abelian ideal and writing, as before, $H=H'+X^\flat\wedge B$ for $H'\in \Lambda^3 \mfn^*$, $B\in \Lambda^2 \mfn^*$ with $X$ orthogonal to $\mfn$ and $g(X,X)\in \{-1,1\}$, Corollary \ref{co:almostAbelian4d} yields that $H'=0$. Then, the third equation in \eqref{eq:almostAbelian} is equivalent to
    \begin{equation*}
    g(B(Y,\cdot), B(Y,\cdot))=0 \, ,
    \end{equation*}
    for all $Y\in \mfn$, which implies $B=0$, and so $H=0$.
\end{proof}
Note that there are Abelian generalised Einstein pseudo-Riemannian Lie algebras $(\bR^n,H,\mathcal{G}_g,0)$ with $H\neq 0$. In fact, such examples already occur in dimension $n=5$ when $g$ has signature $(3,2)$:
\begin{example}\label{ex:abeliansignature32}
Let $(e_1,\ldots,e_5)$ be an orthonormal basis of $(\bR^5,g)$ such that $g(e_i,e_i)=1$ for $i=1,2,3$ and $g(e_4,e_4)=g(e_5,e_5)=-1$. Consider then
   \begin{equation*}
   	H:=e^1\wedge (e^{23}+e^{24}+e^{35}+e^{45}) \, .
   \end{equation*}
   We have that
   \begin{equation*}
   \begin{split}
   e_1\hook H&=e^{23}+e^{24}+e^{35}+e^{45} \, ,\qquad e_2\hook H=-e^{13}-e^{14} \, ,\qquad e_3\hook H=e^{12}-e^{15} \, ,\\
   e_4\hook H&=e^{12}-e^{15} \, ,\qquad e_5\hook H=e^{13}+e^{14} \, ,
   \end{split}
\end{equation*}
are all null and orthogonal to each other, which shows that $X\hook H$ is null for any $X\in \bR^5$. Thus, by \eqref{eq:gEdelta=0}, $(\bR^5,H,\cG_g,0)$ is generalised Einstein.
\end{example}
We now continue discussing the case of almost Abelian generalised Einstein Lorentzian Lie algebras $(\mfg,H,\cG_g,0)$ with non-degenerate $\mfn$ and $H\neq 0$. Recall that in Lemma \ref{le:fSnotdiag} we have showed that $f^S$ cannot be diagonalisable, i.e of first type in Lemma \ref{le:canonicalforms} (a). We now exclude also exclude the case where $f^S$ is of third type:
\begin{lemma}\label{le:fSnot3rd}
Let $(\mfg,H,\cG_g,0)$ be an almost Abelian generalised Einstein Lorentzian Lie algebra with non-degenerate Lorentzian codimension one Abelian ideal $\mfn$ and $H\neq 0$. Then $f^S$ is not of third type. 
\end{lemma}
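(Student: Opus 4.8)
The plan is to assume, for contradiction, that $f^S$ is of third type and to deduce $H=0$. By Lemma \ref{le:fSnotdiag} the ideal $\mfn$ is Lorentzian, so it has signature $(n-2,1)$ inside the Lorentzian $\mfg$ and $X$ is spacelike, i.e.\ $\epsilon=g(X,X)=1$. I would fix the orthonormal basis $(e_1,\dots,e_{n-1})$ of Lemma \ref{le:canonicalforms}~(a) with $g(e_1,e_1)=-1$ in which $f^S=\diag(L_2(\gamma,\eta),c_1,\dots,c_{n-3})$ for a sign $\eta\in\{-1,1\}$, set $W:=\spa{e_1,e_2}$, $U:=\spa{e_3,\dots,e_{n-1}}$, and let $u:=e_1-e_2$, a null eigenvector of $f^S$. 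First I record two global identities from \eqref{eq:almostAbelian}: the first equation gives $g(B,B)=-2\tr((f^S)^2)$, and the $\epsilon_i$-weighted trace of the third equation (using $\tr([f^*,f])=0$ together with $\sum_i\epsilon_i\,g(H'(e_i,\cdot,\cdot),H'(e_i,\cdot,\cdot))=3\,g(H',H')$ and the analogous factor $2$ for $B$) gives $g(H',H')=\tfrac43\tr((f^S)^2)$. Since $f^S$ is of third type, Lemma \ref{le:trfsquare=0} yields $\tr((f^S)^2)\ge 0$, so $g(B,B)\le 0$ and $g(H',H')\ge 0$.

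Next I exploit that $u$ is a null eigenvector. Evaluating the third equation of \eqref{eq:almostAbelian} at $Y=u$ kills the left-hand side, since $f^Su$ is a multiple of $u$ and $g(u,f^Au)=0$; a direct computation shows that the timelike contributions to $g(u\hook H',u\hook H')$ and $g(u\hook B,u\hook B)$ cancel, leaving sums over the positive definite $U$, so both are nonnegative and hence vanish. Combining this with the third equation at $Y=e_1$ (whose left-hand side is $g([f^*,f]e_1,e_1)=-2\eta p$, where $p$ is the single off-diagonal entry of $f^A|_W$) and its polarisation at pairs $(e_1,e_j)$ with $e_j\in U$ forces $H'(e_1,e_2,\cdot)=0$, $B(e_1,e_2)=0$, $(e_1\hook H')|_U=(e_2\hook H')|_U$, $(e_1\hook B)|_U=(e_2\hook B)|_U$ and $f^Au\in W$. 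Writing $\phi:=e^1+e^2$ (a null covector orthogonal to $U$), this means $H'=\phi\wedge\omega+\mathcal H_U$ and $B=\phi\wedge\mu+B_U$ with $\omega,B_U\in\Lambda^2U^*$, $\mu\in U^*$, $\mathcal H_U\in\Lambda^3U^*$. As $\phi$ is null and orthogonal to $U$, one computes $g(B,B)=g(B_U,B_U)\ge 0$ and $g(H',H')=g(\mathcal H_U,\mathcal H_U)\ge 0$; comparing with the global identities forces $B_U=0$, $\mathcal H_U=0$ and $\tr((f^S)^2)=0$, so $f^S$ is purely nilpotent, $H'=\phi\wedge\omega$, $B=\phi\wedge\mu$, with the residual relation $2\eta p=g(\omega,\omega)+g(\mu,\mu)$ coming from the third equation at $e_1$.

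At this stage the Einstein system \eqref{eq:almostAbelian} is degenerate: every surviving term contains the null covector $\phi$, so the fourth equation yields only $p\,g(\mu,\mu)=0$ and the third equation no longer constrains $\omega$. The decisive extra input is the closedness $\dd H=0$, which was part of the standing hypotheses but has not yet been used. Using $\dd\alpha=-X^\flat\wedge\cL_f\alpha$ on $\mfn$-forms (and $\dd X^\flat=0$), closedness of $H=\phi\wedge\omega+X^\flat\wedge(\phi\wedge\mu)$ reduces to $\cL_fH'=0$; after splitting into $W$- and $U$-types and using $f^Au\in W$, its $U$-component reads $\cL_Q\omega=\eta p\,\omega$, where $Q:=f^A|_U\in\so(U)$ and $\cL_Q$ is its derivation action on $\Lambda^2U^*$. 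Since $\cL_Q$ is skew-adjoint for the positive definite induced inner product, any real eigenvalue vanishes: pairing with $\omega$ gives $\eta p\,g(\omega,\omega)=g(\cL_Q\omega,\omega)=0$, so $\omega=0$ whenever $p\neq 0$. The relation $2\eta p=g(\omega,\omega)+g(\mu,\mu)$ then forces $\omega=\mu=0$ in every case (if $p=0$ it is immediate; if $p\neq0$ then $\omega=0$, and the fourth equation gives $\mu=0$), whence $H=0$, the desired contradiction.

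I expect the main obstacle to be exactly the degeneracy isolated in the last paragraph: the generalised Einstein equations \eqref{eq:almostAbelian} are automatically solved by a ``null'' three-form $\phi\wedge\omega$ together with a suitable antisymmetric part of $f$, so no norm estimate drawn from \eqref{eq:almostAbelian} alone can exclude the third type—this is in sharp contrast with the first-type case of Lemma \ref{le:fSnotdiag}, where the relevant eigendirection is non-null and positivity closes the argument immediately. The proof must therefore invoke the closedness of $H$ and the skew-adjointness of the $\so(U)$-action on forms, and the delicate points are the bookkeeping of the null directions (the cancellation yielding $g(u\hook H',u\hook H')\ge 0$ and the reduction of $\dd H=0$ to the scalar eigenvalue equation $\cL_Q\omega=\eta p\,\omega$).
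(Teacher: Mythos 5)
Your proposal is correct and follows essentially the same route as the paper's proof: the same normal form and $W\oplus U$ decomposition of $H'$ and $B$, the same norm identities from \eqref{eq:almostAbelian} forcing $H'=(e^1+e^2)\wedge\omega$, $B=(e^1+e^2)\wedge\mu$ and $\tr((f^S)^2)=0$, the same skew-adjointness argument on the fourth equation, and the same decisive use of $\dd H=0$ together with skew-adjointness of the $\so(U)$-action to kill $\omega$. Your derivation of the intermediate vanishings via the null eigenvector $u=e_1-e_2$ and global traces is a clean repackaging of the paper's entry-by-entry computation with $M=-N$ (only note that the conclusion $f^Au\in W$ really becomes available after $\mathcal H_U=B_U=0$ and $f^S|_U=0$ are established, and is not strictly needed since the graded pieces of $\dd H$ separate anyway).
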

\begin{proof}
	We argue by contradiction and assume that 
	\begin{equation*}
		f^S=\diag(L_2(\alpha,\epsilon),a_1,\ldots,a_{n-3}) \, ,
	\end{equation*}
	for some $\alpha,a_1,\ldots,a_{n-3}\in \bR$ and some $\epsilon\in\{-1,1\}$ with respect to some orthonormal basis $(e_1,\ldots,e_{n-1})$ of $\mfn$ with $-g(e_1,e_1)=g(e_2,e_2)=\cdots=g(e_{n-1},e_{n-1})=1$.	Let us set $U:=\spa{e_3,\ldots,e_{n-1}}$. Note that $U$ is a positive definite subspace, so we will write $\left\|T\right\|^2$ instead of $g(T,T)$ for tensors on $U$. We write
	\begin{equation*}
		\begin{split}
			H'&=e^1\wedge \eta_1+e^2\wedge \eta_2+e^{12}\wedge \gamma+H_0' \, ,\\
			B&=e^1\wedge \beta_1+e^2\wedge \beta_2+b\, e^{12}+B_0 \, ,
		\end{split}
	\end{equation*}
	for $H_0'\in \Lambda^3 U^*$, $\eta_1,\eta_2,B_0\in \Lambda^2 U^*$, $\beta_1,\beta_2,\gamma\in U^*$ and $b\in \bR$. Then, the first equation in \eqref{eq:almostAbelian} is explicitly given  by
	\begin{equation}\label{eq:3rdcase1}
		\begin{split}
			0&=2\,g(f^S,f^S)+g(B,B)=2\,\tr((f^S)^2)-\left\|\beta_1\right\|^2+\left\|\beta_2\right\|^2-b^2+\left\|B_0\right\|^2\\
			&=4\alpha^2+2\sum_{i=1}^{n-3} a_i^2-\left\|\beta_1\right\|^2+\left\|\beta_2\right\|^2-b^2+\left\|B_0\right\|^2 \, .
		\end{split}
	\end{equation}
	Moreover, if $f^A$ denotes the anti-symmetric part of $f$ and $\rho:=(f^A)_{12}=(f^A)_{21}$, then one computes that
	the upper $2\times 2$-block $M$ of the symmetric two-tensor $g([f^*,f](\cdot),\cdot)$ is given by
	\begin{equation*}
		M:=-2\epsilon\rho\, \begin{pmatrix} 1& 1\\ 1 & 1 \end{pmatrix} \, .
	\end{equation*}
	Now
	\begin{equation*}
		\begin{split}
			N&:=(g(H'(e_i,\cdot,\cdot),H'(e_j,\cdot,\cdot))+g(B(e_i,\cdot),B(e_i,\cdot)))_{i,j=1,2}\\
			&=
			\begin{pmatrix} \left\|\eta_1\right\|^2+\left\|\gamma\right\|^2+ \left\|\beta_1\right\|^2+b^2 &  g(\eta_1,\eta_2) +g(\beta_1,\beta_2)\\ g(\eta_1,\eta_2) +g(\beta_1,\beta_2) &  \left\|\eta_2\right\|^2-\left\|\gamma\right\|^2+ \left\|\beta_2\right\|^2-b^2  \end{pmatrix} \, .
		\end{split}
	\end{equation*}
	By the third equation in \eqref{eq:almostAbelian}, we have $M=-N$ so, in particular,
	\begin{equation*}
		\left\|\eta_1\right\|^2-\left\|\eta_2\right\|^2+\left\|\beta_1\right\|^2-\left\|\beta_2\right\|^2+2\left\|\gamma\right\|^2+ 2 b^2=0 \, .
	\end{equation*}
	Adding this equation twice to \eqref{eq:3rdcase1}, we arrive at
	\begin{equation}\label{eq:3rdcase2}
		0=4\alpha^2+2\sum_{i=1}^{n-3} a_i^2+\left\|\beta_1\right\|^2-\left\|\beta_2\right\|^2+2 \left\|\eta_1\right\|^2-2 \left\|\eta_2\right\|^2+4 \left\|\gamma\right\|^2+3b^2+\left\|B_0\right\|^2 \, .
	\end{equation}
	Now since $f^S$ is diagonal on $U$ for our choice of basis, one finds $g([f^*,f](e_i),e_i)=0$ for all $i=3,\ldots,n-1$ and so, by the third equation in \eqref{eq:almostAbelian}, we have
	\begin{equation*}
		\begin{split}
			0&=\sum_{i=3}^{n-1}\left( g(H'(e_i,\cdot,\cdot),H'(e_i,\cdot,\cdot))+g(B(e_i,\cdot),B(e_i,\cdot))\right)\\
			&=\sum_{i=3}^{n-1} -\left\|e_i\hook \eta_1\right\|^2+\left\|e_i\hook \eta_2\right\|^2-\left\|e_i\hook \gamma\right\|^2+\left\|e_i\hook H_0'\right\|^2-\left\|e_i\hook \beta_1\right\|^2+\left\|e_i\hook \beta_2\right\|^2\\
			&+\left\|e_i\hook B_0\right\|^2=-2\left\|\eta_1\right\|^2+2\left\|\eta_2\right\|^2-\left\|\gamma\right\|^2+3\left\|H'_0\right\|^2-\left\|\beta_1\right\|^2+\left\|\beta_2\right\|^2+2\left\|B_0\right\|^2 \, .
		\end{split}
	\end{equation*}
	Adding this equation to \eqref{eq:3rdcase2}, we get
	\begin{equation*}
		0=4\alpha^2+2\sum_{i=1}^{n-3} a_i^2+3 \left\|\gamma\right\|^2+3b^2+3\left\|B_0\right\|^2+3\left\|H_0'\right\|^2 \, ,
	\end{equation*}
	and so that $\alpha=a_1=\ldots=a_{n-3}=b=0$, $\gamma=0$, $B_0=0$ and $H_0'=0$. Now, since $M=-N$ and all entries in $M$ are equal, we get
	\begin{equation*}
		\left\|\eta_1\right\|^2+\left\|\beta_1\right\|^2=\left\|\eta_2\right\|^2+\left\|\beta_2\right\|^2=g(\eta_1,\eta_2)+g(\beta_1,\beta_2) \, ,
	\end{equation*}
	and so 
	\begin{equation*}
		\left\|\eta_1-\eta_2\right\|^2+\left\|\beta_1-\beta_2\right\|^2=\left\|\eta_1\right\|^2+\left\|\beta_1\right\|^2+\left\|\eta_2\right\|^2+\left\|\beta_2\right\|^2-2 g(\eta_1,\eta_2)-2g(\beta_1,\beta_2)=0 \, .
	\end{equation*}
	Hence, $\eta:=\eta_1=\eta_2$ and $\beta:=\beta_1=\beta_2$ and so
	\begin{equation*}
		H'=(e^1+e^2)\wedge \eta \, ,\quad B=(e^1+e^2)\wedge \beta \, .
	\end{equation*}
	Moreover,
	\begin{equation*}
		f^*(e_1)=\frac{\epsilon}{2}e_1-\left(\frac{\epsilon}{2}+\rho\right)e_2+u_1 \, ,\qquad
		f^*(e_2)=\left(\frac{\epsilon}{2}-\rho\right)e_1-\frac{\epsilon}{2}e_2+u_2 \, ,
	\end{equation*}
	for certain $u_1,u_2\in U$. Moreover, $f^*(u)=-f(u)$ for any $u\in U$. Writing $f(u)=h(u)+k(u)$ for linear maps $h:U\rightarrow \spa{e_1,e_2}$ and $k:U\rightarrow U$, we see
	that $k$ is skew-symmetric and that the fourth equation in \eqref{eq:almostAbelian} yields
	\begin{equation*}
		\begin{split}
		        (\beta\circ k)(u)&=g(k(u),\beta^{\sharp}) =g(f(u),\beta^{\sharp})=-g(f^*(u),\beta^\sharp)\\
		        &=-g(f^*(u),B(e_1,\cdot)^\sharp)=-g(f^*(e_1),B(u,\cdot)^\sharp)\\
			&=\beta(u)\, g\left(\frac{\epsilon}{2}e_1-\left(\frac{\epsilon}{2}+\rho\right)e_2,-e_1+e_2\right)=-\rho\beta(u) \, , \\
		\end{split}
	\end{equation*}
	for all $u\in U$. Thus, $\beta\circ k=-\rho\, \beta$. If $\beta\neq 0$, the one-form $\beta$ is an eigenvector with real eigenvalue $\rho$ for the skew-symmetric endomorphism $U^*\ni \alpha\mapsto \alpha\circ k\in U^*$, and so $\rho=0$ as skew-symmetric endomorphisms have only imaginary eigenvalues. Thus, we have either $\beta=0$ or $\rho=0$. Since $M=-N$ implies
	\begin{equation*}
		\rho=\epsilon\,\frac{\left\|\beta\right\|^2+\left\|\eta\right\|^2}{2} \, ,
	\end{equation*}
	the condition $\rho=0$ yields $\beta=0$, $\eta=0$ and so $H=0$, a contradiction. 
	Hence, we can discard that case and assume that $\rho\neq 0$ and $\beta=0$ in the following. Now $H=(e^1+e^2)\wedge \eta$ and one observes that $f.(e^1+e^2)=-(e^1+e^2)\circ f=-\rho\, (e^1+e^2)$, so that the closure condition for $H$ reads 
	\begin{equation*}
	 0=\dd H=X^\flat\wedge f. ((e^1+e^2)\wedge \eta)= (e^1+e^2)\wedge (-\rho\,\eta+f.\eta)\wedge X^\flat \, ,
	\end{equation*}
where we recall that $X$ was orthogonal to $\mfn$ with $g(X,X)=1$. Now we see that the closure condition is equivalent to 
\begin{equation*}
f.\eta=\rho\, \eta=\frac{\epsilon}{2} \left\|\eta\right\|^2 \eta \, .
\end{equation*}
As $f$ acts skew-symmetrically on $U$, and so also on $\Lambda^2 U^*$, we have
\begin{equation*}
0=g(f.\eta,\eta)=\frac{\epsilon}{2} \left\|\eta\right\|^4 \, .
\end{equation*}
Thus, $\eta=0$ and so $H=0$, again a contradiction. This finishes the proof.
\end{proof}
We are now able to show:
\begin{theorem}\label{th:almostAbeliannondegHneq0}
Let $(\mfg,H,\cG_g)$ be an almost Abelian generalised Lorentzian Lie algebra with non-degenerate codimension Abelian ideal $\mfn$ with $H\neq 0$ and assume that $f^S$ is not of second type.

Then $(\mfg,H,\cG_g,\delta=0)$ is generalised Einstein if and only if there exists an orthonormal basis of $\mfn$ (by an abuse of notation again denoted by) $(e_1,\ldots,e_{n-1})$ such that $g(e_1,e_1)=-g(e_i,e_i)=-1$ for all $i=2,\ldots,n-1$ and there exists some $k\in \left\{0,\ldots,\left\lfloor \frac{n-4}{2}\right\rfloor \right\}$, certain $c_1\geq \ldots \geq c_k>0$ and $u\in \bR^{n-4}$ such that when we set $U_1:=\spa{e_4,\ldots,e_{4+2k-1}}$ and $U_2:=\spa{e_{4+2k},\ldots,e_{n-1}}$ there exist $b\in \bR$, $\beta,\nu\in U_2^*$, $\tau_1\in \Lambda^2 U_1^*$ and $\tau_2\in \Lambda^2 U_2^*$ with at least one of $b,\beta,\nu,\tau_1,\tau_2$ being non-zero, such that
\begin{equation*}
\tau_1\in [[\Lambda^{1,1}U_1^*]] \, ,
\end{equation*}
 with respect to the almost complex structure $J=\diag(L_1(0,1),\ldots,L_1(0,1))$ on $U_1$ and
 \begin{equation*}
\tau_1(Y,Z)=0 \, ,
 \end{equation*}
whenever $Y\in \spa{e_{4+2i-2},e_{4+2i-1}}$ and $Z\in \spa{e_{4+2j-2},e_{4+2j-1}}$ for $i,j\in \{1,\ldots,k\}$ satisfying $c_i\neq c_j\,$, such that
\begin{equation*}
H=(e^1+e^3)\wedge (-e^2\wedge (\nu+b\, X^\flat)+\beta\wedge X^\flat+\tau_1+\tau_2) \, ,
\end{equation*}
and such that
\begin{equation*}
f=\begin{pmatrix} 0 & -\frac{1+\tilde\rho}{\sqrt{2}} & 0 & u^t \\
\frac{1-\tilde\rho}{\sqrt{2}} & 0 & \frac{1-\tilde\rho}{\sqrt{2}} & 0 \\
0 & \frac{1+\tilde\rho}{\sqrt{2}}
& 0 & -u^t \\
u & 0 & u & \diag(L_1(0,c_1),\ldots,L_1(0,c_k),0,\ldots,0)
\end{pmatrix} \, ,
\end{equation*}
for 
\begin{equation*}
\tilde{\rho}:=\frac{1}{2}\left(b^2+\left\|\beta\right\|^2+\left\|\nu\right\|^2+\left\|\tau_1\right\|^2+\left\|\tau_2\right\|^2\right) \, .
\end{equation*}
\end{theorem}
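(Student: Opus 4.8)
The plan is to first pin down the conjugacy type of $f^S$ and then solve the system \eqref{eq:almostAbelian} directly. By Lemma \ref{le:fSnotdiag} the hypothesis $H\neq 0$ forces $\mfn$ to be Lorentzian---so that $\epsilon=g(X,X)=1$---and $f^S$ to be not of first type; by Lemma \ref{le:fSnot3rd} it is not of third type; and it is not of second type by assumption. Since the four types in Lemma \ref{le:canonicalforms}(a) are exhaustive, $f^S$ must be of fourth type, so there is an orthonormal basis $(e_1,\ldots,e_{n-1})$ of $\mfn$ with $g(e_1,e_1)=-1$ and $g(e_i,e_i)=1$ for $i\geq 2$ in which $f^S=\diag(L_3(\tau),d_1,\ldots,d_{n-4})$, where $\tr((f^S)^2)\geq 0$ with equality exactly when $\tau=d_1=\cdots=d_{n-4}=0$ by Lemma \ref{le:trfsquare=0}.

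With these coordinates fixed, I would decompose $H'$ and $B$ along $\spa{e_1,e_2,e_3}$ and its positive definite orthogonal complement $U:=\spa{e_4,\ldots,e_{n-1}}$, and compute $g([f^*,f](e_i),e_i)=2\,g([f^S,f^A](e_i),e_i)$ using $f^*=f^S-f^A$. Feeding each $e_i$ into the third equation of \eqref{eq:almostAbelian} and forming a suitably weighted sum---mirroring the non-negativity argument in the proof of Lemma \ref{le:fSnot3rd}, but now with the genuinely third-order nilpotent block $L_3(0)$---I expect all contributions to reorganise into $2\tr((f^S)^2)$ plus a sum of squared $g$-norms of tensors living on the definite subspace $U$. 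As this sum must vanish, it forces $\tau=d_1=\cdots=d_{n-4}=0$, hence $f^S=\diag(L_3(0),0,\ldots,0)$ and $g(B,B)=0$ from the first equation, together with the vanishing of every component of $H'$ and $B$ not divisible by the null covector $e^1+e^3$. This yields $H'=(e^1+e^3)\wedge(\cdots)$ and $B=(e^1+e^3)\wedge(\cdots)$, the source of the overall factor $e^1+e^3$ in the stated $H$; here $(e^1+e^3)^\sharp=e_3-e_1$ spans the null line $\ker L_3(0)\subset\im L_3(0)=\spa{e_2,e_1-e_3}$.

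Next I would determine $f^A$. The remaining equations of \eqref{eq:almostAbelian} together with the block structure of $f^S$ pin down the off-diagonal coupling of $\spa{e_1,e_3}$ to $U$ (the vector $u$) and reduce the anti-symmetric action on $U$ to $\diag(L_1(0,c_1),\ldots,L_1(0,c_k),0,\ldots,0)$, just as in the fourth-type computation of Theorem \ref{th:almostAbeliannondegH=0}(b) except that the non-normality is now balanced against the $H$-terms through the third equation; this produces the displayed matrix for $f$, and the scalar $\tilde\rho$ entering its $(1,2)$- and $(2,3)$-entries is forced to equal $\tfrac12(\|\beta\|^2+\|\nu\|^2+\|\tau_1\|^2+\|\tau_2\|^2+b^2)$ exactly as the relation $\rho=\epsilon(\|\beta\|^2+\|\eta\|^2)/2$ arose in Lemma \ref{le:fSnot3rd}. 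A direct check gives $(e^1+e^3)\circ f=0$, so the null factor is annihilated by the induced action of $f$; hence, writing $H'=(e^1+e^3)\wedge\omega'$, the Chevalley--Eilenberg closure $\dd H=0$---which reduces to $f.H'=0$ since $\dd X^\flat=0$ makes the $X^\flat\wedge B$ part contribute nothing---becomes $f.\omega'=0$. Decomposing $\Lambda^2 U_1^*$ into eigenspaces for the commuting rotation generators $L_1(0,c_i)$ then yields precisely that the $U_1$-part $\tau_1$ is of real type $(1,1)$ with respect to $J=\diag(L_1(0,1),\ldots,L_1(0,1))$ and pairs trivially across blocks with $c_i\neq c_j$. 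This closure step is exactly what separates the fourth type from the third: in Lemma \ref{le:fSnot3rd} the analogous closure forced the surviving form to vanish, whereas the order-three block here leaves room for nonzero solutions.

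Conversely, for $f$ and $H$ of the stated form one verifies \eqref{eq:almostAbelian} and $\dd H=0$ by direct substitution, completing the equivalence. I expect the main obstacle to lie in the second step: assembling the scalars $g([f^S,f^A](e_i),e_i)$ and the $H'$- and $B$-contributions into a manifestly sign-definite combination requires the right linear combination of the equations coming from \eqref{eq:almostAbelian}, and all decompositions must respect the finer flag $\spa{e_1-e_3}\subset\spa{e_2,e_1-e_3}$, since the kernel and the image of the nilpotent part share the null line $\spa{e_1-e_3}$. This makes the $L_3$-analysis substantially more delicate than the $L_2$-case already treated.
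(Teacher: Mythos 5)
Your road map coincides with the paper's: exclude the first and third types via Lemmas \ref{le:fSnotdiag} and \ref{le:fSnot3rd}, reduce to $f^S=\diag(L_3(\alpha),a_1,\ldots,a_{n-4})$, kill almost all components of $H'$, $B$ and $f^S$ by positivity, determine $f^A$ from normality-type constraints, and extract the $(1,1)$-condition on $\tau_1$ from $\dd H=0$. The converse and the role of the null line $\spa{e_1-e_3}=\ker(L_3(0))$ are also correctly identified.

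However, the central positivity step as you describe it would fail. You propose to feed each $e_i$ into the third equation of \eqref{eq:almostAbelian} (i.e.\ to use only the \emph{diagonal} entries of $M:=(g([f^*,f](e_i),e_j))$ and $N:=(g(H'(e_i,\cdot,\cdot),H'(e_j,\cdot,\cdot))+g(B(e_i,\cdot),B(e_j,\cdot)))$) and then take a weighted sum. On the Lorentzian block $\spa{e_1,e_2,e_3}$ the diagonal entries of $N$ contain indefinite combinations such as $\left\|\nu_2\right\|^2+\left\|\nu_3\right\|^2-\left\|\nu_1\right\|^2$ and $b_2^2+b_3^2-b_1^2$, and no weighting of the three diagonal equations together with the first equation of \eqref{eq:almostAbelian} and the sum over $U$ is sign-definite: the $e^1$- and $e^3$-components of $H'$ and $B$ enter with opposite signs and do not cancel from diagonal data alone. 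The missing ingredient is the \emph{polarised} third equation at the pair $(e_1,e_3)$ combined with the structural identity $2M_{13}=M_{11}+M_{33}$ satisfied by the commutator matrix of an $L_3$-block; this converts $2N_{13}=N_{11}+N_{33}$ into the perfect square $0=\left\|\nu_1+\nu_3\right\|^2+\left\|\tau_1-\tau_3\right\|^2+(b_1+b_3)^2+\left\|\beta_1-\beta_3\right\|^2$, which is precisely what identifies the $e^1$- and $e^3$-components (producing the overall factor $e^1+e^3$ in $H$) and is what makes the subsequent weighted diagonal sum manifestly non-negative, forcing $h=b_2=0$, $B_0=\tau_2=0$, $H_0'=0$ and then $\tr((f^S)^2)=0$. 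You flag this assembly as the expected main obstacle but supply no mechanism for it, and the mechanism is not the one used in Lemma \ref{le:fSnot3rd} (where the analogous identity is $M_{11}=M_{22}=M_{12}$); it is specific to the $L_3$-block. A similar, smaller omission occurs later: the condition $\beta\in U_2^*$ comes from the fourth equation of \eqref{eq:almostAbelian}, which your sketch subsumes into ``the remaining equations'' without indicating how.
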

\begin{proof}
Note first that Lemma \ref{le:fSnotdiag} and Lemma \ref{le:fSnot3rd} show that for an almost Abelian generalised Einstein Lorentzian Lie algebra $(\mfg,H,\cG_g,0)$ with non-degenerate codimension one Abelian ideal $\mfn$, $\mfn$ has to be Lorentzian and the symmetric part $f^S$ of $f$ cannot be of the first or third canonical form in Lemma \ref{le:canonicalforms} (a). So if $f^S$ is not of the second type, it has to be of the fourth type in Lemma \ref{le:canonicalforms} (a):
	\begin{equation*}
		f^S=\diag(L_3(\alpha),a_1,\ldots,a_{n-4}) \, ,
	\end{equation*}
	for certain $\alpha,a_1,\ldots,a_{n-4}\in \bR$, with respect to an orthonormal basis $(e_1,\ldots,e_{n-1})$ of $\mfn$ such that $g(e_1,e_1)=-g(e_i,e_i)=-1$ for all $i=2,\ldots,n-1$. We work in this basis from now on. As a first step, we write $f^A$ as a block-matrix with respect to the splitting $\spa{e_1,e_2,e_3}\oplus \spa{e_4,\ldots,e_{n-1}}$,
	\begin{equation*}
	f^A=\begin{pmatrix} A_1 & A_2 \\ A_3 & A_4 \end{pmatrix} \, ,
	\end{equation*}
   for $A_1\in \bR^{3\times 3}$, $A_2\in \bR^{3\times (n-4)}$, $A_3\in \bR^{(n-4)\times 3}$ and $A_4\in \bR^{(n-4)\times (n-4)}$. Furthermore, we write
  \begin{equation}\label{eq:A3}
   A_3=\begin{pmatrix} u & v & w \end{pmatrix} \, ,
\end{equation}
   for $u,v,w\in \bR^{n-4}$ and note that the skew-symmetry of $f^A$ yields
    \begin{equation*}
   	A_2=\begin{pmatrix} u^t \\ -v^t \\  -w^t \end{pmatrix} \, .
   \end{equation*}
Moreover, $A_1$ may explicitly be written as
	\begin{equation*}
		f^A=\begin{pmatrix} 0 & \rho_1 & \rho_2 \\ \rho_1 & 0 & \rho_3 \\ \rho_2 & -\rho_3 & 0 \end{pmatrix} \, ,
	\end{equation*}
	for certain $\rho_1,\rho_2,\rho_3\in \bR$. Then one computes that
	\begin{equation*}
		M:=(g([f^*,f](e_i),e_j))_{i,j=1,2,3}=\sqrt{2}\begin{pmatrix}
			2\, \rho_1 & \rho_2 & \rho_1+\rho_3 \\
			\rho_2 & 2\, (\rho_1-\rho_3) & \rho_2 \\
			\rho_1+\rho_3 & \rho_2 & 2\, \rho_3
		\end{pmatrix} \, .
	\end{equation*}
	Furthermore, we set $U:=\spa{e_4,\ldots,e_{n-1}}$, note that $U$ is a positive definite subspace of $\mfn$ and write
	\begin{equation*}
	\begin{split}
		B&=b_1\, e^{23}+b_2\, e^{31}+b_3\, e^{12}+\sum_{i=1}^3 e^i\wedge \beta_i+ B_0 \, ,\\
		H'&=h\, e^{123}+e^{23}\wedge \nu_1+e^{31}\wedge \nu_2+e^{12}\wedge \nu_3+\sum_{i=1}^3 e^i\wedge \tau_i +H'_0 \, ,
	\end{split}
	\end{equation*}
	with $b_1,b_2,b_3,h\in \bR$, $\beta_1,\beta_2,\beta_3,\nu_1,\nu_2,\nu_3\in U^*$, $\tau_1,\tau_2,\tau_3,B_0\in \Lambda^2 U^*$ and $H'_0\in \Lambda^3 U^*$.
	Then a short computation yields that the matrix
	\begin{equation*}
			N:=\left(g(H'(e_i,\cdot,\cdot),H'(e_j,\cdot,\cdot))+g(B(e_i,\cdot,\cdot),B(e_j,\cdot,\cdot))\right)_{i,j=1,2,3}
   \end{equation*}
has the following entries:
\begin{equation*}
\begin{split}
N_{11}&=h^2+b_2^2+b_3^2+\left\|\beta_1\right\|^2+\left\|\nu_2\right\|^2+\left\|\nu_3\right\|^2+\left\|\tau_1\right\|^2 \, ,\\
N_{22}&=-h^2+b_1^2-b_3^2+\left\|\beta_2\right\|^2-\left\|\nu_3\right\|^2+\left\|\nu_1\right\|^2+\left\|\tau_2\right\|^2 \, ,\\
N_{33}&=-h^2+b_1^2-b_2^2+\left\|\beta_3\right\|^2-\left\|\nu_2\right\|^2+\left\|\nu_1\right\|^2+\left\|\tau_3\right\|^2 \, ,\\
N_{12}&=N_{21}=-g(\nu_1,\nu_2)+g(\tau_1,\tau_2)-b_1 b_2+g(\beta_1,\beta_2) \, ,\\
N_{13}&=N_{31}=-g(\nu_1,\nu_3)+g(\tau_1,\tau_3)-b_1 b_3+g(\beta_1,\beta_3) \, ,\\
N_{23}&=N_{32}=g(\nu_2,\nu_3)+g(\tau_2,\tau_3)+b_2 b_3+g(\beta_2,\beta_3) \, .
\end{split}
\end{equation*}
	We note that we must have $M=-N$ due to the third equation in \eqref{eq:almostAbelian}. Hence, $2M_{13}=M_{11}+M_{33}$ implies
	\begin{equation*}
	\begin{split}
	-2 g(\nu_1,\nu_3)+2 g(\tau_1,\tau_3)-2 b_1 b_3+2 g(\beta_1,\beta_3)&=2 N_{13}=N_{11}+N_{33}\\
	&=b_1^2+b_3^2+\left\|\beta_1\right\|^2+\left\|\beta_3\right\|^2\\
	&+\left\|\nu_1\right\|^2+\left\|\nu_3\right\|^2+\left\|\tau_1\right\|^2+\left\|\tau_3\right\|^2 \, ,
	\end{split}	
	\end{equation*}
    which is equivalent to
    \begin{equation*}
   0=\left\|\nu_1+\nu_3\right\|^2+ \left\|\tau_1-\tau_3\right\|^2+(b_1+b_3)^2+\left\|\beta_1-\beta_3\right\|^2 \, ,
    \end{equation*}
    and so to
    \begin{equation}\label{eq:fourthcase1}
    \nu:=\nu_1=-\nu_3 \, ,\quad \tau:=\tau_1=\tau_3 \, ,\quad b:=b_1=-b_3 \, ,\quad \beta:=\beta_1=\beta_3 \, .
    \end{equation}
 In addition, $M_{22}=M_{11}-M_{33}$ and so
	\begin{equation}\label{eq:fourthcase2}
		-h^2+\left\|\beta_2\right\|^2+\left\|\tau_2\right\|^2=N_{22}=N_{11}-N_{33}=2\left(h^2+b_2^2+ \left\|\nu_2\right\|^2\right) \, .
	\end{equation}
 Moroever, as $f^S$ is diagonal on $U$, we see that $g([f^*,f](e_i),e_i)=0$ for all $i=4,\ldots,n-1$. Consequently, the third equation in \eqref{eq:almostAbelian} gives, using already \eqref{eq:fourthcase1},
 \begin{equation*}
 \begin{split}
 0&=g(H'(e_i,\cdot,\cdot),H'(e_i,\cdot,\cdot))+g(B(e_i,\cdot),B(e_i,\cdot))\\
&=\beta_2^2(e_i)+\left\|B_0(e_i,\cdot)\right\|^2-\left\|\nu_2(e_i,\cdot)\right\|^2+\left\|\tau_2(e_i,\cdot)\right\|^2+\left\|H'_0(e_i,\cdot,\cdot)\right\|^2 \, .
 \end{split}
 \end{equation*}
Summing this up over $i=4,\ldots,n-1$, we arrive at
 \begin{equation}\label{eq:fourthcase3}
		0=\left\|\beta_2\right\|^2+2\left\|B_0\right\|^2-2\left\|\nu_2\right\|^2+2\left\|\tau_2\right\|^2+3\left\|H'_0\right\|^2 \, .
\end{equation}
Subtracting \eqref{eq:fourthcase2} from \eqref{eq:fourthcase3}, we get
\begin{equation*}
h^2+2\left\|B_0\right\|^2-2\left\|\nu_2\right\|^2+\left\|\tau_2\right\|^2+3\left\|H'_0\right\|^2=-2\, h^2-2\, b_2^2- 2 \left\|\nu_2\right\|^2 \, ,
\end{equation*}
and so
\begin{equation*}
0=3\,h^2+2\, b_2^2+2\left\|B_0\right\|^2+\left\|\tau_2\right\|^2+3\left\|H'_0\right\|^2 \, .
\end{equation*}
Thus, $h=b_2=0$, $B_0=\tau_2=0$ and $H_0'=0$. As a result, \eqref{eq:fourthcase2} reduces to
\begin{equation}\label{eq:fourthcase4}
\left\|\beta_2\right\|^2=2\left\|\nu_2\right\|^2 \, ,
\end{equation}
and we also have that
\begin{equation*}
B=\left(e^1+e^3\right)\wedge (-b e^2+\beta)+e^2\wedge \beta_2 \, ,
\end{equation*}
which implies $g(B,B)=\left\|\beta_2\right\|^2$. Therefore, the first equation in \eqref{eq:almostAbelian} becomes
	\begin{equation*}
		0=2 g(f^S,f^S)+g(B,B)=2\tr((f^S)^2)+g(B,B)=2\tr((f^S)^2)+\left\|\beta_2\right\|^2 \, .
	\end{equation*}
Since $\tr((f^S)^2)\geq 0$ in this case by Lemma \ref{le:trfsquare=0}, we must have $\tr((f^S)^2)=0$ and $\beta_2=0$. Now, by Lemma \ref{le:trfsquare=0} this implies $\alpha=a_1=\ldots=a_{n-4}=0$. In particular, $f^S|_U=0$.
Moreover, $\nu_2=0$ by \eqref{eq:fourthcase4}, which implies
\begin{equation*}
	B=\left(e^1+e^3\right)\wedge (-b e^2+\beta) \, ,\quad  H'=(e^1+e^3)\wedge (-e^2\wedge \nu+\tau) \, .
\end{equation*}	
Hence, $0=N_{12}=-M_{12}=-\sqrt{2}\, \rho_2$ gives $\rho_2=0$, and
\begin{equation*}
\rho:=\rho_1=-\frac{N_{11}}{2\sqrt{2}}=-\frac{b^2+\left\|\beta\right\|^2+\left\|\nu\right\|^2+\left\|\tau\right\|^2}{2\sqrt{2}}=-\frac{N_{33}}{2\sqrt{2}}=\rho_3 \, .
\end{equation*}
Note that the statement of Theorem is given in terms of $\tilde{\rho}:=-\sqrt{2}\rho$, but for convenience we will use $\rho$ for the remainder of the proof.

Observing that $Y\hook H'$ is orthogonal to $Z\hook H'$ and $Y\hook B$ is orthogonal to $Z\hook B$ for all $Y\in \spa{e_1,e_2,e_3}$ and $Z\in U$, (the polarisation of) the third equation in \eqref{eq:almostAbelian} yields
\begin{equation*}
2\,g([f^S,f^A](Y),Z)=g([f^*,f](Y),Z)=0 \, ,
\end{equation*}
for all $Y\in \spa{e_1,e_2,e_3}$ and all $Z\in U$. Hence, $[f^S,f^A]$ preserves the subspace $\spa{e_1,e_2,e_3}$. Now
\begin{equation*}
[f^S,f^A](e_2)=f^S\left(\rho (e_1-e_3)+v \right)-f^A\left(-\frac{1}{\sqrt{2}}(e_1-e_3)\right)=\frac{1}{\sqrt{2}} (u-w) \, ,
\end{equation*}
which is in $\spa{e_1,e_2,e_3}$ only if $w=u$. Moreover,
\begin{equation*}
\begin{split}
	[f^S,f^A](e_1)&=f^S\left(\rho e_2+u\right)-f^A\left(\frac{1}{\sqrt{2}}\, e_2\right)\\
	&=-\frac{\rho}{\sqrt{2}}(e_1-e_3)-\frac{\rho}{\sqrt{2}} (e_1-e_3)-\frac{1}{\sqrt{2}}\, v=-\sqrt{2}\rho (e_1-e_3)-\frac{1}{\sqrt{2}}\, v  \, ,
\end{split}
\end{equation*}
and the condition that this lies in $\spa{e_1,e_2,_3}$ yields $v=0$.

Next, since $A_4$ is skew-symetric on the positive definite subspace $U$, there is an orthonormal basis $(e_4,\ldots,e_{n-1})$ of $U$, some $k\in \left\{0,\ldots,\left\lfloor \frac{n-4}{2}\right\rfloor \right\}$ and certain $c_1\geq \ldots \geq c_k>0$ such that
\begin{equation*}
A_4=\diag(L_1(0,c_1),\ldots,L_1(0,c_k),0,\ldots,0) \, ,
\end{equation*}
with respect to $(e_4,\ldots,e_{n-1})$. We set $U_1:=\spa{e_4,\ldots,e_{4+2k-1}}$ and $U_2:=\spa{e_{4+2k},\ldots,e_{n-1}}$ and use now the equation $\dd H=0$ to get some more information on $f$ and $H$. For this, note first that
\begin{equation*}
\dd H=X^\flat\wedge f.H' \, .
\end{equation*}
Using that $f.(e^1+e^3)=0$ and $f.e^2\in \spa{e^1+e^3}$, we find that
\begin{equation*}
\dd H=X^\flat\wedge (e^1+e^3)\wedge (-e^2\wedge f.\nu+f.\tau) \, .
\end{equation*}
Note also that, for a one-form $\gamma\in U^*\cong (\bR^{n-4})^*$, we have $f.\gamma=-\gamma(u)\, (e^1+e^3)+A_4.\gamma$. We thus see that $\dd H=0$ if and only if
\begin{equation*}
A_4.\nu=0 \, ,\qquad A_4.\tau=0 \, .
\end{equation*}
Since $0=A_4.\nu=-\nu\circ A_4$ and $\mathrm{im}(A_4)=U_1\,$, the first equation is equivalent to $\nu(U_1)=0$, i.e. to $\nu\in U_2^*$. Moreover, $A_4.\tau=0$ is equivalent to the $U_1^*\wedge U_2^*$-part of $\tau$ vanishing---that is $\tau=\tau_1+\tau_2$ with $\tau_i\in \Lambda^2 U_i^*$ for $i=1,2$---and to $\tilde{A}.\tau_1=0$, where $\tilde{A}:=\diag(L_1(0,c_1),\ldots,L_1(0,c_k))$. Let us analyse this last condition further. Let
\begin{equation*}
J:=\diag(L_1(0,1),\ldots,L_1(0,1))	
\end{equation*}
be the natural almost complex structure on $U_1\,$. Note that on $\spa{e_{4+2i-2},e_{4+2i-1}}$ for some $i\in \{1,\ldots,k\}$, we have $\tilde{A}=c_i J$ and so $\tilde{A}^2=-c_i^2 \id$ on that subspace. Hence, if $v\in \spa{e_{4+2i-2},e_{4+2i-1}}$ and $w\in  \spa{e_{4+2j-2},e_{4+2j-1}}$ for $i,j\in \{1,\ldots,k\}$, using that $\tilde{A}.\tau_1=0$ we find
\begin{equation*}
\tau_1(v,w)=-\frac{1}{c_i^2} \tau_1(\tilde{A}^2 v,w)=-\frac{1}{c_i^2} \tau_1(v,\tilde{A}^2 w)=\frac{c_j^2}{c_i^2}\, \tau_1(v,w) \, ,
\end{equation*}
which shows, due to $c_i,c_j>0$, that either $c_i=c_j$ or $\tau_1(v,w)=0$. However, if $c_i=c_j\,$, we see that $\tau_1(Jv,w)=-\tau_1(v,Jw)$. Hence, $\tau_1$ is of type $(1,1)$ with respect to $J$.

Conversely, assuming all the conditions that we have derived so far we have, in fact, that $\tilde{A}.\tau_1=0$ and so $H$ is closed. We also have that the first, second and third equations in \eqref{eq:almostAbelian} are satisfied.

Therefore, it only remains to study the last equation in \eqref{eq:almostAbelian}. For this, note that
\begin{align*}
B(e_1,\cdot)^\sharp=B(e_3,\cdot)^\sharp&=-b\, e_2+\beta^\sharp \, , & B(e_2,\cdot)^\sharp&=-b (e_1-e_3) \, , & \\
B(Y,\cdot)^\sharp&=\beta(Y)\, (e_1-e_3) \, , &&&
\end{align*}
for all $Y\in U$ and that
\begin{equation*}
\begin{split}
f^*(e_1)&=f^*(e_3)=\left(\frac{1}{\sqrt{2}}-\rho\right)\, e_2-u \, ,\quad f^*(e_2)=-\left(\frac{1}{\sqrt{2}}+\rho\right) (e_1-e_3) \, ,\\
f^*(Y)&=-u^t Y (e_1-e_3)-A_4\, Y  \, ,
\end{split}
\end{equation*}
for all $Y\in U\cong \bR^{n-4}$. Using these formulas, one sees that the last equation in \eqref{eq:almostAbelian} is satisfied if and only if $\beta(A_4 Y)=g(\beta^\sharp,A_4 Y)=0$ for all $Y\in U$, i.e. if and only if $\beta(U_1)=\beta(\mathrm{im}(A_4))=0$, i.e. if and only if $\beta\in U_2^*$. This finishes the proof.
\end{proof}
Thus, the only remaining case is that where $f^S$ is of second type, namely $f^S=\diag(L_2(\alpha,\beta),a_1,\ldots,a_{n-3})$ for $\alpha,a_1,\ldots,a_{n-3}\in \bR$ and some $\beta>0$ with respect to an appropriate orthonormal basis of $\mfn$. This situation seems to be too hard to be investigated in total generality in all dimensions. The problem stems from the fact that in this case---as described in Lemma \ref{le:trfsquare=0}---the condition $\tr((f^S)^2)=0$ does not imply that all involved parameters are zero, in contrast with other cases. However, in dimension four this case can be excluded and we achieve the following classification:
\begin{theorem}\label{th:4dalmostAbeliannondegHneq0}
	Let $(\mfg,H,\cG_g)$ be a four-dimensional almost Abelian generalised Lorentzian Lie algebra with non-degenerate codimension one Abelian ideal $\mfn$ and $H\neq 0$. Then $(\mfg,H,\cG_g,0)$ is generalised Einstein if and only if there exists $b\in \bR^*$ and an orthonormal basis $(e_1,e_2,e_3,e_4)$ of $\mfg$ with $g(e_1,e_1)=-g(e_2,e_2)=-g(e_3,e_3)=-g(e_4,e_4)=-1$ such that
	\begin{equation*}
	H=b\, (e^1+e^3)\wedge e^{24} \, ,\qquad
	f=\frac{1}{\sqrt{2}}\begin{pmatrix} 0 & -1-\frac{b^2}{2} & 0 \\ 1-\frac{b^2}{2} & 0 & 1-\frac{b^2}{2} \\ 0 & 1+\frac{b^2}{2} & 0
	\end{pmatrix} \, ,
	\end{equation*}
	with respect to $(e_1,e_2,e_3,e_4)$, where $\mfn=\spa{e_1,e_2,e_3}$ and $f=\ad\vert_{e_4}$. Note that in this case, $\mfg\cong A_{4,1}$ for $b\neq\pm\sqrt{2}$ and $\mfg\cong\mfh_3\oplus \bR$ for $b=\pm\sqrt{2}$.
\end{theorem}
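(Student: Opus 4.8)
The plan is to reduce everything to Theorem~\ref{th:almostAbeliannondegHneq0} by proving that, in dimension four, the symmetric part $f^S$ of $f$ can never be of second type. Once this is established, the standing hypothesis of Theorem~\ref{th:almostAbeliannondegHneq0} is automatically satisfied and the $n=4$ specialisation of that theorem delivers the claim. In particular, the ``if'' direction is immediate: the pair $(H,f)$ in the statement is exactly the $n=4$ instance of the fourth-type normal form of Theorem~\ref{th:almostAbeliannondegHneq0}, for which $U_1$, $U_2$ and $\bR^{n-4}$ are all trivial, so that $k=0$ and $u,\nu,\beta,\tau_1,\tau_2$ all vanish, leaving only the parameter $b\in\bR$ with $\tilde\rho=\tfrac{b^2}{2}$; the constraint $H\neq0$ then forces $b\in\bR^*$.

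For the converse, I would start from Lemma~\ref{le:fSnotdiag} and Lemma~\ref{le:fSnot3rd}, which already show that $\mfn$ must be Lorentzian and that $f^S$ is of neither first nor third type. Hence it only remains to exclude the second type. The decisive simplification is Corollary~\ref{co:almostAbelian4d}, which gives $H'=0$, so that $H=X^\flat\wedge B$ with $B\in\Lambda^2\mfn^*$ and the system \eqref{eq:almostAbelian} reduces to conditions on $f$ and $B$ alone. Introducing the $g$-skew endomorphism $\hat{B}\in\End(\mfn)$ determined by $g(\hat{B}\,Y,Z)=B(Y,Z)$ and using the identities $f^*f-ff^*=2[f^S,f^A]$ and $g(B(Y,\cdot),B(Y,\cdot))=-g(\hat{B}^2Y,Y)$, the third equation of \eqref{eq:almostAbelian} turns into the single operator identity $2[f^S,f^A]=\hat{B}^2$.

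The heart of the argument is the short computation extracting a contradiction from this identity. Assuming $f^S=\diag(L_1(\alpha,\beta),b_1)$ of second type with respect to an orthonormal basis $(e_1,e_2,e_3)$ with $g(e_1,e_1)=-1$ and $\beta>0$, I would parametrise both $f^A$ and $\hat{B}$ by the general $g$-skew matrices on $(\mfn,g)$ and write out the two sides of $2[f^S,f^A]=\hat{B}^2$. The $(3,3)$- and $(1,2)$-entries force the two ``mixed'' components of $\hat{B}$ (those coupling $e_3$ to $\spa{e_1,e_2}$) to vanish, and then the sum of the $(1,1)$- and $(2,2)$-entries, using $\beta\neq0$, forces the remaining component as well; thus $\hat{B}=0$, i.e.\ $B=0$. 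Together with $H'=0$ this yields $H=0$, contradicting $H\neq0$. Therefore $f^S$ is of fourth type, and Theorem~\ref{th:almostAbeliannondegHneq0} with $n=4$ produces the asserted forms of $H$ and $f$. I expect this exclusion of the second type to be the only genuinely delicate point, since it is precisely the case in which $\tr((f^S)^2)=0$ does not force $f^S=0$ (cf.\ Lemma~\ref{le:trfsquare=0}); everything else is bookkeeping once $H'=0$ is available.

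Finally, to pin down the isomorphism type of $\mfg$ I would compute the powers of the $3\times3$ matrix $f=\ad\vert_{e_4}$. Its characteristic polynomial works out to $-\lambda^3$, so $f$ is always nilpotent, and a direct calculation gives $f^2=0$ precisely when $1-\tfrac{b^2}{2}=0$, that is $b=\pm\sqrt2$. Consequently $f$ has nilpotency degree three when $b\neq\pm\sqrt2$, yielding $\mfg\cong A_{4,1}$, and degree two (indeed rank one) when $b=\pm\sqrt2$, yielding $\mfg\cong\mfh_3\oplus\bR$.
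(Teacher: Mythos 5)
Your proposal is correct and follows essentially the same route as the paper: the ``if'' direction is the $n=4$ specialisation of Theorem \ref{th:almostAbeliannondegHneq0}, and the ``only if'' direction reduces to excluding the second type for $f^S$, which you do by the same entrywise comparison (your operator identity $2[f^S,f^A]=\hat{B}^2$ is just the paper's matrix equation $M=-N$ in disguise, and the $(3,3)$-, $(1,2)$- and diagonal entries give $b_1=b_2=b_3=0$ exactly as in the paper). One cosmetic remark: the condition $\beta\neq 0$ is not actually needed in the last step, since the sum of the $(1,1)$- and $(2,2)$-entries of $2[f^S,f^A]$ vanishes identically while the corresponding sum for $\hat{B}^2$ equals $2b_3^2$ once $b_1=b_2=0$.
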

\begin{proof}
Using Theorem \ref{th:almostAbeliannondegHneq0}, we are left with showing that under the assumption $\dim(\mfg)=4$, $f^S$ cannot be of second type. To show this, we argue by contradiction and assume that
	\begin{equation*}
		f^S=\diag(L_1(\alpha,\beta),d)	 \, ,
	\end{equation*}
	for certain $\alpha,d\in \bR$ and $\beta>0$. We can also write
	\begin{equation*}
		f^A=\begin{pmatrix} 0 & \rho & \tau_1 \\ \rho & 0 & \tau_2 \\ \tau_1 & -\tau_2 & 0 \end{pmatrix} \, ,
	\end{equation*}
	for certain $\rho,\tau_1,\tau_2\in\bR$. Thus,
	\begin{equation*}
		(g(f^*f-ff^*(e_i),e_j))_{i,j=1,2}=4\, \beta\rho\, I_2  \, .
	\end{equation*}
     Now, by Corollary \ref{co:almostAbelian4d}, we have $H'=0$. We write $B=b_1\, e^{23}+b_2 e^{31}+b_3 e^{12}$ with $b_1,b_2,b_3\in \bR$ and observe that $[f^*,f]=2\, [f^S,f^A]$. Then
   \begin{equation*}
   	g([f^*,f](e_3),e_3)=2 g([f^S,f^A](e_3),e_3)=4 g(f^S(e_3),f^A(e_3))=0 \, ,
   \end{equation*}
so the third equation in \eqref{eq:almostAbelian} yields
   \begin{equation*}
   	0=g(B(e_3,\cdot),B(e_3,\cdot))=b_1^2-b_2^2 \, ,
   \end{equation*}
   that is $b_1^2=b_2^2$. Moreover, the third equation in \eqref{eq:almostAbelian} also gives
   	\begin{equation*} 
   	    \begin{split}	
   		 4\, \beta\rho\, I_2&=	(g(f^*f-ff^*(e_i),e_j))_{i,j=1,2}=-(g(B(e_i,\cdot),B(e_j,\cdot)))_{i,j=1,2}\\
   		 &=\begin{pmatrix} -b_2^2-b_3^2 & b_1 \, b_2 \\ b_1\, b_2 & -b_1^2+b_3^2 \end{pmatrix} \, .
   		 \end{split}
   \end{equation*}
   Hence, $b_1 b_2=0$ and so, due to $b_1^2=b_2^2$, we have $b_1=b_2=0$. Then, $-b_3^2= 4\, \beta\rho=b_3^2$ yields also $b_3=0$ and so $B=0$. Thus $H=0$, a contradiction. Hence, $(\mfg,H,\cG_g,0)$ cannot be generalised Einstein.
\end{proof}
We end this section by noting that there are examples of almost Abelian generalised Einstein Lorentzian Lie algebras $(\mfg,H,\cG_g,0)$ with $\mfn$ being non-degenerate and $H\neq 0$ for which $f^S$ is of second type already in dimension five:
\begin{example}\label{ex:5dalmostAbelian}
	Let $(\mfg,g)$ be the five dimensional almost Abelian Lorentzian Lie algebra for which there exists an orthonormal basis $(e_1,\ldots,e_5)$ with $g(e_1,e_1)=-g(e_i,e_i)=-1$ for $i=2,\ldots,5$ such that $\mfn=\spa{e_1,\ldots,e_4}$ and such that $f=\ad_{e_5}|_{\mfn}$ is given by
	\begin{equation*}
		f=\left(\begin{smallmatrix} 0 & -2 & 0 & 0 \\ 0 & 0 & 0 & 0 \\ 0 & 0 & 1 & 1 \\ 0 & 0 & -1 & -1 \end{smallmatrix}\right) \, .
	\end{equation*}
Moreover, let
	\begin{equation*}
		H=\sqrt{2}\left(e^{135}+e^{145}-e^{235}+e^{245}\right) \, .
	\end{equation*}
We claim that $(\mfg,H,\cG_g,0)$ is generalised Einstein. For this, we note first that $H'=0$, that
\begin{equation*}
	B=\sqrt{2} \left(e^{13}+e^{14}-e^{23}+e^{24}\right) \, ,
\end{equation*}
and that
	\begin{equation*}
		f^S=\left(\begin{smallmatrix} 0 & -1 & 0 & 0 \\ 1 & 0 & 0 & 0 \\ 0 & 0 & 1 & 0 \\ 0 & 0 & 0 & -1 \end{smallmatrix}\right) \, .
	\end{equation*}
We see that $f^S$ is of second type. Moreover, $g(B,B)=0$ and so
\begin{equation*}
2g(f^S,f^S)+g(B,B)=\tr((f^S)^2)=0 \, ,
\end{equation*}
i.e. the first equation in \eqref{eq:almostAbelian} is satisfied. The second equation in \eqref{eq:almostAbelian} is also satisfied since $H'=0$.

Next, computing that
\begin{equation*}
g([f^*,f](e_i),e_j)_{i,j=1,\ldots,4}=\left(\begin{smallmatrix}
	-4 & 0 & 0 & 0 \ \\
	0& -4 & 0 & 0 \\
	0	& 0 &0  & 4\\
	0 & 0 & 4 & 0
\end{smallmatrix}\right) \, ,
\end{equation*}
and that
\begin{equation*}
	\begin{split}
	-(g(H'(e_i,\cdot,\cdot),H'(e_j,\cdot,\cdot))&+g(B(e_i,\cdot),B(e_j,\cdot))_{i,j=1,\ldots,4}\\
	&=-g(B(e_i,\cdot),B(e_j,\cdot))_{i,j=1,\ldots,4}=\left(\begin{smallmatrix}
		-4 & 0 & 0 & 0 \ \\
	0& -4 & 0 & 0 \\
	0	& 0 &0  & 4\\
		0 & 0 & 4 & 0
	\end{smallmatrix}\right) \, ,
\end{split}
\end{equation*}
we see that also the third equation in \eqref{eq:almostAbelian} holds. Finally, noting that
\begin{equation*}
f^*=\left(\begin{smallmatrix} 0 & 0 & 0 & 0 \\ 2 & 0 & 0 & 0 \\ 0 & 0 & 1 & -1 \\ 0 & 0 & 1 & -1 \end{smallmatrix}\right) \, ,
\end{equation*}
and that
\begin{equation*}
\begin{split}
B(e_1,\cdot)^{\sharp}&=\sqrt{2}(e_3+e_4) \, ,\qquad B(e_2,\cdot)^{\sharp}=\sqrt{2}(-e_3+e_4) \, ,\\ B(e_3,\cdot)^{\sharp}&=\sqrt{2}(e_1+e_2) \, ,\qquad B(e_4,\cdot)^\sharp=\sqrt{2}(e_1-e_2) \, ,
\end{split}
\end{equation*}
one checks that
\begin{equation*}
g(f^*(e_i),B(e_j,\cdot)^\sharp)=g(f^*(e_j),B(e_i,\cdot)^\sharp) \, ,
\end{equation*}
for all $i,j=1,\ldots,4$. Hence, also the fourth equation in \eqref{eq:almostAbelian} is satisfied and so
$(\mfg,H,\cG_g,0)$ is, indeed, generalised Einstein.
\end{example}

\subsection{Almost Abelian Lorentzian case with degenerate $\mfn$}
We now turn our attention to the case where $\mfn$ is degenerate with respect to the pseudo-Riemannian metric $g$. As $g$ is Lorentzian and $\mfn$ is degenerate, it will turn out to be useful to work with the following basis:
\begin{definition}\label{def:Wittbasis}
Let $(\mfg,g)$ be an almost Abelian Lorentzian Lie algebra with degenerate codimension one Abelian ideal $\mfn$. A basis $(e_1,\ldots,e_n)$ of $\mfg$ is called a \emph{Witt basis} if $(e_1,\ldots,e_{n-1})$ is a basis of $\mfn$ and
\begin{equation*}
\begin{split}
g(e_i,e_j)&=\delta_{ij} \, ,\quad g(e_{n-1},e_i)=g(e_n,e_i)=0 \, ,\quad g(e_{n-1},e_{n-1})=g(e_n,e_n)=0 \, ,\\
g(e_{n-1},e_n)&=1 \, ,
\end{split}
\end{equation*}
for all $i,j=1,\ldots,n-2$.
\end{definition}
We may now show:
\begin{theorem}\label{th:almostAbeliandeg}
Let $(\mfg,H,\cG_g)$ be an $n$-dimensional almost Abelian generalised Lorentzian Lie algebra with degenerate codimension one Abelian ideal $\mfn$. Moreover, let $(e_1,\ldots,e_{n-2},Y,X)$ be a Witt basis, set $f:=\ad_X|_{\mfn}\in \End(\mfn)$ and $U:=\spa{e_1,\ldots,e_{n-2}}$. Then $(\mfg,H,\cG_g,0)$ is generalised Einstein if and only $H=0$ and there exists some $k\in \left\{0,\ldots,\left\lfloor\tfrac{n-2}{2}\right\rfloor\right\}$ such that
\begin{equation*}
f=\begin{pmatrix} \diag(L_1(0,c_1),\ldots, L_1(0,c_k),0,\ldots,0) & 0 \\ \alpha & 0 \end{pmatrix} \, ,
\end{equation*}
for some $\alpha\in U^*$ and certain $c_1\geq \ldots\geq c_k>0$ with respect to the splitting $\mfn=U\oplus \spa{Y}$.

Moreover, $g$ is then flat.
\end{theorem}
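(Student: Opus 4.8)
The plan is to work directly with the two equations of Corollary \ref{co:gEdelta=0}, since the degeneracy of $\mfn$ prevents us from invoking Corollary \ref{co:almostAbelian} (whose derivation splits off a \emph{non-degenerate} line $\spa{X}$ via Proposition \ref{pro:codim1idealnondeg}). Writing $f\in\End(\mfn)$ in block form with respect to $\mfn=U\oplus\spa{Y}$, say $f|_U=A+\alpha\otimes Y$ with $A\in\End(U)$, $\alpha\in U^*$, and $f(Y)=b+dY$ with $b\in U$, $d\in\bR$, I would first record the only non-zero brackets $[X,Z]=f(Z)$, $[\mfn,\mfn]=0$, and decompose the three-form as
\[
H=H_U+X^\flat\wedge P_1+Y^\flat\wedge P_2+X^\flat\wedge Y^\flat\wedge R,
\]
with $H_U\in\Lambda^3U^*$, $P_1,P_2\in\Lambda^2U^*$, $R\in U^*$. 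The first technical step is to compute, in the Witt basis, the $g$-adjoints $\ad_W^*$ (the off-diagonal pairing $g(X,Y)=1$ is the source of all the bookkeeping, as the adjoint interchanges the $X$- and $Y$-rows), and from them the symmetric parts $\ad_W^S$, the skew operators $\ad^*(W)$, and the endomorphisms $H_W$, for $W\in\{e_k,Y,X\}$.

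I would then extract the conditions by evaluating the polarised first equation of Corollary \ref{co:gEdelta=0} on pairs of basis vectors. I expect the clean facts $\ad^*(Y)=0$ and $g(\ad_Y^S,\ad_Y^S)=g(\ad_{e_k}^S,\ad_{e_k}^S)=0$ to do most of the work: the pair $(Y,Y)$ reduces to $g(Y\hook H,Y\hook H)=\left\|P_1\right\|^2=0$, so $P_1=0$ since $U$ is positive definite; the pair $(X,Y)$ reduces to $4g(\ad_X^S,\ad_Y^S)=-2\left\|b\right\|^2=0$, forcing $b=0$, i.e.\ $f(Y)\in\spa{Y}$; and the pair $(X,X)$ collapses to a sum of non-negative terms $\tr((A+A^\top)^2)+4d^2+2\left\|P_2\right\|^2=0$, which forces $A$ to be skew-symmetric, $d=0$ (so $f(Y)=0$), and $P_2=0$.

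At this stage $H=H_U+X^\flat\wedge Y^\flat\wedge R$, and the remaining pairs $(e_k,e_l)$ yield, using $g(X^\flat\wedge Y^\flat,X^\flat\wedge Y^\flat)=-1$, the system $g(e_k\hook H_U,e_l\hook H_U)=R(e_k)R(e_l)$ for all $k,l$. Here I expect the real obstacle: the diagonal entries alone give only $3\left\|H_U\right\|^2=\left\|R\right\|^2$, which does not separate the two. The resolution is to read the full system as the statement that the Gram matrix of the linear map $u\mapsto u\hook H_U\in\Lambda^2U^*$ equals the rank-one matrix $R\otimes R$; hence this map has rank $\le 1$, and if $R\neq0$ it satisfies $u\hook H_U=R(u)\,\omega_0$ for a fixed unit two-form $\omega_0$. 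But then $u\hook H_U=0$ for $u\in\ker R$, and a positive-degree form on $U$ annihilated by contraction with a hyperplane must vanish, so $H_U=0$, contradicting $u_0\hook H_U=\omega_0\neq0$ for $R(u_0)=1$. Thus $R=0$, and then $\left\|H_U\right\|^2=0$ gives $H_U=0$, so $H=0$; a rotation of $U$ puts $A$ into the normal form $\diag(L_1(0,c_1),\dots,L_1(0,c_k),0,\dots,0)$. The converse is a direct check that, with $H=0$ and $f$ of this shape, every term entering Corollary \ref{co:gEdelta=0} vanishes (the second equation being trivial since $H=0$).

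Finally, for flatness I would compute the Levi-Civita connection from the Koszul formula. Since the only non-zero brackets are $[X,Z]=f(Z)\in\mfn$, a short computation gives $\nabla_W=0$ for every $W\in\mfn$, while $\nabla_X$ is the only non-trivial covariant derivative, with $\nabla_Xe_i=Ae_i+\alpha(e_i)\,Y$, $\nabla_XY=0$ and $\nabla_XX=-\alpha^\sharp$. Because $\mfg'=\im f\subseteq\mfn$ and $\nabla$ annihilates every $\mfn$-direction, each curvature term $R(P,Q)=\nabla_P\nabla_Q-\nabla_Q\nabla_P-\nabla_{[P,Q]}$ vanishes whenever one of $P,Q$ lies in $\mfn$ (using $[\mfg,\mfg]\subseteq\mfn$), so $R\equiv0$ and $g$ is flat.
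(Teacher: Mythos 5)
Your proposal follows essentially the same route as the paper: the same block decomposition of $f$ with respect to $\mfn=U\oplus\spa{Y}$ and of $H$ into $H_U$, $P_1$, $P_2$, $R$, the same order of substitutions $(Y,Y)$, $(X,Y)$, $(X,X)$, $(e_k,e_l)$ into the first equation of Corollary \ref{co:gEdelta=0}, and the same Koszul-formula verification of flatness. The one place you diverge is the $(X,Y)$ polarisation: the full equation there is $4\,g(\ad_X^S,\ad_Y^S)+2\,g(X\hook H,Y\hook H)=-2\left\|b\right\|^2-2\left\|R\right\|^2=0$, since after $P_1=0$ one has $Y\hook H=Y^\flat\wedge R$, $X\hook H=P_2-X^\flat\wedge R$ and $g(X\hook H,Y\hook H)=-\left\|R\right\|^2$. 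You dropped this cross-term, so your displayed equation $4\,g(\ad_X^S,\ad_Y^S)=-2\left\|b\right\|^2=0$ is not what the polarisation actually gives; the conclusion $b=0$ survives because the omitted term is also non-positive, but the correct equation additionally yields $R=0$ on the spot. This is exactly how the paper proceeds (its $\beta$ is your $R$), after which the $(e_k,e_l)$ equations reduce to $\left\|e_k\hook H_U\right\|^2=0$ and $H_U=0$ follows immediately. Your rank-one Gram-matrix argument (the map $u\mapsto u\hook H_U$ has Gram matrix $R\otimes R$, hence rank $\le 1$, and a non-zero $3$-form cannot be annihilated by contraction with a hyperplane) is a valid way to recover $R=0$ and $H_U=0$, but it is compensating for the dropped term rather than an independent improvement. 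Everything else — the non-negativity collapse in the $(X,X)$ equation, the normal form for the skew-symmetric part on the Euclidean $U$, the converse check, and the curvature argument using $\nabla_W=0$ for $W\in\mfn$ together with $[\mfg,\mfg]\subseteq\mfn$ — matches the paper and is correct.
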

\begin{proof}
Observing that $U$ is positive definite, we write $\left\|T\right\|^2$ instead of $g(T,T)$ for any tensor $T$ on $U$ in what follows.

Next, we describe how to compute scalar products and adjoints of linear endomorphisms of $\mfg$ according to the splitting $\mfg=U\oplus \spa{Y}\oplus \spa{X}$. For this, let $F\in \End(\mfg)$ and decompose $F$ as
\begin{equation*}
F=\begin{pmatrix} \tilde{F} & v & w \\ \alpha & a & b \\ \beta & c & d
\end{pmatrix} \, ,
\end{equation*}
with $\tilde{F}\in \End(U)$, $v,w\in U$, $\alpha,\beta\in U^*$ and $a,b,c,d\in \bR$. Then
\begin{equation}\label{eq:norm}
g(F,F)=\left\|\tilde{F}\right\|^2+2\, g(\alpha,\beta)+2\, g(v,w)+2\, (ad+bc) \, .
\end{equation}
Moreover, 
\begin{equation}\label{eq:adjoint}
F^*=\begin{pmatrix} \tilde{F}^* & \beta^\sharp &\alpha^\sharp \\ w^\flat & d & b \\ v^\flat & c & a
\end{pmatrix} \, .
\end{equation}	
Now, decompose first
$f$ as
\begin{equation*}
	f=\begin{pmatrix} \tilde{f} & v \\ \alpha & a \end{pmatrix} \, ,
\end{equation*}
with $\tilde{f}\in \End(U)$, $v\in U$, $\alpha\in U^*$ and $a\in \bR$ according to the splitting $\mfn=U\oplus \spa{Y}$. Then
\begin{equation*}
\ad_X=\begin{pmatrix} \tilde{f} & v & 0 \\ \alpha & a & 0 \\ 0 & 0 & 0 \end{pmatrix} \, ,
\end{equation*}	
and by \eqref{eq:adjoint}
\begin{equation*}
	\ad_X^*=\begin{pmatrix} \tilde{f}^* & 0 & \alpha^\sharp \\ 0 & 0 & 0 \\ v^\flat & 0 & a \end{pmatrix} \, .
\end{equation*}	
Thus,
\begin{equation*}
	\ad_X^S=\begin{pmatrix} \tilde{f}^S & \tfrac{1}{2}\,v & \tfrac{1}{2}\, \alpha^\sharp \\ \tfrac{1}{2}\, \alpha & \frac{a}{2} & 0 \\ \tfrac{1}{2}\, v^\flat & 0 & \frac{a}{2} \end{pmatrix} \, .
\end{equation*}	
and so 
\begin{equation}\label{eq:adXSnorm}
4\, g(\ad_X^S,\ad_X^S)=4\, \left\| \tilde{f}^S \right\|^2+4\, \alpha(v)+ 2\, a^2 \, ,
\end{equation}
by \eqref{eq:norm}. Furthermore,
\begin{equation*}
\ad_Y=\begin{pmatrix} 0 & 0 & -v \\ 0 & 0 & -a \\ 0 & 0 & 0 \end{pmatrix} \, ,
\end{equation*}
so we get 
\begin{equation*}
	\ad_Y^S= \begin{pmatrix} 0 & 0 & -\tfrac{1}{2}\,v \\ -\tfrac{1}{2}\, v^\flat & 0 & -a \\ 0 & 0 & 0 \end{pmatrix} \, ,
\end{equation*}
using \eqref{eq:adjoint}. This implies $g(\ad_Y^S,\ad_Y^S)=0$ by \eqref{eq:norm}. Moreover, $Y\in \mfn^{\perp}\subseteq (\mfg')^{\perp}$ and so $\ad^*(Y)=0$. Hence, writing
\begin{equation*}
H=H_0+X^\flat\wedge B_1+Y^\flat\wedge B_2+X^\flat\wedge Y^\flat\wedge \beta \, ,
\end{equation*}
for $H_0\in \Lambda^3 U^*$, $B_1,B_2\in \Lambda^2 U^*$ and $\beta\in  U^*$, we have
\begin{equation*}
Y\hook H=B_1+Y^\flat\wedge \beta \, ,
\end{equation*}
and inserting $Y$ into the first equation in \eqref{eq:gEdelta=0} we find
\begin{equation*}
0=4g(\ad_Y^S,\ad_Y^S)-g(\ad^*(Y),\ad^*(Y))+2 g(H(Y,\cdot,\cdot),H(Y,\cdot,\cdot))=2 \left\|B_1\right\|^2 \, ,
\end{equation*}
i.e. $B_1=0$. On the other hand, we have
\begin{equation*}
	X\hook H=B_2-X^\flat\wedge \beta \, .
\end{equation*}
Now, inserting $X$ and $Y$ into the polarisation of the first equation in \eqref{eq:gEdelta=0} and using the polarisation of \eqref{eq:norm} we obtain
\begin{equation*}
0=2 g(\ad_X^S,\ad_Y^S)+g(H(X,\cdot,\cdot),H(Y,\cdot,\cdot))=-\left\|v\right\|^2-\left\|\beta\right\|^2 \, ,
\end{equation*} 
i.e. $v=0$ and $\beta=0$. Now observe that
\begin{equation*}
\ad^*(X)=\begin{pmatrix} 0 & 0 & \alpha^\sharp \\ -\alpha & -a & 0 \\ 0 & 0 & a \end{pmatrix} \, .
\end{equation*}
Thus, \eqref{eq:norm} and \eqref{eq:adXSnorm} imply
\begin{equation*}
\begin{split}
0&=4\, g(\ad_X^S,\ad_X^S)-g(\ad^*(X),\ad^*(X))+2\, g(H(X,\cdot,\cdot),H(X,\cdot,\cdot))\\
&=4\, \left\| \tilde{f}^S \right\|^2+ 2\, a^2+2\, a^2+2\,\left\|B_2\right\|^2=4\, \left\| \tilde{f}^S \right\|^2+4\, a^2+2\,\left\|B_2\right\|^2 \, ,
\end{split}
\end{equation*}
and so $\tilde{f}^S=0$, $a=0$ and $B_2=0$. Summarising our results at this point, we have shown that
\begin{equation*}
H=H_0\in \Lambda^3 U^*\,,\quad f=\begin{pmatrix} \tilde{f} & 0 \\ \alpha & 0 \end{pmatrix} \, ,
\end{equation*}
for some skew-symmetric $\tilde{f}$. Note that then
\begin{equation*}
\ad_Z= \begin{pmatrix} 0 & 0 & -\tilde{f}(Z) \\ 0 & 0 & -\alpha(Z) \\ 0 & 0 & 0 \end{pmatrix} \, ,
\end{equation*}
and so
\begin{equation*}
	\ad_Z^S= \begin{pmatrix} 0 & 0 & -\frac{1}{2}\,\tilde{f}(Z) \\ -\frac{1}{2}\,\tilde{f}(Z)^\flat & 0 & -\alpha(Z) \\ 0 & 0 & 0 \end{pmatrix} \, ,
\end{equation*}
for all $Z\in U$ by \eqref{eq:adjoint}. Hence, $g(\ad_Z^S,\ad_Z^S)=0$ by \eqref{eq:norm}. Next, a short computation shows
\begin{equation*}
\ad^*(Z)=\begin{pmatrix} 0 & 0 & \tilde{f}^*(Z)  \\ -\tilde{f}^*(Z)^\flat & 0 & 0\\ 0 & 0 & 0 \end{pmatrix} \, ,
\end{equation*}
and then \eqref{eq:norm} yields $g(\ad^*(Z),\ad^*(Z))=0$ for all $Z\in U$. Thus, the first equation in \eqref{eq:gEdelta=0} reduces to
\begin{equation*}
0=g(H(Z,\cdot,\cdot),H(Z,\cdot,\cdot))=\left\|Z\hook H_0\right\|^2 \, ,
\end{equation*}
i.e. to $Z\hook H_0$ for all $Z\in U$. This implies $H_0=0$, and so $H=0$ and the second equation in \eqref{eq:gEdelta=0} is trivially satisfied. Finally, since $U$ is positive definite, we may find an orthonormal basis of $U$ such that the skew-symmetric endomorphism $\tilde{f}$ of $U$ equals $\diag(L_1(0,c_1),\ldots,L_1(0,c_k),0,\ldots,0)$ for some $k\in \left\{0,\ldots,\left\lfloor \frac{n-2}{2}\right\rfloor\right\}$ and certain $c_1\geq \ldots\geq c_k>0$.

Conversely, assuming the conditions in the statement, one easily checks that \eqref{eq:gEdelta=0} is fulfilled, i.e. that $(\mfg,H=0,\cG_g,0)$ is generalised Einstein. 
 
To conclude, one can show that $g$ is flat using the Koszul formula to compute that $\nabla^g_Z=0$ for all $Z\in U$ and $\nabla^g_Y=0$, which already implies that $R^g=0$, i.e. that $g$ is flat.
\end{proof}
\begin{remark}
Let $\mfg$ be an almost Abelian Lorentzian Lie algebra which admits a generalised Einstein structure of the form $(H,\cG_g,0)$ such that a codimension one Abelian ideal is degenerate. Then the real Jordan normal form of $f$ is given by
\begin{equation*}
\diag(L_1(0,c_1),\ldots,L_1(0,c_k),0,\ldots,0) \, ,
\end{equation*}
or by
\begin{equation*}
	\diag(L_1(0,c_1),\ldots,L_1(0,c_k),J_2(0),\ldots,0) \, ,
\end{equation*}
where $J_2(0)$ is a Jordan block of size two with $0$ on the diagonal. This has the following implications:
\begin{itemize}
	\item[(i)]
$\mfg$ also admits a generalised Einstein structure with non-degenerate codimension one Abelian ideal since the first class of possible real Jordan normal forms is as in Theorem \ref{th:almostAbeliannondegH=0} (a), whereas the last class of possible real Jordan normal forms is as in Theorem \ref{th:almostAbeliannondegH=0} (b) (v).
\item[(ii)]
In low dimensions, the following almost Abelian Lie algebras $\mfg$ posses a generalised Lorentzian Einstein metric $(H,\mathcal{G}_g,\delta=0)$ with degenerate codimension one Abelian ideal:
\begin{itemize}[$\bullet$]
	\item $\dim(\mfg)=3$: $\bR^3\,$, $\mfh_3\,$, $\mathfrak{e}(2)$.
	\item $\dim(\mfg)=4$: $\bR^4\,$, $\mfh_3\oplus \bR\,$, $\mathfrak{e}(2)\oplus \bR$.
	\item $\dim(\mfg)=5$: $\bR^5\,$, $\mfh_3\oplus \bR^2\,$, $\mathfrak{e}(2)\oplus \bR^2\,$, $A_{5,14}^0\,$.
\end{itemize}
\end{itemize}
\end{remark}
\section{Four-dimensional generalised Einstein Lorentzian Lie algebras with non-degenerate commutator ideal}\label{sec:4dLorentzian}
In this section, we provide a full classification of all four-dimensional generalised Lorentzian Lie algebras $(\mfg,H,\cG_g)$  which are generalised Einstein for zero divergence operator $\delta=0$ under the additional assumption that the commutator ideal $\mfg'$ is non-degenerate. 
More exactly, the result that we prove is the following:
\begin{theorem}\label{th:4dLorentzian}
Let $(\mfg,H,\cG_g)$ be a four-dimensional generalised Lorentzian Lie algebra such that the commutator ideal $\mfg'$ is non-degenerate. Then $(\mfg,H,\cG_g)$ is generalised Einstein for zero divergence if and only if either
\begin{itemize}
	\item[(i)] $\mfg$ is almost Abelian, $H=0$,  and $(\mfg,g)$ is as in Theorem \ref{th:4dalmostAbelian} (i), (ii) with $\sigma\neq 0$, (iii) or (vii) with $a\neq 0$ or $a=b_1=b_2=0$,
	\item[(ii)] or $\mfg\in \{\mathfrak{so}(3)\oplus \bR,\mathfrak{so}(2,1)\oplus \bR\}$ and $(H,g)$ is as in Theorem \ref{th:reductivecases}.
\end{itemize}
In particular, a four-dimensional Lie algebra $\mfg$ admits a generalised Lorentzian metric $(H,\cG_g)$ with non-degenerate commutator ideal $\mfg'$ which is generalised Einstein for zero divergence if and only if $\mfg$ is isomorphic to $\bR^4$, $e(2)\oplus \bR$, $e(1,1)\oplus \bR$, $A_{4,6}^{\sqrt{2}\cos(\varphi),\sin(\varphi)}$ for some $\varphi\in \bR$, $\mathfrak{r}_{3,1}'\oplus \bR$, $\mathfrak{so}(3)\oplus \bR$ or $\mathfrak{so}(2,1)\oplus \bR$.
\end{theorem}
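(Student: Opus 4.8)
The plan is to assemble the classification from the three structural inputs already in place—the almost Abelian classification (Theorem~\ref{th:4dalmostAbelian}), the reductive classification (Theorem~\ref{th:reductivecases}), and the non-degenerate codimension-one ideal analysis (Proposition~\ref{pro:codim1idealnondeg})—organised by the classification of four-dimensional real Lie algebras. Recall that every such $\mfg$ is either almost nilpotent, with nilradical $\bR^3$ (the almost Abelian case) or the Heisenberg algebra $\mfh_3$; or reductive, $\so(3)\oplus\bR$ or $\so(2,1)\oplus\bR$; or isomorphic to $\aff_\bC$ or $\aff_\bR\oplus\aff_\bR$. I would run the argument case by case along this list, always imposing the standing hypothesis that $\mfg'$ is non-degenerate, and at the end read off the isomorphism types to obtain the ``in particular'' clause.

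In the almost Abelian case I would invoke Theorem~\ref{th:4dalmostAbelian}, which is exhaustive in dimension four because Theorem~\ref{th:4dalmostAbeliannondegHneq0} rules out $f^S$ of second type. For each of the seven normal forms of $f$ listed there I would compute $\mfg'=\im(f)$ and test whether $g|_{\mfg'}$ is non-degenerate. A direct computation shows that forms (iv), (v) and the $H\neq0$ form (vi) each place a null vector in $\im(f)$, so that $g|_{\mfg'}$ is degenerate and these are discarded; in particular the hypothesis forces $H=0$. The same computation discards form (ii) when $\sigma=0$ (giving $A_{4,1}$) and form (vii) when $a=0$ but $(b_1,b_2)\neq0$ (giving $\mfh_3\oplus\bR$), while keeping precisely the cases listed in (i). Reading off isomorphism types from Theorem~\ref{th:4dalmostAbelian} then contributes $\bR^4$, $\mathfrak{e}(2)\oplus\bR$, $\mathfrak{e}(1,1)\oplus\bR$, $A_{4,6}^{\sqrt{2}\cos(\varphi),\sin(\varphi)}$ and $\mathfrak{r}'_{3,1}\oplus\bR$.

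In the reductive case the commutator ideal $\mfg'$ is the simple, non-degenerate, codimension-one ideal $\so(3)$ or $\so(2,1)$, so Proposition~\ref{pro:codim1idealnondeg} applies with $\mfa=\mfg'$ and the explicit solutions are exactly those recorded in Theorem~\ref{th:reductivecases}; this yields part (ii) together with the entries $\so(3)\oplus\bR$ and $\so(2,1)\oplus\bR$.

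The remaining cases are where I expect the real work to lie: I must show that no $\mfg$ that is almost nilpotent with nilradical $\mfn\cong\mfh_3$ but not almost Abelian, and neither $\aff_\bC$ nor $\aff_\bR\oplus\aff_\bR$, admits a generalised Einstein structure with non-degenerate $\mfg'$ and $\delta=0$. When $\mfn\cong\mfh_3$ is definite, Corollary~\ref{co:notAbeliannotdefinite} already forces $\mfg$ to be almost Abelian, contradicting the present case; when $\mfn$ is Lorentzian I would apply Proposition~\ref{pro:codim1idealnondeg} with $\mfa=\mfn$ and use that $\ad^{\mfh_3}$ is nilpotent of rank one to show the resulting system is incompatible with non-degeneracy of $\mfg'=Z(\mfh_3)+\im(f)$, the skew-$f$, $B=0$ subcase reducing via Corollary~\ref{co:codim1fskewB=0}(a) to a generalised Einstein structure on $\mfh_3$ itself. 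For $\mfn$ degenerate, and for $\aff_\bC$ and $\aff_\bR\oplus\aff_\bR$ (which are not almost nilpotent and so escape the codimension-one-ideal set-up), I would argue directly from the equations of Corollary~\ref{co:gEdelta=0} that $\mfg'$ is forced to contain a null direction. The main obstacle is exactly this last paragraph: since there is no Lorentzian analogue of the Riemannian reduction of Theorem~\ref{th:structRiemGE}, each of these families must be treated separately, and the delicate bookkeeping is keeping track of which codimension-one ideal can be taken non-degenerate relative to the given $\mfg'$.
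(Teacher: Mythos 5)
Your skeleton matches the paper's: split by the classification of four--dimensional Lie algebras, feed the almost Abelian branch into Theorem~\ref{th:4dalmostAbelian} and select the normal forms with non-degenerate $\im(f)$, and feed the reductive branch into Theorem~\ref{th:reductivecases}. Those two branches are handled correctly and exactly as in the paper (your null-vector computations for discarding cases (ii) with $\sigma=0$, (iv), (v), (vi) and (vii) with $a=0$, $(b_1,b_2)\neq(0,0)$ are right).

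The genuine gap is the residual exclusion, which is where the bulk of the paper's work actually lies and which your proposal only sketches. Two concrete problems. First, for the almost Heisenberg algebras you propose to apply Proposition~\ref{pro:codim1idealnondeg} with $\mfa=\mfn\cong\mfh_3$ non-degenerate; but for $A_{4,9}^0$ the commutator ideal is the two-dimensional centre-plus-image $\bR^2\subsetneq\mfn$, so non-degeneracy of $\mfg'$ does \emph{not} force non-degeneracy of the Heisenberg ideal, and the codimension-one machinery is simply not applicable there. The paper needs the full analysis of Lemma~\ref{le:4dHeisenbergmfn'notnegdef}, Proposition~\ref{pro:4dHeisenbergmfn'notposdef} and Theorem~\ref{th:4dHeisenberg} to reduce the almost Heisenberg case to $A_{4,9}^0$ (Corollary~\ref{co:almostHeisenbergcommutatornondeg}), and note that the system is \emph{not} ``incompatible'' in general: Theorem~\ref{th:4dHeisenberg} produces genuine solutions with $\mfn'$ null, which are only discarded because the underlying algebras ($A_{4,1}$, $\mfh_3\oplus\bR$) turn out to be almost Abelian with degenerate $\mfg'$. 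Second, for $\aff_\bC$, $\aff_\bR\oplus\aff_\bR$ and $A_{4,9}^0$ the assertion that one can ``argue directly from Corollary~\ref{co:gEdelta=0} that $\mfg'$ is forced to contain a null direction'' is a statement of the desired conclusion, not an argument; the paper has to develop a codimension-\emph{two} analogue of the generalised Einstein equations (Proposition~\ref{pro:codim2}) and exploit the common structure of these three algebras as centerless $\bR^2\rtimes\bR^2$ with two-dimensional Abelian commutator ideal to derive a contradiction in Theorem~\ref{th:4dmissingcases}. Without supplying these computations your proof establishes the ``if'' direction and the almost Abelian/reductive part of the ``only if'' direction, but not the full classification statement.
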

\begin{proof}
We explain how the proof follows from the results that we obtain below.

For this, we distinguish the cases that $\mfg$ is solvable or not solvable. We note that if $\mfg$ is not solvable, then the list of all four-dimensional Lie algebras given in Table \ref{table:4d} in the appendix shows that $\mfg$ is isomorphic to $\mathfrak{so}(3)\oplus \bR$ or to $\mathfrak{so}(2,1)\oplus \bR$ and the revelant classification result on these Lie algebras is exactly Theorem \ref{th:reductivecases}.

If $\mfg$ is solvable and in fact almost Abelian, then we may use the classification result of all four-dimensional almost Abelian generalised Lorentzian Lie algebras which are generalised Einstein for zero divergence operator from Theorem \ref{th:4dalmostAbelian} and check which of those have non-degenerate commutator ideal. Doing this, we end up exactly with the cases (i), (ii) with $\sigma\neq 0$, (iii) and (vii) with $a\neq 0$ or $a=b_1=b_2=0$ in Theorem \ref{th:4dalmostAbelian}.

So what is left to show is that any four-dimensional solvable generalised Lorentzian Lie algebra $(\mfg,H,\cG_g)$ with non-degenerate commutator ideal which is generalised Einstein for zero divergence has to be almost Abelian. For this, we look at the list of all four-dimensional Lie algebras in Table \ref{table:4d} and note that if $\mfg$ is solvable but not almost Abelian, then it is either \emph{almost Heisenberg}, i.e. admits a codimension one ideal isomorphic to $\mfh_3\,$, or it is isomorphic to $\aff_{\bC}$ or $\aff_{\bR}\oplus \aff_{\bR}\,$. Now Corollary \ref{co:almostHeisenbergcommutatornondeg} shows that a four-dimensional almost Heisenberg $\mfg$ may only admit a generalised Lorentzian metric $(H,\cG_g)$ with $\mfg'$ being non-degenerate and which is generalised Einstein for $\delta=0$ if $\mfg\cong A_{4,9}^0\,$. Nevertheless, this case as well as the cases  $\mfg\cong \aff_{\bC}$ and $\mfg\cong \aff_{\bR}\oplus \aff_{\bR}$ are excluded by Theorem \ref{th:4dmissingcases}.
\end{proof}

\subsection{The almost Heisenberg case}
In this section, we are considering the situation that $\mfg$ is a four-dimensional almost nilpotent Lie algebra with codimension one ideal $\mfn$ isomorphic to the three-dimensional Heisenberg Lie algebra $\mfh_3\,$. To simplify our notation, we introduce the following terminology:
\begin{definition}
	An $n$-dimensional almost nilpotent Lie algebra is called \emph{almost Heisenberg} if it admits a codimension one ideal $\mfn$ isomorphic to $\mfh_3\oplus \bR^{n-4}$.
\end{definition}
\begin{remark}
	We note that in dimension four, all but four Lie algebras are almost Abelian or almost Heisenberg. The exceptions are the two reductive Lie algebras $\mathfrak{so}(3)\oplus \bR$ and $\mathfrak{so}(2,1)\oplus \bR$ and the two solvable Lie algebras $\mathfrak{aff}_{\bC}$ and $\mathfrak{aff}_{\bR}\oplus \mathfrak{aff}_{\bR}\,$.
\end{remark}
Moreover, we assume in this section that $\mfg$ is endowed with a Lorentzian metric $g$ such that $\mfn$ is non-degenerate. 
We investigate when $(\mfg,H,\cG_g)$ is generalised Einstein for zero divergence and note that by Corollary \ref{co:notAbeliannotdefinite}, then $\mfn$ has to be a Lorentzian subspace of $(\mfg,g)$. So we will assume this from now on.

Note that as a by-product, we obtain in Corollary \ref{co:almostHeisenbergcommutatornondeg} that only the almost Heisenbeg Lie algebra $A_{4,9}^0$ may admit a generalised Lorentzian metric which is non-degenerate on its commutator ideal and generalised Einstein for zero divergence. Recall that this result was used in the proof of our main theorem in this section, namely Theorem \ref{th:4dLorentzian}.

 We start by showing that for a four-dimensional almost Heisenberg generalised Einstein Lorentzian Lie algebra $(\mfg,H,\cG_g,\delta=0)$, the one-dimensional commutator ideal of $\mfn$ may not be negative definite:
\begin{lemma}\label{le:4dHeisenbergmfn'notnegdef}
	Let $(\mfg,H,\cG_g,\delta=0)$ be a four-dimensional almost Heisenberg generalised Einstein Lorentzian Lie algebra. Then the commutator ideal $\mfn'$ of the codimension one Heisenberg ideal $\mfn$ is not negative definite. 
\end{lemma}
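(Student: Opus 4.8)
The plan is to argue by contradiction. Suppose $\mfn'$ were negative definite; since $\mfn'=[\mfn,\mfn]$ is one–dimensional, this means a generator $Z$ of $\mfn'$ satisfies $\zeta:=g(Z,Z)<0$. As $\mfn$ is Lorentzian and $Z$ is timelike, the plane $Z^\perp\cap\mfn$ is positive definite, so I would pick an orthonormal basis $(e_1,e_2)$ of it with $[e_1,e_2]=Z$; together with $X\perp\mfn$ — for which $\epsilon=g(X,X)=+1$ is forced by $\mfg$ having signature $(3,1)$ and $\mfn$ signature $(2,1)$ — this gives an adapted basis. Since $\mfn$ is a non-degenerate codimension one ideal, Proposition \ref{pro:codim1idealnondeg} applies, and I would use only its third and fourth equations. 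Writing $f=\ad(X)|_\mfn$ and $H=H'+X^\flat\wedge B$ with $H'\in\Lambda^3\mfn^*$, $B\in\Lambda^2\mfn^*$, I expand everything in an orthonormal coframe adapted to $Z$: thus $H'=h\,(e^1\wedge e^2\wedge\theta)$ for the unit timelike covector $\theta$ and a single scalar $h$, $B=\beta_1\,e^1\wedge e^2+\beta_2\,e^1\wedge\theta+\beta_3\,e^2\wedge\theta$, and I record the $Z$–components $r,w$ of $f(e_1),f(e_2)$.

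First I would extract $\beta_1=0$ from the fourth equation. The key observation is that $(\ad_W^\mfn)^*(Y)$ is governed by $W\mapsto g(Y,[W,\cdot\,])$ and that $[\mfn,\mfn]=\spa{Z}$; since $e_1,e_2\perp Z$, this forces $(\ad^\mfn)^*(e_1)=(\ad^\mfn)^*(e_2)=0$, so only the timelike direction contributes. Plugging this into $g((\ad^\mfn)^*(Y),B)=0$ with $Y$ the unit timelike vector isolates exactly the spatial component, giving $\beta_1=0$. This preliminary is essential: it is what makes the two norm computations below directly comparable.

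The heart of the proof is to evaluate the third equation of Proposition \ref{pro:codim1idealnondeg} at $Y=Z$, $Y=e_1$ and $Y=e_2$. The ingredients I would compute are $\ad_Z^\mfn=0$, $g((\ad^\mfn)^*(Z),(\ad^\mfn)^*(Z))=2\zeta^2$, $g([f^*,f](Z),Z)=-\zeta^2(r^2+w^2)$, together with the contraction norms $g(H'(Z,\cdot,\cdot),H'(Z,\cdot,\cdot))=|\zeta|h^2$, $g(H'(e_i,\cdot,\cdot),H'(e_i,\cdot,\cdot))=-h^2$ and the analogous expressions for $B$; the crucial feature throughout is that the timelike generator $Z$ contributes the opposite sign to several of these indefinite norms. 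Carrying this out, the equation at $Z$ yields $\beta_2^2+\beta_3^2=-\zeta(1+r^2+w^2)-h^2$, whereas the sum of the equations at $e_1$ and $e_2$ yields $\beta_2^2+\beta_3^2=\zeta(2+r^2+w^2)-2h^2$ (the off-diagonal parts of $f$ cancel in the sum, and $\beta_1=0$ is used). Equating the two expressions forces
\begin{equation*}
h^2=\zeta\bigl(3+2(r^2+w^2)\bigr),
\end{equation*}
whose right-hand side is strictly negative since $\zeta<0$, contradicting $h^2\ge 0$. Hence $\zeta\ge 0$, i.e. $\mfn'$ is not negative definite.

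I expect the main obstacle to be purely computational bookkeeping: correctly evaluating the squared norms of the symmetric and skew-symmetric endomorphisms $(\ad_Y^\mfn)^S$ and $(\ad^\mfn)^*(Y)$, and of the contractions $Y\hook H'$, $Y\hook B$, in the indefinite metric, where the timelike direction $Z$ repeatedly flips signs. The conceptual content is simply that probing the third equation both along the timelike central direction and along the two spacelike directions over-determines the quantity $\beta_2^2+\beta_3^2$ and produces an inconsistent sign once $Z$ is assumed timelike; the only genuinely non-obvious preliminary is the use of the fourth equation to eliminate $\beta_1$, without which the two competing expressions cannot be matched.
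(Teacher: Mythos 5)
Your proposal is correct and follows essentially the same route as the paper: set up an adapted orthonormal basis with the timelike generator of $\mfn'$, use the fourth equation of Proposition \ref{pro:codim1idealnondeg} (noting $(\ad^{\mfn})^*$ vanishes on $(\mfn')^\perp\cap\mfn$) to kill the purely spacelike component of $B$, and then combine the instances of the third equation over the basis of $\mfn$ to reach a sign contradiction forcing $[e_1,e_2]=0$. The only (immaterial) difference is the linear combination used at the end — you equate two expressions for $\beta_2^2+\beta_3^2$ coming from the timelike and spacelike directions, while the paper takes the $\epsilon_i$-weighted sum so that the $[f^*,f]$-terms cancel via a trace identity — but both yield the same contradiction.
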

\begin{proof}
	Assume the contrary and let $e_1\in \mfn'$ with $\epsilon_1:=g(e_1,e_1)=-1$. We extend $e_1$ to an orthonormal basis $(e_1,e_2,e_3,X)$ of $\mfg$ such that $(e_1, e_2,e_3)$ is an orthonormal basis of $\mfn$. Note that if $g(e_i,e_i):=\epsilon_i$ for $i=2,3$, we have $\epsilon_2=\epsilon_3=g(X,X)=1$. Note as well that there has to exist some $\lambda\in \bR^*$ such that $[e_2,e_3]=\lambda e_1\,$. Furthermore, $e_1$ is central in $\mfn$, which implies $\ad^{\mfn}_{e_1}=0$ and so $(\ad^{\mfn}_{e_1})^S=0$. Moreover,
	\begin{equation*}
		(\ad^{\mfn})^*(e_1)=\begin{pmatrix}
			0 & 0 & 0\\
			0 & 0 & \lambda \\
			0 & -\lambda & 0
		\end{pmatrix} \, ,
	\end{equation*}
	and so $g((\ad^{\mfn})^*(e_1),(\ad^{\mfn})^*(e_1))=2\lambda^2$.  Next, observe that $(\ad^{\mfn})^*(e_i)=0$ for $i=2,3$ as $e_2$ and $e_3$ are orthogonal to $\mfn'=\spa{e_1}$. Besides,
	\begin{equation*}
		2\, (\ad^{\mfn}_{e_2})^S=\begin{pmatrix}
			0 & 0 & \lambda \\
			0 & 0 & 0 \\
			-\lambda & 0 & 0
		\end{pmatrix} \, ,\qquad 2\, (\ad^{\mfn}_{e_3})^S=\begin{pmatrix}
			0 & -\lambda & 0 \\
			\lambda & 0 & 0 \\
			0 & 0 & 0
		\end{pmatrix} \, ,
	\end{equation*}
	and so $4\, g((\ad^{\mfn}_{e_i})^S,g((\ad^{\mfn}_{e_i})^S)=-2\lambda^2$ for $i=2,3$.
	
	Now write
	\begin{equation}
	\label{eq:rewritingHandB}
		H'=h\, e^{123} \, ,\quad B=b_1 e^{23}+b_2 e^{31}+b_3 e^{12} \, ,
	\end{equation}
	and note that, as an endomorphism, $B$ equals
	\begin{equation*}
	B= b_1 (e^2\otimes e_3-e^3\otimes e_2)- b_2 (e^3\otimes e_1+e^1\otimes e_3)+b_3 (e^1\otimes e_2+e^2\otimes e_1) \, .
	\end{equation*}
	Thus, by the fourth equation in Proposition \ref{pro:codim1idealnondeg}, we have
	\begin{equation*}
		0=g((\ad^{\mfn})^*(e_1),B)=-2  b_1 \lambda \, ,
	\end{equation*}
	and so $b_1=0$ since $\lambda\neq 0$.

     Now observe that
	\begin{align*}
	\sum_{i=1}^3 \epsilon_i g([f^*,f](e_i),e_i)&=	\sum_{i=1}^3 \epsilon_i (g(f(e_i),f(e_i))-g(f^*(e_u),f^*(e_i)))\\
	&=g(f,f)-g(f^*,f^*)=0 \, .
	\end{align*}
	Thus, the third equation in Proposition \ref{pro:codim1idealnondeg} yields
	\begin{equation*}
	\begin{split}
	0&=\sum_{i=1}^3 \epsilon_i \big(4 g((\ad^{\mfn}_{e_i})^S,(\ad^{\mfn}_{e_i})^S)-g((\ad^{\mfn})^*(e_i),(\ad^{\mfn})^*(e_i))\\
	&+2 g(H'(e_i,\cdot,\cdot),H'(e_i,\cdot,\cdot))+2 g(B(e_i,\cdot),B(e_i,\cdot)\big)\\
	&=-4\lambda^2+6g(H',H')+4 g(B,B)=-4\lambda^2-6h^2-4b_2^2-4b_3^2 \, ,
	\end{split}
	\end{equation*}
 which implies, in particular, $\lambda=0$, a contradiction. Hence, $\mfn'$ cannot be negative definite.
\end{proof}
In addition, $\mfn'$ can not be positive definite. The proof in this case is more involved:
\begin{proposition}\label{pro:4dHeisenbergmfn'notposdef}
	Let $(\mfg,H,\cG_g,0)$ be a four-dimensional almost Heisenberg generalised Einstein Lorentzian Lie algebra with codimension one Heisenberg ideal $\mfn$. Then, the commutator ideal $\mfn'$ is not positive definite.
\end{proposition}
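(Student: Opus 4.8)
The plan is to mimic the structure of the proof of Lemma~\ref{le:4dHeisenbergmfn'notnegdef}, but to push the analysis much further, since the naive trace argument used there will \emph{not} close in the positive-definite case. Arguing by contradiction, I would assume $\mfn'$ is positive definite. As $\mfn$ is Lorentzian and $e_1\in\mfn'$ is now spacelike, I fix an orthonormal basis $(e_1,e_2,e_3)$ of $\mfn$ with $g(e_1,e_1)=g(e_2,e_2)=1$, $g(e_3,e_3)=-1$, extend it by $X$ with $\epsilon:=g(X,X)=1$, and write $[e_2,e_3]=\lambda e_1$ with $\lambda\neq 0$, together with $H'=h\,e^{123}$ and $B=b_1e^{23}+b_2e^{31}+b_3e^{12}$ as in \eqref{eq:rewritingHandB}. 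A crucial structural input is that $f:=\ad_X|_{\mfn}$ is a derivation of $\mfh_3$; hence it preserves the centre $\mfn'=\spa{e_1}$ and, writing $f=(f_{ij})$, satisfies $f_{21}=f_{31}=0$ and $f_{11}=f_{22}+f_{33}$. This rigidly constrains $f$, its adjoint $f^*$ and $f^S$.

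I would first record the operators $\ad^{\mfn}_{e_i}$, $(\ad^{\mfn})^*(e_i)$ and their symmetric parts exactly as in Lemma~\ref{le:4dHeisenbergmfn'notnegdef}, and note that the fourth equation of Proposition~\ref{pro:codim1idealnondeg} evaluated at $Y=e_1$ again forces $b_1=0$. The decisive difference, and the \emph{main obstacle}, is that summing the third equation against $\epsilon_i$ over the basis --- which gave $\lambda=0$ in the negative-definite case because $\sum_i\epsilon_i\,g([f^*,f]e_i,e_i)=g(f,f)-g(f^*,f^*)=0$ --- now yields only the relation $b_3^2-b_2^2=\tfrac12(\lambda^2+3h^2)>0$. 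Here the $B$-contribution enters the weighted trace with a sign allowing it to balance the curvature terms, so no contradiction results; the argument must instead exploit the \emph{individual} components of the remaining equations together with the closedness $\dd H=0$.

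The heart of the proof is to evaluate the equations of Proposition~\ref{pro:codim1idealnondeg} componentwise. Evaluating the third equation at each $e_i$ separately gives three scalar relations, say (E1)--(E3), among $\lambda,h,b_2,b_3$ and the entries of $f$; evaluating the second equation at $e_2$ and $e_3$ gives $\lambda f_{13}=-hb_2$ and $\lambda f_{12}=hb_3$; and the last equation, after computing $\hat B=B(\cdot,\cdot)^\sharp$ and the maps $H'_{e_i}$, reduces its antisymmetric part to $f_{22}b_3+f_{23}b_2=0$ and $f_{22}b_2-f_{32}b_3=0$. For the closedness condition I would use that $\dd X^\flat=0$ and that the internal differential of $B$ vanishes, so $\dd H=-h\,\tr(f)\,X^\flat\wedge e^{123}$; thus $\dd H=0$ forces $h=0$ or $\tr(f)=2(f_{22}+f_{33})=0$.

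Combining (E1)--(E3) yields $2f_{12}^2=3\lambda^2+2f_{13}^2+h^2$, whence $f_{12}\neq 0$; feeding this into $\lambda f_{12}=hb_3$ rules out $h=0$, so closedness gives $f_{11}=0$ and $f_{33}=-f_{22}$. Then $b_2,b_3$ are determined by $h$, and substituting them back into (E1) factors it as $(\lambda^2-h^2)(h^2+f_{12}^2-f_{13}^2)=0$; the second factor is incompatible with $2f_{12}^2=3\lambda^2+2f_{13}^2+h^2$, so necessarily $\lambda^2=h^2$. At this stage the two relations from the last equation express $f_{23}=f_{22}f_{12}/f_{13}$ and $f_{32}=-f_{22}f_{13}/f_{12}$ (and force $f_{22},f_{13},f_{23},f_{32}$ all nonzero), while (E2)--(E3) give $f_{32}^2=2\lambda^2+2f_{13}^2+f_{23}^2$. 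Substituting the explicit values, together with $f_{12}^2=2\lambda^2+f_{13}^2$, into this identity produces an equation whose left-hand side equals $f_{22}^2(f_{13}^4-f_{12}^4)/(f_{12}^2f_{13}^2)$, which is strictly negative because $f_{12}^2>f_{13}^2$, whereas the right-hand side $2\lambda^2+2f_{13}^2$ is strictly positive. This contradiction shows that $\mfn'$ cannot be positive definite.
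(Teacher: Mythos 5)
Your argument is correct, and I could verify every claimed identity in your own signature convention (you place the timelike direction at $e_3$ where the paper places it at $e_2$): the weighted trace relation $b_3^2-b_2^2=\tfrac12(\lambda^2+3h^2)$, the combination $2f_{12}^2=3\lambda^2+2f_{13}^2+h^2$ obtained from $(E2)-(E3)-(E1)$, the factorisation of (E1) into $(\lambda^2-h^2)(h^2+f_{12}^2-f_{13}^2)=0$, and the final sign clash $f_{22}^2(f_{13}^4-f_{12}^4)/(f_{12}^2f_{13}^2)=2\lambda^2+2f_{13}^2$ all check out. The setup is the same as the paper's (contradiction, adapted orthonormal basis, the derivation constraint $f_{11}=f_{22}+f_{33}$, componentwise evaluation of Proposition \ref{pro:codim1idealnondeg}, $b_1=0$ from the fourth equation, $b_2,b_3$ from the second, $\tr f=0$ from $\dd H=0$), but the endgame genuinely differs: the paper closes the argument with the \emph{polarisation} of the third equation (its off-diagonal components, giving $-bd+ce=0$, $bf+cd=0$, $bc-b_2b_3-2d(e+f)=0$ in its notation and the incompatible inequalities $b^2>c^2$ and $c^2>b^2$), and never touches the fifth equation; you never polarise the third equation and instead bring in the fifth equation, extracting the extra fact $\lambda^2=h^2$ en route to a different sign contradiction. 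Both routes are equally computational and use the same amount of input; yours has the mild bonus of pinning down $\lambda^2=h^2$, i.e.\ that $H'$ and the bracket of $\mfh_3$ would have to have equal strength. One small presentational caveat: the relations you attribute to the last equation, $f_{22}b_3+f_{23}b_2=0$ and $f_{22}b_2-f_{32}b_3=0$, are in general $(f_{11}+f_{22})b_3+f_{23}b_2=0$ and $(f_{11}+f_{33})b_2+f_{32}b_3=0$; they take the form you state only after $f_{11}=0$ and $f_{33}=-f_{22}$ have been established from closedness, so the order of deductions in the final write-up should make explicit that these relations are invoked only at that stage (as your last paragraph indeed does).
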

\begin{proof}
    Assume $\mfn'$ is positive definite. We can then consider an orthonormal basis $(e_1,e_2,e_3)$ of $\mfn$ such that $g(e_1,e_1)=-g(e_2,e_2)=g(e_3,e_3)=1$ and $[e_2,e_3]=\lambda e_1$ for some $\lambda\in \bR^*$. This can be completed to an orthonormal basis $(e_1,e_2,e_3,X)$ of $\mfg$ with $g(X,X)=1$.

    Let $f=\ad_{X}\vert_{\mfn}$, and note that $f$ is a derivation of $\mfn$. As a result, $f$ must preserve $\mfn'$ and in the basis $(e_1,e_2,e_3)$ must take the form
    \begin{equation}
   \label{eq:fforalmostHeisenberg}
        f=\left(
\begin{array}{ccc}
 a & b & c \\
 0 & d & e \\
 0 & f & g \\
\end{array}
\right)\, ,
    \end{equation}
for some $a,b,c,d,e,f\in\bR$. In addition, $f$ must be compatible with the bracket $[e_2,e_3]=\lambda e_1\,$, which forces
 \begin{equation}
 \label{eq:compatibilityfwithbracket}
     f([e_2,e_3])=[f(e_2),e_3]+[e_2,f(e_3)]\implies \lambda(a-d-g)=0 \implies a-d-g=0 \, ,
 \end{equation}
where we have used $\lambda\neq 0$.
    
Since $(\mfg,H,\cG_g,0)$ is generalised Einstein, it must satisfy the equations in Proposition \ref{pro:codim1idealnondeg}. We will show this leads to a contradiction. To this end, we now compute all the quantities involved in the second, third and fourth equations in Proposition \ref{pro:codim1idealnondeg}. From $f$ it is immediate to obtain
    \begin{equation*}
        f^*=\left(
\begin{array}{ccc}
 a & 0 & 0 \\
 -b & d & -f \\
 c & -e & g \\
\end{array}
\right)\, , \qquad f^S=\left(
\begin{array}{ccc}
 a & \frac{b}{2} & \frac{c}{2} \\
 -\frac{b}{2} & d & \frac{e-f}{2} \\
 \frac{c}{2} & \frac{f-e}{2} & g \\
\end{array}
\right)\, ,
    \end{equation*}  
    and we can now compute the matrix $(g([f^*,f](e_i),e_j))_{i,j=1,2,3}$:
    \begin{equation*}
        \left(
\begin{array}{ccc}
 b^2-c^2 & a b-b d+c e & a c+b f-c g \\
 a b-b d+c e & b^2-e^2+f^2 & b c-(d-g) (e+f) \\
 a c+b f-c g & b c-(d-g) (e+f) & c^2-e^2+f^2 \\
\end{array}
\right) \, .
    \end{equation*}
    By the same reasoning as in the proof of Lemma \ref{le:4dHeisenbergmfn'notnegdef}, we have
    $\ad^{\mfn}_{e_1}=(\ad^{\mfn}_{e_1})^S=0$ and $(\ad^{\mfn})^*(e_i)=0$ for $i=2,3$, whereas
    \begin{equation*}
        2\left(\ad^\mfn_{e_2}\right)^S=\left(
\begin{array}{ccc}
 0 & 0 & \lambda \\
 0 & 0 & 0 \\
 \lambda & 0 & 0 \\
\end{array}
\right)\, , \qquad 2\left(\ad^\mfn_{e_3}\right)^S=\left(
\begin{array}{ccc}
 0 & -\lambda & 0 \\
 \lambda & 0 & 0 \\
 0 & 0 & 0 \\
\end{array}
\right)\, ,
    \end{equation*}
    and
    \begin{equation*}
        \left(\ad^\mfn\right)^*(e_1)=\left(
\begin{array}{ccc}
 0 & 0 & 0 \\
 0 & 0 & \lambda  \\
 0 & \lambda  & 0 \\
\end{array}
\right)\, .
    \end{equation*}
    Thus, we have the following non-zero terms contributing to the equations of Proposition \ref{pro:codim1idealnondeg}:
    \begin{align*}
        g(f^S,\left(\ad^\mfn_{e_2}\right)^S)&=\frac{c \lambda }{2} \,, & g(f^S,\left(\ad^\mfn_{e_3}\right)^S)&=\frac{b \lambda }{2} \,, & \\
         g(\left(\ad^\mfn_{e_2}\right)^S,\left(\ad^\mfn_{e_2}\right)^S)&=\frac{\lambda ^2}{2} \,, & g(\left(\ad^\mfn_{e_3}\right)^S,\left(\ad^\mfn_{e_3}\right)^S)&=-\frac{\lambda ^2}{2} \,, & \\
         g(\left(\ad^\mfn\right)^*(e_1),\left(\ad^\mfn\right)^*(e_1))&=-2 \lambda ^2 \,. & & &
    \end{align*}
	Writing $H'$ and $B$ as in \eqref{eq:rewritingHandB}, we obtain
    \begin{align*}
        g(B,H'(e_i,\cdot,\cdot))=-h \, b_i \, g(e_i,e_i) \,, \qquad
        g(H'(e_i,\cdot,\cdot),H'(e_j,\cdot,\cdot))=-h^2 \, g(e_i,e_j) \,,   \\
        (g(B(e_i,\cdot),B(e_j,\cdot)))_{i,j=1,2,3}=\left(
\begin{array}{ccc}
 b_2^2-b_3^2 & -b_1 b_2 & b_1 b_3 \\
 -b_1 b_2 & b_1^2+b_3^2 & -b_2 b_3 \\
 b_1 b_3 & -b_2 b_3 & b_2^2-b_1^2 \\
\end{array}
\right) \,,
    \end{align*}
    as well as
    \begin{equation*}
        g(\left(\ad^\mfn\right)^*(e_1),B^{\mathrm{End}})=-2\lambda b_1\, , \quad g(\left(\ad^\mfn\right)^*(e_2),B^{\mathrm{End}})=g(\left(\ad^\mfn\right)^*(e_3),B^{\mathrm{End}})=0\, .
    \end{equation*}       
    
Now, the fourth equation in Proposition \ref{pro:codim1idealnondeg} is non-trivial only for $Y=e_1$ and gives
\begin{equation*}
     0= -2\lambda b_1 \, .
\end{equation*}
Since $\lambda\neq 0$, this forces $b_1=0$ and the third equation in Proposition \ref{pro:codim1idealnondeg} for $Y\in\lbrace e_1,e_2,e_3\rbrace$ becomes
\begin{align}
 0&= \lambda^2 + b^2 -c^2 - h^2 +b_2^2-b_3^2 \, ,  \label{eq:almostheisenbergnpositive5}  \\
     0&= \lambda^2 + b^2 -e^2 +f^2 + h^2 +b_3^2 \, ,  \label{eq:almostheisenbergnpositive6}  \\
     0&= \lambda^2 - c^2 +e^2 -f^2 + h^2 -b_2^2 \, ,  \label{eq:almostheisenbergnpositive7}
\end{align}
Adding together \eqref{eq:almostheisenbergnpositive5}, \eqref{eq:almostheisenbergnpositive6} and \eqref{eq:almostheisenbergnpositive7} we find
\begin{equation}
\label{eq:combination567}
    0=3\lambda^2+2 b^2-2 c^2 + h^2 \, .
\end{equation}
This forces $c\neq 0$ and it is now useful to look at the second equation in Proposition \ref{pro:codim1idealnondeg} for $Y\in\lbrace e_2, e_3 \rbrace$:
\begin{equation*}
     0= c \lambda + h b_2 \, ,  \qquad     0= b \lambda -h b_3 \, , 
\end{equation*}
from the first equation we have $h\neq 0$ (and $b_2\neq 0$), and we can then use these equations to express $b_2$ and $b_3$ in terms of the other variables:
\begin{equation*}
    b_2=-\frac{c\lambda}{h} \, , \qquad b_3=+\frac{b\lambda}{h} \, .
\end{equation*}
Recall now that we require $H$ to be closed: this imposes additional conditions on the coefficients. To see this, we consider the structure equations of the four-dimensional algebra in terms of $\lambda$ and $f$
 \begin{align*}
     \dd e^1 &=-\lambda e^2\wedge e^3 - a X^\flat\wedge e^1 - b X^\flat\wedge e^2 - c X^\flat\wedge e^3 \, , \\
     \dd e^2 &=-d X^\flat\wedge e^2 - e X^\flat\wedge e^3 \, , \\
     \dd e^3 &=-f X^\flat\wedge e^2 - g X^\flat\wedge e^3 \, , \\
     \dd X^\flat &= 0 \, ,
 \end{align*}
and study the closedness conditions for the 3-forms
\begin{align*}
    \dd (e^1 \wedge e^2\wedge e^3)&=-(a+d+g) X^\flat\wedge e^1 \wedge e^2\wedge e^3 \, , \\
    \dd (X^\flat\wedge e^1 \wedge e^2) &= \dd (X^\flat\wedge e^3 \wedge e^1) = \dd (X^\flat\wedge e^2 \wedge e^3) = 0 \, .
\end{align*}
We conclude that we must either have $h=0$ or $a+d+g=0$, but we have just argued that $h\neq 0$, so $a+d+g=0$. This together with \eqref{eq:compatibilityfwithbracket} implies $a=0$ and $g=-d$. We now look at the polarization of the third equation in Proposition \ref{pro:codim1idealnondeg}, which simplifies to
\begin{equation}
\label{eq:almostheisenbergnpositivepolarized}
	-bd+ce=0\, , \qquad bf+cd=0\, , \qquad bc-b_2b_3-2d(e+f)=0\, .
\end{equation}
Since \eqref{eq:almostheisenbergnpositive6} forces $e\neq 0$, the first equation gives $d\neq 0$ and $b\neq 0$. We can then use the first two equations to obtain $e$ and $f$ in terms of the other variables
\begin{equation*}
    e= \frac{bd}{c} \, , \qquad
     f=  -\frac{cd}{b}  \, .
\end{equation*}
Replacing $b_2\,$, $b_3\,$, $e$ and $f$ in the last equation of \eqref{eq:almostheisenbergnpositivepolarized} we find
\begin{equation*}
	bc\left(1+\frac{\lambda^2}{h^2}\right)-2d^2\left(\frac{b}{c} -\frac{c}{b}\right)=0 \implies b^2c^2\left(1+\frac{\lambda^2}{h^2}\right)=2d^2\left(b^2 -c^2\right) \implies b^2>c^2 \, ,
\end{equation*}
where we have used that the left-hand side is positive. Note however that \eqref{eq:combination567} forces $c^2>b^2$, which is a contradiction and finishes the proof.
\end{proof}
Finally, we are left with considering the case of four-dimensional almost Heisenberg generalised Einstein Lorentzian Lie algebras with $\mfn$ being Lorentzian and $\mfn'$ being null. To formulate the classification result in this case, we use again a \emph{Witt} basis $(e_1,e_2,e_3,X)$ of $\mfg$, but now slightly differently defined than in Definition \ref{def:Wittbasis}: Namely, in our context,  a \emph{Witt basis} should be a basis $(e_1,e_2,e_3,X)$ of $\mfg$ such that $(e_1,e_2,e_3)$ is a basis of $\mfn$ with $e_1$ being a basis of $\mfn'$, $X$ is orthogonal to $\mfn$ with $g(X,X)=1$ and
\begin{equation*}
g(e_1,e_1)=g(e_2,e_2)=0 \, ,\quad g(e_1,e_2)=g(e_2,e_3)=0 \, ,\quad g(e_1,e_2)=g(e_3,e_3)=1 \, .
\end{equation*}

\begin{theorem}\label{th:4dHeisenberg}
	Let $(\mfg,H,\cG_g)$ be a four-dimensional almost Heisenberg generalised Lorentzian Lie algebra with codimension one Heisenberg ideal $\mfn$. Then $(\mfg,H,\cG_g)$ is generalised Einstein if and only if $\mfn'$ is null and there exists $(e_1,e_2,e_3,X)$ a Witt basis of $\mathfrak{g}$, satisfying $[e_2,e_3]=\lambda e_1$ for some $\lambda\neq 0$, such that $H=b_1 e^{23}\wedge X^\flat$ and $\ad_{X}\vert_{\mfn}=\left(
\begin{array}{ccc}
 0 & b & \pm\sqrt{f^2+b_1^2} \\
 0 & 0 & 0 \\
 0 & f & 0 \\
\end{array}
\right)\,$ for certain $b,f,b_1\in\mathbb{R}$.
\end{theorem}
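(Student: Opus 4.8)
The plan is to carry out, in a Witt basis adapted to the null line $\mfn'$, the same substitution-and-elimination scheme used in Proposition~\ref{pro:4dHeisenbergmfn'notposdef}, but now in the one remaining admissible signature configuration. First I record that, by Corollary~\ref{co:notAbeliannotdefinite}, $\mfn$ must be Lorentzian, and then Lemma~\ref{le:4dHeisenbergmfn'notnegdef} together with Proposition~\ref{pro:4dHeisenbergmfn'notposdef} rules out $\mfn'$ being negative or positive definite. Since $\mfn'$ is a line in a Lorentzian space, it is therefore null; this already proves the signature assertion and produces a Witt basis $(e_1,e_2,e_3,X)$ with $\mfn'=\spa{e_1}$, $[e_2,e_3]=\lambda e_1$ ($\lambda\neq0$) and $g(X,X)=1$. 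Writing $f:=\ad_X|_\mfn$, the derivation property of $f$ on $\mfh_3$ forces $f(e_1)\in\mfn'$ together with the single scalar relation $f_{11}=f_{22}+f_{33}$, so $f$ depends on the parameters $f_{11},f_{22},f_{33},f_{12},f_{13},f_{23},f_{32}$.

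Next I would assemble every tensor entering the five equations of Proposition~\ref{pro:codim1idealnondeg} in this null basis. Using $A^*=GA^{t}G$ for the Gram matrix $G=g|_\mfn$ (which satisfies $G^2=\id$) I compute $f^*$, $f^S$ and $[f^*,f]$. The Heisenberg brackets give $\ad^\mfn_{e_1}=0$ and explicit nilpotent matrices for $\ad^\mfn_{e_2},\ad^\mfn_{e_3}$; the crucial observation is that $g(\ad^*(Y)(Z),W)=g(Y,[Z,W])$ is proportional to $g(Y,e_1)$, so the skew operator $(\ad^\mfn)^*(Y)$ vanishes unless $Y$ has nonzero $e_2$-component, i.e.\ only $(\ad^\mfn)^*(e_2)\neq0$. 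On $\Lambda^2\mfn^*$ the induced metric makes $e^{23}$ and $e^{31}$ null, with $g(e^{12},e^{12})=g(e^{23},e^{31})=-1$. I then write $H=H'+X^\flat\wedge B$ with $H'=h\,e^{123}$ and $B=b_1e^{23}+b_2e^{31}+b_3e^{12}$.

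The forward direction is then a short chain of evaluations in the correct order. Equation four of Proposition~\ref{pro:codim1idealnondeg} is non-trivial only at $Y=e_2$ and gives $b_2=0$; the third equation at $Y=e_1$ then forces $f_{23}=0$, and at $Y=e_3$ it forces $h=0$, so $H'=0$; its polarisation at $(e_1,e_2)$ gives $b_3=0$, so $B=b_1e^{23}$. With these vanishings the first equation reads $(f_{11}+f_{22})^2+2f_{33}^2=0$, which over $\bR$ forces $f_{33}=0$ and $f_{11}+f_{22}=0$, whence $f_{11}=f_{22}=0$ by the derivation relation. Finally the third equation at $Y=e_2$, where $[f^*,f](e_2)=(f_{32}^2-f_{13}^2)e_1$ and $g(B(e_2,\cdot),B(e_2,\cdot))=b_1^2$, collapses to $f_{13}^2=f_{32}^2+b_1^2$, the claimed $\pm\sqrt{\,\cdot\,}$. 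The second and the skew fifth equations then hold automatically, the forced $H$ is automatically closed, and conversely one checks directly that the displayed $(H,f)$ solve all five equations, most contributions vanishing because $f^S$, $B$ and all the forms $e_i\hook B$ except $e_2\hook B$ are null.

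I expect no conceptual obstruction: the real work is the careful bookkeeping in the degenerate basis, where many of the usual nonzero terms become null (null $2$-forms, $(\ad^\mfn)^*$ supported in a single direction), so one must evaluate the equations on the right basis vectors in the right order to peel off the parameters one at a time. The only genuinely informative relation, $f_{13}^2=f_{32}^2+b_1^2$, emerges from the third equation at the null vector $e_2$; this is precisely the mechanism that survives in the degenerate case, whereas its analogue degenerated into a contradiction in the definite cases of Lemma~\ref{le:4dHeisenbergmfn'notnegdef} and Proposition~\ref{pro:4dHeisenbergmfn'notposdef}.
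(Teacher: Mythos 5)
Your proposal is correct and follows essentially the same route as the paper's proof: the same reduction to the null case via Corollary \ref{co:notAbeliannotdefinite}, Lemma \ref{le:4dHeisenbergmfn'notnegdef} and Proposition \ref{pro:4dHeisenbergmfn'notposdef}, the same Witt basis and derivation-constrained parametrisation of $f$, and the same elimination through the equations of Proposition \ref{pro:codim1idealnondeg} (your only deviation is deriving $h=0$ from the $(3,3)$-entry of the third equation rather than from the second equation at $e_2$, which if anything is slightly cleaner). The end result, $a=d=e=g=b_2=b_3=h=0$ and $c^2=f^2+b_1^2$, matches the paper exactly.
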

\begin{proof}
	Lemma \ref{le:4dHeisenbergmfn'notnegdef} and Proposition \ref{pro:4dHeisenbergmfn'notposdef} imply that $(\mfg,H,\cG_g,\delta=0)$ may only be generalised Einstein if $\mfn'$ is null.
	
	So let us assume that for the rest of the proof and choose $(e_1,e_2,e_3,X)$ a Witt basis. Since $e_1$ spans $\mfn'$, $(e_1,e_2,e_3)$ span $\mfn$ and $\mfn\cong \mfh_3\,$, it is clear that there is some $\lambda\in \bR^*$ such that, up to anti-symmetry, $[e_2,e_3]=\lambda e_1$ is the only non-zero Lie bracket between elements of the basis $(e_1,e_2,e_3)$ of $\mfn$.

By the same arguments as in the proof of Proposition \ref{pro:4dHeisenbergmfn'notposdef}, we have that $f=\ad_{X}\vert_{\mfn}$ takes the form \eqref{eq:fforalmostHeisenberg} in the basis $(e_1,e_2,e_3)$ for some $a,b,c,d,e,f\in\bR$ satisfying $a-d-g=0$.

Again by the same line of reasoning as in the proof of Proposition \ref{pro:4dHeisenbergmfn'notposdef}, we have that $H$ is closed if either $h=0$ or $a+d+g=0$.

We have that $(\mfg,H,\cG_g,\delta=0)$ is generalised Einstein if and only if the equations of Proposition \ref{pro:codim1idealnondeg} are satisfied. We now compute these explicitly. In our basis:
    \begin{equation*}
        f^*=\left(
\begin{array}{ccc}
 d & b & f \\
 0 & a & 0 \\
 e & c & g \\
\end{array}
\right)\, , \qquad f^S=\left(
\begin{array}{ccc}
 \frac{a+d}{2} & b & \frac{c+f}{2} \\
 0 & \frac{a+d}{2} & \frac{e}{2} \\
 \frac{e}{2} & \frac{c+f}{2} & g \\
\end{array}
\right)\, ,
    \end{equation*}
    which give
    \begin{equation*}
        g(f^S,f^S)=\frac{1}{2} (a+d)^2+e (c+f)+g^2\, ,
    \end{equation*}
as well as the matrix $(g([f^*,f](e_i),e_j))_{i,j=1,2,3}$
    \begin{equation*}
     \begin{pmatrix}
        	-e^2 & - e c & e (a-g) \\
        	-e c & -2 a b+2 bd-c^2+f^2 & -a f+ be + dc-c g- f g\\
        	e(a-g) &  -a f+ be + dc-c g- f g & 2 ec
        \end{pmatrix} \, .
    \end{equation*}
Arguing as in Lemma \ref{le:4dHeisenbergmfn'notnegdef}, $\ad^\mfn_{e_1}=\left(\ad^\mfn_{e_1}\right)^S=0$. Similarly, in this case $e_1$ and $e_3$ are orthogonal to $\mfn'=\spa{e_1}$, so $(\ad^{\mfn})^*(e_i)=0$ for $i=1,3$. We thus have:
    \begin{equation*}
        2\left(\ad^\mfn_{e_2}\right)^S=\left(
\begin{array}{ccc}
 0 & 0 & \lambda \\
 0 & 0 & 0 \\
 0 & \lambda & 0 \\
\end{array}
\right)\, , \qquad \left(\ad^\mfn_{e_3}\right)^S=\left(
\begin{array}{ccc}
 0 & -\lambda  & 0 \\
 0 & 0 & 0 \\
 0 & 0 & 0 \\
\end{array}
\right)\, ,
    \end{equation*}
    as well as
    \begin{equation*}
        \left(\ad^\mfn\right)^*(e_2)=\left(
\begin{array}{ccc}
 0 & 0 & -\lambda \\
 0 & 0 & 0 \\
 0  & \lambda & 0 \\
\end{array}
\right)\, .
    \end{equation*}
    It turns out that $g((\ad^\mfn_{e_i})^S,(\ad^\mfn_{e_j})^S)=0$ and $g(\left(\ad^\mfn\right)^*(e_1),\left(\ad^\mfn\right)^*(e_1))=0$ for all $i,j\in\lbrace1,2,3\rbrace$, whereas
    \begin{equation*}
    	g(f^S,\left(\ad^\mfn_{e_1}\right)^S)=g(f^S,\left(\ad^\mfn_{e_3}\right)^S)=0 \, , \qquad 2 g(f^S,\left(\ad^\mfn_{e_2}\right)^S)=e \lambda \, .
    \end{equation*}    
	We now write $H'$ and $B$ as in \eqref{eq:rewritingHandB} and compute
 \begin{align*}
     g(B,B)&=-b_3^2-2b_1 b_2\,, \\
        g(B,H'(e_1,\cdot,\cdot))=-h \ b_2 \,, \quad 
        g(B,H'(e_2,\cdot,\cdot))&=-h \ b_1 \,, \quad
        g(B,H'(e_3,\cdot,\cdot))=-h \ b_3 \,, 
 \end{align*}
as well as
\begin{equation*}
\begin{split}
      (g(H'(e_i,\cdot,\cdot),H'(e_j,\cdot,\cdot)))_{i,j=1,2,3}&=\begin{pmatrix}
     0 & - h^2 & 0 \\
      -h^2 & 0 & 0\\
      0 &  0 & -h^2
  \end{pmatrix} \, , \\
(g(B(e_i,\cdot),B(e_j,\cdot)))_{i,j=1,2,3}&=\begin{pmatrix}
	b_2^2 & - b_1 b_2-b_3^2 & b_2 b_3 \\
	-b_1 b_2-b_3^2 & b_1^2 & b_1 b_3\\
	b_2 b_3 &  b_1 b_3 & -2 b_1 b_2
\end{pmatrix} \, .
\end{split}
    \end{equation*}
    Regarding the forms as endomorphisms, we find that $g(\left(\ad^\mfn\right)^*(e_i),H'_{e_j})=0$ for all $i,j\in\lbrace1,2,3\rbrace$ together with
    \begin{equation*}
        g(\left(\ad^\mfn\right)^*(e_1),B^{\mathrm{End}})=g(\left(\ad^\mfn\right)^*(e_3),B^{\mathrm{End}})=0\, , \qquad g(\left(\ad^\mfn\right)^*(e_2),B^{\mathrm{End}})=-2\lambda b_2 \, .
    \end{equation*}
    Finally, we need to compute
    \begin{equation*}
        (g(f^*(e_i),B(e_j,\cdot)^\sharp))_{i,j=1,2,3}=\left(
\begin{array}{ccc}
 -e b_2 & -d b_3 + e b_1 & d b_2 \\
 a b_3 -c b_2 &-b b_3+c b_1 & -a b_1 + b b_2 \\
 -g b_2 & -f b_3+ g b_1 & f b_2 \\
\end{array}
\right) \, .
    \end{equation*}
	Putting everything together, the equations in Proposition \ref{pro:codim1idealnondeg} (including the polarization of the third equation) are equivalent to
 \begin{align}
     0&= (a+d)^2 +2 e (c+f) + 2 g^2 - 2 b_1 b_2 - b_3^2 \, ,  \label{eq:almostheisenbergnnull1} \\
     0&= -h b_2 \, ,  \label{eq:almostheisenbergnnull2}  \\
     0&= e \lambda - h b_1 \, ,  \label{eq:almostheisenbergnnull3}  \\
     0&= -h b_3 \, ,  \label{eq:almostheisenbergnnull4}  \\
     0&= -e^2 +b_2^2 \, ,  \label{eq:almostheisenbergnnull5}  \\
     0&= ec +h^2+b_1 b_2+b_3^2 \, ,  \label{eq:almostheisenbergnnull6}  \\
      0&=e(a-g)+b_2 b_3 \, ,  \label{eq:almostheisenbergnnull7}  \\
     0&= -2ab+2bd-c^2+f^2 +b_1^2 \, ,  \label{eq:almostheisenbergnnull8}  \\
      0&= -af+be+cd-cg+fg+b_1 b_3 \, ,  \label{eq:almostheisenbergnnull9}  \\
     0&= -2ce + h^2 +2b_1b_2 \, ,  \label{eq:almostheisenbergnnull10}  \\
     0&= -2\lambda b_2 \, , \label{eq:almostheisenbergnnull11} \\
     0&= a b_3-c b_2+d b_3-e b_1 \, ,  \label{eq:almostheisenbergnnull12}  \\
     0&= d b_2 + g b_2 \, ,  \label{eq:almostheisenbergnnull13}  \\
     0&= a b_1 -b b_2 - f b_3 + g b_1 \, .  \label{eq:almostheisenbergnnull14}
 \end{align}
We now impose these equations. Since $\lambda\neq 0$, we observe that \eqref{eq:almostheisenbergnnull11} forces $b_2=0$ and as a result \eqref{eq:almostheisenbergnnull2} and \eqref{eq:almostheisenbergnnull13} are satisfied. Now \eqref{eq:almostheisenbergnnull5} requires $e=0$ which via \eqref{eq:almostheisenbergnnull3} implies $h=0$, ensuring that $H$ is closed and that \eqref{eq:almostheisenbergnnull4}, \eqref{eq:almostheisenbergnnull7} and \eqref{eq:almostheisenbergnnull10} hold. Then, from \eqref{eq:almostheisenbergnnull6} we obtain $b_3=0$ and  \eqref{eq:almostheisenbergnnull12} is trivially satisfied.

Recalling that $a=d+g$, \eqref{eq:almostheisenbergnnull1} can be rewritten as $(2d+g)^2+2g^2=0$, which forces $d=g=0$. Then $a=0$ as well, and so \eqref{eq:almostheisenbergnnull9} and \eqref{eq:almostheisenbergnnull14} hold. Thus, the only equation left is \eqref{eq:almostheisenbergnnull10}, which is equivalent to $c^2=f^2+b_1^2$ and yields the stated form of $f$ and $H$.
\end{proof}
\begin{remark}
We note that the four-dimensional almost Heisenberg Lie algebras $\mfg$ in Theorem \ref{th:4dHeisenberg} are also almost Abelian since $\spa{e_1,e_3,X\mp \frac{1}{\lambda}\sqrt{f^2+b_1^2} e_2}$ is a codimension one Abelian ideal. In fact, if $f\neq 0$, $\mfg$ isomorphic to $A_{4,1}\,$, the only indecomposable four-dimensional nilpotent Lie algebra, whereas in the case $f=0$, $\mfg$ is isomorphic to $\mfh_3\oplus \bR$.
\end{remark}
Note that by Table \ref{table:4d}, $A_{4,1}$ and $\mfh_3\oplus \bR$ are the only almost Heisenberg Lie algebras which are also almost Abelian. Moreover, by the same table, these two almost Heisenberg Lie algebras, together with the almost Heisenberg Lie algebra $A_{4,9}^0\,$, are exactly those almost Heisenberg Lie algebras where the codimension one Heisenberg ideal is not equal to the commutator ideal but has lower dimension. We note that by Theorem \ref{th:4dalmostAbelian}, the Lie algebras $\mfh_3\oplus \bR$ and $A_{4,1}$ do not admit generalised Lorentzian metrics with non-degenerate commutator ideal which are generalised Einstein for $\delta=0$. Hence, Theorem \ref{th:4dHeisenberg} implies:
\begin{corollary}\label{co:almostHeisenbergcommutatornondeg}
A four-dimensional almost Heisenberg Lie algebra $\mfg$ may only admit a generalised Lorenztian metric $(H,\cG_g)$ with non-degenerate commutator ideal which is generalised Einstein for $\delta=0$ if $\mfg$ is isomorphic to $A_{4,9}^0$.
\end{corollary}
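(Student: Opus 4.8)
The plan is to deduce the statement purely from the two classification results already in hand—Theorem~\ref{th:4dHeisenberg} for the almost Heisenberg case with non-degenerate Heisenberg ideal, and Theorem~\ref{th:4dalmostAbelian} for the almost Abelian case—exploiting that the exceptional low-dimensional algebras are simultaneously almost Heisenberg and almost Abelian. The organising principle is a dichotomy read off from Table~\ref{table:4d}: for a four-dimensional almost Heisenberg Lie algebra $\mfg$ with codimension one Heisenberg ideal $\mfn$, either the commutator ideal satisfies $\mfg'=\mfn$, or $\mfg'$ is a proper, strictly lower-dimensional subideal of $\mfn$, and the latter occurs precisely for $\mfg\cong A_{4,1}$, $\mfg\cong\mfh_3\oplus\bR$ and $\mfg\cong A_{4,9}^0$. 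Accordingly I would assume that $(\mfg,H,\cG_g,0)$ is generalised Einstein with $\mfg'$ non-degenerate and argue by cases along this dichotomy, aiming to eliminate every possibility except $A_{4,9}^0$.

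First I would treat the case $\mfg'=\mfn$, where non-degeneracy of $\mfg'$ is exactly non-degeneracy of $\mfn$. As recorded at the start of this section, Corollary~\ref{co:notAbeliannotdefinite} forces a non-degenerate codimension one nilpotent ideal to be abelian when a generalised Einstein structure is present; since $\mfn\cong\mfh_3$ is non-abelian, $\mfn$ cannot be definite, so the non-degenerate $\mfn$ must be Lorentzian. This is precisely the standing hypothesis of Theorem~\ref{th:4dHeisenberg}, and applying that theorem together with the remark following it yields $\mfg\cong A_{4,1}$ or $\mfg\cong\mfh_3\oplus\bR$. But both of these satisfy $\mfg'\subsetneq\mfn$, contradicting $\mfg'=\mfn$; hence the case $\mfg'=\mfn$ produces no examples.

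It then remains to handle $\mfg'\subsetneq\mfn$, i.e.\ $\mfg\cong A_{4,1}$, $\mfg\cong\mfh_3\oplus\bR$, or $\mfg\cong A_{4,9}^0$. The first two are also almost Abelian, so the complete list of four-dimensional almost Abelian generalised Einstein Lorentzian Lie algebras with $\delta=0$ in Theorem~\ref{th:4dalmostAbelian} applies to them; inspecting that list one finds that all of its entries realising $A_{4,1}$ or $\mfh_3\oplus\bR$ occur with degenerate commutator ideal, so neither algebra admits a generalised Einstein Lorentzian metric with $\mfg'$ non-degenerate for $\delta=0$. This contradicts the assumption on $\mfg'$ and leaves $\mfg\cong A_{4,9}^0$ as the only surviving possibility, which is the assertion. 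The work here is not analytic—the two feeder theorems supply all the computation—so the main obstacle is bookkeeping: keeping non-degeneracy of $\mfn$ carefully distinct from non-degeneracy of $\mfg'$ when invoking Theorem~\ref{th:4dHeisenberg}, and verifying from the explicit normal forms of Theorem~\ref{th:4dalmostAbelian} that every $A_{4,1}$ and $\mfh_3\oplus\bR$ entry really does have degenerate $\mfg'$ (a short direct computation of the commutator ideal in each relevant normal form).
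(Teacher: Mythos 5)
Your proposal is correct and follows essentially the same route as the paper: the text preceding the corollary argues exactly via the dichotomy $\mfg'=\mfn$ versus $\mfg'\subsetneq\mfn$ read off from Table \ref{table:4d}, disposes of the first case with Theorem \ref{th:4dHeisenberg} (under the standing reduction to Lorentzian $\mfn$ via Corollary \ref{co:notAbeliannotdefinite}) and of $A_{4,1}$ and $\mfh_3\oplus\bR$ with Theorem \ref{th:4dalmostAbelian}, leaving only $A_{4,9}^0$. Your bookkeeping caveat about distinguishing non-degeneracy of $\mfn$ from that of $\mfg'$ is exactly the point the paper's terse argument relies on, and you handle it correctly.
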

We will show in Theorem \ref{th:4dmissingcases} that also $A_{4,9}^0$ cannot admit such a generalised Lorentzian metric, so no four-dimensional almost Heisenberg Lie algebra can.

However, before we do this, we turn our attention to the two non-solvable cases $\mathfrak{so}(3)\oplus \bR$ and $\mathfrak{so}(2,1)\oplus \bR$ as they may be treated with the same methods that we used before:
\subsection{The Lie algebras $\mathfrak{so}(3)\oplus \bR$ and $\mathfrak{so}(2,1)\oplus \bR$}
We begin by distilling out of Proposition \ref{pro:codim1idealnondeg} a characterisation of the generalised Einstein condition for zero divergence in our situation:
\begin{lemma}\label{le:reductiveeqns}
	Let $\mfn\in \{\mathfrak{so}(3),\mathfrak{so}(2,1)\}$ and write $\ad$ for the adjoint operator on $\mfn$. Moreover, let $(H,\cG_g)$ be a generalised pseudo-Riemannian metric on $\mfg=\mfn\oplus \bR$ such that $\mfn$ is non-degenerate and let $\epsilon:=g(X,X)\in \{-1,1\}$ for a normed element $X\in \mfn^{\perp}$.
	
	Then $(\mfg,H,\cG_g,\delta=0)$ is generalised Einstein if and only if $B=X\hook H=0$ and for $Y_0$ being the (not necessarily orthogonal) projection of $X$ to $\mfn$ along $\bR$ we have
	\begin{equation*}
		\begin{split}
			0&=g(\ad_{Y_0}^S,\ad_Y^S) \, ,\\
			0&=4 g(\ad_Y^S,\ad_Y^S)-g(\ad^*(Y),\ad^*(Y))+2\epsilon g(\ad_{Y_0}(Y),\ad_{Y_0}(Y))\\
			&-2\epsilon g(\ad^*_{Y_0}(Y),\ad^*_{Y_0}(Y))+2g(H(Y,\cdot,\cdot),H(Y,\cdot,\cdot)) \, ,\\
			g(\ad^*(Y),H_Z)&=g(\ad^*(Z),H_Y) \, ,
		\end{split}
	\end{equation*}
	for all $Y,Z\in \mfn$.
\end{lemma}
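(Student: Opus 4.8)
The plan is to apply Proposition \ref{pro:codim1idealnondeg} with the non-degenerate codimension one ideal $\mfa=\mfn$: this is legitimate because $\mfn\in\{\so(3),\so(2,1)\}$ is an ideal of $\mfg=\mfn\oplus\bR$ (indeed $\mfn=\mfg'$) and is non-degenerate by hypothesis, and that proposition does not require $\mfa$ to be Abelian. With $X\in\mfn^\perp$, $\epsilon=g(X,X)\in\{-1,1\}$, and $H=H'+X^\flat\wedge B$ ($H'\in\Lambda^3\mfn^*$, $B\in\Lambda^2\mfn^*$) as in the Notation preceding that proposition, I would first record two identities. Since the $\bR$-summand is central, $[X,W]=[Y_0,W]$ for all $W\in\mfn$, so the operator $f=\ad_X|_\mfn$ is exactly $\ad_{Y_0}$, where $Y_0$ is the projection of $X$ to $\mfn$ along $\bR$. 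And a short computation gives $X\hook H=\epsilon B$ (using $X\hook H'=0$ and $B(X,\cdot)=0$), so the conditions $B=0$ and $X\hook H=0$ in the statement coincide. Thus $(\mfg,H,\cG_g,0)$ is generalised Einstein if and only if the five equations of Proposition \ref{pro:codim1idealnondeg} hold with $f=\ad_{Y_0}$.

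The heart of the argument, and the only place where the simplicity of $\mfn$ is used, is to extract $B=0$ from these equations. I would focus on the fourth equation, $g(\ad^*(Y),B)=0$ for all $Y\in\mfn$, with $B$ viewed as a $g$-skew-symmetric endomorphism of $\mfn$. By \eqref{eq:defofadstarX} each $\ad^*(Y)$ is also $g$-skew-symmetric, so $Y\mapsto\ad^*(Y)$ is a linear map $\mfn\to\so(\mfn,g|_\mfn)$. This map is injective: from $g(\ad^*(Y)(Z),W)=g(Y,[Z,W])$, the vanishing $\ad^*(Y)=0$ forces $Y\perp[\mfn,\mfn]=\mfn$, hence $Y=0$ by perfectness and non-degeneracy. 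Since $\dim\mfn=3=\dim\so(\mfn,g|_\mfn)$, the map is an isomorphism, so $\{\ad^*(Y):Y\in\mfn\}$ exhausts $\so(\mfn,g|_\mfn)$. The fourth equation then says $B$ is $g$-orthogonal to the whole of $\so(\mfn,g|_\mfn)$, and as the induced scalar product on skew-symmetric endomorphisms (equivalently on $\Lambda^2\mfn^*$) is non-degenerate, I conclude $B=0$.

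Once $B=0$, so $H=H'$, the remaining equations collapse by direct substitution. The first and second equations of Proposition \ref{pro:codim1idealnondeg} both reduce to $g(\ad_{Y_0}^S,\ad_Y^S)=0$ for all $Y\in\mfn$ (the first being the case $Y=Y_0$), which is the first displayed equation. Using $g([f^*,f](Y),Y)=g(\ad_{Y_0}(Y),\ad_{Y_0}(Y))-g(\ad^*_{Y_0}(Y),\ad^*_{Y_0}(Y))$ with $f=\ad_{Y_0}$, the third equation becomes the second displayed equation; the fourth equation is now vacuous; and the fifth becomes $g(\ad^*(Y),H_Z)=g(\ad^*(Z),H_Y)$, the third displayed equation. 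The converse direction is immediate, since $B=0$ together with the three displayed equations plainly reproduces all five equations of Proposition \ref{pro:codim1idealnondeg}, giving the claimed equivalence. The genuinely non-formal step—and the main obstacle—is the vanishing of $B$: everything else is bookkeeping, whereas seeing that the fourth equation alone already kills $B$ rests on the isomorphism $Y\mapsto\ad^*(Y)$, i.e.\ on the perfectness and low dimension of $\mfn$.
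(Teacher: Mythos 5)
Your proof is correct and follows essentially the same route as the paper: both reduce to Proposition \ref{pro:codim1idealnondeg} with $f=\ad_{Y_0}$ and extract $B=0$ from the fourth equation via the fact that $Y\mapsto\ad^*(Y)$ maps $\mfn$ onto all of $\End^A(\mfn)$. The only (immaterial) difference is how that surjectivity is established: the paper identifies the map with the transpose of the bracket $\Lambda^2\mfn\to\mfn$ and uses that this bracket is an isomorphism for $\so(3)$ and $\so(2,1)$, whereas you prove injectivity directly from perfectness and non-degeneracy and conclude by a dimension count.
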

\begin{proof}
 By the definition of $Y_0\,$, we have $X=Y_0+\lambda$ for a unique $\lambda\in \bR$. We set, as usual, $f:=\ad^{\mfn\oplus \bR}_X|_{\mfn}$ and observe that $f=\ad^{\mfn\oplus \bR}_{Y_0}|_{\mfn}=\ad_{Y_0}$. Now we use the characterisation of the generalised Einstein condition for zero divergence from Proposition \ref{pro:codim1idealnondeg} and note that it coincides with our formulas if $B=0$.
	
	So we are left with showing that the validity of the conditions in Proposition \ref{pro:codim1idealnondeg} yields $B=0$. For this, we look at the fourth equation in Proposition \ref{pro:codim1idealnondeg}, which is given by
	\begin{equation*}
		0=g(\ad^*(Y),B) \, ,
	\end{equation*}
	for all $Y\in \mfn$. This equation implies $B=0$ if the map
	\begin{equation*}
		Y\mapsto \ad^*(Y) \, ,\quad \mfn\rightarrow \End^A(\mfn):=\{g\in \End(\mfn)|g^*=-g\} \, ,
	\end{equation*}
	from $\mfn$ into the anti-symmetric endomorphisms $\End^A(\mfn)$ of $\mfn$ is surjective. 
	
	To show this, we note that for any Lie algebra $\mfg$ the map $\mfg \ni Y \mapsto \ad^*(Y)\in\End^A(\mfg)$ is given by the transpose map
	\begin{equation*}
		[\cdot,\cdot]^t:\mfg\cong \mfg^*\rightarrow \Lambda^2 \mfg^*\cong \End^A(\mfg)
	\end{equation*}
	of the Lie bracket $[\cdot,\cdot]:\Lambda^2 \mfg\rightarrow \mfg$
	using the natural identifications $\mfg\cong \mfg^*$ and $\Lambda^2 \mfg^*\cong \End^A(\mfg)$ obtained from the metric.
	
	Now for the Lie algebras $\mfn$ under consideration, i.e. $\mfn\in\lbrace\mathfrak{so}(3),\mathfrak{so}(2,1)\rbrace$, the Lie bracket $[\cdot,\cdot]:\mfn\rightarrow \Lambda^2 \mfn$ is an isomorphism. Thus, also $\mfn\ni Y\mapsto \ad^*(Y)\in \End^A(\mfn)$ is an isomorphism, and so, in particular, surjective. This concludes the proof.
\end{proof}
Next, we address solving the equations in Lemma \ref{le:reductiveeqns}. We observe that they fully reduce to equations on $\mfn$, however these are not the generalised Einstein equations for $(\mfn,H|_{\mfn},\cG_{g|_{\mfn}},\delta=0)$ yet. This would be the case if $\ad_{Y_0}^S=0$ as then the first equation in Lemma \ref{le:reductiveeqns} would be automatically fulfilled whereas the others would correspond exactly to the generalised Einstein equations for zero divergence for $(\mfn,H|_{\mfn},\cG_{g|_{\mfn}},\delta=0)$, cf. Corollary \ref{co:gEdelta=0}. We will show that this reduction always takes place. Note that in the case where $\mfn$ is Riemannian we already know this by Corollary \ref{co:codim1fskewB=0} (b).

However, if $\mfn$ has Lorentzian signature, the situation is more complicated. In order to prove the reduction, we take an approach similar to one taken by the first author and David Krusche in \cite{CK}. For this note that, by \cite[Lemma 3.1]{CK}, there is some symmetric endomorphism $L\in \End^S(\mfn)$ of $\mfn$ such that
\begin{equation*}
[Y,Z]=L(Y\times Z) \, ,
\end{equation*}
where $\times:\mfn\rightarrow \mfn$ is the crossproduct on the Lorentzian vector space $(\mfn,g)$, uniquely defined by
\begin{equation*}
e_1\times e_2=e_3 \, ,\qquad e_2\times e_3=-e_1 \, ,\qquad e_3\times e_1=e_2 \, ,
\end{equation*}
for any orthornormal basis $(e_1,e_2,e_3)$ of $\mfn$ with $g(e_1,e_1)=-g(e_2,e_2)=-g(e_3,e_3)=-1$. We note that in our situation, i.e. $\mfn\in \{\mathfrak{so}(3),\mathfrak{so}(2,1)\}$, $L$ has to be bijective.

 The idea to show that the reduction takes place is now to use the canonical forms from Lemma \ref{le:canonicalforms} (a) for the symmetric endomorphism $L$.
 
  We start with the cases where $L$ is of first or second type:
 \begin{lemma}\label{le:reductive1st2ndtype}
 Let $\mfn\in \{\mathfrak{so}(3),\mathfrak{so}(2,1)\}$ and $(H,\cG_g)$ be a generalised Lorentzian metric on $\mfn\oplus \bR$ such that $\mfn$ is Lorentzian. Let $L\in \End^S(\mfn)$, $X\in \mfn^{\perp}$ and $Y_0$ be as above and assume that $L$ is of first or second type. Then $(\mfn,H,\cG_g,\delta=0)$ is generalised Einstein if and only if $B=X\hook H=0$, $\ad_{Y_0}^S=0$ and $(\mfn,H|_{\mfn},\cG_{g|_{\mfn}}, \delta=0)$ is generalised Einstein.
 \end{lemma}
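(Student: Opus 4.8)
The plan is to prove both implications through the reformulation in Lemma \ref{le:reductiveeqns}, the decisive input being that the adjoint operators on $\mfn$ factor through the bijective symmetric endomorphism $L$ via $[Y,Z]=L(Y\times Z)$.

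For the implication from right to left I would assume $B=X\hook H=0$, $\ad_{Y_0}^S=0$ and that $(\mfn,H|_{\mfn},\cG_{g|_{\mfn}},0)$ is generalised Einstein, and simply verify the three equations of Lemma \ref{le:reductiveeqns}. The first, $g(\ad_{Y_0}^S,\ad_Y^S)=0$, is immediate. In the second, the terms $2\epsilon\,g(\ad_{Y_0}(Y),\ad_{Y_0}(Y))$ and $-2\epsilon\,g(\ad_{Y_0}^*(Y),\ad_{Y_0}^*(Y))$ cancel, since $\ad_{Y_0}^S=0$ forces $\ad_{Y_0}^*=-\ad_{Y_0}$; what remains, together with the third equation (and with $H=H|_{\mfn}$ because $B=0$), is exactly the pair \eqref{eq:gEdelta=0} characterising the generalised Einstein condition on $\mfn$ by Corollary \ref{co:gEdelta=0}.

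The substantial direction is the converse. Assuming $(\mfg,H,\cG_g,0)$ generalised Einstein, Lemma \ref{le:reductiveeqns} already yields $B=0$ together with the three displayed equations, so the only point to establish is $\ad_{Y_0}^S=0$; the reduction of the two remaining equations to the generalised Einstein condition on $\mfn$ then follows exactly as in the first direction. To obtain $\ad_{Y_0}^S=0$ I would use $[Y,Z]=L(Y\times Z)$: writing $C_Y\in\End(\mfn)$ for the map $Z\mapsto Y\times Z$, one has $\ad_Y=L\circ C_Y$, and since the cross product is totally skew, $C_Y$ is $g$-anti-symmetric with $C_Y^*=-C_Y$, whence $\ad_Y^S=\tfrac12[L,C_Y]$ is $g$-symmetric. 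As $Y\mapsto C_Y$ is a linear isomorphism of $\mfn$ onto $\End^A(\mfn)$, the subspace $V:=\{[L,A]:A\in\End^A(\mfn)\}\subseteq\End^S(\mfn)$ coincides with $\{\ad_Y^S:Y\in\mfn\}$ (up to the harmless factor $\tfrac12$), and the first equation of Lemma \ref{le:reductiveeqns} says precisely that $\ad_{Y_0}^S\in V$ is orthogonal, with respect to the indefinite trace form $g(P,Q)=\sum_i\epsilon_i\,g(Pe_i,Qe_i)$ on $\End(\mfn)$, to all of $V$. Hence it suffices to show this trace form is non-degenerate on $V$, for then $\ad_{Y_0}^S\in V\cap V^{\perp}=\{0\}$.

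The non-degeneracy on $V$ is where the hypothesis on $L$ enters and is the main obstacle, to be settled by a direct computation in the normal forms of Lemma \ref{le:canonicalforms}(a). For $L$ of first type, $L=\diag(a_1,a_2,a_3)$ in an orthonormal basis with $g(e_1,e_1)=-1$, each $[L,C_{e_i}]$ is a multiple (by a difference of eigenvalues) of an off-diagonal $g$-symmetric matrix supported in a single coordinate plane, and the Gram matrix of the non-vanishing ones is diagonal with entries in $\{+2,-2\}$; it is therefore non-degenerate regardless of the eigenvalue multiplicities, the coincident directions merely dropping out, while $V=\{0\}$ precisely when $L$ is scalar, in which case $\ad_{Y_0}^S=\tfrac12[L,C_{Y_0}]=0$ trivially. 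For $L$ of second type, $L=\diag(L_1(\alpha,\beta),c)$ with $\beta>0$, computing $M_i:=[L,C_{e_i}]$ and the Gram matrix $G=(g(M_i,M_j))$ gives, after a short calculation, $\det G=-32\,\beta^{2}\big((c-\alpha)^{2}+\beta^{2}\big)^{2}\neq 0$, so $M_1,M_2,M_3$ are independent and the trace form is non-degenerate on the three-dimensional $V$. This forces $\ad_{Y_0}^S=0$ and completes the proof. The only delicate points are the bookkeeping of the Lorentzian signs $\epsilon_i$ in the trace form, and the fact that the hypothesis ``$L$ of first or second type'' leaves exactly these two normal forms to inspect.
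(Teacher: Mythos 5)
Your proposal is correct and follows essentially the same route as the paper: both directions hinge on showing that the span of the $\ad_Y^S$ is a non-degenerate subspace of $\End^S(\mfn)$ with respect to the trace form, so that the first equation of Lemma \ref{le:reductiveeqns} forces $\ad_{Y_0}^S=0$, and the remaining equations then reduce to the generalised Einstein condition on $\mfn$. Your Gram-matrix computations in the two normal forms (including the determinant $-32\,\beta^2((c-\alpha)^2+\beta^2)^2$, which is $64$ times the paper's value owing to the factor $\tfrac12$ in $\ad_Y^S=\tfrac12[L,C_Y]$) match the paper's.
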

\begin{proof}
We will consider in both cases the subspace $U:=\spa{\ad_Y^S|Y\in \mfn}$ of $\End^S(\mfn)$ and will show that it is always a non-degenerate subspace of $\End^S(\mfn)$. However, by the first equation in Lemma \ref{le:reductiveeqns}, the symmetric endomorphism $\ad_{Y_0}^S$ lies in $U$ and is orthogonal to $U$, which forces $\ad_{Y_0}^S=0$ due to $U$ being non-degenerate. As explained above, this then gives the stated assertion. So let us dig into the two different cases:
\begin{itemize}[wide]
	\item $L$ of first type:
	
	Then $L=\diag(a_1,a_2,a_3)$ for certain $a_1,a_2,a_3\in \bR^*$ with respect to an orthonormal basis $(e_1,e_2,e_3)$ of $\mfn$ with $g(e_1,e_1)=-g(e_2,e_2)=-g(e_3,e_3)=-1$. One then computes
	\begin{equation*}
	\begin{split}
	\ad_{e_1}^S&=\frac{a_2-a_3}{2} \begin{pmatrix} 0 & 0 & 0 \\ 0 & 0 & -1 \\ 0 & -	1 & 0 \end{pmatrix} \, ,\quad 
	\ad_{e_2}^S=\frac{a_3-a_1}{2} \begin{pmatrix} 0 & 0 & 1 \\ 0 & 0 & 0 \\ -1 & 0 & 0 \end{pmatrix} \, ,\\
		\ad_{e_3}^S&=\frac{a_1-a_2}{2} \begin{pmatrix} 0 & 1 & 0 \\ -1 & 0 & 0 \\ 0 & 0 & 0 \end{pmatrix} \, .
	\end{split}
	\end{equation*}
   We see that $g(\ad_{e_i}^S,\ad_{e_j}^S)=0$ for $i,j\in \{1,2,3\}$ with $i\neq j$ and that $g(\ad_{e_i}^S,\ad_{e_i}^S)=0$ if and only if $\ad_{e_i}^S=0$. This shows that $U$ is non-degenerate independently of its dimension.
   \item $L$ of second type:
   
   Then $L=\diag(L_1(\alpha,\beta),a)$ for certain $\alpha\in \bR$, $\beta>0$ and $a\in \bR^*$ with respect to an orthonormal basis $(e_1,e_2,e_3)$ of $\mfn$ with $g(e_1,e_1)=-g(e_2,e_2)=-g(e_3,e_3)=-1$. Then
   	\begin{equation*}
   	\begin{split}
   		\ad_{e_1}^S&=\frac{1}{2} \begin{pmatrix} 0 & 0 & \beta \\ 0 & 0 & a-\alpha \\ -\beta & a-\alpha & 0 \end{pmatrix} \, ,\qquad 
   		\ad_{e_2}^S=\frac{1}{2} \begin{pmatrix} 0 & 0 & a-\alpha \\ 0 & 0 & -\beta \\ \alpha-a & -\beta & 0 \end{pmatrix} \, ,\\
   		\ad_{e_3}^S&= \begin{pmatrix} -\beta & 0 & 0 \\ 0 & \beta & 0 \\ 0 & 0 & 0 \end{pmatrix} \, ,
   	\end{split}
   \end{equation*}
and one sees that, due to $\beta\neq 0$, $\ad_{e_1}^S,\ad_{e_2}^S,\ad_{e_3}^S$ are linearly independent, i.e $\dim(U)=3$. On the other hand,
\begin{equation*}
(g(\ad_{e_i}^S,\ad_{e_j}^S))_{i,j=1,2,3}=\begin{pmatrix}
\frac{(a-\alpha)^2-\beta^2}{2}	& -\beta (a-\alpha) & 0 \\
-\beta (a-\alpha) &  -\frac{(a-\alpha)^2-\beta^2}{2} & 0 \\
0 & 0 & 2\beta^2	
\end{pmatrix} \, ,
\end{equation*}
and this matrix has determinant $-\frac{\beta^2 ((a-\alpha)^2+\beta^2)^2}{2}\neq 0$ as $\beta\neq 0$. Hence, $U$ is non-degenerate.
\end{itemize}	
\end{proof}
Next, we show that $L$ cannot be of third or fourth type:
 \begin{lemma}\label{le:reductive3rd4thtype}
	Let $\mfn\in \{\mathfrak{so}(3),\mathfrak{so}(2,1)\}$ and let $(\mfn\oplus \bR, H,\cG_g,\delta=0)$ be a generalised Einstein Lorentzian metric such that $\mfn$ is non-degenerate. If $L\in \End^S(\mfn)$ is defined as above, then $L$ can be neither of third type nor of fourth type.
\end{lemma}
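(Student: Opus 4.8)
The plan is to assume that $L$ is of third or fourth type and to derive a contradiction from the generalised Einstein equations of Lemma~\ref{le:reductiveeqns}. Since a $g$-symmetric endomorphism of a positive definite space is diagonalisable, $L$ being of third or fourth type forces $\mfn$ to be Lorentzian; I would fix the associated orthonormal basis $(e_1,e_2,e_3)$ of Lemma~\ref{le:canonicalforms}~(a) with $g(e_1,e_1)=-1$, in which $L=\diag(L_2(\gamma,\eta),c)$ with $\gamma,c\in\bR$, $\eta\in\{\pm1\}$ (third type) or $L=L_3(\tau)$ with $\tau\in\bR$ (fourth type). Writing the bracket as $[Y,Z]=L(Y\times Z)$ and denoting by $C_Y:=Y\times(\,\cdot\,)$ the ($g$-skew) cross-product map, I would first record the identities $\ad_Y=LC_Y$, $\ad^*(Y)=C_{LY}$ and $\ad_Y^S=\tfrac12[L,C_Y]$, together with the cross-product relations $g(C_A,C_B)=-2\,g(A,B)$, $\tr(C_W^2)=2\,g(W,W)$ and $g(A\times B,A\times B)=-\bigl(g(A,A)g(B,B)-g(A,B)^2\bigr)$.

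With these in hand the equations of Lemma~\ref{le:reductiveeqns} simplify drastically. Since $B=X\hook H=0$ and $\mfn$ is three-dimensional, one has $H=h\,e^{123}$, so $H_Z=\pm h\,C_Z$ and $g(H(Y,\cdot,\cdot),H(Y,\cdot,\cdot))=-h^2 g(Y,Y)$; hence the compatibility equation $g(\ad^*(Y),H_Z)=g(\ad^*(Z),H_Y)$ reads $g(C_{LY},C_Z)=g(C_{LZ},C_Y)$, which holds identically because $L$ is symmetric and thus carries no information. The remaining content is the first equation $g(\ad_{Y_0}^S,\ad_Y^S)=0$ for all $Y$, i.e. $\ad_{Y_0}^S\perp U$ with $U:=\spa{\ad_Y^S\mid Y\in\mfn}$, together with the norm equation, which after substitution and using $g(\ad_{Y_0}Y,\ad_{Y_0}Y)-g(\ad_{Y_0}^*Y,\ad_{Y_0}^*Y)=g([\ad_{Y_0}^*,\ad_{Y_0}]Y,Y)$ becomes
\begin{equation*}
\tr(LC_YLC_Y)-\tr(L^2C_Y^2)+g(LY,LY)-h^2 g(Y,Y)+\epsilon\,g\bigl([\ad_{Y_0}^*,\ad_{Y_0}]Y,Y\bigr)=0
\end{equation*}
for all $Y\in\mfn$, where $\epsilon=g(X,X)$.

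Next I would exploit the first equation exactly as in the proof of Lemma~\ref{le:reductive1st2ndtype}: computing the Gram matrix of $U$ in the canonical basis and checking that $U$ is non-degenerate, so that $\ad_{Y_0}^S\in U$ and $\ad_{Y_0}^S\perp U$ force $\ad_{Y_0}^S=0$. Then $\ad_{Y_0}$ is $g$-skew, the commutator term $[\ad_{Y_0}^*,\ad_{Y_0}]$ vanishes, and the norm equation collapses to the purely three-dimensional condition
\begin{equation*}
\tr(LC_YLC_Y)-\tr(L^2C_Y^2)+g(LY,LY)-h^2 g(Y,Y)=0 \qquad (Y\in\mfn).
\end{equation*}
To contradict this I would take its $g$-trace $\sum_i\epsilon_i(\,\cdot\,)$, using $\sum_i\epsilon_i C_{e_i}^2=2\,\mathrm{Id}$ to get $\sum_i\epsilon_i\tr(L^2C_{e_i}^2)=2\tr(L^2)$ and an analogous universal identity for $\sum_i\epsilon_i C_{e_i}LC_{e_i}$, and I would additionally evaluate the quadratic form on the distinguished \emph{null} eigenvector of $L$ (for instance $e_1-e_3$ in the fourth-type normal form), on which both $g(Y,Y)$ and $g(LY,LY)$ vanish. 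Since invertibility of $L$—which holds because $\mfn$ is simple—translates into $\gamma\neq0\neq c$ (third type) and $\tau\neq0$ (fourth type), Lemma~\ref{le:trfsquare=0} gives the \emph{strict} inequality $\tr(L^2)>0$, and comparing this with the trace identity and the null-vector evaluation should yield the contradiction.

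The main obstacle I anticipate is twofold. First, one must confirm that $U$ is genuinely non-degenerate in both normal forms; should it instead be degenerate—which is plausible precisely because the Jordan structure introduces null directions—the clean reduction $\ad_{Y_0}^S=0$ is unavailable, and the full quadratic form in $Y$, including the term $\epsilon\,g([\ad_{Y_0}^*,\ad_{Y_0}]Y,Y)$, must be treated directly for an arbitrary admissible $Y_0$. Second, the bookkeeping for the mixed trace $\sum_i\epsilon_i\tr(LC_{e_i}LC_{e_i})$ and for the individual coefficients of the quadratic form is delicate, and it is here that the strict positivity $\tr(L^2)>0$ coming from invertibility has to be made to collide with the vanishing forced on the null eigenvector. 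I expect the contradiction to emerge from exactly this clash, excluding both the third and the fourth type.
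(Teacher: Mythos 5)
Your strategy hinges on showing that $U:=\spa{\ad_Y^S\mid Y\in\mfn}$ is non-degenerate, so that the first equation of Lemma \ref{le:reductiveeqns} forces $\ad_{Y_0}^S=0$ and the commutator term drops out. This is exactly the step that fails: in both remaining normal forms $U$ \emph{is} degenerate. For $L=\diag(L_2(\alpha,\epsilon),a)$ one computes that $\ad_{e_3}^S$ is a nonzero null element of $U$ orthogonal to $\ad_{e_1}^S$ and $\ad_{e_2}^S$ (the Gram matrix has a zero third row), and in the subcase $a=\alpha$ the Gram matrix vanishes identically, so $U$ is totally isotropic; for $L=L_3(\alpha)$ the Gram matrix has rank one. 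Consequently the orthogonality condition only constrains $Y_0$ to lie in a one- or two-dimensional subspace (e.g.\ $Y_0\in\spa{e_3}$, respectively $Y_0\in\spa{e_1-e_3,e_2}$, or is unconstrained when $a=\alpha$), and $\ad_{Y_0}^S\neq 0$ in general, so the term $\epsilon\,g([\ad_{Y_0}^*,\ad_{Y_0}]Y,Y)$ must be carried through the second equation with an arbitrary admissible $Y_0$. You flag this possibility yourself as the "main obstacle," but the proposal does not resolve it, and since it occurs in every relevant case your primary pathway (reduction to a purely three-dimensional identity, then a trace plus a null-eigenvector evaluation) never becomes available.

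The fallback you describe — treating the full quadratic form in $Y$ for a general $Y_0$ in the degenerate directions — is precisely what the paper's proof does, but that is where all the actual work lies: one must parametrise $Y_0$ (as $\lambda e_3$, as $\sum\lambda_i e_i$ when $a=\alpha$, or as $\lambda_1(e_1-e_3)+\lambda_2 e_2$ in the fourth type), compute the commutator contribution explicitly, and extract a contradiction by inserting carefully chosen pairs into the polarised second equation (for instance, in the third type one first derives $a^2=h^2$ and then $\alpha=a$, contradicting $a\neq\alpha$; the subcase $a=\alpha$ requires a further case split on the coefficients of $Y_0$). None of this is carried out in the proposal, and it is not clear that your proposed trace identity combined with evaluation on the null eigenvector of $L$ would suffice even in principle, since the decisive relations in the paper come from off-diagonal (polarised) entries rather than from the trace. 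As it stands the argument has a genuine gap at its central reduction.
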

\begin{proof}
	We argue by contradiction and distinguish the two possible types of $L$:
\begin{itemize}[wide]
	\item 
Let $L$ first be of third type, i.e. $L=\diag(L_2(\alpha,\epsilon),a)$ for certain $\alpha,a\in \bR^*$ and $\epsilon\in \{-1,1\}$ with respect to an orthonormal basis $(e_1,e_2,e_3)$ of $\mfn$ with $g(e_1,e_1)=-g(e_2,e_2)=-g(e_3,e_3)=-1$. In order to simplify the expressions below, we replace $\alpha$ for $\epsilon\alpha$ in what follows, that is $L=\diag(L_2(\epsilon\alpha,\epsilon),a)$. One computes then
\begin{equation*}
	\begin{split}
		\ad_{e_1}^S&=\frac{1}{4}\begin{pmatrix} 0 & 0 & -\epsilon \\
			0 & 0 & 2 (a-\alpha)+\epsilon \\
			\epsilon & 2 (a-\alpha)+\epsilon & 0
		\end{pmatrix} \, , \\
		\ad_{e_2}^S&=\frac{1}{4}\begin{pmatrix} 0 & 0 & -\epsilon +2 (a-\alpha) \\
			0 & 0 & \epsilon \\
		\epsilon -2 (a-\alpha) & \epsilon & 0
		\end{pmatrix} \, , \qquad
		\ad_{e_3}^S=\frac{1}{2}\begin{pmatrix} \epsilon & \epsilon & 0\\
			-\epsilon & -\epsilon & 0 \\
			0 & 0 & 0
		\end{pmatrix} \, ,
	\end{split}
\end{equation*}
as well as
\begin{equation*}
	\begin{split}
		\ad^*(e_1)&=\frac{1}{2}\begin{pmatrix} 0 & 0 & \epsilon \\
			0 & 0 & -\epsilon-2\alpha \\
			\epsilon & \epsilon+2\alpha & 0
		\end{pmatrix} \, , \qquad
		\ad^*(e_2)=\frac{1}{2}\begin{pmatrix} 0 & 0 & \epsilon -2 \alpha \\
			0 & 0 & -\epsilon \\
			\epsilon -2 \alpha & \epsilon & 0
		\end{pmatrix} \, , \\
		\ad^*(e_3)&=\begin{pmatrix} 0 & a & 0\\
			a & 0 & 0 \\
			0 & 0 & 0
		\end{pmatrix} \, .
	\end{split}
\end{equation*}
This yields
\begin{equation*}
\begin{split}
G:=4\Bigl(g(\ad_{e_i}^S,\ad_{e_j}^S)\Bigr)_{i,j=1,2,3}&=2{\small \begin{pmatrix} (a-\alpha) (a-\alpha+\epsilon) & \epsilon (a-\alpha) & 0 \\ \epsilon (a-\alpha) & -(a-\alpha) (a-\alpha-\epsilon) & 0 \\ 0 & 0 & 0 \end{pmatrix}} \, ,\\
-\Bigl(g(\ad^*(e_i)),\ad^*(e_j)\Bigr)_{i,j=1,2,3}&=2 \begin{pmatrix} -\alpha (\alpha+\epsilon) & -\epsilon \alpha& 0 \\ -\epsilon \alpha & \alpha(\alpha-\epsilon) & 0 \\ 0 & 0 & a^2 \end{pmatrix} \, .
\end{split}
\end{equation*}
By Lemma \ref{le:reductiveeqns}, we know that $B=X\hook H=0$ and, for dimensional reasons, $H=h\, e^{123}$ for some $h\in \bR$. Thus,
\begin{equation*}
\Bigl(g(H(e_i,\cdot,\cdot),H(e_j,\cdot,\cdot))\Bigr)_{i,j=1,2,3}=\diag(h^2,-h^2,-h^2) \, .
\end{equation*}
We now show that we must have $a=\alpha$. Assume that $a\neq \alpha$:  then we see that $\ad_{e_1}^S,\ad_{e_2}^S,\ad_{e_3}^S$ are linearly independent and that the rank of $G$ is two. Consequently, as $\ad_{Y_0}^S$ has to be orthogonal to $U:=\spa{\ad_{e_1}^S,\ad_{e_2}^S,\ad_{e_3}^S}$, we must have $Y_0\in \spa{e_3}$, i.e. $Y_0=\lambda e_3$ for some $\lambda\in \bR$ here. One then computes that
\begin{equation*}
2\Bigl(g(\ad_{Y_0}(e_i),\ad_{Y_0}(e_j))-g(\ad^*_{Y_0}(e_i),\ad^*_{Y_0}(e_j))\Bigr)=-4\lambda^2 \epsilon \alpha \begin{pmatrix}
	1 & 1 & 0\\ 1 & 1 & 0 \\  0 & 0 & 0
\end{pmatrix} \, .
\end{equation*}
and the second equation in Lemma \ref{le:reductiveeqns} is equivalent to the vanishing of the matrix
\begin{equation*}
\left(
\begin{smallmatrix}
	-4\alpha \epsilon (\lambda^2+1)+2a\epsilon+2a^2-4\alpha a+2h^2 & -4 \epsilon \alpha (\lambda^2+1)+2\epsilon a & 0 \\
	-4 \epsilon \alpha (\lambda^2+1)+2\epsilon a & -4\alpha \epsilon (\lambda^2+1)+2a\epsilon-2a^2+4\alpha a-2h^2 & 0 \\
	0 & 0 & 2a^2-2h^2
\end{smallmatrix}\right) \, .
\end{equation*}
Thus, $a^2=h^2$ and so substracting the $(2,2)$-entry from the $(1,1)$-entry, we get $0=4a^2-8\alpha a+4h^2=8a^2-8\alpha a$. As $a\neq 0$, this implies $\alpha=a$, a contradiction.

Hence, we must have $a=\alpha$. Then, $G=0$ and so $Y_0$ can be arbitrary in $\mfn$. We write $Y_0=\sum_{i=1}^3 \lambda_i e_i$ for $\lambda_1,\lambda_2,\lambda_3\in \bR$ and compute that then
the insertion of $e_1$ and $e_3$ into the (polarised) second equation in Lemma \ref{le:reductiveeqns} yields
\begin{equation*}
0=3\epsilon \alpha (\lambda_1+\lambda_2)\lambda_3 \, .
\end{equation*}
Thus, $\lambda_2=-\lambda_1$ or $\lambda_3=0$.

If $\lambda_2=-\lambda_1\,$, then putting $e_1$ and $e_2$ into the second equation in Lemma \ref{le:reductiveeqns} gives
\begin{equation*}
	0=-2\epsilon \alpha (2\lambda_3^2+1) \, ,
\end{equation*} 
a contradiction as $a=\alpha\neq 0$.

Finally, if $\lambda_2\neq -\lambda_1$ but $\lambda_3=0$, then inserting the pairs $(e_1,e_1)$, $(e_1,e_2)$ and $(e_3,e_3)$ into the (polarised) second equation in Lemma \ref{le:reductiveeqns} yields
\begin{equation*}
\begin{split}
0&=-\frac{(\lambda_1+\lambda_2)^2}{2}-2a\epsilon \left(\lambda_1\lambda_2+\lambda_2^2+1\right)-2 a^2+2h^2 \, ,\\
0&=-\frac{(\lambda_1+\lambda_2)^2}{2}+a\epsilon (\lambda_1^2-\lambda_2^2-2) \, ,\\
0&=2a^2-2h^2-2a\epsilon (\lambda_1+\lambda_2)^2	 \, .
\end{split}	
\end{equation*}
Subtracting from the second equation both the first and the third one, we arrive at
\begin{equation*}
0=3\epsilon a (\lambda_1+\lambda_2)^2 \, ,
\end{equation*}
a contradiction since $\epsilon\in\{-1,1\}$, $a\neq 0$ and $\lambda_2\neq -\lambda_1$.

Thus, $L$ cannot be of third type.
\item
Let $L$ be of fourth type, i.e. $L=L_3(\alpha)$ for certain $\alpha\in \bR^*$ with respect to an orthonormal basis $(e_1,e_2,e_3)$ of $\mfn$ with $g(e_1,e_1)=-g(e_2,e_2)=-g(e_3,e_3)=-1$. One computes that then
\begin{equation*}
\begin{split}
\ad_{e_1}^S&=\frac{\sqrt{2}}{4}\begin{pmatrix} 0 &  0 & 1 \\ 0 & 2 & 0 \\ -1 & 0 & -2 \end{pmatrix} \, ,\qquad
 \ad_{e_2}^S=\frac{\sqrt{2}}{4}\begin{pmatrix} 0 &  1 & 0 \\ -1 & 0 & -1 \\ 0 & -1 & 0 \end{pmatrix} \, ,\\
 \ad_{e_3}^S&=\frac{\sqrt{2}}{4}\begin{pmatrix} -2 &  0 & -1 \\ 0 & 2 & 0 \\ 1 & 0 & 0 \end{pmatrix} \, ,
 \end{split}
\end{equation*}
and so
\begin{equation*}
4\Bigl(g(\ad_{e_i}^S,\ad_{e_j}^S)\Bigr)_{i,j=1,2,3}=\begin{pmatrix} 3 & 0 & 3 \\ 0 & 0 & 0 \\ 3 & 0 & 3
\end{pmatrix} \, .
\end{equation*}
Thus, $Y_0=\lambda_1 (e_1-e_3)+\lambda_2 e_2$ for certain $\lambda_1,\lambda_2\in \bR$. Then a lengthy but straightforward compuation shows that inserting $(e_1,e_3)$ and $(e_2,e_3)$ into the polarised second equation in Lemma \ref{le:reductiveeqns} gives
\begin{equation*}
\begin{split}
0&=4+\lambda_2^2+6\sqrt{2}\,\alpha \lambda_1\lambda_2 \, ,\\
0&=\sqrt{2}\alpha(\lambda_2^2+2) \, .
\end{split}
\end{equation*}
From the second equation we get $\alpha=0$ which, when inserted into the first equation, gives $4+\lambda_2^2=0$, a contradiction. Hence, $L$ can neither be of fourth type.
\end{itemize}
\end{proof}
Putting our results together and using previous results in three dimensions from \cite{CK}, we may now show:
\begin{theorem}\label{th:reductivecases}
Let $\mfn\in \{\mathfrak{so}(3),\mathfrak{so}(2,1)\}$ and $(H,\cG_g)$ be a generalised Lorentzian metric on $\mfg=\mfn\oplus \bR$ such that $\mfn$ is non-degenerate. Then $(\mfg,H,\cG_g)$ is generalised Einstein for zero divergence operator $\delta=0$ if and only if one of the following conditions is satisfied:
\begin{itemize}
\item[(i)] $\mfn=\mathfrak{so}(3)$ is Riemannian and there exist $a\in \bR^*$, $b\in \bR$ and $\epsilon\in \{-1,1\}$ and an orthonormal basis $(e_1,\ldots,e_4)$ of $\mathfrak{so}(3)\oplus \bR$ such that $(e_1,e_2,e_3)$ is a basis of $\mathfrak{so}(3)$,
$H=a e^{123}$ and, up to anti-symmetry, the only non-zero Lie brackets are given by
		\begin{equation*}
	[e_1,e_2]=\epsilon a\, e_3 \, ,\;\;\; [e_2,e_3]=\epsilon a\, e_1 \, ,\;\;\; [e_3,e_1]=\epsilon a\, e_2 \, ,\;\;\; [e_3,e_4]=b\, e_2 \, ,\;\;\; [e_4,e_2]=b\, e_3 \, ,
\end{equation*}
\item[(ii)] or $\mfn=\mathfrak{so}(2,1)$ is Lorentzian and there exist $a\in \bR^*$ and $\epsilon\in \{-1,1\}$ and an orthonormal basis $(e_1,\ldots,e_4)$ of $\mathfrak{so}(2,1)\oplus \bR$ such that $(e_1,e_2,e_3)$ is a basis of $\mathfrak{so}(2,1)$ with $g(e_1,e_1)=-1$,
$H=a e^{123}$ and, up to anti-symmetry, the only non-zero Lie brackets are given by
\begin{equation*}
[e_1,e_2]=\epsilon a\, e_3 \, ,\quad [e_2,e_3]=-\epsilon a\, e_1 \, ,\quad [e_3,e_1]=\epsilon a\, e_2 \, ,
\end{equation*}
and by exactly one of the following:
\begin{itemize}
	\item[($\alpha$)] $[e_4,e_2]=b\, e_3\,,\quad [e_4,e_3]=-b\, e_2$ for some $b\in \bR$,
	\item[($\beta$)] or $[e_4,e_1]=b\, e_2\,,\quad [e_4,e_2]=b\, e_1$ for some $b\in \bR^*$,
	\item[($\gamma$)] or $[e_4,e_1]=e_2\,,\quad [e_4,e_2]=e_1+e_3\,,\quad [e_4,e_3]=-e_2\,$.
\end{itemize}
\end{itemize}
\end{theorem}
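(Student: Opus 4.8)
The plan is to reduce the four-dimensional problem to the already-understood three-dimensional one on $\mfn$, and then to classify the only remaining datum, the projection $Y_0$ of $X$ onto $\mfn$, up to the symmetries of the situation.

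First I would carry out the reduction. In the Lorentzian case Lemma~\ref{le:reductive3rd4thtype} forces the symmetric endomorphism $L$ attached to the bracket of $\mfn$ to be of first or second type, so Lemma~\ref{le:reductive1st2ndtype} applies and shows that $(\mfg,H,\cG_g,0)$ is generalised Einstein if and only if $B=X\hook H=0$, $\ad_{Y_0}^S=0$, and $(\mfn,H|_{\mfn},\cG_{g|_{\mfn}},0)$ is generalised Einstein. In the Riemannian case the same reduction is immediate from Corollary~\ref{co:codim1fskewB=0}(b), since then $\mfn$ is definite. In either case $B=0$ gives $H=H'\in\Lambda^3\mfn^*$, and this space is one-dimensional in dimension three, so $H=a\,e^{123}$ for some $a$.

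Next I would feed in the three-dimensional classification of \cite{CK}. Among the Lie algebras at hand this forces $\mfn\cong\mathfrak{so}(3)$ to carry the bi-invariant structure $[e_1,e_2]=\epsilon a\,e_3$ and cyclically, with $H'=a\,e^{123}$, and $\mfn\cong\mathfrak{so}(2,1)$ to carry the Lorentzian bi-invariant structure $[e_1,e_2]=\epsilon a\,e_3$, $[e_2,e_3]=-\epsilon a\,e_1$, $[e_3,e_1]=\epsilon a\,e_2$ with $H'=a\,e^{123}$, where $a\in\bR^*$. This in particular forces $g|_{\mfn}$ to be definite when $\mfn=\mathfrak{so}(3)$ and Lorentzian when $\mfn=\mathfrak{so}(2,1)$, because $g|_{\mfn}$ is then proportional to the (respectively negative definite and Lorentzian) Killing form. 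Since $g|_{\mfn}$ is bi-invariant, every $\ad_{Y_0}$ is automatically skew-symmetric, so the condition $\ad_{Y_0}^S=0$ is satisfied for free and imposes nothing further. Thus the sole remaining freedom is the choice of $Y_0\in\mfn$, equivalently of $f=\ad_X|_{\mfn}=\ad_{Y_0}$.

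Finally I would classify $Y_0$ up to the isometries of $(\mfn,g|_{\mfn})$ preserving the bracket, and read off the four-dimensional brackets $[X,\cdot]|_{\mfn}=\ad_{Y_0}$. In the Riemannian case $O(3)$ acts transitively on the spheres of $\mfn$, so $Y_0$ may be rotated to a multiple of $e_1$; writing $b$ for the resulting scale yields exactly case (i), with $b=0$ the trivial central extension. In the Lorentzian case the relevant group $O(2,1)$ has three orbit types of nonzero vectors, timelike, spacelike and null, and I would check by direct computation that these yield respectively the brackets ($\alpha$), ($\beta$) and ($\gamma$); here a boost rescales a null $Y_0$ to a fixed representative, which is why ($\gamma$) carries no free parameter while ($\alpha$) and ($\beta$) retain $b$. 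The converse is then immediate, since each listed structure is a three-dimensional generalised Einstein $\mfn$ extended by a skew-symmetric $\ad_{Y_0}$ with $B=0$ and so satisfies Lemma~\ref{le:reductiveeqns}. The main remaining task is the bookkeeping of the Lorentzian orbit analysis, checking that the three causal types are exhaustive, produce precisely the stated normal forms and are genuinely distinct; the conceptually hard part, namely excluding the third and fourth types of $L$ and handling the first and second, has already been settled in Lemmas~\ref{le:reductive3rd4thtype} and~\ref{le:reductive1st2ndtype}.
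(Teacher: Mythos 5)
Your proposal follows essentially the same route as the paper's proof: reduce via Corollary \ref{co:codim1fskewB=0}(b) (Riemannian $\mfn$) and Lemmas \ref{le:reductive1st2ndtype}--\ref{le:reductive3rd4thtype} (Lorentzian $\mfn$) to the three-dimensional classification of \cite{CK} on $\mfn$ together with $B=0$ and $\ad_{Y_0}^S=0$, observe that the latter is automatic for the resulting bi-invariant structures, and then normalise $Y_0$ by (special) orthogonal automorphisms according to its causal type, giving (i) and (ii)($\alpha$)--($\gamma$). The argument is correct and no essential step is missing.
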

\begin{proof}
By Lemma \ref{le:reductive1st2ndtype} and Lemma \ref{le:reductive3rd4thtype}, we know that the generalised Einstein condition for zero divergence is equivalent to $L$ being of first or second type, $B=X\hook H=0$, $\ad_{Y_0}^S=0$---where $Y_0$ is the projection of $X\in \mfn^{\perp}$ to $\mfn$ along $\bR$---and $(\mfn,H'=H|_{\mfn},\cG_{g|_{\mfn}})$ being generalised Einstein for zero divergence operator. 

Now we may apply the classification of generalised Einstein Lie algebras from \cite[1. in Theorem 3.4]{CK} to deduce that if $\mfn$ is Riemannian, then $\mfn=\mathfrak{so}(3)$ and $\mfn$ admits an orthonormal basis $(e_1,e_2,e_3)$ of $\mfn$ such that $H=H'=a e^{123}$ for some $a\in \bR^*$ and there exists $\epsilon\in \{-1,1\}$ such that, up to anti-symmetry, the only-non-zero Lie brackets between elements in $(e_1,e_2,e_3)$ are given by
\begin{equation*}
	[e_1,e_2]=\epsilon a\, e_3 \, ,\quad [e_2,e_3]=\epsilon a\, e_1 \, ,\quad [e_3,e_1]=\epsilon a\, e_2 \, .
\end{equation*}
Now we may apply an special orthogonal transformation to assume that $Y_0=b e_1$ for some $b\in \bR$ and so, setting $e_4:=X$, we arrive at the Lie brackets
\begin{align*}
[e_4,e_1]&=\ad_{Y_0} (e_1)=b [e_1,e_1]=0 \, , & & \\
[e_4,e_2]&=b [e_1,e_2]=b e_3 \, , &  [e_4,e_3]&=b [e_1,e_3]=-b e_2 \, .
\end{align*}
This is case (i) in the statement.

In the case that $\mfn$ is Lorentzian, \cite[1. in Theorem 3.4]{CK} yields that $\mfn=\mathfrak{so}(2,1)$ and $\mfn$ admits an orthonormal basis $(e_1,e_2,e_3)$ of $\mfn$ with $g(e_1,e_1)=-1$ such that $H=H'=a e^{123}$ for some $a\in \bR^*$ and there exists $\epsilon\in \{-1,1\}$ such that, up to anti-symmetry, the only-non-zero Lie brackets between elements in $(e_1,e_2,e_3)$ are given by
\begin{equation*}
	[e_1,e_2]=\epsilon a\, e_3 \, ,\quad [e_2,e_3]=-\epsilon a\, e_1 \, ,\quad [e_3,e_1]=\epsilon a\, e_2 \, .
\end{equation*}
Now depending on whether $Y_0$ is negative definite (or $0$), positive definite or null, we may apply a special orthogonal transformation to get that either $Y_0=\frac{b}{\epsilon a} e_1$ for some $b\in \bR$, $Y_0=\frac{b}{\epsilon a} e_3$ for some $b\in \bR^*$ or $Y_0=\frac{1}{\epsilon a}(e_1+e_3)$, which gives the cases $(ii) (\alpha)$, $(ii) (\beta)$ or $(ii) (\gamma)$, respectively.
\end{proof}
We note that the generalised Lorentzian metrics from Theorem \ref{th:reductivecases} are not only Einstein for zero divergence but for a larger class of possible divergence operators:
\begin{corollary}\label{co:reductivecasesdeltaneq0}
\begin{enumerate}[(a)]
	\item The generalised Riemannian metrics $(H,\cG_g)$ from Theorem \ref{th:reductivecases} (i) are generalised Einstein for divergence operator $\delta\in E^*$ if and only if $b\neq 0$, $\delta(e_2)=\delta(e_3)=\delta(e^2)=\delta(e^3)=0$ and $\delta(e_1+\epsilon\, e^1)=0$ or if $b=0$ and $\delta(e_i+\epsilon\, e^i)=0$ for all $i=1,2,3$.
	\item The generalised Lorentzian metrics $(H,\cG_g)$ from Theorem \ref{th:reductivecases} (ii) $(\alpha)$ are generalised Einstein for divergence operator $\delta\in E^*$ if and only if $b\neq 0$, $\delta(e_2)=\delta(e_3)=\delta(e^2)=\delta(e^3)=0$ and $\delta(e_1-\epsilon\, e^1)=0$ or if $b=0$ and $\delta(e_i+\epsilon\, e^i)=0$ for all $i=2,3$ and $\delta(e_1-\epsilon\, e^1)=0$.
		\item The generalised Lorentzian metrics $(H,\cG_g)$ from Theorem \ref{th:reductivecases} (ii) $(\beta)$ are generalised Einstein for divergence operator $\delta\in E^*$ if and only if $\delta(e_1)=\delta(e_2)=\delta(e^1)=\delta(e^2)=0$ and $\delta(e_3+\epsilon\, e^3)=0$.
			\item The generalised Lorentzian metrics $(H,\cG_g)$ from Theorem \ref{th:reductivecases} (ii) $(\gamma)$ are generalised Einstein for divergence operator $\delta\in E^*$ if and only if $\delta(e_2)=\delta(e^2)=0$, $\delta(e_3)=-\delta(e_1)$, $\delta(e^3)=\delta(e^1)$ and $\delta(e_1-\epsilon\, e^1)=0$.
\end{enumerate}
\end{corollary}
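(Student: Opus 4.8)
The metrics listed in Theorem~\ref{th:reductivecases} are already generalised Einstein for $\delta=0$, so the plan is to invoke Corollary~\ref{co:gEdeltanonzero}(a): the quadruple $(\mfg,H,\cG_g,\delta)$ is generalised Einstein for a given $\delta\in E^*$ if and only if $\delta$ annihilates the four families of vectors
\[
\ad_X^*(X),\qquad (\ad_X^*(X))^\flat,\qquad [X,Y]+H(X,Y,\cdot),\qquad H(X,Y,\cdot)^\sharp+[X,Y]^\flat,
\]
for all $X,Y\in\mfg$. Since $\delta$ is linear, each family imposes the vanishing of $\delta$ on the linear span of its members, and I would compute these spans explicitly in each of the cases (i), (ii)$(\alpha)$, (ii)$(\beta)$, (ii)$(\gamma)$, using the stated brackets and the equality $H=H'=a\,e^{123}$ (recall $B=X\hook H=0$ by Theorem~\ref{th:reductivecases}, so $H(e_4,e_i,\cdot)=0$).

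First I would dispose of the first two families. Writing $X=W+c\,e_4$ with $W\in\mfn$ and using that $\ad_W$ is skew on $\mfn$ (the structure constants are totally antisymmetric) together with $\ad_{Y_0}^S=0$ from the proof of Theorem~\ref{th:reductivecases} via Lemma~\ref{le:reductive1st2ndtype}, a short evaluation of $g(\ad_X^*(X),Z)=g(X,[X,Z])$ on $Z\in\mfn$ and on $Z=e_4$ gives the clean identity
\[
\ad_X^*(X)=-c\,[Y_0,W].
\]
Hence $\spa{\ad_X^*(X)\mid X\in\mfg}=\im(\ad_{Y_0})$, so the first two conditions read $\delta(\im(\ad_{Y_0}))=0$ and $\delta(\im(\ad_{Y_0})^\flat)=0$. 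As $Y_0$ is a multiple of $e_1$, $e_1$, $e_3$, or $e_1+e_3$ in the four respective cases, $\im(\ad_{Y_0})$ equals $\spa{e_2,e_3}$, $\spa{e_2,e_3}$, $\spa{e_1,e_2}$, or $\spa{e_2,e_1+e_3}$ (and $\{0\}$ when the relevant parameter $b$ vanishes), which I would read off directly.

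Next I would evaluate the last two families on basis pairs. For $X,Y\in\mfn$ the brackets $[e_i,e_j]=\pm\epsilon a\,e_k$ together with $H(e_i,e_j,\cdot)=\pm a\,e^k$ produce, after dividing by $a\neq0$, the relations $\delta(e_i+\epsilon\,e^i)=0$; in the Lorentzian cases (ii) the timelike direction instead yields $\delta(e_1-\epsilon\,e^1)=0$, since $e_1^\flat=-e^1$. One checks that conditions $3$ and $4$ impose the same relations on these pairs. For pairs $(e_4,e_i)$ one has $H(e_4,e_i,\cdot)=0$, so the two families reduce to $\delta([e_4,e_i])=0$ and $\delta([e_4,e_i]^\flat)=0$, i.e. again to $\delta(\im(\ad_{Y_0}))=0=\delta(\im(\ad_{Y_0})^\flat)$. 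Collecting all constraints and discarding redundancies then yields precisely the four lists in the statement; for instance, in case (ii)$(\gamma)$ the relation $\delta(e_3+\epsilon e^3)=0$ coming from the $\mfn$-pairs is automatically implied by $\delta(e_1-\epsilon e^1)=0$ together with $\delta(e_3)=-\delta(e_1)$ and $\delta(e^3)=\delta(e^1)$.

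The computations are elementary, and the main obstacle is purely bookkeeping: one must keep straight the Lorentzian signs entering $\flat$ and $\sharp$ (so that $\delta(e_1-\epsilon e^1)=0$, rather than $\delta(e_1+\epsilon e^1)=0$, appears along the timelike direction), and verify in each case that conditions 1--2 and the $e_4$-pairs of conditions 3--4 are mutually redundant while the $\mfn$-pairs supply the remaining independent relations. Case (ii)$(\gamma)$, with its non-diagonal $\ad_{Y_0}$ whose image is $\spa{e_2,e_1+e_3}$, requires the most care, but no genuinely new phenomenon arises beyond careful linear algebra.
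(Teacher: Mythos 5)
Your proposal is correct and follows the same overall route as the paper: reduce to the residual linear conditions on $\delta$ via Corollary \ref{co:gEdeltanonzero}(a) and then evaluate them on basis pairs, using $B=X\hook H=0$ to kill the $e_4$-contributions of $H$. The one substantive difference is your treatment of the first two families. The paper disposes of them by asserting that \emph{all} adjoint operators $\ad_X$, $X\in\mfg$, are skew-symmetric, whence $\ad_X^*(X)=0$; this is actually false whenever the parameter $b\neq 0$ (e.g.\ in case (i), $g([e_2,e_4],e_3)=-b$ while $-g(e_4,[e_2,e_3])=0$, so $\ad_{e_2}$ is not skew). Your identity $\ad_X^*(X)=-c\,[Y_0,W]$ for $X=W+c\,e_4$ is the correct statement — it follows exactly as you indicate from skew-symmetry of $\ad_W|_{\mfn}$ and $\ad_{Y_0}^S=0$ — and it shows that the first two conditions amount to $\delta(\im\ad_{Y_0})=0=\delta(\im(\ad_{Y_0})^\flat)$, which, as you observe, are precisely the constraints already produced by the pairs $(e_4,e_i)$ in the last two families. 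So the final lists are unaffected, but your version of the argument is the one that actually closes this gap; the sign bookkeeping along the timelike direction ($e_1^\flat=-e^1$, giving $\delta(e_1-\epsilon e^1)=0$) and the redundancy check in case (ii)$(\gamma)$ are handled correctly.
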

\begin{proof}
Let $(\mfg,H,\cG_g)$ be one of the generalised pseudo-Riemannian metrics from Theorem \ref{th:reductivecases} which are generalised Einstein for zero divergence. Then, in all cases,
the adjoint operators $\ad_X\,$, $X\in \mfg$, are skew-symmetric and so $\ad_X^*(X)=0$ for all $X\in \mfg$. Thus, by Corollary \ref{co:gEdeltanonzero} (a), $(\mfg,H,\cG_g,\delta)$ is generalised Einstein if and only if
\begin{equation*}
\delta([X,Y]+H(X,Y,\cdot))=0 \, ,\qquad \delta(H(X,Y,\cdot)^\sharp+[X,Y]^\flat)=0 \, ,
\end{equation*}
for all $X,Y\in \mfg$. Working these equations explictly out in the different cases in Theorem \ref{th:reductivecases}, we obtain the claimed result.
\end{proof}
\subsection{Generalised Einstein Lie algebras with a codimension two ideal and zero divergence}

So far we have studied Lie algebras with a distinguished ``nice'' codimension one ideal. However, not every Lie algebra has a ``nice'' (or any) codimension one ideal. In particular, in four dimensions the Lie algebras $\mathfrak{aff}_{\bC}$ and $\mathfrak{aff}_{\bR}\oplus \mathfrak{aff}_{\bR}$ have nice ideals of codimension at most two, which are more difficult to work with.

With this motivation in mind, in this section we study generalised Einstein Lie algebras with a codimension two ideal $\mfn$, assuming that the metric is non-degenerate on $\mfn$, and vanishing divergence operator.

Let $\mfg$ be a Lie algebra with a codimension two ideal $\mfn$ and a metric $g$ that is non-degenerate on $\mfn$. We can then choose elements $X_1,X_2\in\mfg$ orthogonal to $\mfn$ satisfying $g(X_1,X_2)=0$ and $g(X_i,X_i)=\epsilon_i\in\lbrace -1,1 \rbrace$, and write $[X_1,X_2]=a X_1+b X_2+u$ with $a,b\in\mathbb{R}$ and $u\in\mfn$. We set $f_i:=\ad(X_i)|_{\mfn}$ and consider a closed 3-form $H$, which can be decomposed as 
\begin{equation*}
    H=H'+X_1^\flat\wedge B_1+X_2^\flat\wedge B_2+X_1^\flat\wedge X_2^\flat\wedge C \, ,
\end{equation*}
where $H'\in\Lambda^3\mfn^*$, $B_1,B_2\in\Lambda^2\mfn^*$ and  $C\in\mfn^*$. We then have the following generalization of Proposition \ref{pro:codim1idealnondeg}:
\begin{proposition}\label{pro:codim2}
Let $(\mfg,H,\cG_g)$ be a generalised pseudo-Riemannian Lie algebra with non-degenerate codimension two ideal $\mfn$. Let $\epsilon_i\,$, $a$, $b$, $X_i\,$, $u$, $C$, $B_i\,$, $H'$ and $f_i$ be as above with $i\in\lbrace1,2\rbrace$. Then $(\mfg,H,\cG_g,0)$ is generalised Einstein if and only if
\begin{align*}
0&=2\, g(f_1^S,f_1^S)+ g(B_1,B_1)+2b^2+\epsilon_2\left( g(u,u) + g(C,C) \right) \, ,\\
0&=2\, g(f_2^S,f_2^S)+ g(B_2,B_2)+2a^2+\epsilon_1\left( g(u,u) + g(C,C) \right) \, ,\\
0&=2\, g(f_1^S,f_2^S)+ \epsilon_1 \epsilon_2\, g(B_1,B_2)-2ab \, ,\\
0&=2\,g(f_1^S,(\ad_Y^{\mfn})^S)+g(B_1,H'(Y,\cdot,\cdot))\\
&-\epsilon_2\left( a u^\flat (Y)+ g(u,f_2(Y)) + \epsilon_1\, g(C,B_2(Y,\cdot)) \right) \, ,\\
0&=2\,g(f_2^S,(\ad_Y^{\mfn})^S)+g(B_2,H'(Y,\cdot,\cdot))\\
&-\epsilon_1\left( b u^\flat (Y)- g(u,f_1(Y)) - \epsilon_2\, g(C,B_1(Y,\cdot)) \right) \, ,\\
0&=4\, g((\ad_Y^\mfn)^S,(\ad_Y^{\mfn})^S)-g((\ad^\mfn)^*(Y),(\ad^{\mfn})^*(Y))+ 2\, g(H'(Y,\cdot,\cdot),H'(Y,\cdot,\cdot))\\
&+2\epsilon_1\, g([f^*_1,f_1](Y),Y)+ 2\epsilon_1\, g(B_1(Y,\cdot),B_1(Y,\cdot))\\
&+2\epsilon_2\, g([f^*_2,f_2](Y),Y)+ 2\epsilon_2\, g(B_2(Y,\cdot),B_2(Y,\cdot)) + 2\,\epsilon_1\epsilon_2\left( C(Y)^2-u^\flat(Y)^2 \right) \, ,\\
0&=g((\ad^{\mfn})^*(Y),B_1)+ 2\epsilon_1\, g(C^\sharp,f_2^*(Y))- 2\epsilon_1 a\, C(Y) \, ,\\
0&=g((\ad^{\mfn})^*(Y),B_2)- 2\epsilon_2\, g(C^\sharp,f_1^*(Y))- 2\epsilon_2 b\, C(Y) \, ,\\
0&=g((\ad^{\mfn})^*(Y),H'_Z)-2 g(f_1^*(Y),B_1(Z,\cdot)^{\sharp})-2 g(f_2^*(Y),B_2(Z,\cdot)^{\sharp})\\
& - g((\ad^{\mfn})^*(Z),H'_Y)+2 g(f_1^*(Z),B_1(Y,\cdot)^{\sharp})+2 g(f_2^*(Z),B_2(Y,\cdot)^{\sharp})\\
&+ 2 u^\flat(Y) C(Z) - 2 u^\flat(Z) C(Y) \, ,
\end{align*}
for all $Y,Z\in \mfn$, where we consider $B_1$ and $B_2$ in the seventh and eight equations as elements of $\End(\mfn)$.
\end{proposition}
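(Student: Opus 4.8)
The plan is to follow the strategy already used for Proposition~\ref{pro:codim1idealnondeg}, taking Corollary~\ref{co:gEdelta=0} as the starting point: $(\mfg,H,\cG_g,0)$ is generalised Einstein if and only if the quadratic condition and the antisymmetric bilinear condition in \eqref{eq:gEdelta=0} hold for all arguments in $\mfg$. I would fix the splitting $\mfg=\mfn\oplus\spa{X_1}\oplus\spa{X_2}$ and decompose each of these two tensorial conditions according to whether the inserted vectors lie in $\mfn$ or in the transverse plane $\spa{X_1,X_2}$, so that they unfold into the nine equations of the statement (together with one further instance that will turn out to be vacuous).

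First I would record the block forms of the relevant operators with respect to this splitting. Since $\mfn$ is an ideal, for $Y\in\mfn$ one has $\ad_Y(X_i)=-f_i(Y)$ and $\ad_Y|_\mfn=\ad_Y^\mfn$, while the transverse brackets are governed entirely by $[X_1,X_2]=aX_1+bX_2+u$; explicitly
\begin{equation*}
\ad_{X_1}=\begin{pmatrix} f_1 & 0 & u \\ 0 & 0 & a \\ 0 & 0 & b\end{pmatrix},\qquad \ad_{X_2}=\begin{pmatrix} f_2 & -u & 0 \\ 0 & -a & 0 \\ 0 & -b & 0\end{pmatrix},\qquad \ad_Y=\begin{pmatrix}\ad_Y^\mfn & -f_1(Y) & -f_2(Y)\\ 0&0&0\\ 0&0&0\end{pmatrix}.
\end{equation*}
From the metric being block diagonal with transverse part $\diag(\epsilon_1,\epsilon_2)$, I would derive the analogues of \eqref{eq:norm} and \eqref{eq:adjoint} for these three-block operators, which differ from the codimension-one case by the appearance of $\epsilon_1,\epsilon_2$ and of genuine cross terms in the transverse $2\times 2$ block (whose off-diagonal entries pick up a factor $\epsilon_1\epsilon_2$, so that it is not simply the transpose). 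From these I would read off $\ad_Y^S$, the skew-symmetric operator $\ad^*(Y)\colon Z\mapsto\ad_Z^*(Y)$, and $H_Y$ for $Y$ ranging over $\mfn$, $X_1$, $X_2$. The decomposition $H=H'+X_1^\flat\wedge B_1+X_2^\flat\wedge B_2+X_1^\flat\wedge X_2^\flat\wedge C$ feeds in through $\iota_{X_1}H=\epsilon_1(B_1+X_2^\flat\wedge C)$ and $\iota_{X_2}H=\epsilon_2(B_2-X_1^\flat\wedge C)$.

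With these ingredients in hand the bookkeeping is mechanical. The quadratic condition in \eqref{eq:gEdelta=0} is a quadratic form on $\mfg$, so its vanishing is equivalent to its vanishing on $X_1$ and on $X_2$, to the vanishing of its polarisation on the pairs $(X_1,X_2)$, $(X_1,Y)$ and $(X_2,Y)$ with $Y\in\mfn$, and to its vanishing on $\mfn$; these six instances yield precisely the first six equations. The $2\epsilon_i\,g([f_i^*,f_i](Y),Y)$ terms arise exactly as in the codimension-one case from the $\pm f_i(Y)$ entries of $\ad_Y$ and $\ad^*(Y)$, whereas the genuinely new mixed terms $g(u,f_i(Y))$, $g(C,B_i(Y,\cdot))$ and $\epsilon_1\epsilon_2(C(Y)^2-u^\flat(Y)^2)$ come from the $u$- and $C$-contributions that couple the two transverse directions. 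Similarly, the antisymmetric bilinear condition vanishes identically if and only if it vanishes on $(X_1,Y)$ and $(X_2,Y)$ with $Y\in\mfn$ and on pairs in $\mfn$, giving the seventh, eighth and ninth equations; the only remaining pair $(X_1,X_2)$ must be treated separately, and a short computation shows that $g(H_{X_1},\ad^*(X_2))$ and $g(H_{X_2},\ad^*(X_1))$ both vanish, so it imposes no further constraint.

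The main obstacle is purely computational: organising the numerous block inner products $g(\ad_Y^S,\ad_Y^S)$, $g(\ad^*(Y),\ad^*(Y))$ and $g(H_Y,\ad^*(Z))$ so that the $\epsilon_1,\epsilon_2$ factors and the transverse cross terms are tracked correctly, and verifying that the contributions reassemble into the compact tensorial expressions displayed in the statement. In particular one must check that the new scalars $a,b$ and vector $u$ distribute correctly across the first three equations and the remaining ones, and that the $C$-component couples into the fourth through ninth equations with the right signs. No appeal to $\dd H=0$ is needed at any point; closedness of $H$ enters only as a standing hypothesis through the definition of a generalised pseudo-Riemannian metric.
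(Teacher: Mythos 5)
Your proposal is correct and follows essentially the same route as the paper's proof: both start from Corollary \ref{co:gEdelta=0}, record the block forms of $\ad_{X_1}$, $\ad_{X_2}$, $\ad_Y$, their adjoints and symmetric parts (with the $\epsilon_1,\epsilon_2$ factors in the transverse block) together with $\iota_{X_i}H$, and then obtain the nine equations by evaluating the quadratic condition on $X_1$, $X_2$, $Y$ and its polarisations on $(X_1,X_2)$, $(X_1,Y)$, $(X_2,Y)$, and the antisymmetric condition on $(X_1,Y)$, $(X_2,Y)$, $(Y,Z)$. Your observation that the pair $(X_1,X_2)$ in the antisymmetric condition is vacuous matches the paper's remark that this instance is trivially satisfied.
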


\begin{proof}
    We need to substitute into the equations of Corollary \ref{co:gEdelta=0}. For that, we first compute the different operators and metrics. Given $X_1\,$, $X_2$ and an arbitrary vector $Y\in\mfn$, we can write in the basis $\lbrace X_1, X_2 ,\mfn \rbrace$:  
    \begin{align*}
        \ad_{X_1}&=\left(
\begin{array}{ccc}
 0 & a & 0 \\
 0 & b & 0 \\
 0 & u & f_1 \\
\end{array}
\right)\, , & \ad_{X_2}&=\left(
\begin{array}{ccc}
 -a & 0 & 0  \\
 -b & 0 & 0 \\
 -u & 0 & f_2 \\
\end{array}
\right)\, , & \\
\ad_{Y}&=\left(
\begin{array}{ccc}
 0 & 0  & 0 \\
 0 & 0 & 0 \\
 -f_1(Y) & -f_2(Y) & \ad^\mfn_{Y} \\
\end{array}
\right)\, . &&&
    \end{align*}
    When computing the transpose, we pick up factors of $\epsilon_1$ and $\epsilon_2$:
    \begin{align*}
        \ad_{X_1}^*&=\left(
\begin{array}{ccc}
 0 & 0 & 0 \\
 \frac{\epsilon_1}{\epsilon_2} a & b & \frac{1}{\epsilon_2} u^\flat \\
 0 & 0 & f_1^* \\
\end{array}
\right)\, , & \ad_{X_2}^*&=\left(
\begin{array}{ccc}
 -a & -\frac{\epsilon_2}{\epsilon_1}b & -\frac{1}{\epsilon_1} u^\flat  \\
 0 & 0 & 0 \\
 0 & 0 & f_2^* \\
\end{array}
\right)\, , & \\
\ad_{Y}^*&=\left(
\begin{array}{ccc}
 0 & 0  & -\frac{1}{\epsilon_1}f_1(Y)^\flat \\
 0 & 0 & -\frac{1}{\epsilon_2}f_2(Y)^\flat \\
 0 & 0 & (\ad^\mfn_{Y})^* \\
\end{array}
\right)\, , &&&
    \end{align*}
    and we obtain
    \begin{align*}
        \ad_{X_1}^S&=\left(
\begin{array}{ccc}
 0 & \frac{1}{2}a & 0 \\
 \frac{1}{2}\frac{\epsilon_1}{\epsilon_2} a & b & \frac{1}{2}\frac{1}{\epsilon_2} u^\flat \\
 0 & \frac{1}{2}u & f_1^S \\
\end{array}
\right)\, , \qquad \ad_{X_2}^S=\left(
\begin{array}{ccc}
 -a & -\frac{1}{2}\frac{\epsilon_2}{\epsilon_1}b & -\frac{1}{2}\frac{1}{\epsilon_1} u^\flat  \\
 -\frac{1}{2} b & 0 & 0 \\
 -\frac{1}{2} u & 0 & f_2^S \\
\end{array}
\right)\, , \\
\ad_{Y}^S&=\left(
\begin{array}{ccc}
 0 & 0  & -\frac{1}{2\,\epsilon_1}f_1(Y)^\flat \\
 0 & 0 & -\frac{1}{2\, \epsilon_2}f_2(Y)^\flat \\
 -\frac{1}{2}f_1(Y) & -\frac{1}{2}f_2(Y) & (\ad^\mfn_{Y})^S \\
\end{array}
\right)\, .
    \end{align*}
    We also need
    \begin{align*}
        \ad^*(X_1)&=\left(
\begin{array}{ccc}
 0 & -a & 0 \\
 \frac{\epsilon_1}{\epsilon_2} a & 0 & 0 \\
 0 & 0 & 0 \\
\end{array}
\right)\, , \qquad \ad^*(X_2)=\left(
\begin{array}{ccc}
 0 & -\frac{\epsilon_2}{\epsilon_1}b & 0  \\
 b & 0 & 0 \\
 0 & 0 & 0 \\
\end{array}
\right)\, , \\
\ad^*(Y)&=\left(
\begin{array}{ccc}
 0 & -\frac{1}{\epsilon_1} u^\flat(Y)  & -\frac{1}{\epsilon_1}f_1^*(Y)^\flat \\
 \frac{1}{\epsilon_2} u^\flat(Y) & 0 & -\frac{1}{\epsilon_2}f_2^*(Y)^\flat \\
 f_1^*(Y) & f_2^*(Y) & (\ad^\mfn)^*(Y) \\
\end{array}
\right)\, ,
    \end{align*}
    Evaluating the three-form $H$ results in
    \begin{align*}
        H(X_1,\cdot,\cdot)&= \epsilon_1  \left(B_1+X_2^\flat\wedge C\right)\, ,\qquad
        H(X_2,\cdot,\cdot)= \epsilon_2  \left(B_2-X_1^\flat\wedge C\right)  ,\\
        H(Y,\cdot,\cdot)&=H'(Y,\cdot,\cdot)-X_1^\flat\wedge B_1(Y,\cdot)-X_2^\flat\wedge B_2(Y,\cdot)+ C(Y) \, X_1^\flat\wedge X_2^\flat \, ,
    \end{align*}
    whereas as endomorphisms these would take the form
    \begin{align*}
        H_{X_1}&=\left(
\begin{array}{ccc}
 0 & 0 & 0 \\
 0 & 0 & -  \epsilon_1\, C \\
 0 &  \epsilon_1 \epsilon_2 \, C^\sharp & B_1 \\
\end{array}
\right)\, , \qquad H_{X_2}=\left(
\begin{array}{ccc}
 0 & 0 &   \epsilon_2 \,C \\
 0 & 0 & 0 \\
  - \epsilon_1 \epsilon_2 \,C^\sharp & 0 & B_2 \\
\end{array}
\right)\, , \\
H_{Y}&=\left(
\begin{array}{ccc}
 0 & -   \epsilon_2\,C(Y)  & B_1(Y,\cdot) \\
   \epsilon_1\,C(Y) & 0 & B_2(Y,\cdot) \\
 -    \epsilon_1\, B_1(Y,\cdot)^\sharp & -   \epsilon_2\,B_2(Y,\cdot)^\sharp & H'_Y \\
\end{array}
\right)\, ,
    \end{align*}
    where $B_1$ and $B_2$ should be understood as endomorphisms in these expressions.

    These ingredients can now be inserted in the equations of \eqref{eq:gEdelta=0} and, after computing the norms with the appropriate factors of $\epsilon_1$ and $\epsilon_2\,$, the result follows. More precisely, substituting $X_1$ and $X_2$ in the first equation of \eqref{eq:gEdelta=0} yields the first two equations of Proposition \ref{pro:codim2}, and its polarization with $X_1$ and $X_2$ yields the third equation. Similarly, the polarizations for $X_1\,$, $Y$ and $X_2\,$, $Y$  give the fourth and fifth equations, whereas simply substituting $Y$ gives the sixth equation. Regarding the second equation of \eqref{eq:gEdelta=0}, it is trivially satisfied for the pair $(X_1,X_2)$, so the only non-trivial pairs are $(X_1,Y)$, $(X_2,Y)$ and $(Y,Z)$ for $Z\in\mfn$. The resulting conditions correspond to the last three equations of Proposition \ref{pro:codim2}. 
\end{proof}

Proposition \ref{pro:codim2} has interesting consequences already in the Lorentzian setting. For example, we obtain the following:
\begin{corollary}\label{cor:codim2nRiemannian}
    Let $(\mfg,H,\cG_g,0)$ be a generalised Einstein Lorentzian Lie algebra with non-degenerate codimension two Riemannian ideal $\mfn$. Then $\mfg=\mathbb{R}^2\ltimes\mfn$ for an antisymmetric $\mathbb{R}^2$ action, $H=H'=H\vert_\mfn$ and $(\mfn,H',\cG_g\vert_n,0)$ is generalised Einstein.
\end{corollary}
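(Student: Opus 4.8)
The plan is to feed the generalised Einstein equations of Proposition \ref{pro:codim2} into the rigidity coming from the positive-definiteness of $g|_{\mfn}$, and then to check that exactly the generalised Einstein system on $\mfn$ survives.

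First I would settle the signature bookkeeping. Since $g$ is Lorentzian of signature $(n-1,1)$ and $\mfn$ is Riemannian (positive definite) of dimension $n-2$, the orthogonal $2$-plane $\mfn^{\perp}=\spa{X_1,X_2}$ must carry signature $(1,1)$. Hence $\epsilon_1\epsilon_2=-1$, and after relabelling I may assume $\epsilon_2=1$ and $\epsilon_1=-1$. The crucial point is then the first two equations of Proposition \ref{pro:codim2}: because $g|_{\mfn}$ is positive definite, each of $g(f_i^S,f_i^S)$, $g(B_i,B_i)$, $g(u,u)$, $g(C,C)$ is non-negative. With $\epsilon_2=1$ the first equation is a sum of non-negative terms, so it forces $f_1^S=0$, $B_1=0$, $b=0$, $u=0$ and $C=0$. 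Feeding $u=0$ and $C=0$ into the second equation (where $\epsilon_1=-1$ now multiplies the already-vanishing $g(u,u)+g(C,C)$) turns it into another sum of non-negative terms, yielding $f_2^S=0$, $B_2=0$ and $a=0$.

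Next I would read off the structural consequences. From $a=b=0$ and $u=0$ we get $[X_1,X_2]=0$, so $\spa{X_1,X_2}\cong\bR^2$ is an abelian complement; since $f_1^S=f_2^S=0$, the endomorphisms $f_1=\ad(X_1)|_{\mfn}$ and $f_2=\ad(X_2)|_{\mfn}$ are $g$-skew, i.e.\ $\bR^2$ acts antisymmetrically, giving $\mfg=\bR^2\ltimes\mfn$. From $B_1=B_2=C=0$ the decomposition $H=H'+X_1^\flat\wedge B_1+X_2^\flat\wedge B_2+X_1^\flat\wedge X_2^\flat\wedge C$ collapses to $H=H'=H|_{\mfn}$. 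Substituting all of these vanishings back into the remaining equations of Proposition \ref{pro:codim2}, the fourth, fifth, seventh and eighth equations become $0=0$ identically.

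It then remains to identify the sixth and ninth equations with the generalised Einstein system of $(\mfn,H',\cG_{g|_{\mfn}},0)$ as recorded in Corollary \ref{co:gEdelta=0}. Here the key simplification is that skew-symmetry of $f_i$ gives $f_i^*=-f_i$, hence $[f_i^*,f_i]=0$; combined with $B_i=0$, $u=0$, $C=0$, the sixth equation reduces to $4\,g((\ad_Y^{\mfn})^S,(\ad_Y^{\mfn})^S)-g((\ad^{\mfn})^*(Y),(\ad^{\mfn})^*(Y))+2\,g(H'(Y,\cdot,\cdot),H'(Y,\cdot,\cdot))=0$, while the ninth reduces to $g((\ad^{\mfn})^*(Y),H'_Z)=g((\ad^{\mfn})^*(Z),H'_Y)$. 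By Corollary \ref{co:gEdelta=0} these are precisely the two conditions for $(\mfn,H',\cG_{g|_{\mfn}},0)$ to be generalised Einstein, which closes the argument. The only genuinely delicate point is the sign bookkeeping in the opening step---ensuring that the mixed-signature terms $\epsilon_i\,(g(u,u)+g(C,C))$ are exploited in the right order so that definiteness of $g|_{\mfn}$ can be invoked to kill all of $a,b,u,C,B_i,f_i^S$; everything afterwards is a direct substitution.
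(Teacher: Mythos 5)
Your proposal is correct and follows essentially the same route as the paper: exploit positivity of $g|_{\mfn}$ in the first equation of Proposition \ref{pro:codim2} (with $\epsilon_2=1$) to kill $f_1^S$, $B_1$, $b$, $u$, $C$, then feed $u=C=0$ into the second equation to kill $f_2^S$, $B_2$, $a$, and observe that the surviving sixth and ninth equations are exactly the generalised Einstein system of Corollary \ref{co:gEdelta=0} for $(\mfn,H',\cG_{g|_{\mfn}},0)$. Your write-up is in fact slightly more explicit than the paper's about the order in which the two sum-of-squares arguments must be run and about why the remaining equations trivialise.
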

\begin{proof}
    Consider a basis of $\mfg$ as in Proposition \ref{pro:codim2}, and we can assume that $\epsilon_1=-\epsilon_2=-1$ without any loss of generality. From the first equation in Proposition \ref{pro:codim2} we deduce $f_1^S=0$, $B_1=0$, $b=0$, $u=0$ and $C=0$. From the second equation we further obtain $f_2^S=0$, $B_2=0$ and $a=0$. This shows that $[X_1,X_2]=0$ and both of them act antisymmetrically on $\mfn$. In addition, $H=H'$ and the sixth and ninth equation in Proposition \ref{pro:codim2} reduce to the generalised Einstein equations on $\mfn$. All the other equations are trivially satisfied.
\end{proof}
Note that, in the previous Corollary, one can also apply Theorem \ref{th:structRiemGE} to $\mfn$ and observe that the generalised Einstein condition on $\mfn$ further reduces to the commutator ideal $[\mfn,\mfn]$.

\subsection{The Lie algebras $\mathfrak{aff}_{\bC}\,$,  $\mathfrak{aff}_{\bR}\oplus \mathfrak{aff}_{\bR}$ and $A_{4,9}^0\,$}

We focus on four-dimensional algebras admitting Abelian codimension two ideals in this subsection. We are particularly interested in the Lie algebras $\mathfrak{aff}_{\bC}$, $\mathfrak{aff}_{\bR}\oplus \mathfrak{aff}_{\bR}$ and $A_{4,9}^0\,$, which are not almost
Abelian, and the first two also not almost Heisenberg.

Now what all these three Lie algebras have in common is that they are a semidirect product of the form $\mathbb{R}^2\rtimes\mathbb{R}^2$, where the first $\mathbb{R}^2$ factor is the commutator ideal of the entire Lie algebra and $\mathbb{R}^2\rtimes\mathbb{R}^2$ has no center. In fact, one may observe from Table \ref{table:4d} that these properties characterise the Lie algebras $\mathfrak{aff}_{\bC}\,$, $\mathfrak{aff}_{\bR}\oplus \mathfrak{aff}_{\bR}$ and $A_{4,9}^0$ uniquely among all four-dimensional Lie algebras. 

We are now able to prove:
\begin{theorem}\label{th:4dmissingcases}
Let $(\mfg,H,\cG_g)$ be a four-dimensional generalised Riemannian or Lorentzian Lie algebra such that the commutator ideal $\mfn:=\mfg'$ is two-dimensional, Abelian and non-degenerate and such that $\mfg$ has trivial center. Then $(\mfg,H,\cG_g,\delta=0)$ cannot be generalised Einstein.

Hence, $\mathfrak{aff}_{\bC}\,$, $\mathfrak{aff}_{\bR}\oplus \mathfrak{aff}_{\bR}$ and $A_{4,9}^0$ all do not admit any Riemannian or Lorentzian generalised Einstein metric $(H,\cG_g)$ with non-degenerate commutator ideal for zero divergence operator $\delta=0$.
\end{theorem}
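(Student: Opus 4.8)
The plan is to apply Proposition \ref{pro:codim2} with the codimension two ideal $\mfn=\mfg'$, which is Abelian by hypothesis, and derive a contradiction. Since $\mfn$ is Abelian, all the operators $\ad_Y^{\mfn}$ vanish for $Y\in\mfn$, so $(\ad^\mfn_Y)^S=0$ and $(\ad^\mfn)^*(Y)=0$; this dramatically simplifies the system of ten equations in Proposition \ref{pro:codim2}. I would first choose $X_1,X_2$ orthogonal to $\mfn$ with $g(X_i,X_i)=\epsilon_i\in\{-1,1\}$, write $[X_1,X_2]=aX_1+bX_2+u$ with $u\in\mfn$, set $f_i=\ad(X_i)|_{\mfn}\in\End(\mfn)$ and decompose $H=H'+X_1^\flat\wedge B_1+X_2^\flat\wedge B_2+X_1^\flat\wedge X_2^\flat\wedge C$ as in the statement of that proposition.

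The key structural fact to exploit is that $\mfn=\mfg'$ is the \emph{commutator ideal}. This forces $a=b=0$ and $u=0$ in the bracket $[X_1,X_2]$: indeed $[X_1,X_2]\in\mfg'=\mfn$, so its $X_1$- and $X_2$-components must vanish, giving $a=b=0$ and $[X_1,X_2]=u\in\mfn$. It also forces $\mathrm{im}(f_1),\mathrm{im}(f_2)\subseteq\mfn$ (automatic) together with the condition that $\mfn$ is \emph{exactly} the span of all brackets, so $u$ and the images of $f_1,f_2$ span $\mfn$. With $a=b=u=0$ plugged in, the first three equations of Proposition \ref{pro:codim2} become $2g(f_1^S,f_1^S)+g(B_1,B_1)+\epsilon_2 g(C,C)=0$, the analogous equation with indices swapped, and $2g(f_1^S,f_2^S)+\epsilon_1\epsilon_2 g(B_1,B_2)=0$. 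In the Riemannian case these immediately give $f_i^S=0$, $B_i=0$, $C=0$; but then $\mfg'=\mathrm{im}(f_1)+\mathrm{im}(f_2)$ would consist of images of skew-symmetric operators, and I would show the remaining equations force $f_1=f_2=0$ as well, contradicting that $\mfn=\mfg'$ is two-dimensional (equivalently contradicting triviality of the center). The Lorentzian case is the substantive one.

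In the Lorentzian setting $\mfn$ is either Riemannian or Lorentzian (definiteness being excluded if $\mfn$ were definite of the wrong sign, but here $\mfn$ is two-dimensional and non-degenerate so it is positive definite, negative definite, or split). If $\mfn$ is definite, Corollary \ref{cor:codim2nRiemannian} applies (after possibly rescaling $g$ by $-1$): it yields $f_1^S=f_2^S=0$, $B_1=B_2=0$, $u=0$, $C=0$, so again $\mfg'$ is spanned by images of skew endomorphisms of a two-dimensional space, and I would check this forces the center to be nontrivial or $\mfg'$ to have dimension less than two, a contradiction. The genuinely delicate case is $\mfn$ of split signature $(1,1)$, where the first three equations no longer force $f_i^S$ and $B_i,C$ to vanish because indefinite norms can cancel. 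Here I would use the remaining equations—particularly the sixth equation evaluated on a null basis of $\mfn$, together with the seventh and eighth equations linking $C$ to $f_i^*$—to pin down the possible $(f_1,f_2,B_1,B_2,C)$, and then invoke the closedness $\dd H=0$ as an extra constraint. The hard part will be this split-signature analysis: one must carefully combine the indefinite-signature traces from the first three equations with the commutator-type equation (the sixth, which now reads $2g(H'(Y,\cdot,\cdot),H'(Y,\cdot,\cdot))+2\epsilon_1 g([f_1^*,f_1](Y),Y)+\cdots=0$) and the skew-pairing equations to show that the span of $u,\mathrm{im}(f_1),\mathrm{im}(f_2)$ cannot equal a two-dimensional $\mfn$ while having trivial center, thereby excluding $\mathfrak{aff}_{\bC}$, $\mathfrak{aff}_{\bR}\oplus\mathfrak{aff}_{\bR}$ and $A_{4,9}^0$ in one stroke. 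The final sentence of the theorem then follows because, as noted before the statement, these three Lie algebras are precisely the four-dimensional ones that are a semidirect product $\bR^2\rtimes\bR^2$ with $\mfg'=\bR^2$ the commutator ideal and trivial center.
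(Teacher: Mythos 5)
Your overall strategy coincides with the paper's: apply Proposition \ref{pro:codim2} to the codimension two Abelian ideal $\mfn=\mfg'$, use $a=b=0$ and $H'=0$, dispose of the definite case via Corollary \ref{cor:codim2nRiemannian}, and fight the indefinite case with the remaining equations plus $\dd H=0$. However, there is a concrete error at the start that propagates: you assert that $\mfn=\mfg'$ forces $u=0$, but your own justification only yields $[X_1,X_2]=u\in\mfn$, i.e. $a=b=0$. The vectors $X_1,X_2$ are pinned down (up to rotation and sign) as an orthonormal basis of $\mfn^\perp$, and $\mfn^\perp$ is in general \emph{not} a subalgebra, so $u\neq 0$ is perfectly possible; for $\aff_\bR\oplus\aff_\bR$ with a generic metric one computes $[X_1,X_2]$ has a nonzero $\mfn$-component. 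Because $\mfg'=\im(f_1)+\im(f_2)+\spa{u}$, setting $u=0$ silently restricts to the subcase $\im(f_1)+\im(f_2)=\mfn$. The paper instead derives $u\in(\im(f_1)+\im(f_2))^\perp$ from the fourth and fifth equations and then splits into the two cases $\im(f_1)+\im(f_2)=\mfn$ (forcing $u=0$) and $\im(f_1)+\im(f_2)\neq\mfn$ (where $u\neq 0$ spans a definite complement and the sixth equation on a suitable orthonormal basis kills $u$, $B_1$ and the off-diagonal parts of $f_i$, producing a central element). Your proposal omits this second case entirely.

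Beyond that, the substantive part of the argument --- the Lorentzian $\mfn$ case --- is only sketched, and the sketch does not contain the ideas that actually close it in the paper: (i) the seventh and eighth equations give $C^\sharp\in\ker(f_1)\cap\ker(f_2)$, which lies in the center, hence $C=0$ by the trivial-center hypothesis; (ii) in the case $\im(f_1)+\im(f_2)=\mfn$ one uses that $[f_i^*,f_i]$ is symmetric and trace-free with equal diagonal entries in an orthonormal basis, concludes $[f_2^*,f_2]=-[f_1^*,f_1]$ and $B_1=0$, observes that $\spa{f_1^S,f_2^S}$ is totally isotropic in the three-dimensional Lorentzian space of symmetric endomorphisms (hence at most one-dimensional), rotates so that $f_2^S=0$, and finally plays the normality of $f_1$ against the Jacobi identity $[f_1,f_2]=0$ to force a central element. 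None of these steps is routine, and "evaluate the sixth equation on a null basis and invoke $\dd H=0$" does not by itself produce them (in fact $\dd H=0$ is not used in the paper's proof of this theorem). As written, the proposal identifies the right framework but contains a false reduction ($u=0$) and leaves the essential case unproved.
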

\begin{proof}
We assume by contradiction that $(H,\cG_g)$ is generalised Einstein for zero divergence operator. We are, thus, in the situation of Proposition \ref{pro:codim2} with the codimension two Abelian ideal $\mfn$, which implies that $\ad^\mfn_{Y}=(\ad^\mfn_{Y})^S=(\ad^\mfn)^*(Y)=0$ for all $Y\in\mfn$. Moreover, since $\mfn$ is the commutator ideal, we have $a=b=0$. Furthermore, we have $H'=0$ for dimensional reasons.

We first assume that $\mfn$ is Riemannian. Then, by Corollary \ref{cor:codim2nRiemannian}, $H=H'=0$ and $\mfh:=\mfn^{\perp}$ acts skew-symmetrically on $\mfn$. As the space of skew-symmetric endomorphims on a two-dimensional Riemannian vector space is one-dimensional, there is some $0\neq X\in \mfh$ which has to act trivially on $\mfn$. Extend $X$ by $\tilde{X}$ to a basis of $\mfh$. As $\mfg$ has trivial center, $[X,\tilde{X}]\in \mfn\setminus \{0\}$ and, since $\mfn$ is the commutator ideal of $\mfg$, $\tilde{X}$ has to act non-trivially on $\mfn$. This implies that $\tilde{X}$ has full rank and so there is some $Y\in \mfn$ such that $[\tilde{X},Y]=[X,\tilde{X}]$. Therefore, $[X+Y,\tilde{X}]=0$ which shows that $X+Y$ is central, a contradicition to our assumptions. 

Hence, we may assume from now on that $\mfn$ is Lorentzian. Then $\mfh$ is Riemannian. As the space of two-forms on $\mfn$ is one-dimensional, we may choose an orthonormal basis $X_1\,$, $X_2$ of $\mfh$ such that $B_2=0$. Writing $B_1=b\, e^{12}$ with $b\in \bR$ and $(e_1,e_2)$ of $\mfn$ being an orthonormal basis of $(e_1,e_2)$ with $g(e_1,e_1)=-g(e_2,e_2)=-1$, we note that the equations in Proposition \ref{pro:codim2} reduce in our case to
\begin{align}
	0&= 2 g(f_1^S,f_1^S)-b^2+g(u,u)+g(C,C) \, , \label{eq:4dcodim21} \\
	0&= 2 g(f_2^S,f_2^S)+g(u,u)+g(C,C) \, , \label{eq:4dcodim22}  \\
	0&= g(f_1^S,f_2^S) \, , \label{eq:4dcodim23}  \\
	0&= g(u,f_2(Y)) \, , \label{eq:4dcodim24}  \\
	0&= g(u,f_1(Y))+g(C,B_1(Y,\cdot)) \, , \label{eq:4dcodim25}  \\
	0&= g([f_1^*,f_1](Y),Y)+ g([f_2^*,f_2](Y),Y)\nonumber \\
	&+g(B_1(Y,\cdot),B_1(Y,\cdot))+ C(Y)^2-g(u,Y)^2 \, , \label{eq:4dcodim26}  \\
	0&= g(f_2(C^\sharp),Y) \, , \label{eq:4dcodim27}  \\
	0&= g(f_1(C^\sharp),Y) \, , \label{eq:4dcodim28}  \\
	0&= -g(f_1^*(Y),B_1(Z,\cdot)^\sharp)+g(f_1^*(Z),B_1(Y,\cdot)^\sharp)+g(u,Y)C(Z)-g(u,Z)C(Y) \, , \label{eq:4dcodim29} 
\end{align}
for all $Y,Z\in \mfn$.

From \eqref{eq:4dcodim27} and \eqref{eq:4dcodim28}, we obtain $C^\sharp\in \ker(f_1)\cap \ker(f_2)$. However, $\ker(f_1)\cap \ker(f_2)$ is a subspace of the center of $\mfg$, which, by assumption, is trivial. Hence, $C=0$. Then, \eqref{eq:4dcodim24}  and \eqref{eq:4dcodim25} are equivalent to $u\in (\im(f_1)+\im(f_2))^\perp$.

For the rest of the proof, we distinguish the cases $\im(f_1)+\im(f_2)=\mfn$ and $\im(f_1)+\im(f_2)\neq \mfn$ and show that both assumptions give a contradiction:
\begin{itemize}[wide]
\item $\im(f_1)+\im(f_2)=\mfn$:
Then $u\in \mfn^{\perp}=\{0\}$, i.e. $u=0$. Since $[f_i^*,f_i]$ is symmetric and trace-free, we must have
\begin{equation*}
(g([f_i^*,f_i](e_j),e_k))_{j,k=1,2}=\begin{pmatrix}
	c_i & d_i \\  d_i  & c_i \end{pmatrix} \, ,
\end{equation*}
for certain $c_i,d_i\in \bR$. This means that \eqref{eq:4dcodim26} is equivalent to
\begin{equation*}
0=c_1+c_2+b^2 \, ,\qquad 0=d_1+d_2 \, , \qquad 0=c_1+c_2-b^2 \, ,
\end{equation*}
which is equivalent to $c_2=-c_1\,$, $d_2=-d_1$ and $b=0$. Thus,
$[f_2^*,f_2]=-[f_1^*,f_1]$ and $B_1=0$.

Now \eqref{eq:4dcodim21} -- \eqref{eq:4dcodim23} are given by $g(f_i^S,f_j^S)=0$ for all $i,j=1,2$. This implies that $\spa{f_1^S,f_2^S}$ is a totally isotropic subspace of the three-dimensional Lorentzian vector space of symmetric endomorphisms of $\mfn$. Thus, $\dim(\spa{f_1^S,f_2^S})\leq 1$. We note that we must have $\dim(\spa{f_1^S,f_2^S})=1$ as otherwise both $f_1$ and $f_2$ would be skew-symmetric, so a multiple of each other and, due to $u=0$, there is some non-zero central element in $\mfh=\spa{X_1,X_2}$, a contradiction to our assumptions.

Since $B_1=B_2=0$, we may rotate in the space $\mfh=\spa{X_1,X_2}$ and assume, w.l.o.g., that $f_2^S=0$, and so $f_1^S\neq 0$. We note
that $f_1^S$ cannot be of first type as then, by Lemma \ref{le:trfsquare=0}, we must have $f_1^S=0$, a contradiction.
Now write, with respect to $(e_1,e_2)$, 
\begin{equation*}
	f^S_1=\begin{pmatrix} a & -b \\ b & c \end{pmatrix}\,,\qquad f^A_1=\begin{pmatrix} 0 & \rho \\ \rho & 0 \end{pmatrix} \, ,
\end{equation*}
for certain $a,b,c,\rho\in \bR$ with $b\neq 0$, and observe that
\begin{equation*}
0=[f_2^S,f_2^A]=\frac{1}{2}[f_2^*,f_2]=\frac{1}{2}[f_1^*,f_1]=[f_1^S,f_1^A]=\rho \begin{pmatrix} -2 b & -(c-a) \\ c-a & 2b \end{pmatrix}
\end{equation*}
implies $\rho=0$, i.e. $f_1^A=0$. Thus, $f_1=f_1^S$ is symmetric and $f_2=f_2^A$ is anti-symmetric. Now the Jacobi identity for triples $(X_1,X_2,Y)$ with $Y\in \mfn$ yields that $[f_1,f_2]=0$. However, $0=[f_1,f_2]=[f_1^S,f_2^A]$ and $f_1^S$ not being of first type implies, argueing as above, that $f_2=f_2^A=0$, a contradiction as then $X_2$ would be central. Hence, the case
$\im(f_1)+\im(f_2)=\mfn$ cannot occur.

\item $\im(f_1)+\im(f_2)\neq \mfn$: Setting $U:=\im(f_1)+\im(f_2)$ and observing that $\mfn=\mfg'=U+\spa{u}$, we must have $\dim(U)=1$ and $\spa{u}$ being complementary to $U$. Now since $u$ is orthogonal to $U$, this implies that both $U$ and $\spa{u}$ are definite subspaces of $\mfn$.

Let us first consider the case that $U$ is negative definite and so $u$ is positive definite, i.e. $U=\spa{e_1}$ and $u=\tilde{u} e_2$ for $(e_1,e_2)$ being an orthonormal basis of $\mfn$ with $g(e_1,e_1)=-g(e_2,e_2)=-1$. The endomorphism $f_i\in \End(\mfn)$, $i=1,2$, may then be written as
\begin{equation*}
f_i=\begin{pmatrix} c_i & d_i \\ 0 & 0 \end{pmatrix} \, ,
\end{equation*}
for certain $c_i,d_i\in \bR$ with respect to $(e_1,e_2)$. Inserting $e_2$ into \eqref{eq:4dcodim26} yields
\begin{equation*}
0=-d_1^2-d_2^2-b^2-\tilde{u}^2 \, .
\end{equation*} 
Hence, $d_1=d_2=b=\tilde{u}=0$, i.e. $B_2=0$, $u=0$ and $f_1$ and $f_2$ are, in particular linearly dependent. However, this implies that there is some non-zero central element $X\in \mfh=\spa{X_1,X_2}$, which is a contradiction to our assumptions.

So let us finally consider the case that $U$ is positive definite and $\spa{u}$ is negative definite. We choose now an orthonormal basis $(e_1,e_2)$ of $\mfn$ such that $g(e_1,e_1)=-g(e_2,e_2)=-1$ and such that $u=\tilde{u}\, e_1$ for some $\tilde{u}\in \bR$ and $U=\spa{e_2}$. Hence, there are, for $i=1,2$, $c_i,d_i\in \bR$ such that
\begin{equation*}
	f_i=\begin{pmatrix} 0 & 0 \\ c_i & d_i \end{pmatrix} \, ,
\end{equation*}
 with respect to $(e_1,e_2)$. If we insert now $e_1$ into \eqref{eq:4dcodim26}, we obtain
 \begin{equation*}
 	0=c_1^2+c_2^2+b^2+\tilde{u}^2 \, ,
 \end{equation*}
and so $c_1=c_2=b=\tilde{u}=0$. Again, $u=0$ and $f_1$ and $f_2$ are linearly dependent, which again yields a contradiction to our assumption that $\mfg$ has trivial center.

Thus, also the case $\im(f_1)+\im(f_2)\neq \mfn$ cannot occur.
\end{itemize}
\end{proof}
\section*{Appendix}
Table \ref{table:4d} gives a list of all four-dimensional Lie algebras. The table is further subdivided according to whether the Lie algebra is unimodular or not.  The names for the Lie algebras in the first column come from \cite{PSWZ} with the exception of the names for the affine Lie algebra of motions of the real line $\aff_{\bR}$ and the complex line $\aff_{\bC}\,$, which were called $\mfr_2$ or $A_{4,12}$ in \cite{PSWZ}, respectively.
The presentation is self-contained as the second column of the table encodes the Lie bracket of $\mfg$ by giving the exterior differentials $(de^1,\ldots, de^4)$ of the dual basis of a basis $(e_1,\ldots,e_4)$ of $\mfg$. The column labelled ``$\mfa$'' contains all isomorphism classes of unimodular codimension one ideals in $\g$. So if for some Lie algebra $\mfg$, this column contains $\bR^3$, $\mfg$ is almost Abelian, while if the column contains $\mfh_3\,$, $\mfg$ is almost Heisenberg.

 The next column, labelled $\mfg' =[\mfg , \mfg ]$ contains the commutator ideal of $\mfg$. Finally, in the last column, we write $\yes \yes$ if $\mfg$ admits a generalised Lorentzian metric with non-degenerate commutator ideal which is generalised Einstein for $\delta=0$. If $\mfg$ is almost Abelian and it does admit a generalised Lorentzian metric which is generalised Einstein for $\delta=0$ but none which also has non-degenerate commutator ideal, we put only one $\yes$ into the last column. The entry ``$\times$'' in the last column means that there is no generalised Lorentzian metric which is generalised Einstein for $\delta=0$ while the entry ``$-$'' means that there is no such generalised metric with non-degenerate commutator ideal. 
 
 \tabcolsep=0.11cm
\begin{longtable}[ht]{ccccl}
	\caption{Four-dimensional Lie algebras} \\ \hline
	$\g$ & Lie bracket & $\mfa$ &  $\g'$ & gE \\
	\hline
	\endhead
	\label{table:4d}
 & \multicolumn{2}{c}{unimodular} &  \\ \hline
	$\mathfrak{so}(3)\op \bR$ & $(e^{23},-e^{13},e^{12},0)$ & $\mathfrak{so}(3)$ & $\mathfrak{so}(3)$ & \yes \yes \\
	$\mathfrak{so}(2,1)\op \bR$ & $(e^{23},e^{13},e^{12},0)$ & $\mathfrak{so}(2,1)$ &  $\mathfrak{so}(2,1)$ & \yes \yes \\
	$e(2)\op \bR$ & $(e^{23},-e^{13},0,0)$ & $\bR^3,\,e(2)$ & $\bR^2$  & \yes \yes \\
	$e(1,1)\op \bR$ & $(e^{23},e^{13},0,0)$ & $\bR^3,\,e(1,1)$ & $\bR^2$ & \yes \yes \\
	$\h_3\op \bR$ & $(e^{23},0,0,0)$ & $\bR^3$, $\h_3$ & $\bR$ & \yes \\
	$\bR^4$ & $(0,0,0,0)$ & $\bR^3$ & $\{0\}$ & \yes\yes \\
	$A_{4,1}$ & $(e^{24},e^{34},0,0)$ & $\bR^3,\,\h_3$ & $\bR^2$ &  \yes \\
	$A_{4,2}^{-2}$ & $(-2 e^{14},e^{24}+e^{34},e^{34},0)$ & $\bR^3$ & $\bR^3$ & $\times$ \\
	$A_{4,5}^{\alpha,-(\alpha+1)}$ & $( e^{14},\alpha e^{24},-(\alpha+1) e^{34},0)$  & $\bR^3$ & $\bR^3$ & $\times$ \\
	& $-1<\alpha\leq -1/2$  & & \\
	$A_{4,6}^{\alpha,-\alpha/2}$ & $( \alpha e^{14},-\frac{\alpha}{2} e^{24}+e^{34},-\frac{\alpha}{2} e^{34}-e^{24},0)$ & $\bR^3$ & $\bR^3$ & $\times$ \\
	& $\alpha>0$ & &  \\
	$A_{4,8}$ & $(e^{23},e^{24},-e^{34},0)$ & $\h_3$ & $\h_3$ & -- \\
	$A_{4,10}$ & $(e^{23},e^{34},-e^{24},0)$ & $\h_3$ & $\h_3$ & -- \\
	\hline & \multicolumn{2}{c}{non-unimodular} & \\ \hline
	$\aff_{\bR}\op \bR^2$ & $(e^{14},0,0,0)$  & $\bR^3$ & $\bR$ & $\times$ \\
	$\mathfrak{r}_3\op \bR$ & $(e^{14}+e^{24},e^{24},0,0)$  & $\bR^3$ & $\bR^2$ & $\times$ \\
	$\mathfrak{r}_{3,\mu}\op \bR$ & $(e^{14},\mu e^{24},0,0)$, $-1<\mu\leq 1$, $\mu\neq 0$ & $\bR^3$ & $\bR^2$ & $\times$\\
	$\mathfrak{r}'_{3,\mu}\op \bR$ & $(\mu e^{14}+e^{24},-e^{14}+\mu e^{24},0,0)$, $\mu>0$ & $\bR^3$ & $\bR^2$ &\yes\yes \footnote{Only for $\mu=1$.} \\
	$A_{4,2}^{\alpha}$ & $(\alpha e^{14},e^{24}+e^{34},e^{34},0)$, $\alpha\neq 0,-2$ & $\bR^3$ & $\bR^3$ & $\times$ \\
	$A_{4,3}$ & $( e^{14},e^{34},0,0)$ & $\bR^3$ & $\bR^2$  & $\times$\\
	$A_{4,4}$ & $(e^{14}+e^{24},e^{24}+e^{34},e^{34},0)$ & $\bR^3$ & $\bR^3$  & $\times$ \\
	$A_{4,5}^{\alpha,\beta}$ & $( e^{14},\alpha e^{24},\beta e^{34},0)$ & $\bR^3$ &  $\bR^3$ & $\times$ \\
	&  $-1<\alpha\leq \beta\leq 1$, $\alpha\beta\neq 0$, $\beta\neq -(\alpha+1)$ & & \\
	$A_{4,6}^{\alpha,\beta}$ & $( \alpha e^{14},\beta e^{24}+e^{34},\beta e^{34}-e^{24},0)$ &  $\bR^3$ &  $\bR^3$ &\yes \yes \footnote{Only for $\alpha^2+2\beta^2=1$.}  \\
	& $\alpha>0$, $\beta\neq-\alpha/2$ & & & \\
	$A_{4,7}$ & $( 2 e^{14}+e^{23},e^{24}+e^{34},e^{34},0)$ & $\h_3$ & $\h_3$ & -- \\
	$A_{4,9}^{\alpha}$ & $((\alpha+1) e^{14}+e^{23},e^{24},\alpha e^{34},0)$, & & & \\
	&  $\alpha\in (-1,1]$, $\alpha\neq 0$ & $\h_3$ & $\h_3$ & -- \\
	& $\alpha=0$ & $\h_3$ & $\bR^2$ & -- \\
	$A_{4,11}^{\alpha}$ & $(2 \alpha e^{14}+e^{23},\alpha e^{24}+e^{34}, \alpha e^{34}-e^{24},0)$ & $\h_3$ & $\h_3$ & -- \\
	&  $\alpha>0$ & & \\
	$\aff_{\bC}$ & $\left( e^{14}+e^{23},e^{24}-e^{13},0,0\right)$ & $e(2)$ & $\bR^2$  & -- \\
	$\aff_{\bR}\op \aff_{\bR}$ & $\left( e^{12},0,e^{34},0\right)$ & $e(1,1)$ & $\bR^2$ & -- \\
\end{longtable}


\begin{thebibliography}{PSWZ}
\bibitem[ADG]{ADG}L. \'Alvarez-C\'onsul, A. De Arriba de La Hera, M. Garcia-Fernandez, \textit{(0,2) Mirror Symmetry on homogeneous Hopf surfaces}, Int.\ Math.\ Res.\ Not. \textbf{2} (2024), 1211 -- 1298 .
\bibitem[CSW]{CSW} A. Coimbra, C. Strickland-Constable, D. Waldram, \textit{Supergravity as Generalised Geometry I: Type II Theories}, JHEP \textbf{11} (2011), 091.
\bibitem[CD]{CD} V.\ Cort\'{e}s, L.\ David, {\it Classification of odd generalized Einstein metrics on 3-dimensional Lie groups}, {\tt arXiv:2311.00380}, (2023). 
	\bibitem[CK]{CK} V.\ Cort\'{e}s, D.\ Krusche, {\it Classification of generalized Einstein metrics on $3$-dimensional Lie groups}, Canad.\ J.\ Math.\ {\bf 75} (2023), no.\ 6, 2038 -- 2095.
	\bibitem[K]{K} D.\ Krusche, {\it Ricci curvature on Courant
		algebroids}, Ph.D. thesis, Hamburg, (2022).
	\bibitem[GSh]{GSh} M.\ Garc\'{i}a-Fern\'{a}ndez, C.\ S.\ Shahbazi, {\it Self-dual generalized metrics for pure $N=1$
	six-dimensional Supergravity}, {\tt arXiv:1505.03088}, (2015).
	\bibitem[GSt]{GSt} M.\ Garc\'{i}a-Fern\'{a}ndez, J.\ Streets, {\it
	Generalized Ricci flow}, Univ.\ Lecture\ Ser., 76
	American Mathematical Society, Providence, RI, 2021, vi+248 pp.
	\bibitem[GLP]{GLP} U.\ Gran, P.\ Lohrmann and G.\ Papadopoulos,  \textit{The spinorial geometry of supersymmetric heterotic string backgrounds}, JHEP {\bfseries 02} (2006) 06.
\bibitem[Gu1]{Gu1}  M. Gualtieri, \textit{Generalized complex geometry}, Ph.D thesis, University of Oxford, 2004.
\bibitem[Gu2]{Gu2} M. Gualtieri, \textit{Generalized complex geometry}, Ann. Math. \textbf{174} (2011), 75--123. 
\bibitem[Hi]{Hi} N. J. Hitchin, \textit{Generalized Calabi-Yau manifolds}, Q. J. Math. \textbf{54} no. 3 (2003),  281--308.
	\bibitem[MN]{MN}M.\ Mars, C.\ Pe\'{o}n-Nieto, {\it Skew-symmetric endomorphisms in $\mathbb{M}^{1,n}$: a unified canonical form with applications to conformal geometry},
	Classical\ Quantum\ Gravity\ {\bf 38} (2021), no.\ 12, Paper No.\ 125009, 48 pp.
	\bibitem[Mi]{Mi} J.\ Milnor, {\it Curvatures of left invariant metrics on Lie groups},
	Advances\ in\ Math.\ {\bf 21} (1976), no.\ 3, 293 -- 329.
	\bibitem[PSWZ]{PSWZ} J.\ Patera, R.\ T.\ Sharp, P.\ Winternitz,
	H.\ Zassenhaus, \emph{Invariants of real low dimension Lie algebras},
	J.\ Mathematical\ Phys.\ {\bf 17} (1976), no.\ 6, 986 -- 994.
	\bibitem[PR1]{PR1} F.\ Podest\`{a}, A.\ Raffero, {\it Bismut Ricci flat manifolds with symmetries}, Proc.\ Roy.\ Soc.\ Edinburgh\ Sect.\ A\ {\ bf 153} (2023), no.\ 4, 1371 -- 1390.
		\bibitem[PR2]{PR2} F.\ Podest\`{a}, A.\ Raffero, {\it Infinite families of homogeneous Bismut Ricci flat manifolds}, Commun.\ Contemp.\ Math.\ {\bf 26} (2024), no.\ 2, Paper No.\ 2250075, 17 pp.
	\bibitem[R]{R} K.\ Rajaratnam, {\it Canonical forms of Self-Adjoint Operators in Minkowski Space-time}, {\tt arXiv:1404.1867}, (2014).
\end{thebibliography}
\end{document}